\def\NM{{\mathbb{N}}}
\def\GM{{\mathbb{G}}}
\def\QM{{\mathbb{Q}}}
\def\FM{{\mathbb{F}}}
\def\ZM{{\mathbb{Z}}}
\def\CM{{\mathbb{C}}}
\def\SG{{\mathfrak S}}
\def\AC{{\mathcal A}}
\def\CC{{\mathcal C}}
\def\HC{{\mathcal H}}
\def\NC{{\mathcal N}}
\def\RC{{\mathcal R}}
\def\OC{{\mathcal O}}
\def\MC{{\mathcal M}}
\def\KC{{\mathcal K}}
\def\PC{{\mathcal P}}
\def\oQl{\o\QM_{\ell}}
\def\oZl{\o\ZM_{\ell}}
\def\oFl{\o\FM_{\ell}}
\def\ssi{si et seulement si}
\def\para{sous-groupe parabolique }
\def\paras{sous-groupes paraboliques }
\def\levi{sous-groupe de Levi }
\def\tr{{\rm tr}}
\def\simto{\buildrel\hbox{$\sim$}\over\longrightarrow}
\def\leq{\leqslant}
\def\geq{\geqslant}
\def\injo{\hookrightarrow}
\def\id{\mathop{\mathrm{Id}}\nolimits}
\def\ch{\check}
\def\wt{\widetilde}
\def\wh{\widehat}
\def\o#1{\overline{#1}}
\def\l{\lambda}
\def\application#1#2#3#4#5{\begin{array}{rcl}
                            #1 \;\;\; #2 & \to &  #3 \\
                              #4 & \mapsto & #5 
                            \end{array}} 
\def\cas#1#2#3#4#5#6{\begin{array}{rcl} #1 \; & = &
    \left\{\begin{array}{rll} #2 & \hbox{ si } & #3 \\
                             #4 & \hbox{ #5 } & #6 \end{array}
                         \right. \end{array}}
\def\To#1{\buildrel\hbox{\tiny{$#1$}}\over\longrightarrow}
\def\to{\rightarrow}
\def\mod#1{\mathop{#1-\hbox {\sl Mod}}}     
\def\Mod#1{\mathop{\hbox {\sl #1-Mod}}}     
\def\ker{\mathop{\hbox{\sl ker}\,}}
\def\Hom{\mathop{\hbox{\rm Hom}}\nolimits}
\def\Ext{\mathop{\hbox{\rm Ext}}\nolimits}
\def\Mo#1#2{\mathop{{\rm Mod}_{#1}(#2)}}
\def\Adm#1#2{\mathop{\hbox {\sl Adm}_{#1}(#2)}}
\def\Irr#1#2{\mathop{{\rm Irr}_{#1}\left(#2\right)}}
\def\Ind#1#2{\hbox {\sl Ind}_{#1}^{#2}}
\def\ind#1#2#3{\hbox {\sl Ind}_{#1}^{#2}\>\!\left(#3\right)}  
\def\cInd#1#2{\hbox {\rm ind}_{#1}^{#2}}
\def\cind#1#2#3{\hbox {\rm ind}_{#1}^{#2}\>\!\left(#3\right)} 
\def\res#1#2#3{\hbox {\sl Res}_{#1}^{#2}\>\!\left(#3\right)}
\def\Res#1#2{\hbox {\sl Res}_{#1}^{#2}}              
\def\dim{\mathop{\mbox{\rm dim}}\nolimits}
\def\vol{\mathop{\mbox{\sl vol}}\nolimits}
\def\val{\mathop{\mbox{\sl val}}\nolimits}
\def\rk{\mathop{\mbox{\rm rk}}\nolimits}
\def\ad{\mathop{\mbox{\sl Ad}}\nolimits}
\def \limproj{{\lim\limits_{\longleftarrow}}}
\def\exp{\mathop{\hbox{\sl Exp}\,}}
\renewcommand{\subsubsection}{\@startsection{subsubsection}{3}{\parindent}{-\baselineskip}{-0.01\baselineskip}{\bf}}
\renewcommand*{\@seccntformat}[1]{%
  \csname the#1\endcsname\
}
\def\ali{\subsubsection{}\setcounter{equation}{0}}
\def\alin#1{\setcounter{equation}{0}\subsubsection{\it  #1}. }
\newtheoremstyle{th}
  {\baselineskip}{.5\baselineskip}{\itshape}
  {\parindent}{\bf}
  {--}{.5em}{}
\newtheoremstyle{def}
  {\baselineskip}{\baselineskip}{}
  {\parindent}{\bf}
  {--}{.5em}{}
\newtheoremstyle{th*}
  {.5\baselineskip}{.5\baselineskip}{\itshape}
  {\parindent}{\bf}
  {--}{.5em}{}
\newtheoremstyle{remark*}
  {.5\baselineskip}{.5\baselineskip}{}
  {\parindent}{\bf}
  {--}{.5em}{}
\newtheoremstyle{remark}
  {.5\baselineskip}{.5\baselineskip}{}
  {\parindent}{\bf}
  {--}{.5em}{}
\theoremstyle{th}
\newtheorem{theo}[subsubsection]{\sc Th{\'e}or{\`e}me.\bf}
\newtheorem{lemme}[subsubsection]{\sc Lemme.\bf}
\newtheorem{prop}[subsubsection]{\sc Proposition.\bf}
\newtheorem{coro}[subsubsection]{\sc Corollaire.\bf}
\theoremstyle{def}
\theoremstyle{remark}
\newtheorem{ques}[subsubsection]{\sc Question.\bf} 
\theoremstyle{th*}
\newtheorem*{thm}{\sc Th{\'e}or{\`e}me.}
\newtheorem*{lem}{\sc Lemme.}
\newtheorem*{pro}{\sc Proposition.}
\newtheorem*{cor}{\sc Corollaire.}
\newtheorem*{defn}{\sc D\'efinition.}
\newtheorem*{lj}{\sc Langlands-Jacquet.}
\newtheorem*{jl}{\sc Jacquet-Langlands.}
\theoremstyle{remark*}
\newtheorem*{rem}{\sc Remarque.}
\newtheorem*{fait}{\sc Fait.}
\newtheorem*{que}{\sc Question.}
\newtheorem*{exe}{\sc Exemple.}
\newcommand{\findem}{\hfill$\Box$\par\medskip}
\newcommand{\dem}{\indent {\it Preuve :} \rm }
\title{Un cas simple de correspondance de Jacquet-Langlands modulo $\ell$}
\author{J.-F. Dat, with an appendix by M.-F. Vignéras}
\date{}
\begin{document}
\maketitle
\bibliographystyle{plain}
\renewcommand{\proofname}{\indent Preuve}

\def\la{\langle}
\def\ra{\rangle}
\def\knr{{\wh{K^{nr}}}}
\def\ka{\wh{K^{ca}}}
\abstract{ Let $G$ be a general linear group over a $p$-adic field and
  let $D^{\times}$ be an anisotropic inner form of $G$.  The
  Jacquet-Langlands correspondence between irreducible complex representations
  of $D^{\times}$ and discrete series of $G$ does not behave well with
  respect to reduction modulo $\ell\neq p$. However we show that the
  Langlands-Jacquet transfer, from the Grothendieck group of
  admissible $\o\QM_{\ell}$-representations of $G$ to that of $D^{\times}$ is
  compatible with congruences and reduces modulo $\ell$ to a similar
  transfer for $\o\FM_{\ell}$-representations, which moreover can be
  characterized by some Brauer 
  characters identities. Studying more carefully this transfer, we 
  deduce a bijection between irreducible $\o\FM_{\ell}$-representations of
  $D^{\times}$ and  ``super-Speh'' $\o\FM_{\ell}$-representations of
  $G$. Via reduction mod $\ell$, this latter bijection is compatible 
with the classical Jacquet-Langlands correspondence composed with
the Zelevinsky involution. Finally we discuss the question whether our
Langlands-Jacquet transfer sends irreducibles to effective
virtual representations up to a sign. This is related to a possible
cohomological realization of this transfer in the Lubin-Tate tower,
 and  presumably boils down
to some unknown properties of parabolic affine Kazhdan-Lusztig polynomials.

In the appendix, we reproduce Vignéras' first construction of Brauer
characters for $p$-adic groups. It follows  Harish Chandra's classical
approach while our construction uses resolutions on the building. 
}

\def\dd{D_d^\times}
\def\mdro{\MC_{Dr,0}}
\def\mdrn{\MC_{Dr,n}}
\def\mdr{\MC_{Dr}}
\def\mlto{\MC_{LT,0}}
\def\mltn{\MC_{LT,n}}
\def\mlt{\MC_{LT}}
\def\mltK{\MC_{LT,K}}
\def\LJ{{\rm LJ}}
\def\JL{{\rm JL}}
\def\SL{{\rm SL}}
\def\GL{{\rm GL}}

\def\Ql{\QM_{\ell}}
\def\Zl{\ZM_{\ell}}
\def\Fl{\FM_{\ell}}

\tableofcontents

\section{Expos\'e des r\'esultats}
Soit $K$ un corps local non-archim\'edien de caract\'eristique r\'esiduelle
$p$, $d$ un entier et $D$ une alg\`ebre \`a division de centre $K$ et de
dimension $d^{2}$ sur $K$. Notons aussi $W_{K}$ le groupe de Weil
d'une cl\^oture alg\'ebrique $\o{K}$ de $K$.

\subsection{Rappels sur Jacquet-Langlands et Langlands-Jacquet classiques}

\alin{Le point de vue de la {fonctorialit\'e de Langlands}} Les groupes
$D^{\times}$ et $G$ ont le m\^eme $L$-groupe $\GL_{d}(\CM)$. Pour $G$,
tous les param\`etres de Langlands  $W_{K}\times \SL_{2}(\CM) \To{}
\GL_{d}(\CM)$ sont \emph{relevants} au sens de \cite[3.3]{BorelCorv}, tandis que pour
$D^{\times}$, seuls ceux qui ne se factorisent pas \`a travers un \para
propre de $\GL_{d}(\CM)$ le sont. Le principe de fonctorialit\'e
de Langlands pr\'edit donc dans ce cas l'existence d'une injection 
$$\JL_{\CM}:\,\Irr{\CM}{D^{\times}}\injo \Irr{\CM}{G}$$
de l'ensemble
$\Irr{\CM}{D^{\times}}$ des classes de repr\'esentations complexes lisses
irr\'eductibles de $D^{\times}$ dans l'ensemble
correspondant pour $G$. Selon le desideratum \cite[10.3(3)]{BorelCorv}, l'image de cette
injection doit co\"{\i}ncider avec l'ensemble des s\'eries discr\`etes de $G$.

\alin{Le point de vue de l'{endoscopie}} Le groupe $G$ est un groupe endoscopique
elliptique pour $D^{\times}$. Cela signifie entre autres que l'on a
une application des classes de conjugaison elliptiques r\'eguli\`eres de
$G$ vers celles de $D^{\times}$, qui dans ce cas est m\^eme une bijection puisque
$G$ est forme int\'erieure de $D^{\times}$. Cette bijection est facile \`a
expliciter puisque des deux c\^ot\'es les classes de conjugaisons
elliptiques r\'eguli\`eres sont param\'etr\'ees par les polyn\^omes unitaires
irr\'eductibles de degr\'e $d$, via l'application ``polyn\^ome caract\'eristique''. 
Ce transfert des classes de conjugaison elliptiques implique un
transfert des caract\`eres virtuels 
que l'on peut r\'esumer par le diagramme
commutatif suivant
$$\xymatrix{\CC^{\infty}(G^{\rm ell},\CM)^{G} \ar[r]^{\sim} &
  \CC^{\infty}(D^{\rm ell},\CM)^{D^{\times}} \\
\RC(G,\CM)\otimes\CM \ar[u]^{\theta^{G}} \ar[r]_{\LJ_{\CM}} &  \RC(D^{\times},\CM)\otimes\CM
\ar[u]_{\theta^{D}} ^{\simeq}
}$$
o\`u $\RC(G,\CM)$ d\'esigne le groupe de Grothendieck des repr\'esentations
lisses complexes de longueur finie de $G$,
$\CC^{\infty}(G^{\rm ell},\CM)^{G}$ l'espace des fonctions lisses et
invariantes par conjugaison sur l'ouvert elliptique de $G$,  $\theta^{G}$ la fonction
caract\`ere de Harish Chandra restreinte aux elliptiques, et idem pour $D^{\times}$.
Le fait que $\theta^{D}$ soit un isomorphisme vient de ce que
$D^{\times}$ est compact modulo son centre, et que l'ouvert elliptique
$ D^{\rm ell}$ y est dense.
Notons enfin que le transfert des fonctions $\CC^{\infty}(G^{\rm ell},\CM)^{G} \To{}
  \CC^{\infty}(D^{\rm ell},\CM)^{D^{\times}}$ est normalis\'e par un
  signe $(-1)^{d+1}$.


\ali Venons-en maintenant aux \'enonc\'es des ``correspondances''. \label{corclassic}

\begin{jl}[\cite{DKV},\cite{Badu1}]
  Il existe une injection $\JL_\CM:\,\Irr{\CM}{D^{\times}}\injo
  \Irr{\CM}{G}$ caract\'eris\'ee par les deux propri\'et\'es suivantes :
  \begin{itemize}
  \item Son image est l'ensemble des s\'eries
discr\`etes de $G$.
\item Pour tout $\rho\in\Irr{\CM}{D^{\times}}$, on a $\LJ_\CM [\JL_\CM(\rho)]=[\rho]$.
  \end{itemize}
\end{jl}


Rappelons que la preuve de ce th\'eor\`eme repose sur la formule des
traces ``simple'' et est de nature essentiellement globale. Par
contre, une fois
connu ce r\'esultat, c'est par des arguments purement locaux bas\'es sur
la classification de Zelevinsky que l'on prouve la propri\'et\'e suivante
de $\LJ_\CM$.

\begin{lj}[\cite{Badu2},\cite{lt}, Cor. 2.1.5] {L'application $\LJ_\CM$ envoie $\RC(G,\CM)$ dans
    $\RC(D^{\times},\CM)$. De plus, pour toute $\pi\in\Irr{\CM}{G}$,
    on a $\LJ_\CM([\pi])=0$ ou $\LJ_\CM([\pi])=\pm [\rho]$ pour une
    repr\'esentation $\rho\in\Irr{\CM}{D^{\times}}$.}
\end{lj}


\subsection{Correspondances  $\ell$-modulaires}

Soit maintenant $\ell\neq p$ un nombre premier. Notre but est de
g\'en\'eraliser autant que possible les \'enonc\'es pr\'ec\'edents aux
$\oFl$-repr\'esentations. Il y a deux approches possibles pour cela.

\alin{Caract\`eres de Brauer}
L'approche directe  se heurte \`a deux difficult\'es. 
D'une part il n'y a pas de
notion \'evidente de s\'erie discr\`ete sur $\oFl$, ce qui laisse perplexe
quant \`a l'image d'une \'eventuelle injection $\JL_{\oFl}$. D'autre part,
bien que Vign\'eras ait adapt\'e la th\'eorie des caract\`eres de
Harish-Chandra 
aux $\oFl$-repr\'esentations, ces caract\`eres ne conduisent qu'\`a un
transfert $\RC(G,\oFl)\otimes\oFl\To{}\RC(D^{\times},\oFl)\otimes\oFl$
et ne peuvent donc pas induire un \emph{unique} transfert \'eventuel 
$\RC(G,\oFl)\To{}\RC(D^{\times},\oFl)$. De m\^eme une formule des traces
modulo $\ell$ ne donne a priori que des informations ``modulo $\ell$''.

C'est pourquoi nous d\'eveloppons une notion de \emph{caract\`ere de
  Brauer}, calqu\'ee sur celle de Brauer pour les groupes finis. Il
s'agit \emph{grosso-modo} d'un homomorphisme
$$  \wt\theta^{G} :\ \RC(G,\oFl)\To{} \CC^{\infty}(G^{\rm ell}_{\ell'},
\oZl)^{G} $$
qui rel\`eve ``canoniquement''  le caract\`ere ordinaire, mais que l'on ne
d\'efinit que sur les \'el\'ements elliptiques ``d'ordre premier \`a $\ell$''.
Nous renvoyons au th\'eor\`eme \ref{theoBrauer} et \`a la proposition
\ref{propBrauer} pour un \'enonc\'e
pr\'ecis de ses propri\'et\'es. Notons que M.-F. Vign\'eras avait d\'ej\`a d\'efini un caract\`ere de
Brauer en \'etendant l'approche de Harish-Chandra. Son manuscrit est
rest\'e non publi\'e depuis 1998 et nous le reproduisons en appendice. Notre
approche est diff\'erente et repose sur \cite{MS1}. En l'\'etat actuel,
elle est aussi plus g\'en\'erale puisque nous d\'efinissons le caract\`ere de
Brauer sur tout \'el\'ement semi-simple r\'egulier compact, et pas seulement
sur les elliptiques r\'eguliers. De plus, notre construction fonctionne
encore lorsque le corps local de
d\'efinition est de caract\'eristique positive.

\alin{Congruences} L'autre approche est de  transporter les correspondances  de $\CM$ \`a
$\oQl$, puis 
d'\'etudier les compatibilit\'es aux
congruences et \`a la r\'eduction ``modulo $\ell$''. 
Un bon point est que le transfert de $\CM$ \`a $\oQl$ se passe bien.
\begin{fait}
  Si l'on admet l'existence d'un isomorphisme\footnote{Heureusement, on n'a
    pas besoin de supposer l'existence de tels isomorphismes pour
    d\'efinir $\LJ_{\oQl}$ et $\JL_{\oQl}$, car les s\'eries discr\`etes de
    $G$, comme les irr\'eductibles de $D^{\times}$, sont ``essentiellement''
    d\'efinies sur la cl\^oture de $\QM$ dans $\CM$.} $\CM\simto
  \o\QM_{\ell}$, les applications $\LJ_{\oQl}$ et $\JL_{\oQl}$ que
  l'on en d\'eduit ne d\'ependent pas du choix de cet isomorphisme, \emph{cf.} \cite[2.1.1]{lt}.
  De plus, elles envoient repr\'esentations
  $\ell$-enti\`eres sur repr\'esentations $\ell$-enti\`eres, \emph{cf.} \ref{lentier}.
\end{fait}

Mais on voit rapidement que $\JL_{\oQl}$ ne peut pas \^etre compatible, via la
r\'eduction modulo $\ell$, avec une application analogue $\JL_{\oFl}$.
Par exemple, pour $d=2$, $q\equiv -1[\ell]$, la triviale
$1_{\oQl}$ de
$D^{\times}$ correspond \`a la Steinberg ${\rm St}_{\oQl}$, dont la
r\'eduction modulo $\ell$ est de longueur $2$.
C'est pourquoi, \`a rebours du cas complexe, on commence par \'etudier $\LJ$.

\alin{Langlands-Jacquet modulo $\ell$}
La preuve du r\'esultat suivant est donn\'ee au paragraphe
\ref{preuvetheoLJ}. Elle utilise l'existence et les propri\'et\'es du
caract\`ere de Brauer, ainsi que la classification ``\`a la Zelevinsky''
par Vign\'eras des $\oFl$-repr\'esentations irr\'eductibles. 

\begin{thm}
\label{theoLJ} Il existe un morphisme de groupes ab\'eliens  $\RC(G,\o\FM_{\ell})
\To{\LJ_{\o\FM_{\ell}}} \RC(D^{\times},\o\FM_{\ell})$ rendant
commutatifs les deux diagrammes suivants :
$$
\xymatrix{\CC^{\infty}(G^{\rm ell}_{\ell'},\o\ZM_{\ell})^{G} \ar[r]^{\sim} &
  \CC^{\infty}(D^{\rm ell}_{\ell'},\o\ZM_{\ell})^{D^{\times}} \\
  \RC(G,\o\FM_{\ell}) \ar[u]^{\tilde\theta^{G}}
  \ar@{..>}[r]_{\LJ_{\o\FM_{\ell}}} &  \RC(D^{\times},\o\FM_{\ell})  \ar[u]_{\tilde\theta^{D^{\times}}}
}
\;\,\hbox{ et }\,\; 
\xymatrix{
\RC^{\rm ent}(G,\o\QM_{\ell}) \ar[d]_{r_{\ell}^{G}} \ar[r]^{\LJ_{\oQl}} &
\RC^{\rm ent}(D^{\times},\o\QM_{\ell}) \ar[d]^{r_{\ell}^{D^{\times}}} \\
\RC(G,\o\FM_{\ell})  \ar@{..>}[r]_{\LJ_{\o\FM_{\ell}}} &
\RC(D^{\times},\o\FM_{\ell})  \\
}.
$$
De plus, ce morphisme est uniquement d\'etermin\'e par l'un ou l'autre de
ces diagrammes.
\end{thm}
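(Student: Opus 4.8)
\textbf{Plan and uniqueness.}
The plan is to \emph{define} $\LJ_{\oFl}$ through the second (congruence) square and then to check that it automatically fits into the first (Brauer--character) square. The starting input is that the decomposition map $r_{\ell}^{G}\colon\RC^{\rm ent}(G,\oQl)\to\RC(G,\oFl)$ is surjective: this is where Vign\'eras' classification ``\`a la Zelevinsky'' of the irreducible $\oFl$-representations of $G=\GL_{d}(K)$ enters, together with the facts that cuspidal $\oFl$-representations lift to integral $\oQl$-representations and that parabolic induction commutes with reduction mod $\ell$ (an induction on length then writes any irreducible class as a $\ZM$-linear combination of reductions of integral representations). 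Granting this and the integrality of $\LJ_{\oQl}$ recalled in \ref{lentier}, the second square determines $\LJ_{\oFl}$ uniquely; granting in addition the injectivity of $\tilde\theta^{D^{\times}}$ (Proposition \ref{propBrauer}, used below), so does the first, since then $\LJ_{\oFl}(x)$ is pinned down by $\tilde\theta^{D^{\times}}(\LJ_{\oFl}(x))$.

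\textbf{Construction.}
I would set $\LJ_{\oFl}(r_{\ell}^{G}[V]):=r_{\ell}^{D^{\times}}(\LJ_{\oQl}[V])$ for $V$ an integral representation of $G$ of finite length, which makes sense by \ref{lentier}; the only point is well-definedness, i.e. $\ker r_{\ell}^{G}\subseteq\ker(r_{\ell}^{D^{\times}}\circ\LJ_{\oQl})$. Here the Brauer character of Theorem \ref{theoBrauer} enters through its compatibility with reduction: for integral $V$ one has $\tilde\theta^{G}(r_{\ell}^{G}[V])=\theta^{G}_{\oQl}[V]|_{G^{\rm ell}_{\ell'}}$, the restriction to regular elliptic $\ell'$-elements of the ordinary Harish--Chandra character, and likewise for $D^{\times}$. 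Thus if $r_{\ell}^{G}[V]=0$ then $\theta^{G}_{\oQl}[V]$ vanishes on $G^{\rm ell}_{\ell'}$; by the classical endoscopic character identity recalled in \ref{corclassic} (namely $\theta^{D^{\times}}_{\oQl}\circ\LJ_{\oQl}$ is the $(-1)^{d+1}$-normalised transfer of invariant functions precomposed with $\theta^{G}_{\oQl}$), and since that transfer is induced by the characteristic-polynomial bijection of regular elliptic classes and hence respects $\oZl$-integrality and the support condition ``$\ell'$-element'', the function $\theta^{D^{\times}}_{\oQl}(\LJ_{\oQl}[V])$ vanishes on $D^{\rm ell}_{\ell'}$. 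Using the Brauer-reduction compatibility on the $D^{\times}$-side, $\tilde\theta^{D^{\times}}(r_{\ell}^{D^{\times}}\LJ_{\oQl}[V])=0$, and because $D^{\times}$ is compact modulo its centre the map $\tilde\theta^{D^{\times}}$ is injective (Proposition \ref{propBrauer}, the $\ell$-modular shadow of the fact that $\theta^{D}$ is an isomorphism). Therefore $r_{\ell}^{D^{\times}}\LJ_{\oQl}[V]=0$, so $\LJ_{\oFl}$ is a well-defined group homomorphism and it makes the second square commute by construction.

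\textbf{The Brauer square.}
For the same $\LJ_{\oFl}$, writing $x=r_{\ell}^{G}[V]$ with $V$ integral and chaining the Brauer-reduction compatibility on $D^{\times}$, the classical endoscopic identity, and the Brauer-reduction compatibility on $G$, one obtains
\[
\tilde\theta^{D^{\times}}\bigl(\LJ_{\oFl}(x)\bigr)
=\theta^{D^{\times}}_{\oQl}\bigl(\LJ_{\oQl}[V]\bigr)\big|_{D^{\rm ell}_{\ell'}}
=\mathrm{transfer}\bigl(\theta^{G}_{\oQl}[V]\big|_{G^{\rm ell}_{\ell'}}\bigr)
=\mathrm{transfer}\bigl(\tilde\theta^{G}(x)\bigr),
\]
so the first square commutes, its top horizontal arrow being the restriction to $\ell'$-elements of the $(-1)^{d+1}$-normalised transfer; this is consistent because, as noted, that restriction does land in $\CC^{\infty}(D^{\rm ell}_{\ell'},\oZl)^{D^{\times}}$.

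\textbf{Main obstacle.}
Once the Brauer-character machinery (Theorem \ref{theoBrauer}, Proposition \ref{propBrauer}) is granted, the crux is the dissymmetry of the argument: vanishing of a class in $\RC(G,\oFl)$ cannot be read off its Brauer character, which only sees the ``elliptic'' part of a representation and misses many $\oFl$-representations of $G$, so well-definedness must be routed through $D^{\times}$, where $\tilde\theta^{D^{\times}}$ \emph{is} injective. The two places demanding real care are therefore (i) this injectivity, i.e. the linear independence of the Brauer characters of the irreducible $\oFl$-representations of $D^{\times}$ on regular $\ell'$-semisimple elements, which one reduces to the classical linear independence of Brauer characters of finite groups using that $D^{\times}$ is compact modulo its centre and that its irreducible smooth $\oFl$-representations are finite-dimensional; and (ii) the verification that the normalised transfer of invariant functions preserves $\oZl$-integrality and the $\ell'$-support condition, which follows from its explicit ``characteristic polynomial'' description.
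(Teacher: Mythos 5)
Your proposal is correct and follows essentially the same route as the paper: define $\LJ_{\oFl}$ via the congruence square using the surjectivity of $r_{\ell}^{G}$ (the paper's Corollaire \ref{rlsurjectif}, itself resting on the Vign\'eras classification and the decomposition $\RC=\RC_{I}\oplus\RC_{\Delta}$), prove well-definedness by pushing $\ker r_{\ell}^{G}$ through the Brauer-character compatibility of Proposition \ref{propBrauer}, the classical transfer identity, and the injectivity of $\tilde\theta^{D^{\times}}$ (Proposition \ref{propD1}), then deduce the first square by chaining the same identities. The only (shared) imprecision is invoking the second diagram of Proposition \ref{propBrauer} on all of $\RC^{\rm ent}(G,\oQl)$ rather than on $\RC^{\rm ent}_{\ell'}$, exactly as the paper does.
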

Ici, l'application $r_{\ell}^{G}$ d\'esigne l'application de r\'eduction
modulo $\ell$, ou ``d\'ecomposition'', dont l'existence est prouv\'ee dans \cite[II.5.11b]{Vig}.
Notons que, contrairement au cas complexe, il est facile de voir que l'image d'une irr\'eductible par $\LJ_{\oFl}$
n'est pas n\'ec\'essairement nulle ou ``irr\'eductible au signe pr\`es''. Par
contre, on peut esp\'erer que la variante suivante est vraie.

\begin{que}\label{questionLJ}
  Soit $\pi\in\Irr{\oFl}{G}$. La repr\'esentation virtuelle
  $\LJ_{\oFl}(\pi)$ est-elle effective au signe pr\`es ?
\end{que}

Nous discutons quelques cas o\`u cette propri\'et\'e est non-trivialement
vraie dans la section \ref{seceff}. Pour y r\'epondre en g\'en\'eral, en admettant
l'analogue modulaire de l'''analogue $p$-adique des conjectures de
Kazhdan-Lusztig'' (fortement plausible), on est ramen\'e \`a examiner
certains signes dans la matrice inverse de certaine matrice de
polyn\^omes de Kazhdan-Lusztig. Mais l'auteur n'a pas su tirer parti de
cette traduction ``concr\`ete''.

\alin{Jacquet-Langlands modulo $\ell$}
Puisque nous disposons maintenant de $\LJ_{\oFl}$, il est naturel de
rechercher $\JL_{\oFl}$ sous la forme d'une section remarquable de
$\LJ_{\oFl}$. Comme on l'a d\'ej\`a mentionn\'e, la section $\JL_{\oQl}$ de $\LJ_{\oQl}$
n'est en g\'en\'eral pas compatible \`a la r\'eduction modulo $\ell$, \emph{cf} l'exemple
\ref{exempleLJ}. En fait, cet exemple montre aussi qu'il
  n'existe pas n\'ecessairement de section de $\LJ_{\oFl}$ envoyant les
  irr\'eductibles sur des irr\'eductibles.
N\'eanmoins, le th\'eor\`eme ci-dessous exhibe
 une section de $\LJ_{\oFl}$ qui envoie les irr\'eductibles sur
des irr\'eductibles au signe pr\`es.


Pour caract\'eriser l'image de cette section, rappelons que
dans la classification de Zelevinsky des repr\'esentations complexes en
termes de multisegments, un r\^ole important est jou\'e par
les repr\'esentations ``de Speh'', qui sont celles associ\'ees aux
segments. Ces repr\'esentations sont \'echang\'ees avec les ``s\'eries
discr\`etes'' par l'involution de Zelevinsky $Z_{\CM}$ sur $\Irr{\CM}{G}$.

De m\^eme, sur $\oFl$, on peut associer une repr\'esentation ``de Speh'' \`a
tout segment cuspidal, et on dit que c'est une repr\'esentation
``\emph{super}Speh'' si le segment est \emph{super}cuspidal,
\emph{cf.} paragraphe \ref{defsuperSpeh}. 
Le r\'esultat suivant est prouv\'e au paragraphe \ref{preuvetheoJL}.

\begin{thm} \label{theoJL}
  Il existe une injection $^{z}\JL_{\oFl}: \, \Irr{\oFl}{D^{\times}}\injo
  \Irr{\oFl}{G}$, caract\'eris\'ee par les propri\'et\'es suivantes :
  \begin{itemize}
  \item son image est l'ensemble des repr\'esentations ``superSpeh''.
  \item pour toute $\rho\in\Irr{\oFl}{D^{\times}}$, on a $\LJ_{\oFl} [{^{z}\JL}_{\oFl}(\rho)]=\pm[\rho]$.
  \end{itemize}
Elle est aussi uniquement d\'etermin\'ee par la relation suivante \`a
$\JL_{\oQl}$. Pour toute $\rho\in\Irr{\oFl}{D^{\times}}$ et tout
rel\`evement $\tilde\rho\in\Irr{\oQl}{D^{\times}}$, la repr\'esentation
$Z_{\oQl}(\JL_{\oQl}(\tilde\rho))$ est enti\`ere,  de r\'eduction irr\'eductible, et on a 
$$ ^{z}\JL_{\oFl}(\rho) = r_{\ell}(Z_{\oQl}(\JL_{\oQl}(\tilde\rho))).$$ 
\end{thm}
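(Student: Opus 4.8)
The plan is to construct $^{z}\JL_{\oFl}$ directly via the formula in the last paragraph of the statement and then verify it has the required properties. First I would fix $\rho \in \Irr{\oFl}{D^{\times}}$ and a lift $\tilde\rho \in \Irr{\oQl}{D^{\times}}$, which exists because $D^{\times}$ is compact modulo center so all its irreducibles are $\ell$-integral and reduction mod $\ell$ is surjective on irreducibles. Set $\sigma := \JL_{\oQl}(\tilde\rho) \in \Irr{\oQl}{G}$, a discrete series; by the \textsc{Jacquet-Langlands} theorem recalled in \ref{corclassic}, $\sigma$ is a discrete series and by the Fait on $\ell$-integrality it is $\ell$-integral. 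Now apply the Zelevinsky involution $Z_{\oQl}$: since $\sigma$ is a discrete series, $Z_{\oQl}(\sigma)$ is a Speh representation. The key integrality input I need is that $Z_{\oQl}(\sigma)$ is $\ell$-integral with \emph{irreducible} reduction mod $\ell$ --- this is where I would invoke Vign\'eras' classification: a Speh representation attached to a segment is, after reduction, either irreducible or has predictable constituents, and in the discrete-series/Speh case the reduction of $Z_{\oQl}(\sigma)$ turns out to be an irreducible superSpeh representation precisely because $\tilde\rho$ reduces irreducibly on the $D^{\times}$ side (the cuspidal support of $\sigma$ is the ``lift'' of that of $\rho$, hence supercuspidal after reduction, forcing the segment to be supercuspidal). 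So I define $^{z}\JL_{\oFl}(\rho) := r_{\ell}(Z_{\oQl}(\sigma))$ and the candidate lands, by construction, in the set of superSpeh representations.

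Next I would check this is well-defined (independent of the choice of lift $\tilde\rho$) and injective. For well-definedness: two lifts $\tilde\rho_1, \tilde\rho_2$ of $\rho$ have the same reduction, hence $\JL_{\oQl}(\tilde\rho_i)$ have the same reduction as virtual representations --- this uses the second diagram of Theorem \ref{theoLJ} together with the fact that $\JL_{\oQl}$ is the unique section of $\LJ_{\oQl}$ sending irreducibles to discrete series with $\LJ_{\oQl}[\JL_{\oQl}(\tilde\rho)] = [\tilde\rho]$; applying $r_\ell$ and commuting with $\LJ$ shows the classes $[\sigma_1], [\sigma_2]$ in $\RC(G,\oFl)$ coincide, and since each $\sigma_i$ is a discrete series with irreducible Zelevinsky dual reduction, $Z_{\oQl}(\sigma_i)$ have equal reductions. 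For injectivity: if $^{z}\JL_{\oFl}(\rho) = {^{z}\JL}_{\oFl}(\rho')$, apply $\LJ_{\oFl}$ to both sides; by the second bullet (to be proved next) this gives $\pm[\rho] = \pm[\rho']$ in $\RC(D^{\times},\oFl)$, and since both are irreducible, $\rho = \rho'$ (the sign is forced). Surjectivity onto superSpeh representations: every superSpeh representation of $G$ over $\oFl$ comes from a supercuspidal segment, which lifts to a segment over $\oQl$ whose associated Speh representation is $Z_{\oQl}$ of a discrete series $\sigma$, and $\LJ_{\oQl}[\sigma] = \pm[\tilde\rho]$ for a unique $\tilde\rho \in \Irr{\oQl}{D^{\times}}$ by the \textsc{Langlands-Jacquet} theorem and the \textsc{Jacquet-Langlands} bijection onto discrete series; reducing $\tilde\rho$ gives the preimage.

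The main point, and the one I expect to be the real obstacle, is the second bullet: $\LJ_{\oFl}[{^{z}\JL}_{\oFl}(\rho)] = \pm[\rho]$. Here I would use the first commutative diagram of Theorem \ref{theoLJ} (compatibility of $\LJ_{\oFl}$ with Brauer characters $\wt\theta$) reduced to a computation on $\ell$-regular elliptic elements. Writing $\sigma = \JL_{\oQl}(\tilde\rho)$ we know $\LJ_{\oQl}[\sigma] = [\tilde\rho]$; I need to compare $Z_{\oQl}(\sigma)$ to $\sigma$ after $\LJ$ and $r_\ell$. The classical fact (Badu\-lescu, \cite{Badu2}, \cite{lt} Cor 2.1.5) is that $\LJ_{\oQl}$ intertwines the Zelevinsky involution $Z_{\oQl}$ on $G$ with the Aubert--Zelevinsky involution (or identity, up to sign $(-1)^{\,d - d/d'}$ type factor) on $D^{\times}$; since $D^{\times}$ has a single parabolic, the involution on $D^{\times}$ is essentially trivial up to sign, so $\LJ_{\oQl}[Z_{\oQl}(\sigma)] = \pm[\tilde\rho]$. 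Now apply $r_\ell^{D^{\times}}$, use the second diagram of Theorem \ref{theoLJ} ($r_\ell^{D^{\times}} \circ \LJ_{\oQl} = \LJ_{\oFl} \circ r_\ell^{G}$), and the facts $r_\ell^{G}[Z_{\oQl}(\sigma)] = [{^{z}\JL}_{\oFl}(\rho)]$ (irreducible reduction) and $r_\ell^{D^{\times}}[\tilde\rho] = [\rho]$ (irreducible reduction, as $D^\times$ is compact mod center) to conclude $\LJ_{\oFl}[{^{z}\JL}_{\oFl}(\rho)] = \pm[\rho]$. The delicate step is establishing the compatibility of $\LJ_{\oQl}$ with $Z_{\oQl}$ up to a controlled sign --- either by citing it from the classical literature in the precise form needed, or by an independent Brauer-character argument over $\oQl$ using density of $\ell$-regular elliptic elements. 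Finally, uniqueness of $^{z}\JL_{\oFl}$ subject to the stated relation with $\JL_{\oQl}$ is immediate from the defining formula, and uniqueness subject to the two bullets follows because $\LJ_{\oFl}$ is injective on superSpeh representations (a consequence of the bijection just established).
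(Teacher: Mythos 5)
Your overall architecture matches the paper's: both routes define $^{z}\JL_{\oFl}(\rho)$ as $r_{\ell}(Z_{\oQl}(\JL_{\oQl}(\tilde\rho)))$ (the paper phrases this as inverting the bijection of Theorem \ref{ptesLJ}, whose inverse is induced by $|\LJ_{\oQl}|\circ Z_{\oQl}$ via reduction mod $\ell$), and both reduce the second bullet to the diagrams of Theorem \ref{theoLJ} together with the classical fact that $\LJ_{\oQl}\circ Z_{\oQl}\circ\JL_{\oQl}=\pm\,\mathrm{id}$.

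There is, however, a genuine gap at the crux. You need that $Z_{\oQl}(\JL_{\oQl}(\tilde\rho))$ --- a Speh representation $\delta_{r}(\tilde\pi)$ for some integral cuspidal $\tilde\pi$ --- has a reduction that is not merely irreducible (that much follows from Proposition \ref{defsuperSpeh} and Lemma \ref{lemrelev}) but \emph{superSpeh}, i.e.\ that $\tilde\pi$ is $\ell$-supercuspidale. Your justification (``the cuspidal support of $\sigma$ is the lift of that of $\rho$, hence supercuspidal after reduction'') is vacuous: $\rho$ lives on $D^{\times}$ and carries no cuspidal support, and the hypothesis that $\tilde\rho$ has irreducible reduction does not directly control whether the reduction of $\tilde\pi$ is supercuspidale or only cuspidale. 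The equivalence ``$\tilde\rho$ $\ell$-irr\'eductible $\Leftrightarrow$ $Z_{\oQl}(\JL_{\oQl}(\tilde\rho))$ $\ell$-superSpeh'' is precisely Proposition \ref{propptesLJ}, which the paper proves by a counting argument: one compares the number $m(\cdot)$ of strictly congruent representations and the invariants $t(\cdot)$, $a(\cdot)$ on the two sides, using the numerical criteria of Lemma \ref{lemrelev} (superSpeh side) and Proposition \ref{propD2} ($D^{\times}$ side, itself a nontrivial Clifford-theoretic computation). Without this input your construction is not known to land in the superSpeh representations, and your surjectivity argument also breaks (it needs $r_{\ell}(\tilde\rho)$ irreducible when one starts from a superSpeh, i.e.\ the other direction of the same equivalence). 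Your well-definedness argument is flawed for a related reason: $\JL_{\oQl}$ is \emph{not} compatible with $r_{\ell}$ (Example \ref{exempleLJ}), and the second diagram of Theorem \ref{theoLJ} only yields $\LJ_{\oFl}(r_{\ell}[\sigma_{1}])=\LJ_{\oFl}(r_{\ell}[\sigma_{2}])$, from which $r_{\ell}[\sigma_{1}]=r_{\ell}[\sigma_{2}]$ does not follow since $\LJ_{\oFl}$ has a large kernel; to conclude one needs the injectivity of $\LJ_{\oFl}$ on superSpeh classes, which again rests on the same congruence-counting machinery.
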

Notons que comme $D^{\times}$ est r\'esoluble, le th\'eor\`eme de Fong-Swan
assure que toute $\oFl$-irr\'eductible $\rho$ se rel\`eve. Pr\'ecisons aussi
que la seconde partie du th\'eor\`eme ne signifie pas que la compos\'ee
$Z_{\oQl}\circ\JL_{\oQl}$ est compatible \`a la r\'eduction modulo
$\ell$. Elle ne l'est g\'en\'eralement pas, toujours par l'exemple \ref{exempleLJ}.

N\'eanmoins, la situation rappelle celle de la correspondance de Langlands modulo
$\ell$ puisque, d'apr\`es \cite[1.8]{VigLanglands}, seule la correspondance de
Langlands \emph{compos\'ee avec l'involution de Zelevinsky} a des
propri\'et\'es de  compatibilit\'e partielle \`a la
r\'eduction modulo $\ell$. Comme dans \emph{loc. cit.}, il est donc tentant
d'utiliser l'involution de Zelevinsky-Vign\'eras $Z_{\oFl}$ sur
$\Irr{\oFl}{G}$, voir \cite{VigSheaves}, pour
\emph{d\'efinir} la correspondance de Jacquet-Langlands par la formule :
$$  \JL_{\oFl}:= Z_{\oFl}\circ {^{z}\JL_{\oFl}}. $$

\alin{Correspondance de Langlands modulo $\ell$ pour $D^{\times}$}
Notons 
$$\sigma^{G}_{\oFl}:\, \Irr{\o\FM_{\ell}}{G} \simto {\rm
  Rep}^{d}_{\o\FM_{\ell}}(WD_{K}) $$
 la correspondance de Langlands-Vign\'eras entre $\oFl$-repr\'esentations
 irr\'eductibles de $G$ et 
 $\oFl$-repr\'esentations de Weil-Deligne de dimension $d$, voir
 \cite[1.8]{VigLanglands}. Rappelons qu'une repr\'esentation de
 Weil-Deligne est un couple $\sigma=(\sigma^{\rm ss},N)$ form\'e d'une repr\'esentation
 \emph{semi-simple} $\sigma^{\rm ss}$ ``du'' groupe de Weil $W_{K}$ de $K$ et d'un
 homomorphisme \emph{nilpotent} $N:\sigma^{\rm ss}\To{}\sigma^{\rm ss}(-1)$.

\begin{cor}\label{coroLD}
  L'application $ \sigma^{D^{\times}}_{\oFl}:= \sigma^{G}_{\oFl}\circ
  \JL_{\oFl}$ est une bijection 
$ \Irr{\o\FM_{\ell}}{D^{\times}} \simto {\rm
  Rep}^{d}_{\o\FM_{\ell}}(WD_{K})^{\rm indec} $
entre $\oFl$-repr\'esentations irr\'eductibles de $D^{\times}$ et
 $\oFl$-repr\'esentations de Weil-Deligne ind\'ecomposables de dimension $d$, 
caract\'eris\'ee par la propri\'et\'e suivante : si
$\rho\in\Irr{\o\FM_{\ell}}{D^{\times}}$ et $\tilde\rho$ est un
rel\`evement de $\rho$ \`a $\o\QM_{\ell}$, alors $\sigma^{D^{\times}}_{\o\FM_{\ell}}(\rho) = R_{\ell}(\sigma_{\oQl}^{D^{\times}}(\tilde\rho))$.
\end{cor}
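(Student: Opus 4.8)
The plan is to deduce the statement directly from Theorem \ref{theoJL} together with the known properties of the Langlands--Vign\'eras correspondence $\sigma^G_{\oFl}$ and of its lift $\sigma^G_{\oQl}$ over $\oQl$. First I would observe that, by construction, $\sigma^{D^\times}_{\oFl}=\sigma^G_{\oFl}\circ\JL_{\oFl}$ is a composition of two injections: $\JL_{\oFl}$ is injective by Theorem \ref{theoJL} (recall $\JL_{\oFl}=Z_{\oFl}\circ{}^z\!\JL_{\oFl}$, and both $Z_{\oFl}$ and ${}^z\!\JL_{\oFl}$ are injective), and $\sigma^G_{\oFl}$ is a bijection onto ${\rm Rep}^d_{\oFl}(WD_K)$. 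Hence $\sigma^{D^\times}_{\oFl}$ is automatically injective; the whole content is to identify its image with the indecomposable Weil--Deligne representations and to check the compatibility-with-reduction formula.

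For the image, the key step is to transport, via $\sigma^G_{\oFl}$, the characterization of the image of ${}^z\!\JL_{\oFl}$ as the set of ``superSpeh'' representations. So I would need the following two facts about $\sigma^G_{\oFl}$, both of which should follow from \cite[1.8]{VigLanglands} and the explicit description of the Zelevinsky--Vign\'eras classification: (a) $\sigma^G_{\oFl}$ sends a superSpeh representation (the Speh representation attached to a supercuspidal segment $[\rho,\dots,\rho(r-1)]$) to the indecomposable Weil--Deligne representation $\tau\otimes\mathrm{Sp}(r)$, where $\tau=\sigma_{\oFl}(\rho)$ is irreducible and $\mathrm{Sp}(r)$ is the ``special'' $r$-dimensional summand carrying a regular nilpotent; and (b) conversely every indecomposable $\oFl$-Weil--Deligne representation of dimension $d$ is of this shape, i.e.\ is $\tau\otimes\mathrm{Sp}(r)$ for a unique pair $(\tau,r)$ with $\dim\tau\cdot r=d$ and $\tau$ irreducible. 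Granting (a) and (b), the image of $\sigma^{D^\times}_{\oFl}=\sigma^G_{\oFl}\circ Z_{\oFl}^{-1}\circ({}^z\!\JL_{\oFl}$-target$)$\,--\, wait, more simply: since $\JL_{\oFl}$ has image the Zelevinsky--Vign\'eras duals of the superSpeh representations, and $\sigma^G_{\oFl}$ intertwines $Z_{\oFl}$ with the involution $N\mapsto$ (transpose nilpotent) on the Weil--Deligne side (again \cite[1.8]{VigLanglands}), the image is exactly $\{\tau\otimes\mathrm{Sp}(r)\}={\rm Rep}^d_{\oFl}(WD_K)^{\rm indec}$.

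For the reduction formula, I would argue as follows. Fix $\rho\in\Irr{\oFl}{D^\times}$ and a lift $\tilde\rho\in\Irr{\oQl}{D^\times}$ (which exists since $D^\times$ is solvable, by Fong--Swan, as noted after Theorem \ref{theoJL}). By the second part of Theorem \ref{theoJL}, ${}^z\!\JL_{\oFl}(\rho)=r_\ell(Z_{\oQl}(\JL_{\oQl}(\tilde\rho)))$, and $Z_{\oQl}(\JL_{\oQl}(\tilde\rho))$ is $\ell$-integral with irreducible reduction. Now $\sigma^{D^\times}_{\oQl}(\tilde\rho)=\sigma^G_{\oQl}(\JL_{\oQl}(\tilde\rho))$ by the classical definition, and $\sigma^G_{\oQl}$ intertwines $Z_{\oQl}$ with the nilpotent-transpose involution, so $\sigma^G_{\oQl}(Z_{\oQl}(\JL_{\oQl}(\tilde\rho)))$ and $\sigma^{D^\times}_{\oQl}(\tilde\rho)$ differ only by that involution on the $N$-part; in particular one is $\ell$-integral iff the other is, and $R_\ell$ (reduction on the Galois side) commutes with it. The compatibility $\sigma^G_{\oFl}\circ r_\ell = R_\ell\circ\sigma^G_{\oQl}$ on the locus of $\ell$-integral representations with irreducible reduction is precisely the defining property of $\sigma^G_{\oFl}$ in \cite[1.8]{VigLanglands}. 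Stringing these equalities together gives
$$
\sigma^{D^\times}_{\oFl}(\rho)=\sigma^G_{\oFl}({}^z\!\JL_{\oFl}(\rho)\ \text{after }Z_{\oFl})=\sigma^G_{\oFl}(r_\ell(\dots))=R_\ell(\sigma^G_{\oQl}(\dots))=R_\ell(\sigma^{D^\times}_{\oQl}(\tilde\rho)),
$$
which is the asserted formula, and it also pins down $\sigma^{D^\times}_{\oFl}$ uniquely since every $\rho$ admits a lift.

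**Main obstacle.** The routine steps (injectivity, solvability/Fong--Swan, chasing the defining compatibility of $\sigma^G_{\oFl}$) are immediate; the real work is facts (a) and (b) above, i.e.\ checking that under $\sigma^G_{\oFl}$ the superSpeh representations correspond exactly to the indecomposable Weil--Deligne representations. This requires unwinding Vign\'eras' $\ell$-modular classification and her normalization of the local Langlands correspondence carefully enough to see that a supercuspidal segment of length $r$ goes to $\tau\otimes\mathrm{Sp}(r)$ with $\tau$ irreducible, and that there are no other indecomposables in dimension $d$ (here it matters that we use \emph{super}cuspidal, not merely cuspidal, segments, so that $\tau$ is genuinely irreducible rather than just indecomposable on the Galois side). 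I expect this to be where one must be most careful, though it should all be available in \cite{VigLanglands} once the definitions in \S\ref{defsuperSpeh} are matched up.
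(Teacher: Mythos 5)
Your overall strategy is the same as the paper's: reduce the corollary to Théorème \ref{theoJL} by transporting the bijection between $\Irr{\oFl}{D^{\times}}$ and the superSpeh representations through Vignéras' correspondence, then chase its defining reduction--compatibility. Two remarks on your derivation of the formula $\sigma^{D^{\times}}_{\oFl}(\rho)=R_{\ell}(\sigma^{D^{\times}}_{\oQl}(\tilde\rho))$. First, the defining property in \cite{VigLanglands} is stated for $\tau^{G}:=\sigma^{G}\circ Z$ and reads $\tau^{G}_{\oFl}\circ J_{\ell}=R_{\ell}\circ\tau^{G}_{\oQl}$, with $J_{\ell}$ rather than $r_{\ell}$; you need to note that $J_{\ell}=r_{\ell}$ on Speh representations with irreducible reduction. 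Second, since $\sigma^{G}_{\oFl}\circ\JL_{\oFl}=\tau^{G}_{\oFl}\circ{}^{z}\JL_{\oFl}$, the two Zelevinsky involutions cancel and you never need the claim that ``$\sigma^{G}$ intertwines $Z$ with transposition of $N$'' --- which is in any case incorrect as stated: on the relevant objects the involution exchanges $\bigl(\bigoplus_{i}\tau(-i),N_{\mathrm{reg}}\bigr)$ with $\bigl(\bigoplus_{i}\tau(-i),0\bigr)$, not a transpose. Done with $\tau^{G}$ throughout, your chain of equalities is exactly the paper's.

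The genuine gap is your pair of facts (a) and (b), which you defer to \cite{VigLanglands}. Fact (b) (structure of indecomposable Weil--Deligne representations) is routine, but fact (a) --- that $\tau^{G}_{\oFl}$ carries the superSpeh representations bijectively onto the indecomposables --- is precisely where the paper does its work, and it is not quotable off the shelf. The paper establishes it by proving a lemma giving four equivalent characterizations of the $\ell$-indecomposable integral $\oQl$-Weil--Deligne representations (indecomposable reduction; indecomposable with length preserved by $r_{\ell}$; explicit shape $\bigoplus_{i}\lambda(-i)$ with $\lambda$ $\ell$-irreducible; preimage under $\tau^{G}_{\oQl}$ an $\ell$-superSpeh representation), and the equivalence with the fourth condition rests on a numerical criterion $m(\sigma)\leq[a(\sigma)]_{\ell}$, with equality iff $\sigma$ is $\ell$-indecomposable, proved by the same counting of congruent objects as in Lemme \ref{lemrelev} and Proposition \ref{propD2}. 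This lemma is what makes the middle row of the paper's diagram (between $\ell$-superSpeh and $\ell$-indecomposable objects) a bijection, from which the mod-$\ell$ statement follows by reduction. Your plan is viable, but as written it asserts the key matching rather than proving it; to complete it you would either reproduce this counting argument or, following your sketch more literally, compute $\tau^{G}_{\oFl}(\delta_{r}(\pi))=R_{\ell}\bigl(\tau^{G}_{\oQl}(\delta_{r}(\tilde\pi))\bigr)$ directly and justify how $R_{\ell}$ acts on the nilpotent part and why surjectivity onto all indecomposables holds.
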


Ici, $\sigma_{\oQl}^{D^{\times}}=\sigma^{G}_{\oQl}\circ \JL_{\oQl}$
est la correspondance de Langlands classique pour $D^{\times}$, et
$R_{\ell}$ est l'application de r\'eduction modulo $\ell$ des (classes
de) $\oQl$-repr\'esentaitions de Weil-Deligne enti\`eres, implicite dans
\cite[1.8]{VigLanglands}, et expliqu\'ee dans \cite[4.1.8]{lefschetz}.

La morale de ce corollaire est qu'une certaine forme du principe de
fonctorialit\'e semble \^etre envisageable pour les
$\oFl$-repr\'esentations. Mais ce principe de fonctorialit\'e, s'il
existe, fera plut\^ot intervenir le ${\rm SL}_{2}$ d'Arthur que celui de
Deligne-Langlands.

\section{Caract\`eres de Brauer et groupes de Grothendieck}

Dans cette section, nous construisons le {caract\`ere de
  Brauer} d'une $\oFl$-repr\'esentation lisse de longueur finie d'un
groupe r\'eductif $p$-adique quelconque, et nous sp\'eculons sur les
propri\'et\'es que l'on peut en attendre, par analogie avec les groupes finis.
Puis nous rappelons les classifications de Zelevinsky et Vign\'eras des
repr\'esentations irr\'eductibles de $\GL_{d}(K)$ pour
en d\'eduire la surjectivit\'e de la r\'eduction modulo
$\ell$ entre groupes de Grothendieck, qui est un point clef de la
construction de $\LJ_{\oFl}$. Enfin nous prouvons quelques propri\'et\'es
des repr\'esentations de $D^{\times}$ utilis\'ees dans la construction de $^{z}\JL_{\oFl}$.

\subsection{Caract\`ere de Brauer}

\alin{Rappels terminologiques}
Dans cette section, $G$ d\'esigne un groupe r\'eductif
$p$-adique quelconque, et $Z(G)$ d\'esigne son centre. Un \'el\'ement $\gamma$ de $G$ est dit
\emph{semi-simple r\'egulier} si son centralisateur connexe
$Z_{G}(\gamma)^{\circ}$ est un tore de $G$. Lorsque ce tore est 
  anisotrope modulo $Z(G)$, on dit que $\gamma$ est
  \emph{elliptique}. Par exemple, pour $G=\GL_{d}(K)$ ou $D^{\times}$,
  l'\'el\'ement $\gamma$ est semi-simple r\'egulier \ssi\ son polyn\^ome
  minimal est s\'eparable de degr\'e $d$, et il est de plus elliptique \ssi\ ce
  polyn\^ome minimal est de plus irr\'eductible.

Rappelons aussi qu'un \'el\'ement $\gamma$ de $G$ est \emph{compact} si le
sous-groupe qu'il engendre est relativement compact. Si $Z$ est un
sous-groupe ferm\'e central de $G$ on dira que $\gamma$ est
 \emph{compact modulo $Z$} si son image dans $G/Z$ est un \'el\'ement compact.
 De mani\`ere \'equivalente, $\gamma$ est
compact, resp. compact modulo $Z(G)$, s'il fixe un point de
l'immeuble \'etendu de $G$, resp. de l'immeuble semi-simple (\emph{i.e.}
non \'etendu) $X$ de $G$. En cons\'equence, un \'el\'ement compact modulo le
centre normalise des sous-groupes ouverts compacts arbitrairement petits.

Un \'el\'ement elliptique $\gamma$ est compact modulo le centre et
l'ensemble des points fixes $X^{\gamma}$ de $\gamma$ dans l'immeuble
de $G$ est compact. R\'eciproquement, un \'el\'ement semi-simple r\'egulier
dont l'ensemble des points fixes $X^{\gamma}$ est compact, est elliptique.

L'ensemble $G^{\rm rs}$  des \'el\'ements semi-simples r\'eguliers de $G$
est ouvert dans $G$ et stable par conjugaison. Pour tout anneau
$\Lambda$, on note
$\CC^{\infty}(G^{\rm rs},\Lambda)^{G}$ le $\Lambda$-module des
fonctions \`a valeurs dans $\Lambda$ sur $G^{\rm rs}$ qui sont  localement
constantes et invariantes par conjugaison. On utilisera des notations
similaires $\CC^{\infty}(G^{\rm crs},\Lambda)^{G}$ et
$\CC^{\infty}(G^{\rm ell},\Lambda)^{G}$ o\`u les notations $G^{\rm crs}$
et $G^{\rm ell}$ d\'esignent respectivement les sous-ensembles ouverts
de $G$ form\'es des \'el\'ements semi-simples r\'eguliers et compacts modulo
le centre, resp. elliptiques.

Si $\pi$ est une repr\'esentation admissible de $G$ sur un corps, les
op\'erateurs $\pi(\mu)$, pour $\mu$ dans l'alg\`ebre de Hecke de $G$, sont \`a
image finie. Moyennant le choix d'une mesure de Haar $dg$ sur $G$, on
obtient un caract\`ere-distribution $f\mapsto {\rm tr}(\pi(fdg))$ sur
$G$. Lorsque le corps de coefficients est $\CM$, Harish Chandra a montr\'e que
cette distribution est donn\'ee par int\'egration contre une fonction
invariante lisse d\'efinie sur $G^{\rm rs}$. Pour un corps de
coefficients de caract\'eristique diff\'erente de $p$, Vign\'eras et
Waldspurger ont \'etendu les arguments de Harish Chandra, puis
r\'ecemment
 Meyer et Solleveld ont trouv\'e une approche diff\'erente de ce r\'esultat. 

Rappelons que $\RC(G,\Lambda)$ d\'esigne le groupe de Grothendieck des
repr\'esentations $\Lambda$-admissibles de longueur finie.

\begin{theo}[Caract\`ere ordinaire, \cite{MS2}, Thms 7.2 et 7.4 et
  \cite{VigWalds}, Thm. E.4.4] Supposons $\Lambda=\o\FM_{\ell}$ ou $\o\QM_{\ell}$.
  Il existe un unique homomorphisme 
$$\RC(G,\Lambda)\To{\theta}
\CC^{\infty}(G^{\rm rs},\Lambda)^{G}$$  appel\'e \emph{caract\`ere
  ordinaire}, tel que pour toute $\pi\in\Irr{\Lambda}{G}$ et tout
$\gamma\in G^{\rm rs}$ il existe  un sous-groupe
  ouvert compact $H_{\pi,\gamma}$ tel que pour tout pro-$p$-sous-groupe ouvert
   $H\subset H_{\pi,\gamma}$, on ait l'\'egalit\'e
   \begin{equation}
  \theta_{\pi}(\gamma)={\rm
    tr}(\pi(\varepsilon_{H}*\gamma*\varepsilon_{H}))\label{caractheta}
  \end{equation}
 o\`u $\varepsilon_{H}$ d\'esigne la  mesure de Haar normalis\'ee de $H$ (un
 idempotent de l'alg\`ebre de   Hecke de $G$).
\end{theo}

M.-F. Vign\'eras a propos\'e une d\'efinition, reproduite en appendice, du caract\`ere de Brauer d'une
 $\o\FM_{\ell}$-repr\'esentation  sur certains \'el\'ements elliptiques.
Nous allons ici utiliser les techniques de Meyer et Solleveld pour
d\'efinir un caract\`ere de Brauer sur certains \'el\'ements semi-simples
r\'eguliers et compacts modulo le
centre.

\alin{Rappel sur la notion de caract\`ere de Brauer}
Soit $\mu^{\ell'}(\Lambda)$ le groupe des racines de
l'unit\'e de l'anneau $\Lambda$ qui sont d'ordre premier \`a $\ell$.
L'homomorphisme de r\'eduction $r_{\ell}: \,
\o\ZM_{\ell}\To{}\o\FM_{\ell}$ induit une bijection $\mu^{\ell'}(\o\ZM_{\ell}) \To{}
\mu^{\ell'}(\o\FM_{\ell})$ dont on note $\iota$
la bijection r\'eciproque.
On la prolonge en $0$ en posant $\iota(0):=0$.

 Soit $V$ un $\o\FM_{\ell}$-espace vectoriel et $\rho$ un
 endomorphisme de $V$ tels que
\begin{enumerate}
\item[B1)]  $\rho$ satisfait l'\'egalit\'e $\rho^{n+1}=\rho$ pour un entier $n$ premier \`a
  $\ell$,
\item[B2)] l'image de $\rho$ est de dimension finie. 
\end{enumerate}

Alors pour tout sous-espace $W$ de $V$ contenant
 $\rho(V)$, l'endomorphisme $\rho_{|W}$ de $W$ est diagonalisable \`a
 valeurs propres dans $\mu^{\ell'}(\o\FM_{\ell})\cup\{0\}$. De plus,
 la multiplicit\'e des valeurs propres non nulles est ind\'ependante de $W$.
Ainsi, notant
 ${\rm Sp}(\rho_{|W})$ le multi-ensemble de ces valeurs propres compt\'ees
 avec leurs multiplicit\'es, la somme 
$$ \tilde{\rm tr}(\rho):=\sum_{\zeta\in{\rm Sp}(\rho_{|W})} \iota(\zeta)
\in\o\ZM_{\ell}$$
 est ind\'ependante du choix de $W$ puisque $\iota(0)=0$.
Les propri\'et\'es \'el\'ementaires de cette ``trace de Brauer'' sont r\'esum\'ees
dans le lemme suivant :

\begin{lem} \label{lemmeperm} Avec les notations ci-dessus, 
  \begin{enumerate}
  \item On a l'\'egalit\'e $r_{\ell}(\tilde{\rm tr}(\rho)) = {\rm tr}(\rho)$.
  \item  Si $\tilde{\rho}$ est un endomorphisme 
d'un $\o\QM_{\ell}$-espace vectoriel $\tilde{V}$ satisfaisant les
propri\'et\'es B1) et B2) ci-dessus, et dont l'image
contient un $\o\ZM_{\ell}$-r\'eseau $\omega$ stable sous $\tilde{\rho}$
de r\'eduction isomorphe \`a $(V,\rho)$,
alors $\tilde{\rm tr}(\rho)={\rm tr}(\tilde\rho)$.
\item Supposons qu'il existe une d\'ecomposition $V=\bigoplus_{i\in
      I}V_{i}$ et une permutation $\sigma$ de $I$ \emph{sans points
      fixes} telle que $\forall i\in I, \, \rho(V_{i})\subset
    V_{\sigma(i)}$. Alors $\tilde\tr(\rho)=0$.
  \item Si $ 0 \To{} (V_{1},\rho_{1}) \To{} (V_{2},\rho_{2})\To{}
    (V_{3},\rho_{3})\To{}0 $ est une suite exacte courte, alors
    $\sum\limits_{i=1}^{3} (-1)^{i}\tilde\tr(\rho_{i})=0$.
  \end{enumerate}
\end{lem}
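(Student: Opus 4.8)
Le plan est de traiter les quatre points séparément, en se ramenant chaque fois à un sous-espace de dimension finie sur lequel la trace de Brauer devient une vraie somme de valeurs propres. Pour (i) on prendra $W=\rho(V)$, de dimension finie par B2) et stable par $\rho$. Comme $\ell\nmid n$, le polynôme $X^{n+1}-X$ est séparable sur $\oFl$, donc $\rho_{|W}$ est diagonalisable de valeurs propres dans $\mu^{\ell'}(\oFl)\cup\{0\}$ et $\tr(\rho)=\tr(\rho_{|W})=\sum_{\zeta\in{\rm Sp}(\rho_{|W})}\zeta$. Il restera à appliquer $r_\ell$ à $\tilde\tr(\rho)=\sum\iota(\zeta)$ et à utiliser $r_\ell\circ\iota=\id$ sur $\mu^{\ell'}(\oFl)\cup\{0\}$. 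Pour (ii) on posera $\tilde W=\tilde\rho(\tilde V)$. Puisque $\tilde\rho^{n+1}=\tilde\rho$, l'endomorphisme $\tilde\rho_{|\tilde W}$ est inversible d'ordre divisant $n$, donc diagonalisable de valeurs propres dans $\mu^{\ell'}(\oZl)$, et son polynôme caractéristique est à coefficients dans $\oZl$, de réduction égale au polynôme caractéristique de $\rho$ sur $V\cong\omega\otimes_{\oZl}\oFl$. On en déduira que ${\rm Sp}(\rho)$ est l'image par $r_\ell$ de ${\rm Sp}(\tilde\rho_{|\tilde W})$, d'où $\tilde\tr(\rho)=\sum\iota(r_\ell(\lambda))=\sum\lambda=\tr(\tilde\rho_{|\tilde W})=\tr(\tilde\rho)$.

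Pour (iv) on choisira $W_2=\rho_2(V_2)$, $W_1=W_2\cap V_1$ et $W_3$ l'image de $W_2$ dans $V_3$: ces espaces sont de dimension finie, stables par les $\rho_i$, contiennent les images respectives et forment une suite exacte courte $0\to W_1\to W_2\to W_3\to 0$. Le polynôme caractéristique de $\rho_2$ sur $W_2$ sera alors le produit de ceux de $\rho_1$ sur $W_1$ et de $\rho_3$ sur $W_3$, les multi-ensembles de valeurs propres s'ajouteront, et comme $\iota(0)=0$ on obtiendra $\tilde\tr(\rho_2)=\tilde\tr(\rho_1)+\tilde\tr(\rho_3)$.

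Le point (iii) est le plus substantiel. On remplacera $V$ par $W_0:=\rho(V)$, de dimension finie et sur lequel $\rho$ est inversible avec $\rho^n=\id$ (toujours par B1). On vérifiera que $W_0=\bigoplus_j U_j$ avec $U_j:=\rho(V_{\sigma^{-1}(j)})\subseteq V_j$ et $\rho(U_j)\subseteq U_{\sigma(j)}$. L'injectivité de $\rho$ sur $W_0$ donnera $\dim U_{\sigma(j)}\geq\dim U_j$; comme $W_0$ est de dimension finie, seuls un nombre fini de $U_j$ sont non nuls, et $\sigma$ préserve, donc permute, ce sous-ensemble fini: toutes ces inégalités sont alors des égalités et $\rho(U_j)=U_{\sigma(j)}$. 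En décomposant le support des $U_j$ en $\sigma$-cycles, on se ramènera à un seul cycle $j_1\mapsto\cdots\mapsto j_k\mapsto j_1$ de longueur $k\geq 2$ (pas de point fixe). Le point crucial — et l'obstacle principal — sera l'observation que $\rho^n=\id$ sur $W_0$ force $\sigma^n(j_1)=j_1$ (sinon $U_{j_1}\subseteq V_{j_1}\cap V_{\sigma^n(j_1)}=0$), d'où $k\mid n$ et en particulier $\gcd(k,\ell)=1$. Sur le bloc cyclique $\bigoplus_a U_{j_a}$, le polynôme caractéristique de $\rho$ vaudra $\prod_b(X^k-\mu_b)$, où les $\mu_b\in\mu^{\ell'}(\oFl)$ sont les valeurs propres de l'automorphisme $(\rho^k)_{|U_{j_1}}$; comme $\gcd(k,\ell)=1$, chaque $X^k-\mu_b$ a $k$ racines distinctes $\xi\tau_b$ ($\xi\in\mu_k(\oFl)$, $\tau_b$ une racine $k$-ième fixée de $\mu_b$), toutes dans $\mu^{\ell'}(\oFl)$. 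Comme $\iota$ est multiplicative sur $\mu^{\ell'}$ et induit une bijection $\mu_k(\oFl)\simto\mu_k(\oQl)$, la contribution de ce bloc à $\tilde\tr(\rho)$ sera $\sum_b\iota(\tau_b)\bigl(\sum_{\xi\in\mu_k(\oFl)}\iota(\xi)\bigr)=\bigl(\sum_b\iota(\tau_b)\bigr)\bigl(\sum_{\zeta\in\mu_k(\oQl)}\zeta\bigr)=0$, puisque $k\geq 2$. En sommant sur les cycles, il viendra $\tilde\tr(\rho)=0$. Les points à soigner seront la compatibilité de $\rho(V)$ avec la décomposition donnée et la divisibilité $k\mid n$: c'est précisément elle qui fait passer de l'annulation classique du caractère de permutation sur les non-points-fixes à un énoncé valable pour les traces de Brauer.
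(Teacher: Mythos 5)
Your proof is correct and follows essentially the same route as the paper's: for (iii) you restrict to the image $\rho(V)$, where $\rho$ becomes an automorphism killed by $X^{n}-1$, note that $\sigma$ permutes the finitely many nonzero blocks with cycle lengths dividing $n$ (hence prime to $\ell$), and conclude by the vanishing of the sum of the lifted $k$-th roots of unity for $k\geq 2$, which is exactly the paper's reduction to a scaled cyclic permutation matrix. Your write-up merely makes explicit a few steps the paper compresses (equality of block dimensions along a cycle, the divisibility $k\mid n$, multiplicativity of $\iota$), and your treatments of (i), (ii) and (iv) coincide with the paper's "immediate" arguments.
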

\begin{proof}
  Les propri\'et\'es i) et iv) sont imm\'ediates. La propri\'et\'e ii) vient du
  fait que $\omega$ est somme directe de sous-$\o\ZM_{\ell}$-modules
  propres pour $\rho$ puisque l'\'equation $X^{n+1}-X$ est s\'eparable sur
  $\o\FM_{\ell}$. Prouvons la propri\'et\'e iii). On a $\rho(V)=\sum_{i\in
    I}\rho(V_{i})\subset \sum_{i\in I}(\rho(V)\cap
  V_{\sigma(i)})
$. Comme $\tilde\tr(\rho)=\tilde\tr(\rho_{|\rho(V)})$,
  on est ainsi ramen\'e au cas o\`u $V=\rho(V)$ est de dimension finie et
  $\rho$ est un automorphisme d'ordre $n$ premier \`a $\ell$. Dans ce
  cas $\rho$ induit un isomorphisme $V_{i}\simto V_{\sigma(i)}$ pour
  tout $i$. Choisissant une base de chaque $V_{i}$ pour $i$ dans un
  syst\`eme de repr\'esentants des orbites de $\sigma$ dans $I$, on se
  ram\`ene au cas o\`u les $V_{i}$ sont de dimension $1$.
 Enfin, puisque $\rho$ stabilise $\sum_{n\in
    \NM}V_{\sigma^{n}(i)}$, on peut supposer que $\sigma$ a une seule orbite dans $I$.
Dans ce cas, $d:={\rm dim}(V)$ divise $n$ et $\rho$ est la  matrice de
permutation circulaire d'une base de $V$. Ses valeurs propres sont les
puissances d'une racine $d$-\`eme de l'unit\'e et ont multiplicit\'e
$1$. Leur somme est donc nulle et il
s'ensuit que $\tilde\tr(\rho)=0$.
\end{proof}



\begin{rem}
  L'hypoth\`ese $n$ premier \`a $\ell$ dans B1) n'est pas n\'ecessaire pour
  d\'efinir $\tilde\tr(\rho)$. Sans cette hypoth\`ese les propri\'et\'es i) et
  iv) ci-dessus sont encore vraies mais les propri\'et\'es ii) et iii) ne
  le sont g\'en\'eralement plus.
\end{rem}

On s'int\'eresse au cas o\`u $V$ est l'espace d'une $\o\FM_{\ell}$-repr\'esentation
 $\pi$ de $G$ et $\rho = \pi(\gamma*\varepsilon_{H})$ o\`u $H$
est un pro-$p$-sous-groupe ouvert normalis\'e par un \'el\'ement 
$\gamma$ de $G$. Ceci implique en particulier que $\gamma$ est compact
modulo le centre. La condition B2) est v\'erifi\'ee si $\pi$ est suppos\'ee
\emph{admissible}. Pour s'assurer que la condition B1) le soit on
introduit les notions suivantes :
\begin{defn} Soit $Z$ un sous-groupe ferm\'e  central de $G$.
Un \'el\'ement $\gamma$ de $G$ est dit \emph{d'ordre premier \`a $\ell$
  modulo $Z$}, s'il satisfait \`a 
l'une des deux conditions \'equivalentes suivantes :
\begin{enumerate}
\item il existe un  pro-$p$-sous-groupe ouvert $H$ et un entier $n$ premier \`a
$\ell$ tel que $\gamma^{n}\in H.Z$.
\item l'adh\'erence du sous-groupe engendr\'e par $g$ dans
  $G/Z$ est un groupe profini de pro-ordre premier \`a $\ell$.
\end{enumerate}
\end{defn}
L'\'equivalence entre les deux conditions d\'ecoule de la pro-nilpotence des
pro-$p$-groupes. Reprenant la discussion pr\'ec\'edente, on remarque que
la condition B2) est  v\'erifi\'ee par $\rho=\pi(\gamma*\varepsilon_{H})$
lorsqu'il existe un sous-groupe ferm\'e central $Z$ de $G$ tel que 
\begin{itemize}
\item $\gamma$ est d'ordre premier \`a $\ell$ modulo $Z$
\item  $\pi$ est \emph{$Z$-semisimple} au sens o\`u
  $\pi(\o\FM_{\ell}[Z])\subset {\rm End}_{\o\FM_{\ell}}(V)$ est
  une alg\`ebre semi-simple.
\end{itemize}
Par exemple, ces conditions sont remplies si $\gamma$ est d'ordre
premier \`a $\ell$ et $\pi$ admissible, ou si $\gamma$ est d'ordre
premier \`a $\ell$ modulo le centre et $\pi$ admet un caract\`ere central.
 Dans chacun de ces cas on a alors l'\'egalit\'e
$$ \tilde\tr(\pi(\gamma*\varepsilon_{H})) = \tilde\tr(\pi(\gamma)_{|V^{H}}).$$

Nous noterons $G^{\rm crs}_{\ell'/Z}$, resp. $G^{\rm ell}_{\ell'/Z}$
l'ensemble des \'el\'ements compacts modulo $Z$, resp. elliptiques,
de pro-ordre premier \`a $\ell$ modulo $Z$ au sens ci-dessus. Ces ensembles sont
clairement stables par conjugaison et ouverts.

\begin{theo}[Caract\`ere de Brauer] \label{theoBrauer}
Soit $(\pi,V)$ une $\o\FM_{\ell}$-repr\'esentation de longueur finie de $G$ et $Z$ un
sous-groupe central ferm\'e de $G$ tel que $\pi$ soit $Z$-semisimple. Il
existe une fonction $$\tilde\theta_{\pi}\in \CC^{\infty}(G^{\rm
  crs}_{\ell'/Z},\o\ZM_{\ell})^{G},$$ appel\'ee \emph{caract\`ere de Brauer
de $\pi$}, et
caract\'eris\'ee par la propri\'et\'e suivante :
pour tout $\gamma\in G^{\rm crs}_{\ell'/Z}$ il existe  un sous-groupe
  ouvert compact $H_{\pi,\gamma}$ tel que pour tout pro-$p$-sous-groupe ouvert
   $H\subset H_{\pi,\gamma}$ normalis\'e par $\gamma$, on ait l'\'egalit\'e
   \begin{equation}
  \tilde\theta_{\pi}(\gamma)=\tilde{\rm
    tr}(\pi(\gamma)_{|V^{H}}).\label{caracthetatilde}
  \end{equation}
\end{theo}

\begin{proof}
Nous allons utiliser les r\'esultats d'acyclicit\'e de \cite{MS1}
g\'en\'eralisant ceux de \cite{SS2}. Rappelons de quoi il s'agit.
Nous avons besoin de la structure polysimpliciale de l'immeuble semi-simple $X$ de
$G$ et  noterons pour cela $X_{q}$ l'ensemble des facettes de
dimension $q$ de $X$. Le stabilisateur d'une facette $\sigma$ est not\'e
$P_{\sigma}^{\dag}$. Nous utiliserons aussi
les pro-$p$-sous-groupes ouverts distingu\'es $U_{\sigma}^{(e)}$ de
$P_{\sigma}^{\dag}$, pour $e\geq 1$ entier, introduits
par Schneider et Stuhler dans \cite[I.2.7]{SS2}.
 Le th\'eor\`eme 2.4 de \cite{MS1} affirme que pour toute  repr\'esentation
 lisse $(\pi, V)$ de $G$ sur laquelle $p$ est inversible, et 
tout
sous-complexe polysimplicial \emph{convexe} $\Sigma$ de $X$, 
le complexe cellulaire $C_{*}(\Sigma,
\sigma\mapsto V^{U_{\sigma}^{(e)}})$ est une r\'esolution du sous-espace
$V_{\Sigma}:= \sum_{x\in \Sigma_{0}} V^{U_{x}^{(e)}}$. Le cas
$\Sigma=X$ remonte \`a \cite[II.3.1]{SS2}.



Choisissons $e$ assez grand pour que $V=\sum_{x\in X_{0}}
V^{U_{x}^{(e)}}$. 
Fixons aussi un \'el\'ement $\gamma\in G$, semi-simple
r\'egulier et compact modulo le centre.
Soit $H$ un pro-$p$-sous-groupe ouvert normalis\'e par
$\gamma$. On peut alors
 trouver un sous-complexe polysimplicial convexe \emph{fini}
$\Sigma$ de $X$, stable par $H$ et $\gamma$, et  tel que $V^{H}\subset V_{\Sigma}:=\sum_{x\in \Sigma_{0}}
V^{U_{x}^{(e)}}$. Pour un tel $\Sigma$, le sous-espace $V_{\Sigma}$
est de dimension finie, muni d'une action du groupe
$\langle H,\gamma\rangle$ engendr\'e par $H$ et $\gamma$, et le
complexe cellulaire $C_{*}(\Sigma,
\sigma\mapsto V^{U_{\sigma}^{(e)}})$ est une r\'esolution $\langle
H,\gamma\rangle$- \'equivariante de $V_{\Sigma}$.
Supposons de plus $\gamma$ d'ordre premier \`a $\ell$ modulo $Z$. C'est
encore le cas de tout \'el\'ement  de $\langle H,\gamma\rangle$, et tout
tel \'el\'ement agit donc par un automorphisme de degr\'e premier \`a $\ell$
sur $V_{\Sigma}$.
En utilisant successivement les
propri\'et\'es iv) et iii) du lemme
\ref{lemmeperm}, on obtient pour tout $  g\in \langle
H,\gamma\rangle$,  
\begin{equation}\label{ep}
\tilde\tr\left(\pi(g)_{|V_{\Sigma}}\right)  =  \sum_{\sigma\in \Sigma^{g}} (-1)^{\rm
  dim \sigma}\epsilon_{\sigma}(g) \tilde\tr\left(\pi(g)_{|V^{U_{\sigma}^{(e)}}}\right). 
\end{equation}
Ici $\Sigma^{g}$ d\'esigne l'ensemble des facettes $\sigma$ de $\Sigma$
qui sont stables par $g$, \emph{i.e.}
telles que $g\in P_{\sigma}^{\dag}$. De plus, 
$\epsilon_{\sigma}:\, P_{\sigma}^{\dag}\To{}\{\pm 1\}$ est le
caract\`ere d\'ecrivant l'action de $P_{\sigma}^{\dag}$ sur les orientations de
$\sigma$. 

Par ailleurs, soit $U_{\Sigma}^{(e)}:=\bigcap_{\sigma\in \Sigma}
U_{\sigma}^{(e)}$. C'est un pro-$p$-sous-groupe ouvert de $G$ qui agit
trivialement sur $V_{\Sigma}$ et dont l'intersection avec $\langle
H,\gamma\rangle$ est distingu\'ee dans celui-ci.
D'apr\`es le lemme \ref{pgroupe} appliqu\'e \`a $\Gamma:=\langle
H,\gamma\rangle / (\langle H,\gamma\rangle\cap U_{\Sigma}^{(e)}), P:= H/(H\cap U_{\Sigma}^{(e)})$ et $\gamma$, on a
\begin{equation}  \label{moy}
 \tilde{\rm tr}\left(\pi(\gamma)_{|V^{H}}\right) 
 = \frac{1}{[H:(H\cap U_{\Sigma}^{(e)})]} \sum_{h\in H/(H\cap
   U_{\Sigma}^{(e)})} \tilde\tr(\pi(h\gamma)_{|V_{\Sigma}}).
\end{equation}


Supposons momentan\'ement que $\gamma$ est \emph{elliptique}, de sorte
que $X^{\gamma}$ est un complexe polysimplicial fini. Posons
$H_{\pi,\gamma} := \bigcap_{\sigma \in X^{\gamma}}
U_{\sigma}^{(e)}$. C'est un pro-$p$-sous-groupe ouvert de $G$
normalis\'e par $\gamma$, et qui fixe un voisinage de
$X^{\gamma}$ dans $X$ (car $e\geq 1$). Par cons\'equent on a
$X^{h\gamma}=X^{\gamma}$ pour tout $h\in H_{\pi,\gamma}$.
De plus, 
comme $H_{\pi,\gamma}$ agit trivialement sur $V^{U_{\sigma}^{(e)}}$ pour
toute facette $\sigma\in X^{\gamma}$, la formule (\ref{ep}) montre que
pour tout $\Sigma$ tel que $V_{\Sigma}\supset V^{H_{\pi,\gamma}}$, on a
$$ \forall h\in H_{\pi,\gamma},\,\, \tilde\tr(\pi(h\gamma)_{|V_{\Sigma}})=
\tilde\tr(\pi(\gamma)_{|V_{\Sigma}}).$$ 
Soit alors $H\subset H_{\pi,\gamma}$ un sous-groupe ouvert normalis\'e
par $\gamma$.
La formule
(\ref{moy}), appliqu\'ee \`a un $\Sigma$ qui contient $X^{\gamma}$ et pour
lequel on a $V_{\Sigma}\supset V^{H}$, donne 
$$
 \tilde{\rm tr}\left(\pi(\gamma)_{|V^{H}}\right) =
 \tilde\theta_{\pi}(\gamma): =  \sum_{\sigma\in
  X^{\gamma}} (-1)^{\rm dim \sigma}\epsilon_{\sigma}(\gamma)
\tilde{\rm tr}\left(\pi(\gamma)_{|V^{U_{\sigma}^{(e)}}}\right),
$$
ce qui montre que le terme de gauche est ind\'ependant de $H$, pourvu
que $H\subset H_{\pi,\gamma}$.
La fonction $\gamma\mapsto \tilde\theta_{\pi}(\gamma)$ ainsi d\'efinie
est constante sur $H_{\pi,\gamma}\gamma$ et manifestement invariante
par conjugaison ; elle appartient bien \`a $\CC^{\infty}(G^{\rm
  ell}_{\ell'/Z},\o\ZM_{\ell})^{G}$.

Revenons au cas g\'en\'eral d'un \'el\'ement $\gamma$ semi-simple r\'egulier et
d'ordre premier \`a $\ell$ modulo  $Z$. Dans ce cas, $X^{\gamma}$ n'est plus n\'ecessairement  fini et
l'intersection $\bigcap_{\sigma\in X^{\gamma}} U_{\sigma}^{(e)}$ n'est
plus n\'ecessairement ouverte dans $G$. Cependant, fixons $\sigma_{0}\in X^{\gamma}$ :
nous allons suivre les arguments de Meyer et Solleveld dans \cite{MS2} pour prouver l'assertion suivante :
\begin{center}
  \emph{(*) : il existe un pro-$p$-sous-groupe ouvert distingu\'e
    $H_{\pi,\gamma}$ de $P_{\sigma_{0}}^{\dag}$ tel que \\ pour tout
    $\Sigma$ convexe fini stable sous $P_{\sigma_{0}}^{\dag}$  et tout
$h\in H_{\pi,\gamma},$ on a
 $   \tilde\tr(\pi(h\gamma)_{|V_{\Sigma}})=
    \tilde\tr(\pi(\gamma)_{|V_{\Sigma}}).$}
\end{center}
Admettons momentan\'ement (*) et supposons $H$ normalis\'e par $\gamma$
et inclus dans $H_{\pi,\gamma}$.
Choisissons $\Sigma$ convexe fini stable sous $P_{\sigma_{0}}^{\dag}$
tel que $V_{\Sigma}\supset V^{H}$. On a donc aussi $V_{\Sigma}\supset
V^{H_{\pi,\gamma}}$. La formule (\ref{moy}) montre alors que
$\tilde\tr(\pi(\gamma)_{|V^{H}})$ est \'egal \`a
$\tilde\tr(\pi(\gamma)_{|V^{H_{\pi,\gamma}}})$ et est donc ind\'ependant
de $H$. La fonction $\gamma\mapsto
\tilde\theta_{\pi}(\gamma):=\tilde\tr(\pi(\gamma)_{|V^{H_{\pi,\gamma}}})$
est constante sur $H_{\pi,\gamma}\gamma$ et donc lisse sur
$G^{crs}_{\ell'}$. 
 Notons qu'on a encore des formules du type 
$$  \tilde\theta_{\pi}(\gamma) =  \sum_{\sigma\in
  X_{\gamma}} (-1)^{\rm dim \sigma}\epsilon_{\sigma}(\gamma)
\tilde{\rm tr}\left(\pi(\gamma)_{|V^{U_{\sigma}^{(e)}}}\right),
$$
pour $X_{\gamma} \subset X^{\gamma}$ convenable, par exemple
$X_{\gamma}=\Sigma^{\gamma}$ pour $\Sigma$ comme ci-dessus. Si $g\in
G$, on voit facilement que $  \tilde\theta_{\pi}(g\gamma g^{-1}) =  \sum_{\sigma\in
  gX_{\gamma}g^{-1}} (-1)^{\rm dim \sigma}\epsilon_{\sigma}(g\gamma g^{-1})
\tilde{\rm tr}\left(\pi(g\gamma g^{-1})_{|V^{U_{\sigma}^{(e)}}}\right),
$ d'o\`u l'on d\'eduit que $\tilde\theta_{\pi}$ est invariante par
conjugaison, et  appartient donc \`a
$\CC^{\infty}(G^{crs}_{\ell'},\o\ZM_{\ell})^{G}$.

Reste donc \`a prouver (*). Soit $T$ le tore centralisateur de $\gamma$.
D'apr\`es \cite[9.1]{Korman} l'ensemble $X^{\gamma}$ est compact modulo
l'action de $T$. Il s'ensuit que
 l'intersection $$T_{\pi,\gamma}:= T\cap \left(\bigcap_{\sigma\in X^{\gamma}}
U_{\sigma}^{(e)}\right) $$
est un sous-groupe ouvert de
$T$. 
Comme plus haut dans le cas elliptique, on a 
$X^{t\gamma}=X^{\gamma}$ pour tout $t\in T_{\pi,\gamma}$, et puisque
$T_{\pi,\gamma}$ agit trivialement sur chaque 
$V^{U_{\sigma}^{(e)}}$ pour $\sigma\in X^{\gamma}$, on en tire pour
tout $\Sigma$ convexe l'\'egalit\'e
$$ \forall t\in T_{\pi,\gamma},\,\, \tilde\tr(\pi(t\gamma)_{|V_{\Sigma}})=
\tilde\tr(\pi(\gamma)_{|V_{\Sigma}}).$$
Supposons de plus $\Sigma$ stable sous $P_{\sigma_{0}}^{\dag}$. Le
sous-espace $V_{\Sigma}$ est alors aussi stable sous
$P_{\sigma_{0}}^{\dag}$ et la
fonction $g\in P_{\sigma_{0}}^{\dag}\mapsto
\tilde\tr(\pi(g)_{|V_{\Sigma}})$ est clairement invariante par
conjugaison sous $P_{\sigma}^{\dag}$. En d'autres termes on a
$$ \forall g\in P_{\sigma_{0}}^{\dag },\, \forall t\in
T_{\pi,\gamma},\,\, \tilde\tr(\pi(gt\gamma g^{-1})_{|V_{\Sigma}})=
\tilde\tr(\pi(\gamma)_{|V_{\Sigma}}).$$ 
Or il est bien connu que l'application 
$$\application{\psi_{\gamma}:\,}{G\times T}{G}{(g,t)}{gt\gamma
  g^{-1}}$$
est ouverte. Il suffit donc de prendre pour $H_{\pi,\gamma}$ n'importe
quel sous-groupe ouvert, normalis\'e par $\gamma$, et tel que
$H_{\pi,\gamma}.\gamma\subset \psi_{\gamma}(P_{\sigma_{0}}^{\dag}\times T_{\pi,\gamma})$.
On trouvera dans \cite[Lemma 6.5]{MS2} un exemple de $H_{\pi,\gamma}$ explicite.

\end{proof}

\begin{lemme}\label{pgroupe}
  Soit $\Gamma$ un groupe fini, $P$ un $p$-sous-groupe de $\Gamma$, et
  $\gamma\in \Gamma$ un \'el\'ement d'ordre premier \`a $\ell$ qui normalise $P$.
 Alors pour toute repr\'esentation $(\pi,V)$ de $\Gamma$, on a l'\'egalit\'e
$$ \tilde\tr(\gamma_{|V^{P}})=\frac{1}{|P|} \sum_{h\in
  P}\tilde\tr(\gamma h).$$
\end{lemme}
\begin{proof}
  Rappelons que
  $\tilde\tr(\gamma_{|V^{P}})=\tilde\tr(\frac{1}{|P|}\sum_{h\in
    P}\gamma h)$. Ainsi la propri\'et\'e que l'on veut montrer est une
  propri\'et\'e de lin\'earit\'e de $\tilde\tr$ dans un cas bien pr\'ecis. 
Notons qu'il suffit de prouver l'\'enonc\'e lorsque $\Gamma$ est engendr\'e par $P$ et $\gamma$,
et par cons\'equent on peut supposer $P$ distingu\'e dans $\Gamma$. 
Soit alors $P'\subset P$ un sous-groupe distingu\'e dans $\Gamma$. Si
l'on sait prouver l'\'enonc\'e pour le triplet $(\Gamma, P', \gamma)$ et
le triplet $(\Gamma/P', P/P', \gamma.P')$, alors on l'en d\'eduit
imm\'ediatement pour le triplet $(\Gamma, P, \gamma)$.
Comme un $p$-groupe est nilpotent, on voit qu'il suffit de prouver l'\'enonc\'e
lorsque $P$ est ab\'elien. Mais alors on a une d\'ecomposition $P$-\'equivariante  
$V=V^{P}\oplus \bigoplus_{\chi\neq 1}V_{\chi}$ de $V$ selon les
caract\`eres de $P$. Comme il est clair que 
$\tilde\tr(\gamma_{|V^{P}})=\frac{1}{|P|} \sum_{h\in
  P}\tilde\tr(\gamma h_{|V^{P}})$, il nous suffira de prouver que 
$\sum_{h\in
  P}\tilde\tr(\gamma h_{|\bigoplus_{\chi\neq
    1}V_{\chi}})=0$. D\'ecomposons la somme 
$$\bigoplus_{\chi\in \hat P,
  \chi\neq 1} V_{\chi} = \bigoplus_{[\chi]\in (\hat P/\gamma), [\chi]\neq 1}
V_{[\chi]} \;\hbox{ avec }\; V_{[\chi]}=\bigoplus_{\chi\in [\chi]} V_{\chi} $$
selon les orbites de $\gamma$ dans le dual $\hat{P}$ de $P$. Pour une
orbite $[\chi]$ de cardinal $>1$, la propri\'et\'e iv) du lemme
\ref{lemmeperm} montre que $\tilde\tr(\pi(\gamma h)_{|V_{[\chi]}})=0$
pour tout $h$. Pour une orbite de cardinal $1$, on a 
$$\sum_{h\in
  P}\tilde\tr(\gamma
h_{|V_{\chi}})=\tilde\tr(\gamma_{|V_{\chi}})\left(\sum_{h\in P}
\iota(\chi(h))\right)$$
et la somme dans le terme de droite est nulle d\`es que $\chi$ est non trivial.
\end{proof}

De l'\'egalit\'e (\ref{caracthetatilde}), il apparait clairement que si
$\pi'\subset \pi$, alors
$\tilde\theta_{\pi}=\tilde\theta_{\pi'}+\tilde\theta_{\pi/\pi'}$. Par
contre le domaine de d\'efinition de $\theta_{\pi'}$ peut \^etre plus
grand que celui de $\theta_{\pi}$. Par exemple, comme  les irr\'eductibles ont des
caract\`eres centraux, leurs caract\`eres de Brauer sont d\'efinis sur tout
\'element d'ordre premier \`a $\ell$ modulo $Z(G)$. 
 Ainsi on peut d\'efinir par lin\'earit\'e
un  homomorphisme de groupes ab\'eliens
$$\RC(G,\o\FM_{\ell})\To{\tilde\theta}
\CC^{\infty}(G^{\rm crs}_{\ell'},\o\ZM_{\ell})^{G}$$  
o\`u nous avons pos\'e  $G^{\rm crs}_{\ell'}:=G^{\rm crs}_{\ell'/Z(G)}$ 
pour all\'eger les notations.

Par ailleurs, nous noterons
aussi $\RC^{\rm ent}(G,\o\QM_{\ell})$, resp. $\RC^{\rm
  ent}_{\ell'}(G,\o\QM_{\ell})$, le sous-groupe de
$\RC(G,\o\QM_{\ell})$ engendr\'e par les repr\'esentations irr\'eductibles
$\o\ZM_{\ell}$-enti\`eres au sens de \cite[II.4]{Vig}, resp. $\o\ZM_{\ell}$-enti\`eres
et \`a caract\`ere central d'ordre fini et premier \`a $\ell$. Le principe
de Brauer-Nesbitt \cite[II.5.11.b)]{Vig}, nous fournit un  morphisme de
r\'eduction 
 $r_{\ell}:\,\RC^{\rm ent}(G,\o\QM_{\ell})\To{} \RC(G,\o\FM_{\ell})$
 (aussi appel\'e ``application  de d\'ecomposition'').

\begin{prop}\label{propBrauer}
 Les diagrammes suivants commutent :
$$ \xymatrix{ \RC(G,\o\FM_{\ell})\ar[r]^{\tilde\theta} \ar[d]_{\theta}
& \CC^{\infty}(G^{\rm crs}_{\ell'},\o\ZM_{\ell})^{G} \ar[d]^{r_{\ell}}
 \\ \CC^{\infty}(G^{\rm crs},\o\FM_{\ell})^{G}
 \ar[r]_{_{|G^{\rm crs}_{\ell'}}} & \CC^{\infty}(G^{\rm
  crs}_{\ell'},\o\FM_{\ell})^{G}  } 
\;\;\hbox{ et }\;\;
\xymatrix{ \RC^{\rm ent}_{\ell'}(G,\o\QM_{\ell})\ar[r]^{\theta} \ar[d]_{r_{\ell}}
& \CC^{\infty}(G^{\rm crs},\o\ZM_{\ell})^{G} \ar[d]^{_{|G^{\rm crs}_{\ell'}}}
 \\  \RC(G,\o\FM_{\ell})\ar[r]^{\tilde\theta} 
&
\CC^{\infty}(G^{\rm crs}_{\ell'},\o\ZM_{\ell})^{G}}.
$$

\end{prop}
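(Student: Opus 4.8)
The plan is to reduce each square, by additivity, to a pointwise identity at an arbitrary $\gamma\in G^{\rm crs}_{\ell'}$ evaluated on an irreducible class, and to compare the two sides using a single small test group $H$; this is possible because such a $\gamma$, being compact modulo $Z(G)$, normalizes arbitrarily small open pro-$p$-subgroups. For the left square, fix $\pi\in\Irr{\oFl}{G}$ and $\gamma$, and choose a pro-$p$-subgroup $H$ normalized by $\gamma$ that lies both in the group $H_{\pi,\gamma}$ of Theorem \ref{theoBrauer} and in a group for which the ordinary-character formula (\ref{caractheta}) holds at $\gamma$. Then $\theta_\pi(\gamma)={\rm tr}\bigl(\pi(\varepsilon_H*\gamma*\varepsilon_H)\bigr)={\rm tr}\bigl(\pi(\gamma)_{|V^H}\bigr)$, while $\tilde\theta_\pi(\gamma)=\tilde\tr\bigl(\pi(\gamma)_{|V^H}\bigr)$ by (\ref{caracthetatilde}), so property i) of Lemma \ref{lemmeperm} gives $r_\ell\bigl(\tilde\theta_\pi(\gamma)\bigr)={\rm tr}\bigl(\pi(\gamma)_{|V^H}\bigr)=\theta_\pi(\gamma)$; as $\gamma$ was arbitrary, the left square commutes.

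For the right square it suffices to treat an $\oZl$-integral irreducible $\tilde\pi\in\Irr{\oQl}{G}$ whose central character $\omega$ has finite order prime to $\ell$; fix a $G$-stable $\oZl$-lattice $L$ in its space $\tilde V$, so that $r_\ell(\tilde\pi)=[L\otimes_{\oZl}\oFl]$ in $\RC(G,\oFl)$ by definition of the decomposition map. Fix $\gamma\in G^{\rm crs}_{\ell'}$ and a pro-$p$-subgroup $H$ normalized by $\gamma$ that is small enough both for (\ref{caractheta}) applied to $\tilde\pi$ and for (\ref{caracthetatilde}) applied to each Jordan--H\"older constituent of $L\otimes\oFl$. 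Since $\ell\neq p$, the functor $(-)^H$ is exact on smooth $\oZl[G]$- and $\oFl[G]$-modules; hence $L^H$ is a $\gamma$-stable $\oZl$-lattice in $\tilde V^H$ with reduction $(L\otimes\oFl)^H$ (in particular $\theta_{\tilde\pi}$ is $\oZl$-valued on $G^{\rm crs}$, as the diagram asserts, $\gamma$ being compact), and, running through the Jordan--H\"older filtration of $L\otimes\oFl$ and using additivity of $\tilde\tr$ in short exact sequences (property iv) of Lemma \ref{lemmeperm}), one obtains $\tilde\theta_{r_\ell(\tilde\pi)}(\gamma)=\tilde\tr\bigl(\overline{\pi}(\gamma)_{|(L\otimes\oFl)^H}\bigr)$, where $\overline{\pi}$ denotes the action on $L\otimes\oFl$. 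The identity to be proved, $\tilde\theta_{r_\ell(\tilde\pi)}(\gamma)=\theta_{\tilde\pi}(\gamma)={\rm tr}\bigl(\tilde\pi(\gamma)_{|\tilde V^H}\bigr)$, then follows from property ii) of Lemma \ref{lemmeperm} applied to $\tilde\rho:=\tilde\pi(\gamma)_{|\tilde V^H}$ together with the lattice $L^H$ --- provided $\tilde\rho$ satisfies hypotheses B1) and B2) of that lemma.

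Hypothesis B2) is immediate from admissibility of $\tilde\pi$, and checking B1) is the one genuinely substantive point. I would argue thus: since $\gamma$ has order prime to $\ell$ modulo $Z:=Z(G)$, there are an integer $n$ prime to $\ell$ and $z\in Z$ with $h:=\gamma^n z^{-1}$ lying in a pro-$p$-subgroup; then $h$ and $z$ commute with $\gamma$ and normalize $H$, so the abelian group $A:=\overline{\langle\gamma,h\rangle}$ normalizes $H$ and acts on the finite-dimensional space $\tilde V^H$ through a finite quotient $Q$. On $\tilde V^H$ the subgroup $A\cap Z$ acts by scalars via $\omega$, hence with image of order prime to $\ell$ in $Q$, while $A/(A\cap Z)$ embeds into $G/Z$ with image contained in the closure of $\langle\overline{\gamma}\rangle$, a profinite group of pro-order prime to $\ell$ by hypothesis, so its image in $Q$ is an $\ell'$-group; therefore $|Q|$ is prime to $\ell$, $\tilde\rho$ has finite order prime to $\ell$, and B1) holds. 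Being of finite order, $\tilde\rho$ is moreover invertible, so $\tilde\rho(\tilde V^H)=\tilde V^H$ contains the $\tilde\rho$-stable lattice $L^H$, whose reduction is indeed $\bigl((L\otimes\oFl)^H,\overline{\pi}(\gamma)\bigr)$, as Lemma \ref{lemmeperm} ii) requires. (The analogous fact over $\oFl$, which is what makes (\ref{caracthetatilde}) available here, is the one noted just before Theorem \ref{theoBrauer}.) The main --- and essentially the only --- obstacle is thus this propagation of the ``order prime to $\ell$ modulo $Z$'' property from $\gamma$ to its action on the spaces $V^H$; granting it, both squares are formal consequences of parts i) and ii) of Lemma \ref{lemmeperm} and the defining formulas (\ref{caractheta}) and (\ref{caracthetatilde}).
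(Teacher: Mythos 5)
Your proof is correct and follows essentially the same route as the paper's: the first square is reduced, via the defining formulas (\ref{caractheta}) and (\ref{caracthetatilde}) and a common test subgroup $H$, to property i) of Lemma \ref{lemmeperm}, and the second square to property ii), the only substantive point being that the hypothesis ``central character of finite order prime to $\ell$'' (built into the definition of $\RC^{\rm ent}_{\ell'}$) guarantees condition B1) for $\tilde\pi(\gamma*\varepsilon_H)$. You merely spell out in more detail what the paper asserts in one line (the lattice $L^H$, exactness of $H$-invariants, the Jordan--Hölder additivity via property iv), and the verification of B1)).
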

\begin{proof}
Prouvons la commutativit\'e du premier
diagramme. Par les caract\'erisations
(\ref{caracthetatilde}) de $\tilde\theta$
et (\ref{caractheta}) de $\theta$, il suffit de prouver que pour tout \'el\'ement $\gamma\in
G^{crs}_{\ell'}$ et tout $H$ normalis\'e par $\gamma$, on a
$\tr(\pi(\gamma*\varepsilon_{H}))=
r_{\ell}(\tilde\tr(\pi(\gamma*\varepsilon_{H})))$, ce qui d\'ecoule de
la propri\'et\'e i) du lemme \ref{lemmeperm}.
Pour prouver la commutativit\'e du second diagramme de l'\'enonc\'e, il
suffit de partir de $\tilde\pi$ irr\'eductible et enti\`ere et de prouver que
pour tout $\gamma,H$ comme ci-dessus on a
$\tr(\tilde\pi(\gamma*\varepsilon_{H}))
=\tilde\tr(r_{\ell}\tilde\pi(\gamma*\varepsilon_{H}))$.
Or ceci d\'ecoule de la propri\'et\'e ii) du lemme \ref{lemmeperm}, \`a
condition que $\rho:= \gamma*\varepsilon_{H}$ satisfasse une \'equation
$\rho^{n+1}=\rho$ avec $n$ premier \`a $\ell$. Ceci est toujours le cas
si 
le caract\`ere central de $\pi$ est d'ordre fini et premier \`a $\ell$.
\end{proof}


\alin{\'El\'ements non compacts}
  On peut \'etendre le caract\`ere de Brauer \`a certains \'el\'ements semi-simples
  r\'eguliers  $\gamma$ de la mani\`ere suivante. Soit $(P_{\gamma}^{+},P_{\gamma}^{-})$ la
paire de  \paras oppos\'es associ\'ee \`a $\gamma$, et
$M_{\gamma}=P_{\gamma}^{+}\cap P_{\gamma}^{-}$ leur composante de Levi
commune. Alors $\gamma$ est compact modulo le centre de
$M_{\gamma}$. Supposons-le d'ordre premier \`a $\ell$ modulo ce
centre. On pose alors
$$ \tilde\theta_{\pi}(\gamma):= \tilde\theta_{r_{P_{\gamma}}(\pi)}(\gamma)^{\rm ss} $$
o\`u $r_{P_{\gamma}}$ d\'esigne le foncteur de Jacquet associ\'e \`a
$P_{\gamma}$ et le signe $^{\rm ss}$ d\'esigne la semisimplifi\'ee. On
obtient ainsi une fonction $\tilde\theta_{\pi}$ d\'efinie sur l'ensemble
$G^{\rm rs}_{\ell'}$
des semi-simples r\'eguliers $\gamma$ tels que $\gamma \in
M_{\gamma,\ell'}^{\rm crs}$. On v\'erifie ais\'ement que cet ensemble est
ouvert et stable par conjugaison sous $G$, et que la fonction
$\tilde\theta_{\pi}$ est lisse et invariante sous $G$. Il r\'esulte
alors de \cite[Thm 7.4]{MS2} que si on remplace 
$G^{\rm crs}_{\ell'}$ par $G^{\rm rs}_{\ell'}$ dans le premier
diagramme de la proposition, alors le diagramme obtenu est encore
commutatif.
 Par contre, le
deuxi\`eme diagramme n'a pas de g\'en\'eralisation int\'eressante avec cette
d\'efinition, car les exposants des $\o\QM_{\ell}$-repr\'esentations ``int\'eressantes'' ne
sont pas d'ordre fini. Penser par exemple \`a la repr\'esentation de Steinberg.

\alin{\'Elements d'ordre divisible par $\ell$}
Le premier diagramme de la proposition \ref{propBrauer} montre que le
caract\`ere de Brauer d\'etermine la restriction du caract\`ere ordinaire
aux \'el\'ements d'ordre premier \`a $\ell$. En fait, comme pour les groupes
finis, le caract\`ere de Brauer d\'etermine enti\`erement le caract\`ere ordinaire.
Cela fait intervenir l'analogue suivant de 
la d\'ecomposition unique $x=x_{\ell}x_{\ell'}$ d'un \'el\'ement
d'ordre fini en le produit d'un $\ell$-\'el\'ement et d'un $\ell'$-\'el\'ement
commutants.

\begin{lem}
  Soit $\gamma$ un \'el\'ement compact modulo un sous-groupe ferm\'e central
  $Z$ de $G$. Il existe un couple $(\gamma_{\ell},\gamma_{\ell'})$ 
d'\'el\'ements de $G$ satisfaisant les propri\'et\'es suivantes :
\begin{itemize}
\item $\gamma=\gamma_{\ell}\gamma_{\ell'}=\gamma_{\ell'}\gamma_{\ell}$,
\item $\gamma_{\ell}$ est d'ordre fini \'egal \`a une puissance de $\ell$ dans
  $G/Z$,
\item $\gamma_{\ell'}$ est d'ordre premier \`a $\ell$ modulo $Z$
\end{itemize}
Cette d\'ecomposition est unique modulo l'action \'evidente de $Z$, et
l'on a $\gamma_{\ell},\gamma_{\ell'}\in Z.\o{\langle\gamma\rangle}$.
\end{lem}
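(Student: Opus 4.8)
The plan is to reduce to the quotient $\bar G:=G/Z$, carry out the decomposition there via the primary (Sylow) decomposition of a procyclic profinite group, and then lift back. Write $\pi\colon G\to\bar G$ for the projection, $\bar\gamma:=\pi(\gamma)$, $A:=\overline{\langle\bar\gamma\rangle}\subseteq\bar G$, and $M:=\pi^{-1}(A)$; this last is the closure of $Z\langle\gamma\rangle$ in $G$, and I read $Z.\overline{\langle\gamma\rangle}$ in the statement as this closure (the two coincide when $\gamma$ is itself a compact element). Since $\langle\gamma\rangle$ is abelian and commutes with the central subgroup $Z$, the group $M$ is closed and abelian, and $\pi$ restricts to a surjection $M\twoheadrightarrow M/Z\simeq A$. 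As $\gamma$ is compact modulo $Z$, the group $A$ is compact and topologically generated by one element, hence a procyclic profinite group; the decomposition sought is the topological analogue, in $A$, of the primary decomposition $x=x_\ell x_{\ell'}$, lifted to $M$.

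The single point where the hypothesis $\ell\neq p$ enters is the claim that the pro-$\ell$ Sylow subgroup $A_\ell$ of $A$ is \emph{finite}: $\bar G$ admits arbitrarily small open pro-$p$ subgroups (images under $\pi$ of such subgroups of the $p$-adic group $G$), so $A$ meets one of them, $\bar H$, in an open — hence finite-index — pro-$p$ subgroup, and since $\ell\neq p$ one has $A_\ell\cap\bar H=1$, whence $A_\ell\hookrightarrow A/(A\cap\bar H)$ is finite, say $|A_\ell|=\ell^{k}$. Now take the canonical decomposition $A=A_\ell\times A_{\ell'}$ into the finite $\ell$-group $A_\ell$ and the closed subgroup $A_{\ell'}:=\prod_{q\neq\ell}A_q$ of pro-order prime to $\ell$, and write $\bar\gamma=\bar\gamma_\ell\,\bar\gamma_{\ell'}$ accordingly: both factors lie in $A=\overline{\langle\bar\gamma\rangle}$ and commute, $\bar\gamma_\ell$ has order a power of $\ell$ (dividing $\ell^{k}$), and $\overline{\langle\bar\gamma_{\ell'}\rangle}\subseteq A_{\ell'}$ has pro-order prime to $\ell$. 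For existence, pick any $\gamma_\ell\in M$ with $\pi(\gamma_\ell)=\bar\gamma_\ell$ and set $\gamma_{\ell'}:=\gamma_\ell^{-1}\gamma\in M$; then $\pi(\gamma_{\ell'})=\bar\gamma_{\ell'}$, and since $M$ is abelian $\gamma=\gamma_\ell\gamma_{\ell'}=\gamma_{\ell'}\gamma_\ell$, while $\gamma_\ell$ has finite $\ell$-power order in $\bar G$ and $\gamma_{\ell'}$ is of order prime to $\ell$ modulo $Z$ because $\overline{\langle\pi(\gamma_{\ell'})\rangle}$ is a closed subgroup of $A_{\ell'}$, hence profinite of pro-order prime to $\ell$. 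This also places $\gamma_\ell,\gamma_{\ell'}$ in $M$.

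For uniqueness, suppose $(\gamma_\ell',\gamma_{\ell'}')$ is another such pair and pass to $\bar G$. Since the two elements commute, $B:=\overline{\langle\bar\gamma_\ell',\bar\gamma_{\ell'}'\rangle}=\langle\bar\gamma_\ell'\rangle\cdot\overline{\langle\bar\gamma_{\ell'}'\rangle}$ is the internal direct product of the finite $\ell$-group $\langle\bar\gamma_\ell'\rangle$ and the closed group $\overline{\langle\bar\gamma_{\ell'}'\rangle}$ of pro-order prime to $\ell$ (trivial intersection by coprimality), so $\langle\bar\gamma_\ell'\rangle$ is exactly the pro-$\ell$ Sylow of $B$. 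Because $\bar\gamma\in B$ we have $A=\overline{\langle\bar\gamma\rangle}\subseteq B$, hence $A_\ell=A\cap\langle\bar\gamma_\ell'\rangle$ and $A_{\ell'}=A\cap\overline{\langle\bar\gamma_{\ell'}'\rangle}$; comparing $\bar\gamma=\bar\gamma_\ell'\bar\gamma_{\ell'}'=\bar\gamma_\ell\bar\gamma_{\ell'}$ inside the direct product $B$ forces $\bar\gamma_\ell'=\bar\gamma_\ell$ and $\bar\gamma_{\ell'}'=\bar\gamma_{\ell'}$. Therefore $\gamma_\ell'=z\gamma_\ell$ for some $z\in Z$ and $\gamma_{\ell'}'=(\gamma_\ell')^{-1}\gamma=z^{-1}\gamma_{\ell'}$, which is precisely the indeterminacy by the action $z\cdot(\gamma_\ell,\gamma_{\ell'})=(z\gamma_\ell,z^{-1}\gamma_{\ell'})$ of $Z$. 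The only non-formal ingredient is the finiteness of $A_\ell$ — that is, that $\bar\gamma_\ell$ has finite order, which is where the $p$-adic hypothesis and $\ell\neq p$ are genuinely needed; everything else is the primary decomposition of abelian profinite groups, and I expect the main effort to be stating these structural facts cleanly rather than surmounting a real obstacle.
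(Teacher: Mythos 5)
Your proof is correct and complete, but it takes a different route from the paper: the paper's entire proof is a one-line appeal to the existence of $p$-topological Jordan decompositions for compact elements of the locally $p$-profinite group $G/Z$, with a citation to Spice. That route would first split $\bar\gamma$ into an absolutely semisimple part (of finite order prime to $p$) and a topologically $p$-unipotent part, and then split the finite-order part into its $\ell$- and $\ell'$-components by the usual finite-group argument; you instead decompose the procyclic group $A=\overline{\langle\bar\gamma\rangle}$ directly as $A_\ell\times A_{\ell'}$ via its pro-Sylow decomposition. The two agree (the topologically unipotent part is pro-$p$, hence falls into $A_{\ell'}$), and your argument has the merit of being self-contained and of isolating exactly where $\ell\neq p$ enters, namely in the finiteness of $A_\ell$, which you extract from an open pro-$p$ subgroup of $G/Z$ --- precisely the content the paper delegates to the reference. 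Your reading of $Z.\overline{\langle\gamma\rangle}$ as the closure of $Z\langle\gamma\rangle$ (equivalently the preimage of $A$ in $G$) is the right one: with the literal product $Z\cdot\overline{\langle\gamma\rangle}$ the final containment can fail when $\gamma$ is not itself compact (already for $G=K^{\times}$, $Z=\varpi^{\ZM}$, $\gamma=\varpi u$ with $u$ a unit of infinite order whose torsion part has nontrivial $\ell$-component), so your gloss repairs a small imprecision in the statement rather than weakening it.
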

\begin{proof}
  Cela d\'ecoule de l'existence de d\'ecompositions de Jordan
  $p$-topologiques pour les
  \'el\'ements compacts du groupe
  localement $p$-profini $G/Z$, \emph{cf} \cite{Spice}.
\end{proof}

Soit $(\pi,V)$ une $\o\FM_{\ell}$-repr\'esentation de longueur finie,
$Z$ un sous-groupe central ferm\'e de $G$ agissant de mani\`ere
semi-simple sur $\pi$  et $\gamma\in G$ un \'el\'ement compact modulo
$Z$. Choisissons une d\'ecomposition
$\gamma=\gamma_{\ell}\gamma_{\ell'}$ comme ci-dessus. Tout
pro-$p$-sous-groupe ouvert $H$ normalis\'e par $\gamma$ est encore
normalis\'e par $\gamma_{\ell}$ et $\gamma_{\ell'}$. De plus, le produit
$\pi(\gamma_{\ell})_{|V^{H}}\pi(\gamma_{\ell'})_{|V^{H}}$ est la
$\ell$-d\'ecomposition usuelle de l'automorphisme d'ordre fini
$\pi(\gamma)_{|V^{H}}$. On sait dans ces conditions, \emph{cf}
\cite[Lemma (17.8)]{CR}, que 
$$ \tr(\pi(\gamma)_{|V^{H}}) = \tr(\pi(\gamma_{\ell'})_{|V^{H}}. $$
Si $\gamma_{\ell'}$ est \emph{r\'egulier}, on en d\'eduit
$\theta_{\pi}(\gamma)=\theta_{\pi}(\gamma_{\ell'})$. 
N\'eanmoins, en
g\'en\'eral $\gamma_{\ell'}$ n'est pas n\'ecessairement r\'egulier.
Cependant,  supposons $H$ assez petit pour que $H\gamma\subset
G^{\rm crs}$ et pour  que $\theta_{\pi}$ soit constant sur $H\gamma$. 
Pour tout $h\in H\cap Z_{G}(\gamma)$, on a $(\gamma h)_{\ell'}=\gamma_{\ell'}h$.
Choisissons $h$ tel que $\gamma_{\ell'}h$ soit r\'egulier. On a alors
$\theta_{\pi}(\gamma)=\theta_{\pi}(\gamma h)=\theta_{\pi}(\gamma_{\ell'}
h)$. Ceci montre que $(\theta_{\pi})_{|G^{\rm crs}}$ est d\'etermin\'e par sa
restriction  $(\theta_{\pi})_{|G^{\rm crs}_{\ell'}}$.
Plus pr\'ecis\'ement on a prouv\'e :
\begin{pro} \label{ellelements}
  Soit $x\in \RC(G,\o\FM_{\ell})$. Si la restriction de $\theta_{x}$ \`a
  $G^{\rm rs}_{\ell'}$, resp. \`a $G^{\rm crs}_{\ell'}$, resp. \`a $G^{\rm
  ell}_{\ell'}$, est nulle, alors il en est de m\^eme de sa restriction \`a  $G^{\rm rs}$,
resp. \`a $G^{\rm crs}$, resp. \`a $G^{\rm  ell}$.
\end{pro}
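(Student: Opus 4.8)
The plan is to reduce all three assertions to the computation already made in the paragraph preceding the statement, which shows --- for an irreducible $\pi$ and a $\gamma$ compact modulo $Z(G)$ --- that $\theta_\pi(\gamma)=\theta_\pi(\gamma_{\ell'}h)$ for a suitable $h$; by linearity this stays valid with $\pi$ replaced by an arbitrary $x\in\RC(G,\oFl)$. So I would first upgrade that ``is determined by'' statement into the explicit equality $\theta_x(\gamma)=\theta_x(\gamma')$ with $\gamma'$ in the smaller locus, then check that $\gamma'$ can be taken in $G^{\rm crs}_{\ell'}$ when $\gamma\in G^{\rm crs}$ and in $G^{\rm ell}_{\ell'}$ when $\gamma\in G^{\rm ell}$. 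The case of $G^{\rm rs}$ is then obtained from the case of $G^{\rm crs}$ by van Dijk's descent formula, the one used already for the non-compact extension of the character.

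\textbf{Compact and elliptic cases.} Let $\gamma\in G^{\rm crs}$ and put $T:=Z_G(\gamma)^\circ$. As $\gamma$ is regular semisimple, $T$ is a maximal torus; since $T$ commutes with $\gamma$, and since $\gamma_\ell,\gamma_{\ell'}$ lie in $Z(G)\cdot\o{\langle\gamma\rangle}$ which commutes with $T$, one has $\gamma,\gamma_\ell,\gamma_{\ell'}\in Z_G(T)=T$. Pick a pro-$p$ open subgroup $H$ normalised by $\gamma$, small enough that $H\gamma\subset G^{\rm crs}$ and that $\theta_x$ is constant on $H\gamma$; this is possible since $G^{\rm crs}$ is open and $\theta_x$ is locally constant on $G^{\rm rs}$. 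The set of $h\in T$ for which $\gamma_{\ell'}h$ is regular is $\gamma_{\ell'}^{-1}T^{\rm reg}$, which is open and dense in $T$; it therefore meets the open subgroup $H\cap T$, and we fix such an $h$. Then $\gamma h\in G^{\rm crs}$; the factors $\gamma_\ell$ and $\gamma_{\ell'}h$ commute, $\gamma_\ell$ is an $\ell$-element and $\gamma_{\ell'}h$ an $\ell'$-element modulo $Z(G)$ (here $h$ is pro-$p$), so by uniqueness of the $\ell$-decomposition $(\gamma h)_{\ell'}=\gamma_{\ell'}h$; this element is regular by construction and lies in $G^{\rm crs}_{\ell'}$. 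The identity ${\rm tr}(\pi(g)_{|V^H})={\rm tr}(\pi(g_{\ell'})_{|V^H})$ of \cite[Lemma (17.8)]{CR} applied to $g=\gamma h$, together with the regularity of $\gamma_{\ell'}h$, gives $\theta_x(\gamma)=\theta_x(\gamma h)=\theta_x(\gamma_{\ell'}h)$. If moreover $\gamma$ is elliptic, then $T$ is anisotropic modulo $Z(G)$, and since $Z_G(\gamma_{\ell'}h)^\circ=T$ the element $\gamma_{\ell'}h$ is again elliptic, so $\gamma_{\ell'}h\in G^{\rm ell}_{\ell'}$. In both cases the hypothesis that $\theta_x$ vanish on $G^{\rm crs}_{\ell'}$ (resp.\ on $G^{\rm ell}_{\ell'}$) forces $\theta_x(\gamma)=0$.

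\textbf{Case of $G^{\rm rs}$.} Let $\gamma\in G^{\rm rs}$ and let $(P_\gamma^{\pm},M_\gamma)$ be the associated pair of opposite parabolics with their common Levi, so that $\gamma$ is compact modulo $Z(M_\gamma)$. Taking the $\ell$-decomposition of $\gamma$ relative to $Z(M_\gamma)$, one still has $\gamma_\ell,\gamma_{\ell'}\in T\subset M_\gamma$, and --- because the power-bounded factor $\gamma_\ell$ does not alter the contracting and dilating directions --- the parabolic attached to $\gamma_{\ell'}h$ is still $P_\gamma$ for $h\in M_\gamma$ close to $1$. Van Dijk's descent formula, i.e.\ the one behind the non-compact extension discussed above (\cite[Thm 7.4]{MS2}), then gives $\theta^G_x(\delta)=\theta^{M_\gamma}_{r_{P_\gamma}(x)}(\delta)$ for every $\delta\in G^{\rm rs}$ near $\gamma$ with associated parabolic $P_\gamma$; and such $\delta$ lie in $G^{\rm rs}_{\ell'}$ exactly when they are of pro-order prime to $\ell$ modulo $Z(M_\gamma)$, which is precisely the defining condition of $G^{\rm rs}_{\ell'}$. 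Applying the compact case inside $M_\gamma$ to the element $\gamma$ and the virtual representation $r_{P_\gamma}(x)$ produces an element $\gamma_{\ell'}h$ ($h\in M_\gamma$ small) with $\theta^G_x(\gamma)=\theta^G_x(\gamma_{\ell'}h)$ and $\gamma_{\ell'}h\in G^{\rm rs}_{\ell'}$; hence vanishing of $\theta^G_x$ on $G^{\rm rs}_{\ell'}$ forces $\theta^G_x(\gamma)=0$.

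\textbf{Main obstacle.} The one non-formal point is the genericity step: inside the small open subgroup $H\cap T$ one must find $h$ with $\gamma_{\ell'}h$ regular \emph{while staying in the correct stratum} (same associated parabolic as $\gamma$, in the $G^{\rm rs}$ case) \emph{and in the correct prime-to-$\ell$ class}. This rests on the density of the regular locus $T^{\rm reg}$ in $T$ and on the remark that multiplying by the $\ell$-part $\gamma_\ell$, which is power-bounded modulo the relevant centre, changes neither the associated parabolic nor the pro-order modulo that centre. Granted those, the openness of $G^{\rm rs}$, $G^{\rm crs}$, $G^{\rm ell}$ and the local constancy of $\theta_x$ do the rest.
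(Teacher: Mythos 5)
For the compact and elliptic cases your argument coincides with the paper's: the paper's proof of the proposition is precisely the paragraph that precedes it (perturb $\gamma$ by some $h\in H\cap Z_{G}(\gamma)$ so that $(\gamma h)_{\ell'}=\gamma_{\ell'}h$ becomes regular, apply \cite[Lemma (17.8)]{CR} on $V^{H}$, and use local constancy of $\theta_{x}$), and the points you add --- that $\gamma,\gamma_{\ell},\gamma_{\ell'}$ lie in $T=Z_{G}(\gamma)^{\circ}=Z_{G}(T)$, that $\gamma_{\ell'}^{-1}T^{\rm reg}$ meets the open subgroup $H\cap T$ by density of the regular locus, and that $Z_{G}(\gamma_{\ell'}h)^{\circ}=T$ preserves ellipticity --- are exactly what the paper leaves implicit. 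These two cases are correct.

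The regular semisimple case, which the paper does not write out, is where your proposal has a genuine gap. You justify $P_{\gamma_{\ell'}h}=P_{\gamma}$ by calling $\gamma_{\ell}$ \emph{power-bounded}; but $\gamma_{\ell}$ is only of finite order \emph{modulo} $Z(M_{\gamma})$, and $Z(M_{\gamma})$ is not compact. If $\gamma_{\ell}^{\ell^{a}}=z\in Z(M_{\gamma})$, then for a root $\alpha$ outside $M_{\gamma}$ one has $|\alpha(\gamma_{\ell})|=|\alpha(z)|^{1/\ell^{a}}$, a possibly nontrivial fractional power of $q$, so $|\alpha(\gamma_{\ell'})|$ may cross $1$ when $|\alpha(\gamma)|$ is close to $1$ (already for $M_{\gamma}=\GL_{3}\times\GL_{3}$ inside $\GL_{6}$, $\ell=3$, with $\gamma$ having eigenvalue valuations $0$ and $1/3$ on the two blocks and the second block lying in a ramified cubic extension). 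When this happens $M_{\gamma_{\ell'}h}$ can strictly contain $M_{\gamma}$, or $P_{\gamma_{\ell'}h}$ can differ from $P_{\gamma}$; then your second application of van Dijk's formula breaks down, and so does the membership $\gamma_{\ell'}h\in G^{\rm rs}_{\ell'}$, which requires prime-to-$\ell$ pro-order modulo the centre of the element's \emph{own} Levi, a smaller group. The step is repairable: the topological Jordan decomposition is only unique modulo $Z(M_{\gamma})$, and the open cone of cocharacters on which every $\log|\alpha(\cdot)|$ has the same sign as $\log|\alpha(\gamma)|$ is a nonempty open cone in the valuation space of $Z(M_{\gamma})$, hence meets every coset of the full-rank lattice of valuations of $Z(M_{\gamma})$; choosing the representative of $\gamma_{\ell'}$ accordingly forces $P_{\gamma_{\ell'}h}=P_{\gamma}$ and $M_{\gamma_{\ell'}h}=M_{\gamma}$, after which your descent argument goes through. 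As written, however, the justification you give for this key step is false.
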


Voici maintenant deux propri\'et\'es classiques des
caract\`eres de Brauer des groupes finis :
\begin{itemize}
\item \emph{Injectivit\'e :} l'application $\tilde\theta:\,
  \RC(G,\o\FM_{\ell})\To{} \CC(G_{\ell'},\o\ZM_{\ell})^{G}$ est injective.
\item \emph{Surjectivit\'e :} soit $x\in\RC(G,\o\FM_{\ell})$,
  prolongeons $\tilde\theta_{x}$ \`a $G$ en posant
  $\bar\theta_{x}(g):=\tilde\theta_{x}(g_{\ell'})$. Alors la fonction
  obtenue est le caract\`ere d'un \'el\'ement $\tilde{x}\in \RC(G,\o\QM_{\ell})$. 
\end{itemize}

Ces propri\'et\'es s'\'etendent facilement \`a un groupe $p$-adique compact
modulo son centre, bien que notre caract\`ere de Brauer ne soit d\'efini que
sur les \'el\'ements r\'eguliers. Cela d\'ecoule en effet de la densit\'e des
\'elements r\'eguliers, \emph{cf} la 
preuve du th\'eor\`eme \ref{theoLJ}. 

Pour un groupe r\'eductif $p$-adique $G$ plus g\'en\'eral, il r\'esulte de
\cite{VigWalds} que pour $\ell$ \emph{assez grand}, le caract\`ere de Brauer
$\tilde\theta$ \'etendu \`a tout $G^{\rm rs}$ comme plus haut d\'efinit une
injection de $\RC(G,\o\FM_{\ell})$ dans $\CC^{\infty}(G^{\rm
  rs}_{\ell'},\o\ZM_{\ell})$. Les seules conditions explicites donn\'ees
dans \emph{loc. cit.} sont $\ell> n$, pour $G=\GL_{n}(K)$ ou ${\rm
  SL}_{n}(K)$. 

N\'eanmoins, que ce soit pour l'injectivit\'e ou la surjectivit\'e, il
para\^{\i}t plus utile d'obtenir un analogue modulaire de
\cite{KazhdanCusp}. 

\begin{ques} \label{questBrauer} Soit $\RC_{I}(G,\o\FM_{\ell})$ le sous-groupe de
  $\RC(G,\o\FM_{\ell})$ engendr\'e par les induites paraboliques
  propres.
  \begin{enumerate}
  \item A-t-on $\ker (\tilde\theta_{|G^{\rm ell}_{\ell'}} \otimes \QM)=
    \RC_{I}(G,\o\FM_{\ell})\otimes \QM$ ?
  \item Soit $\pi \in \RC(G,\o\FM_{\ell})$ et $\bar\theta_{\pi}$ le
    prolongement \`a $G^{\rm ell}$ de $\tilde\theta_{\pi}$ d\'efini par 
$$ \bar\theta_{\pi}(\gamma):=\tilde\theta_{\pi}(\gamma_{\ell'}h)$$
o\`u $h\in Z_{G}(\gamma)\cap H_{\pi,\gamma}$ est tel que
$\gamma_{\ell'}h$ soit r\'egulier. Est-ce que $\bar\theta_{\pi}$ est la
restriction \`a $G^{\rm ell}$ du caract\`ere d'un \'el\'ement de
$\RC(G,\o\QM_{\ell})$ ?
  \end{enumerate}
\end{ques}
Concernant le point i),  on v\'erifie en effet facilement, et par les
  m\^emes arguments que pour le caract\`ere ordinaire \cite{VanDijk}, que le caract\`ere de
  Brauer d'une induite parabolique propre est nul sur les \'el\'ements elliptiques.
  
Une r\'eponse \`a ces deux questions permettrait par exemple d'en d\'eduire
la surjectivit\'e de la r\'eduction de Brauer-Nesbitt $r_{\ell}$.
Dans les paragraphes suivants, nous r\'epondrons affirmativement \`a ces
deux questions dans le cas de $G=\GL_{d}(K)$. Contrairement aux
arguments d'analyse harmonique de Kazhdan dans \cite{KazhdanCusp}, nous
utiliserons les r\'esultats de classification des repr\'esentations de M.-F. Vign\'eras.

\subsection{Repr\'esentations de $\GL_{d}(K)$. Classifications de Vign\'eras et Zelevinsky}

Dans ce paragraphe nous rappelons l'\'enonc\'e de classification des
repr\'esentations lisses irr\'eductibles de $G_{d}:=\GL_{d}(K)$ d\^u \`a
Zelevinsky lorsque les coefficients sont $\o\QM_{\ell}$ et \`a Vign\'eras
lorsque les coefficients
sont $\o\FM_{\ell}$. Nous en d\'eduisons facilement que
l'homomorphisme de r\'eduction $r_{\ell}:\, \RC^{\rm
  ent}(G_{d},\o\QM_{\ell})\To{}\RC(G_{d},\o\FM_{\ell})$ est surjectif, ce qui
sera crucial pour d\'efinir $\LJ_{\oFl}$ au
prochain paragraphe. Au passage, nous prouvons quelques propri\'et\'es des
repr\'esentations de Speh et ``superSpeh'', qui seront utilis\'ees pour la
d\'efinition de $^{z}\JL_{\oFl}$.
Dans ce qui suit, la lettre $C$ d\'esignera le corps
$\o\FM_{\ell}$ ou le corps $\o\QM_{\ell}$. Les repr\'esentations
irr\'eductibles consid\'er\'ees sont \`a coefficients dans $C$ sauf pr\'ecision contraire.

\alin{Mod\`eles de Whittaker et d\'eriv\'ees}
Fixons un caract\`ere non trivial $\psi : K\To{} \o\ZM_{\ell}^{\times}$.
\`A toute partition  $\lambda=(\lambda_{1}\geq\lambda_{2}\geq \cdots \geq
\lambda_{t})\in \Lambda(d)$  de $d$ est alors associ\'e un $\o\ZM_{\ell}$-caract\`ere $\psi_{\lambda}$
du sous-groupe unipotent maximal sup\'erieur $U_{d}$ de $G_{d}$, qui
induit \`a son tour un caract\`ere \`a valeurs dans $C$, encore not\'e
$\psi_{\lambda}$, \emph{cf.} \cite[V.5]{VigInduced}.
Pour une $C$-repr\'esentation $\pi$ de $G_{d}$, on d\'efinit
 $$\Lambda(\pi) :=\left\{\lambda\in \Lambda(d),\, m_{\pi,\lambda}:={\rm
   dim} \left({\rm Hom}_{U_{d}}(\pi,\psi_{\lambda}) \right)\neq 0\right\}
.$$
Par r\'eciprocit\'e de Frobenius tout morphisme $\varphi\in{\rm
  Hom}_{U_{d}}(\pi,\psi_{\lambda})$ induit un morphisme $\pi\To{}{\rm
  Ind}_{U_{d}}^{G_{d}}(\psi_{\lambda})$ qui, s'il est injectif, est
appel\'e $\lambda$-mod\`ele de Whittaker.
Rappelons que lorsque $\pi$ est irr\'eductible et $(d)\in \Lambda(\pi)$,  
on dit que $\pi$ est ``g\'en\'erique'' ou encore ``non d\'eg\'en\'er\'ee''. On
sait alors \cite[III.5.10.3)]{Vig} que $m_{\pi,(d)}=1$ ; c'est l'unicit\'e du mod\`ele de Whittaker.


Plus g\'en\'eralement, munissons l'ensemble $\Lambda(d)$ de l'ordre
partiel d\'efini par $\lambda \geq \lambda'$ \ssi\ 
$\forall i=1,\cdots, t, \,\sum_{j=1}^{i}\lambda_{j}\geq \sum_{j=1}^{i}\lambda'_{j}$.
La th\'eorie des d\'eriv\'ees de Gelfand-Kazhdan montre alors \cite[V.5]{VigInduced}
que \emph{pour toute $\pi$ irr\'eductible, l'ensemble
  $\Lambda(\pi)$ admet un plus grand \'el\'ement, que l'on notera
  $\lambda_{\pi}$}.
Concr\`etement, la partition $\lambda_{\pi}$ est donn\'ee par les plus grandes
d\'eriv\'ees successives. Plus pr\'ecis\'ement, $\lambda_{\pi,1}$ est l'ordre de
la plus grand d\'eriv\'ee de $\pi$,
$\lambda_{\pi,2}=\lambda_{\pi^{(\lambda_{\pi,1})},1}$ est celui de la d\'eriv\'ee
$\pi^{(\lambda_{\pi, 1})}$ qui est une repr\'esentation de
$G_{d-\lambda_{\pi,1}}$, etc.

Notons que, comme dans l'article de Zelevinsky \cite{Zel} sur les
repr\'esentations complexes, une fois prouv\'e le
th\'eor\`eme de classification (rappel\'e plus bas), on en d\'eduit
\cite[V.12]{VigInduced} que \emph{le $\lambda_{\pi}$-mod\`ele de
  Whittaker  est unique, \emph{i.e.} $m_{\pi,\lambda_{\pi}}=1$.}

\alin{D\'eriv\'ees et induites paraboliques}
Pour d\'efinir les induites paraboliques normalis\'ees, il nous faut choisir une
racine carr\'ee de $q$ dans $\o\ZM_{\ell}$.
Comme d'habitude, pour $\pi_{i}$ une
repr\'esentation de $G_{d_{i}}$, $i=1,\cdots, t$, 
on note  $\pi_{1}\times \pi_{2}\times\cdots \times\pi_{t}$ l'induite
 normalis\'ee de la repr\'esentation
 $\pi_{1}\otimes\pi_{2}\otimes\cdots\otimes \pi_{t}$ du sous-groupe
de Levi diagonal par blocs  $G_{d_{1}}\times\cdots\times G_{d_{t}}$ de
$G_{d_{1}+\cdots + d_{t}}$, le
long du parabolique triangulaire par blocs sup\'erieur correspondant,
not\'e $P_{d_{1},\cdots,d_{t}}$.

La formule ``de Leibniz'' pour la d\'eriv\'ee d'une induite parabolique
\cite[III.1.10]{Vig} montre que la partition $\lambda_{\pi_{1}}+ \lambda_{\pi_{2}}
+\cdots + \lambda_{\pi_{t}} \in \Lambda(d_{1}+\cdots+d_{t})$ est le plus
grand \'el\'ement de l'ensemble
$\Lambda(\pi_{1}\times\cdots\times\pi_{t})$ et que de plus, on a
$m_{\pi_{1}\times\cdots\times\pi_{t}, \lambda_{\pi_{1}}+ \lambda_{\pi_{2}}
+\cdots + \lambda_{\pi_{t}}}=1$ si l'on sait que chaque
$m_{\pi_{i},\lambda_{\pi_{i}}}$ vaut $1$.

\alin{Segments (super)cuspidaux et repr\'esentations (super)Speh}
\label{defsuperSpeh} 
 Pour tout $d$, la lettre $\nu$ d\'esignera le caract\`ere $g\mapsto q^{-{\rm val}_{K} \circ
   {\rm det}(g)}$ de $G_{d}$ \`a valeurs dans $\ZM_{\ell}^{\times}$. 
Un $C$-segment (super)cuspidal est une paire $\Delta=(\pi,r)$ form\'ee d'une
$C$-repr\'esentation irr\'eductible (super)cuspidale de $G_{d}$ et d'un entier positif
$r$. Comme la seule d\'eriv\'ee non nulle de $\pi$ est $\pi^{(d)}$, la formule de Leibniz montre que l'ensemble  $\Lambda(\pi\times\pi\nu\times \cdots
\times\pi\nu^{r-1})$ est form\'e de $d$-partitions de $dr$, \emph{i.e.}
dont les entr\'ees sont divisibles par $d$. L'\'el\'ement minimal de cet
ensemble est la partition $(d^{(r)}):=(d,d,\cdots, d)$.

Lorsque $C=\oQl$, ou plus g\'en\'eralement lorsque les $\pi\nu^{i}$ sont
deux \`a deux non isomorphes, on montre facilement que l'induite $\pi\times\pi\nu\times \cdots
\times\pi\nu^{r-1}$ poss\`ede une unique sous-repr\'esentation
irr\'eductible, que nous noterons $\delta_{r}(\pi)$.
Parmi les sous-quotients irr\'educibles de
cette induite, elle est caract\'eris\'ee par l'une ou l'autre des
propri\'et\'es suivantes :
\begin{enumerate}
\item[m1)] Son module de Jacquet est donn\'e par $r_{P_{d,d,\cdots, d}}(\delta_{r}(\pi)) 
\simto \pi\otimes\pi\nu\otimes \cdots \otimes\pi\nu^{r-1}$
\item[m2)] Sa partition de Whittaker est donn\'ee par $\lambda_{\delta_{r}(\pi)}= (d,d,\cdots, d)$.
\end{enumerate}

Lorsque $C=\oFl$, il n'y a pas de d\'efinition aussi simple de
$\delta_{r}(\pi)$,
 et l'auteur n'en connait pas qui n'utilise
pas la th\'eorie des types. 
La d\'efinition de  \cite[V.9.1]{VigInduced} n'est pas tout \`a fait
correcte car la propri\'et\'e m1) ci-dessus ne suffit pas toujours \`a
isoler une unique repr\'esentation. Cependant, en r\'eordonnant les
arguments de \emph{loc. cit.}, on aboutit \`a la d\'efinition suivante,
dont l'\'enonc\'e a \'et\'e adapt\'e \`a l'usage que nous ferons de ces
repr\'esentations par la suite.


 On rappelle auparavant
\cite[III.5.10.2]{Vig} que \emph{toute
$\oFl$-repr\'esentation cuspidale $\pi$ admet un rel\`evement $\tilde\pi$ \`a
$\oQl$}, c'est-\`a-dire une $\oQl$-repr\'esentation contenant un $\oZl
G$-sous-module de type fini $\omega$ g\'en\'erateur, dont la r\'eduction
$\omega\otimes_{\oZl}\oFl$ est isomorphe \`a $\pi$. 

\begin{pro}
  Soient $\pi$ et $\tilde\pi$ comme ci-dessus. La repr\'esentation
  $\delta_{r}(\tilde\pi)$ est enti\`ere. Sa r\'eduction
  $r_{\ell}(\delta_{r}(\tilde\pi))$ est irr\'eductible, satisfait les
  propri\'et\'es m1) et m2) ci-dessus, et est ind\'ependante, 
\`a isomorphisme pr\`es,  du choix de $\tilde\pi$.
\end{pro}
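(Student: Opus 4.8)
The plan is to reduce everything to statements about the generic (non-degenerate) constituents and the highest derivative, exactly as in Zelevinsky's complex case, and to leverage the fact that $\tilde\pi$ is cuspidal so that $\tilde\pi\nu^{i}$ are pairwise non-isomorphic over $\oQl$. First I would check integrality: since $\tilde\pi$ is $\ell$-integral (it contains a stable lattice by the very definition of a lift of $\pi$), the parabolically induced representation $\tilde\pi\times\tilde\pi\nu\times\cdots\times\tilde\pi\nu^{r-1}$ is $\ell$-integral — normalized parabolic induction preserves integrality because $\nu$ takes values in $\Zl^{\times}$ and the modulus character is a power of $q$, a unit — and hence its subrepresentation $\delta_{r}(\tilde\pi)$ is integral as well. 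So $r_{\ell}(\delta_{r}(\tilde\pi))$ is a well-defined element of $\RC(G_{dr},\oFl)$, a priori only a virtual representation.

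Next I would pin down the derivative/Whittaker data. By the Leibniz formula \cite[III.1.10]{Vig} for derivatives of parabolic inductions, applied to the cuspidal $\tilde\pi$ whose only nonzero derivative is $\tilde\pi^{(d)}$, one gets that $\Lambda(\tilde\pi\times\cdots\times\tilde\pi\nu^{r-1})$ consists of $d$-partitions of $dr$ with maximal element $(dr)$ (the fully generic one, coming from the product of the top derivatives) — but $\delta_{r}(\tilde\pi)$ is the \emph{sub}representation, not the generic quotient, so its own top partition $\lambda_{\delta_{r}(\tilde\pi)}$ is the \emph{minimal} element $(d^{(r)})$; this is property m2), and m1) follows from computing $r_{P_{d,\dots,d}}$ of the induced representation by the geometric lemma and using cuspidality to see that only the "diagonal" term $\tilde\pi\otimes\tilde\pi\nu\otimes\cdots\otimes\tilde\pi\nu^{r-1}$ survives in $\delta_{r}(\tilde\pi)$. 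All of this is over $\oQl$ and is classical.

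The key step, and the main obstacle, is irreducibility of the reduction $r_{\ell}(\delta_{r}(\tilde\pi))$. Here I would argue that the properties m1) and m2) pass to the reduction: the Jacquet functor $r_{P_{d,\dots,d}}$ is exact and compatible with $r_{\ell}$, so $r_{P_{d,\dots,d}}(r_{\ell}\delta_{r}(\tilde\pi))$ is the reduction $r_{\ell}(\pi\otimes\cdots)$, which — since $\pi$ is the reduction of the cuspidal $\tilde\pi$, and cuspidal reductions stay irreducible by \cite[III.5.10]{Vig} — equals $\pi\otimes\pi\nu\otimes\cdots\otimes\pi\nu^{r-1}$, a single irreducible of the Levi. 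By Frobenius reciprocity and exactness, any irreducible subquotient of $r_{\ell}(\delta_{r}(\tilde\pi))$ whose Jacquet module meets this line is a subquotient of $\pi\times\cdots\times\pi\nu^{r-1}$, and the multiplicity-one statement for $r_{P_{d,\dots,d}}$ forces there to be exactly one such constituent, appearing once; combined with the fact that \emph{every} constituent has a nonzero $r_{P_{d,\dots,d}}$ (each lies in the cuspidal support $\{\pi,\pi\nu,\dots,\pi\nu^{r-1}\}$ up to reordering, and one checks by induction on $r$, using the Whittaker-partition bound $\lambda_{\delta_{r}}=(d^{(r)})$, that no constituent can be "more degenerate"), we conclude $r_{\ell}(\delta_{r}(\tilde\pi))$ is irreducible. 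The delicate point — where one genuinely needs Vigneras' classification rather than a naive multiplicity count — is ruling out that the reduction could split as a sum/difference of representations with the \emph{same} cuspidal support but strictly smaller Whittaker partition; this is exactly the phenomenon flagged in the text (the failure of m1) alone to isolate a representation mod $\ell$), and it is handled by invoking that among all irreducibles with that supercuspidal support, $r_{\ell}(\delta_{r}(\tilde\pi))$ is characterized by m2), together with \cite[V.9.1]{VigInduced} (suitably reordered as the text indicates).

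Finally, independence of the lift: if $\tilde\pi_{1},\tilde\pi_{2}$ are two lifts of $\pi$, then $r_{\ell}(\delta_{r}(\tilde\pi_{1}))$ and $r_{\ell}(\delta_{r}(\tilde\pi_{2}))$ are both irreducible, have the same cuspidal support (namely $\{\pi\nu^{i}\}$) and the same Whittaker partition $(d^{(r)})$, hence coincide by the uniqueness part of Vigneras' classification. This closes the proof.
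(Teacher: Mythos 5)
Your treatment of integrality, of m1)--m2) over $\oQl$, and of the irreducibility of $r_{\ell}(\delta_{r}(\tilde\pi))$ is essentially sound. For irreducibility you count lengths of Jacquet modules: every constituent has cuspidal support on the Levi $G_{d}^{r}$, hence nonzero $r_{P_{d,\dots,d}}$, while $r_{P_{d,\dots,d}}(r_{\ell}(\delta_{r}(\tilde\pi)))=\pi\otimes\pi\nu\otimes\cdots\otimes\pi\nu^{r-1}$ has length one. The paper instead counts the multiplicity of the Whittaker partition $(d^{(r)})$ using the exactness of derivatives; both arguments are valid and of the same nature. (Minor point: the reduction of a stable lattice is an actual representation, so there is no question of ``sums/differences''.)

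The genuine gap is in the independence of the lift. You claim that two irreducible $\oFl$-representations with the same cuspidal support $\{\pi\nu^{i}\}_{0\le i<r}$ and the same Whittaker partition $(d^{(r)})$ must coincide ``by the uniqueness part of Vign\'eras' classification''. This is false as stated: if $r_{0}$ denotes the cardinality of the orbit $\{\pi\nu^{i}\}$ and $r_{0}$ divides $r$, the $r_{0}$ representations $\delta_{r}(\pi),\delta_{r}(\pi)\nu,\dots,\delta_{r}(\pi)\nu^{r_{0}-1}$ are pairwise non-isomorphic yet all have the same (unordered) cuspidal support and the same partition $(d^{(r)})$ --- this is exactly the count in the lemme \ref{decompssc}, case i)(a). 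Even if you replace ``cuspidal support'' by the ordered Jacquet module (i.e.\ require m1) and m2) simultaneously), the appeal to the classification is circular: the very statement of the classification, and the fact that m1)$+$m2) single out one constituent, presuppose that $\delta_{r}(\pi)$ is well defined, which is what the proposition is establishing; the paper stresses that the definition of \cite[V.9.1]{VigInduced} by m1) alone is defective for precisely this reason. The paper's proof of independence is therefore of a different and unavoidable nature: it passes to the Hecke algebra of a simple type $(J^{\circ},\lambda)$, shows that ${\rm Hom}_{J^{\circ}}(\lambda,r_{\ell}(\delta_{r}(\tilde\pi)))$ is a simple module equal to the reduction of the character $\tilde M$ attached to $\delta_{r}(\tilde\pi)$, identifies its restriction to the finite subalgebra $\HC^{0}$ (the character $e_{w}\mapsto q^{l(w)}$) and to the commutative subalgebra $\AC$ (determined by $\pi\otimes\pi\nu\otimes\cdots\otimes\pi\nu^{r-1}$), both independent of $\tilde\pi$, and concludes by the quasi-projectivity of ${\rm ind}_{J^{\circ}}^{G}(\lambda)$. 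Some argument of this strength, pinning down the module over the whole Hecke algebra rather than just the support and the partition, is needed to close your proof.
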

\begin{proof}
La repr\'esentation
  $\delta_{r}(\tilde\pi)$ est enti\`ere, comme toute repr\'esentation dont
  le support cuspidal est form\'e de repr\'esentations  enti\`eres,
  \emph{cf.} \cite[II.4.14]{Vig}.
 La propri\'et\'e m2) de
  $\delta_{r}(\tilde\pi)$ et la compatibilit\'e des d\'eriv\'ees (exactes) \`a la
  r\'eduction mod $\ell$ impliquent que
  $\Lambda(\delta_{r}(\pi))=\{(d,\cdots,d)\}$. Comme on a aussi
  $m_{\delta_{r}(\pi), (d,\cdots, d)}=1$, il s'ensuit que
  $r_{\ell}(\delta_{r}(\tilde\pi))$ est irr\'eductible
  (et m\^eme sa restriction au sous-groupe mirabolique est irr\'eductible). 
Elle v\'erifie bien m2), et v\'erifie aussi m1) par compatibilit\'e des
foncteurs de Jacquet (exacts) avec la r\'eduction modulo $\ell$.
Il reste \`a voir que
  $r_{\ell}(\delta_{r}(\tilde\pi))$ est ind\'ependante des choix.

Choisissons un $\oFl$-type simple $(J^{\circ},\lambda)$ contenu
dans l'induite $\pi\times\cdots\times\pi{\nu^{r-1}}$, \emph{cf.}
 \cite[Lemme 5.12]{Vig} o\`u la notation $(J_{m}^{\circ},\lambda_{m})$
 est utilis\'ee. 
 On sait \cite[III.4.20, 4.29]{Vig} qu'il existe un rel\`evement
 $\tilde\lambda$ de $\lambda$ \`a $\oQl$ tel que le $\oQl$-type simple
$(J^{\circ},\tilde\lambda)$ soit contenu dans
$\tilde\pi\times\cdots\times\tilde\pi\nu^{r-1}$. Fixons un $\oZl$-r\'eseau
stable $\lambda_{\oZl}$ dans $\tilde\lambda$, et notons
$\HC(G,\lambda_{\oZl})$ son alg\`ebre de Hecke. Dans \cite[III.5.6-5.7]{Vig}
est construite une famille d'isomorphismes ``canoniques''
\begin{equation}\label{hecke}
\HC_{\oZl}(\wt\SG_{r},q')\simto \HC(G,\lambda_{\oZl})
\end{equation}
depuis
l'alg\`ebre de Hecke affine \'etendue de type $\wt{A}_{r-1}$ sp\'ecialis\'ee en une
certaine puissance $q'$ de $q$. Ces isomorphismes sont des versions
enti\`eres de ceux de Bushnell-Kutzko \cite[(5.6.6)]{BK}.
Rappelons que l'on a une d\'ecomposition en produit tensoriel de
$\oZl$-modules
$$\HC_{\oZl}(\wt\SG_{r},q') = \HC_{\oZl}(\SG_r,q')\otimes_{\oZl}
\oZl[X_{1}^{\pm 1},\cdots, X_{r}^{\pm 1}] = :\HC^{0}\otimes_{\oZl} \AC
$$
o\`u $\HC^{0}$ est la sous-alg\`ebre de Hecke de type $A_{r-1}$ et $\AC$
est une sous-alg\`ebre commutative. 

La th\'eorie de Bushnell et Kutzko nous dit que $\tilde{M}:={\rm
  Hom}_{J^{\circ}}(\tilde\lambda,\delta_{r}(\tilde\pi))$ est un
$\HC(G,\tilde\lambda)$-module simple \cite[Thm. (7.5.7)]{BK}, 
et que sa restriction \`a $A$ est
d\'etermin\'ee par $r_{P_{d,\cdots, d}}(\delta_{r}(\tilde\pi))=
\tilde\pi\otimes\tilde\pi\nu\otimes\cdots\otimes\tilde\pi\nu^{r-1}$
\cite[Thms. (7.6.1),(7.6.20)]{BK}. En particulier,
 $\tilde{M}$ est un caract\`ere  dont la restriction \`a
$\AC$ est donn\'ee par $X_{i}\mapsto q^{i-1}t$ o\`u $t\in\oZl^{\times}$
d\'epend de la normalisation de l'isomorphisme (\ref{hecke}). Il s'ensuit
que la restriction \`a 
$\HC^{0}$ doit \^etre le caract\`ere ``trivial'' $e_{w}\mapsto q^{l(w)}$ et que
la restriction \`a $\HC(G,\lambda_{\oZl})$ est \`a valeurs enti\`eres.

Par r\'eduction modulo $\ell$, on constate que $M:= {\rm
  Hom}_{J^{\circ}}(\lambda,r_{\ell}(\delta_{r}(\tilde\pi)))\neq 0$. Il
r\'esulte alors de la propri\'et\'e de ``quasi-projectivit\'e'' de
$\cind{J^{\circ}}{G}{\lambda}$  que $M$ est un module simple sur
$\HC(G,\lambda)$ \cite[Thm IV.2.5, 3)]{Vig}, donc n\'ecessairement 
\'egal \`a $r_{\ell}(\tilde{M})$.
Ainsi, la restriction de $M$ \`a $\AC$ est d\'etermin\'ee par
$\pi\otimes\pi\nu\otimes\cdots\otimes\pi\nu^{r-1}$ et sa restriction \`a
$\HC^{0}$ est le caract\`ere ``trivial''. Ceci d\'etermine enti\`erement
$M$, et \`a nouveau par \cite[Thm IV.2.5, 3)]{Vig},
 cela d\'etermine aussi $r_{\ell}(\delta_{r}(\tilde\pi))$ qui est
donc bien ind\'ependante de $\tilde\pi$.
\end{proof}

\begin{defn}
  Nous noterons $\delta_{r}(\pi):=r_{\ell}(\delta_{r}(\tilde\pi))$ ``la''
  repr\'esentation donn\'ee par la proposition ci-dessus. Une telle
  repr\'esentation sera appel\'ee  une \emph{repr\'esentation de Speh} ou
\emph{repr\'esentation superSpeh} selon
 que $\pi$ est cuspidale ou supercuspidale.
\end{defn}

\begin{rem}
  Il devrait \^etre possible d'\'ecrire $\delta_{r}(\pi)$ comme image d'un
  op\'erateur d'entrelacement ``explicite''
  $\pi\nu^{r-1}\times\pi\nu^{r-2}\times\cdots\times\pi\To{}
  \pi\times\cdots\times\pi\nu^{r-1}$. Mais l'auteur n'a pas \'et\'e
  capable d'\'ecrire un tel op\'erateur lorsque les $\pi\nu^{i}$ ne sont
  pas deux \`a deux distincts. La d\'efinition ci-dessus a l'inconv\'enient
  minime de ne d\'efinir en fait  qu'une classe d'isomorphisme de
  repr\'esentations irr\'eductibles.
Cependant, on peut aussi d\'efinir $\delta_{r}(\pi)$ comme ``unique
sous-repr\'esentation irr\'eductible de
$\pi\times\pi\nu\cdots\times\pi\nu^{r-1}$ v\'erifiant une certaine
condition''. Pour exprimer la condition, on utilise un foncteur
$\kappa_{\rm max}$ convenable, comme dans \cite[Sec. 5]{SchZ} et
\cite{MinSech}. Un tel foncteur est exact, \`a valeurs dans les
repr\'esentations d'un $\GL_{d'}(k')$ avec $k'$ fini.
La condition est alors que $\kappa_{\rm max}(\delta_{r}(\pi))$
contient (et m\^eme est \'egal \`a) la sous-repr\'esentation d\'efinie
explicitement par James  comme image d'un op\'erateur d'entrelacement explicite,
  \emph{cf.} \cite[III.2.4]{Vig} o\`u cette repr\'esentation  est
  not\'ee $S(\sigma,(m))$.
\end{rem}


Notons que  la repr\'esentation $\delta_{r}(\pi)$ v\'erifie bien les
propri\'et\'es m1) et m2) ci-dessus, mais n'est g\'en\'erale\-ment caract\'eris\'ee
par aucune des deux. Cependant, une fois prouv\'e le th\'eor\`eme de
classification ci-dessous, il appara\^{\i}t que ces deux conditions
caract\'erisent $\delta_{r}(\pi)$. Par ailleurs, on v\'erifie ais\'ement que
la $d$-\`eme d\'eriv\'ee est donn\'ee par $\delta_{r}(\pi)^{(d)}\simeq
\delta_{r-1}(\pi)$ et que $m_{\delta_{r}(\pi),\lambda_{\delta_{r}(\pi)}}=1$.



\alin{R\'eduction et rel\`evements} \label{terminologie}
Nous adoptons et extrapolons la terminologie  de Vign\'eras. Nous dirons qu'une
$\o\QM_{\ell}$-repr\'esentation est 
\begin{itemize}
\item \emph{$\ell$-enti\`ere} si elle admet un r\'eseau stable comme dans \cite[II.4]{Vig}.
\item \emph{$\ell$-irr\'eductible} si elle est $\ell$-enti\`ere et si la
  r\'eduction de tout r\'eseau stable est irr\'eductible.
\item \emph{$\ell$-supercuspidale} si elle est $\ell$-irr\'eductible et de r\'eduction supercuspidale.
\item \emph{$\ell$-superSpeh} si elle est  $\ell$-irr\'eductible de
  r\'eduction superSpeh.
\item \emph{congrue} \`a une autre repr\'esentation (modulo $\ell)$, si les deux
  repr\'esentations sont $\ell$-enti\`eres et leurs r\'eductions
  semi-simplifi\'ees sont isomorphes.
\item \emph{strictement congrue} \`a une autre repr\'esentation, si
  elle lui est congrue et si leurs caract\`eres centraux
  co\"{\i}ncident sur la matrice diagonale $\varpi. I_{d}$, o\`u $I_{d}$
  d\'esigne la matrice identit\'e de taille $d$.
\end{itemize}
Concernant les repr\'esentations supercuspidales, on sait que :
\begin{enumerate}
\item Une $\o\FM_{\ell}$-repr\'esentation cuspidale se rel\`eve \`a
  $\o\QM_{\ell}$ et tout rel\`evement est supercuspidal, \cite[III.5.10.2)]{Vig}.
\item Une $\o\QM_{\ell}$-repr\'esentation supercuspidale $\pi$ est $\ell$-enti\`ere \ssi\ son
  caract\`ere central est $\ell$-entier. Dans ce cas, $\pi$ est
  $\ell$-irr\'eductible, \cite[III.1.1.d)]{Vig}, et on a les propri\'et\'es
  suivantes, \cite[2.3]{VigAENS} :
  \begin{enumerate}
   \item le nombre $m(\pi)$ de repr\'esentations strictement congrues \`a
     $\pi$ est inf\'erieur \`a la plus grande puissance de $\ell$ divisant
     le nombre   $a(\pi):=\frac{d}{t(\pi)}(q^{t(\pi)}-1)$ dans lequel $t(\pi)$
     d\'esigne le nombre 
     de caract\`eres non ramifi\'es
    $\psi:K^{\times}\To{}\o\QM_{\ell}^{\times}$ tels que $\pi\simeq
    \pi\otimes(\psi\circ\rm det)$.
  \item $\pi$ est $\ell$-supercuspidale \ssi\ l'in\'egalit\'e pr\'ec\'edente
    est une \'egalit\'e.
  \end{enumerate}
\end{enumerate}



\begin{lem} \label{lemrelev}
  Toutes les propri\'et\'es pr\'ec\'edentes sont vraies en rempla\c{c}ant partout
  (super)cuspidale par (super)Speh.
\end{lem}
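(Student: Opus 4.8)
The plan is to transfer each property listed for (super)cuspidal representations to its (super)Speh analogue through the description of $\delta_{r}(\pi)$ obtained in the proposition above, and to lean repeatedly on four elementary facts about $\delta_{r}$. It commutes with twisting by a character $\chi\circ\det$ of $G_{dr}$, since such a twist restricts on the Levi $G_{d}^{\times r}$ to the analogous twist on each factor, so $\delta_{r}(\pi\otimes(\chi\circ\det))\simeq\delta_{r}(\pi)\otimes(\chi\circ\det)$; it commutes with reduction modulo $\ell$, which is precisely the content $r_{\ell}(\delta_{r}(\tilde\pi))\simeq\delta_{r}(\pi)$; it is injective on isomorphism classes of (super)cuspidals over either coefficient field, because the cuspidal support together with its $\nu$-ordering recovers $\pi$, the $\tilde\pi\nu^{i}$ for $0\leq i<r$ being moreover pairwise non-isomorphic over $\o\QM_{\ell}$ since $\nu$ has infinite order there; and $r_{\ell}(\delta_{r}(\tilde\pi))\simeq\delta_{r}(\pi)$ is irreducible.

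For the lifting statement: if $\rho=\delta_{r}(\pi)$ with $\pi$ an $\o\FM_{\ell}$-(super)cuspidal, I would lift $\pi$ to a (super)cuspidal $\tilde\pi$ by \cite[III.5.10.2]{Vig} and invoke the proposition to see $\delta_{r}(\tilde\pi)$ lifts $\rho$. Conversely, let $\tilde\rho$ be any $\o\QM_{\ell}$-lift of $\rho$, with stable lattice $\omega$, $r_{\ell}(\omega)\simeq\rho$. Since $\rho$ is irreducible, $\tilde\rho$ is irreducible (a proper nonzero subrepresentation would meet $\omega$ in a stable lattice whose reduction is a proper nonzero subrepresentation of $\rho$) and $r_{\ell}(\tilde\rho)=\rho$; as $\rho$ contains the simple type $(J^{\circ},\lambda)$, the theory of types \cite[III.4]{Vig} gives that $\tilde\rho$ contains a lift $(J^{\circ},\tilde\lambda)$, hence lies in the Bernstein block of a (super)cuspidal lift $\tilde\pi_{1}$ of $\pi$ and is a subquotient of $\tilde\pi_{1}\times\tilde\pi_{1}\nu\times\cdots\times\tilde\pi_{1}\nu^{r-1}$. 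Because derivatives are exact, commute with $r_{\ell}$, and send nonzero $\ell$-integral representations to nonzero ones, iterating the highest derivative gives $\lambda_{\tilde\rho}=\lambda_{r_{\ell}(\tilde\rho)}=\lambda_{\rho}=(d,\dots,d)$; by the Leibniz formula that induced representation has $m_{\bullet,\,(d,\dots,d)}=1$, so $\delta_{r}(\tilde\pi_{1})$ is its unique irreducible subquotient with Whittaker partition $(d,\dots,d)$, forcing $\tilde\rho\simeq\delta_{r}(\tilde\pi_{1})$, which is $\ell$-(super)Speh since its reduction is $\rho$.

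For ``$\ell$-integral $\iff$ central character $\ell$-integral'' and for $\ell$-irreducibility: one implication is trivial; conversely, if the central character of $\delta_{r}(\tilde\pi)$ is $\ell$-integral, then on $zI_{dr}$ it equals $\omega_{\tilde\pi}(zI_{d})^{r}$ up to a power of the $\ell$-adic unit $q$, so $\omega_{\tilde\pi}^{r}$, and hence $\omega_{\tilde\pi}$, is $\o\ZM_{\ell}$-valued (a smooth character of $\OC_{K}^{\times}$ has finite order, and $v(\omega_{\tilde\pi}(\varpi)^{r})=0$ forces $v(\omega_{\tilde\pi}(\varpi))=0$), so $\tilde\pi$ is $\ell$-integral by \cite[III.1.1.d]{Vig}, its cuspidal support consists of $\ell$-integral representations, and $\delta_{r}(\tilde\pi)$ is $\ell$-integral by \cite[II.4.14]{Vig}. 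When $\delta_{r}(\tilde\pi)$ is $\ell$-integral it is automatically $\ell$-irreducible: it is irreducible, and by Brauer--Nesbitt every stable lattice has the same semisimplified reduction, which by the proposition is the irreducible representation $\delta_{r}(\pi)$, so each stable lattice reduces irreducibly. In particular $\ell$-integral (super)Speh representations are automatically $\ell$-irreducible.

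For the numerical statements, $\delta_{r}$ commuting with unramified twists and being injective identifies the group of unramified characters fixing $\delta_{r}(\pi)$ with the one fixing $\pi$, so $t(\delta_{r}(\pi))=t(\pi)$ and $a(\delta_{r}(\pi))=\frac{dr}{t(\pi)}(q^{t(\pi)}-1)=r\,a(\pi)$. By the previous two paragraphs the $\o\QM_{\ell}$-representations strictly congruent to $\delta_{r}(\tilde\pi)$ are exactly the $\delta_{r}(\tilde\pi_{1})$ with $\tilde\pi_{1}$ a (super)cuspidal lift of $\pi$ such that $\omega_{\delta_{r}(\tilde\pi_{1})}(\varpi I_{dr})=\omega_{\delta_{r}(\tilde\pi)}(\varpi I_{dr})$, equivalently $\omega_{\tilde\pi_{1}}(\varpi I_{d})^{r}=\omega_{\tilde\pi}(\varpi I_{d})^{r}$; writing $\tilde\pi_{1}=\tilde\pi\otimes(\psi\circ\det)$, this is precisely the condition defining the representations strictly congruent to $\tilde\pi$ when $\tilde\pi$ is regarded as a supercuspidal of $G_{dr}$ (central character tested on $\varpi I_{dr}$). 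Via $\delta_{r}$ this is a bijection compatible with the identification of stabilisers above, so $m(\delta_{r}(\tilde\pi))$ is the corresponding count for $\tilde\pi$ with $d$ replaced by $dr$, and the argument of \cite[2.3]{VigAENS} then yields $m(\delta_{r}(\tilde\pi))\leq\ell^{v_{\ell}(a(\delta_{r}(\pi)))}$ with equality if and only if $r_{\ell}(\tilde\pi)$ is supercuspidal. Since $\delta_{r}(\tilde\pi)$ is automatically $\ell$-irreducible and $\delta_{r}(r_{\ell}(\tilde\pi))$ is superSpeh exactly when $r_{\ell}(\tilde\pi)$ is supercuspidal, ``$\delta_{r}(\tilde\pi)$ is $\ell$-superSpeh'' $\iff$ ``$r_{\ell}(\tilde\pi)$ is supercuspidal'' $\iff$ ``that inequality is an equality'', which is the $\delta_{r}$-analogue of property (ii)(b). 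The hard part is this last paragraph: one has to check carefully that the counting bijection genuinely respects the stabiliser identification so that \cite[2.3]{VigAENS} transfers with $a$ scaled by $r$, and that its equality case is controlled by supercuspidality of $r_{\ell}(\tilde\pi)$ alone --- which here also accounts for $\ell$-irreducibility, in contrast with the cuspidal case.
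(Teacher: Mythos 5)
Most of your proposal follows the paper's own route: lifting a Speh representation via a supercuspidal lift of $\pi$, showing conversely that any lift of $\delta_r(\pi)$ has Whittaker partition $(d,\dots,d)$ by exactness of derivatives and hence is Speh, deducing integrality from the cuspidal support, getting $\ell$-irreducibility from the proposition on $r_\ell(\delta_r(\tilde\pi))$, and observing $t(\delta_r(\pi))=t(\pi)$ from compatibility with twists. These parts are fine.

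The gap is exactly where you flag it, and it is a real one, not a routine verification. Your reduction identifies the representations strictly congruent to $\delta=\delta_r(\tilde\pi)$ with the lifts $\tilde\pi_1$ of $\pi$ satisfying $\omega_{\tilde\pi_1}(\varpi I_{d})^{r}=\omega_{\tilde\pi}(\varpi I_{d})^{r}$, but you then write $\tilde\pi_1=\tilde\pi\otimes(\psi\circ\det)$ and try to read this off as the strict-congruence condition ``for $\tilde\pi$ viewed on $G_{dr}$''. Two supercuspidals with isomorphic reductions are in general \emph{not} unramified twists of one another (already for $d=1$: characters congruent to $\chi$ differ from it by possibly ramified characters of $\ell$-power order), and $\tilde\pi$ is not a representation of $G_{dr}$, so this step does not make sense as written and the appeal to \cite[2.3]{VigAENS} ``with $a$ scaled by $r$'' is unjustified. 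What is actually needed, and what the paper proves, is the precise multiplicative relation $m(\delta)=\ell^{{\rm val}_\ell(r)}\,m(\pi)$: the condition $\omega_{\tilde\pi_1}(\varpi I_d)^{r}=\omega_{\tilde\pi}(\varpi I_d)^{r}$, combined with the congruence $\omega_{\tilde\pi_1}(\varpi I_d)\equiv\omega_{\tilde\pi}(\varpi I_d)\ [\ell]$, is equivalent to $\omega_{\tilde\pi_1}(\varpi I_d)^{\ell^{{\rm val}_\ell(r)}}=\omega_{\tilde\pi}(\varpi I_d)^{\ell^{{\rm val}_\ell(r)}}$, which enlarges the count of strictly congruent $\pi'$ by exactly the factor $\ell^{{\rm val}_\ell(r)}$. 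Since $a(\delta)=r\,a(\pi)$ contributes the same factor $\ell^{{\rm val}_\ell(r)}$ to the bound, the inequality $m(\delta)\le \ell^{{\rm val}_\ell(a(\delta))}$ and its equality case then follow directly from the cuspidal case applied to $\pi$; without the exact factor $\ell^{{\rm val}_\ell(r)}$ on the counting side, the equality criterion (hence the characterisation of $\ell$-superSpeh) cannot be transferred.
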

\begin{proof}

i) Par d\'efinition, une repr\'esentation de Speh se rel\`eve, \emph{cf.}
proposition \ref{defsuperSpeh}. V\'erifions que
ses rel\`evements sont tous des repr\'esentations de  Speh.
Soit $\tilde\delta$
un rel\`evement de $\delta=\delta_{r}(\pi)$. Posant $d'=d/r$,
on voit que  $r_{P_{d',\cdots,  d'}}(\tilde\delta)$ est un rel\`evement de
$\pi\otimes\pi\nu\otimes\cdots\otimes \pi{\nu^{r-1}}$. Or, par
exactitude des d\'eriv\'ees, on a
$\Lambda(\tilde\delta)=\Lambda(\pi)=\{(d',d',\cdots,d')\}$. Le
param\`etre de Zelevinsky de $\tilde\delta$ est donc  un segment,  donc
$\sigma$ est une repr\'esentation de Speh.

ii) 
Pour le pr\'eambule de la propri\'et\'e ii), partons de $\delta=\delta_{r}(\pi)$ une $\o\QM_{\ell}$-repr\'esentation
de Speh. On sait qu'elle est $\ell$-enti\`ere \ssi\ son support
cuspidal l'est donc \ssi\ $\pi$ est $\ell$-enti\`ere. On v\'erifie ais\'ement que cela
\'equivaut \`a ce que son caract\`ere central soit $\ell$-entier. Dans ce
cas, d'apr\`es la proposition \ref{defsuperSpeh}, sa r\'eduction est
irr\'eductible, \'egale par d\'efinition \`a $\delta_{r}(\o\pi)$ o\`u $\o\pi$
est la r\'eduction de $\pi$.

Passons aux propri\'et\'es ii)(a) et ii)(b). Gardons les notations
ci-dessus, et soit  $\delta'$ une repr\'esentation $\ell$-enti\`ere de m\^eme
r\'eduction que $\delta$. 
Par le point i), il existe une $\oQl$-repr\'esentation $\pi'$ cuspidale
enti\`ere de $G_{d/r}$ telle que  $\delta'=\delta_{r}(\pi')$. Par la
propri\'et\'e m1) d'une repr\'esentation de Speh, $\pi'$ est uniquement
d\'etermin\'ee par $\delta'$ et est congrue \`a $\pi$.
On obtient de la sorte une injection de l'ensemble des
repr\'esentations congrues \`a $\delta$ dans celui des
repr\'esentations congrues \`a $\pi$. D'apr\`es la proposition
\ref{defsuperSpeh}, c'est m\^eme une bijection.
Par l'\'egalit\'e $\delta(\varpi I_{d})= \pi(\varpi I_{d'})^{r}
q^{-d'r(r-1)/2}$, on en d\'eduit une bijection entre l'ensemble des
repr\'esentations \emph{strictement} congrues \`a $\delta$ et l'ensemble des
repr\'esentations $\pi'$ congrues \`a $\pi$ v\'erifiant $\pi(\varpi
I_{d})^{r}=\pi'(\varpi I_{d})^{r}$.
Notons que cette derni\`ere condition, jointe \`a la congruence 
$\pi(\varpi
I_{d})\equiv \pi'(\varpi I_{d})[\ell]$ \'equivaut \`a la condition 
$\pi(\varpi I_{d})^{\ell^{{\rm val}_\ell(r)}}=\pi'(\varpi I_{d})^{\ell^{ {\rm val}_\ell(r)}}$.
Il s'ensuit que $m(\delta)=\ell^{{\rm val}_\ell(r)}.m(\pi)$.

Par ailleurs, comme $\delta_{r}(\pi\otimes(\psi\circ {\rm det}))=
\delta_{r}(\pi)\otimes (\psi\circ {\rm det})$, on a $t(\delta)=t(\pi)$.

Notons $[n]_{ \ell}:= \ell^{{\rm val}_\ell(n)}$ la plus grande
puissance de $\ell$ divisant l'entier $n$.
On a obtenu l'in\'egalit\'e
$$ m(\delta))=[r]_{\ell} m(\pi)\leq [r]_{\ell}
\left[\frac{d'}{t(\pi)}(q^{t(\pi)}-1)\right]_{\ell} =
\left[\frac{d}{t(\delta)}(q^{t(\delta)}-1)\right]_{\ell}
$$
avec \'egalit\'e \ssi\ $\pi$ est $\ell$-supercuspidale, donc \ssi\
$\delta$ est $\ell$-superSpeh.
\end{proof}

\alin{Classification} \label{classification}
Un multi-$C$-segment (super)cuspidal est un multi-ensemble de
$C$-segments (super)cuspidaux, \emph{i.e.} un ensemble de $C$-segments avec multiplicit\'es, ou
en termes plus rigoureux,
un \'el\'ement du mono\"{\i}de libre
de base l'ensemble des $C$-segments au sens ci-dessus. Soit
$a=\{(\pi_{i},r_{i}), i=1,\cdots, t\}$ un multi-segment supercuspidal, o\`u
$\pi_{i}$ est une supercuspidale de $G_{d_{i}}$. Il d\'etermine une
partition $\lambda_{a}=(d_{1}^{(r_{1})}+\cdots + d_{t}^{(r_{t})})$ de
l'entier $d(a):=r_{1}d_{1}+\cdots + r_{t}d_{t}$, que nous appellerons ``longueur'' de $a$ .

Notons 
$\pi(a):=\delta_{r_{1}}(\pi_{1})\times\cdots \times
\delta_{r_{t}}(\pi_{t})$, qui est une repr\'esentation de $G_{d(a)}$,
induite depuis le parabolique standard associ\'ee \`a la partition
\emph{transpos\'ee} de $\lambda_{a}$.
L'ensemble $\Lambda(\pi(a))$ a pour plus grand
\'el\'ement $\lambda_{a}$
et on a $m_{\pi(a),\lambda_{a}}=1$. Par cons\'equent la
repr\'esentation $\pi(a)$ poss\`ede un unique sous-quotient irr\'eductible
$\langle a \rangle$ tel que $\lambda_{\langle a
  \rangle}=\lambda_{a}$.

Le th\'eor\`eme de classification affirme que 
\begin{center}
  \emph{l'application $a\mapsto \langle a\rangle$ induit une bijection
    de l'ensemble des multi-$C$-segments supercuspidaux de longueur
    $d$ sur l'ensemble $\Irr{C}{G_{d}}$.}
\end{center}

Lorsque $C=\o\QM_{\ell}$, c'est\cite[(6.1).d)]{Zel}, et lorsque
$C=\o\FM_{\ell}$, c'est \cite[V.12]{VigInduced} ou \cite{MinSech}.
Notons que la d\'efinition originale de $a$ et $\pi(a)$ dans
 \cite[Thm (6.1).a)]{Zel} coincide bien avec celle ci-dessus en vertu
 de \cite[Thm (8.1)]{Zel}.

\begin{prop} \label{propdecomp}
 Soit $G=\GL_{d}(K)$ et $C=\o\FM_{\ell}$ ou $\o\QM_{\ell}$. 
  Notons $\RC_{I}(G,C)$ le sous-groupe de
  $\RC(G,C)$  engendr\'e par les repr\'esentations induites 
paraboliques propres et $\RC_{\Delta}(G,C)$ celui engendr\'e par les
repr\'esentations superSpeh. On a une d\'ecomposition
$$ \RC(G,C)= \RC_{I}(G,C)\oplus \RC_{\Delta}(G,C).$$
\end{prop}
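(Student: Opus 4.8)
Le th\'eor\`eme de classification fournit une $\ZM$-base de $\RC(G,C)$, \`a savoir la famille $\{[\la a\ra]\}_{a}$ o\`u $a$ parcourt les multi-$C$-segments supercuspidaux de longueur $d$. Le plan est de montrer que la famille des « modules standard » $\{[\pi(a)]\}_{a}$ en forme une autre, puis d'identifier $\RC_\Delta$ au sous-$\ZM$-module engendr\'e par les $[\pi(a)]$ tels que $a$ poss\`ede un unique segment, et $\RC_I$ \`a celui engendr\'e par les $[\pi(a)]$ tels que $a$ en poss\`ede au moins deux. Ces deux familles formant une partition de la base des modules standard, la d\'ecomposition $\RC(G,C)=\RC_I\oplus\RC_\Delta$ en r\'esulte aussit\^ot.

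Que $\{[\pi(a)]\}_{a}$ soit une $\ZM$-base d\'ecoule de l'argument m\^eme du th\'eor\`eme de classification : on a $[\pi(a)]=[\la a\ra]+\sum_{b}m_{ab}[\la b\ra]$ avec $m_{ab}\neq 0 \Rightarrow \l_{b}<\l_{a}$ (\emph{cf.} \cite[(7.1)]{Zel} pour $C=\o\QM_{\ell}$, \cite[V.12]{VigInduced} pour $C=\o\FM_{\ell}$). Comme $\la a\ra$ et $\pi(a)$ ont le m\^eme support cuspidal et que, \`a support cuspidal fix\'e, il n'y a qu'un nombre fini de multisegments $a$, cette matrice de passage se d\'ecompose en blocs finis unitriangulaires pour l'ordre $\l_{b}<\l_{a}$, donc est inversible sur $\ZM$. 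Par ailleurs, notant $t(a)$ le nombre de segments de $a$, on constate qu'une $\la a\ra$ est superSpeh \ssi\ $t(a)=1$ ; dans ce cas, si $a=\{(\pi,r)\}$ avec $\pi$ supercuspidale, la repr\'esentation $\pi(a)=\delta_{r}(\pi)=\la a\ra$ est d\'ej\`a irr\'eductible. D'o\`u $\RC_\Delta=\bigoplus_{t(a)=1}\ZM\,[\pi(a)]$.

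Enfin, si $t(a)\geq 2$ alors $\pi(a)=\delta_{r_{1}}(\pi_{1})\times\cdots\times\delta_{r_{t}}(\pi_{t})$ est une induite parabolique propre, d'o\`u l'inclusion $\bigoplus_{t(a)\geq 2}\ZM\,[\pi(a)]\subset\RC_{I}$. R\'eciproquement, par exactitude de l'induction parabolique et $\ZM$-multilin\'earit\'e du produit $\times$ sur les groupes de Grothendieck, toute induite parabolique propre est combinaison $\ZM$-lin\'eaire d'induites $\la a_{1}\ra\times\cdots\times\la a_{s}\ra$ avec $s\geq 2$, chaque $\la a_{i}\ra$ irr\'eductible de $G_{e_{i}}$ et $e_{1}+\cdots+e_{s}=d$, $e_{i}\geq 1$. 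En d\'eveloppant chaque $[\la a_{i}\ra]$ dans la base des modules standard, puis en utilisant la commutativit\'e de $\times$ au niveau du groupe de Grothendieck (\emph{cf.} \cite[\S 1]{Zel} et \cite{Vig}) — qui donne $[\pi(b_{1})\times\cdots\times\pi(b_{s})]=[\pi(b_{1}\sqcup\cdots\sqcup b_{s})]$, o\`u $\sqcup$ d\'esigne la r\'eunion des multisegments — on obtient une combinaison $\ZM$-lin\'eaire de $[\pi(b)]$ avec $t(b)=t(b_{1})+\cdots+t(b_{s})\geq s\geq 2$. Donc $\RC_{I}=\bigoplus_{t(a)\geq 2}\ZM\,[\pi(a)]$, ce qui ach\`eve la preuve. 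Le pas le plus d\'elicat est ce dernier : s'assurer que le d\'eveloppement d'une induite propre dans la base des modules standard ne fait intervenir que des $\pi(b)$ \`a au moins deux segments, ce qui repose sur la compatibilit\'e de $a\mapsto\pi(a)$ \`a la r\'eunion des multisegments, modulo le groupe de Grothendieck.
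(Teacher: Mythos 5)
Votre preuve est correcte et suit essentiellement la m\^eme route que celle du texte : toutes deux reposent sur le th\'eor\`eme de classification, la relation unitriangulaire entre $[\pi(a)]$ et $[\langle a\rangle]$, et l'identification des induites paraboliques propres avec les modules standard $\pi(b)$ \`a au moins deux segments (via la compatibilit\'e de $a\mapsto[\pi(a)]$ \`a la r\'eunion des multisegments, que le texte utilise aussi implicitement). Vous pr\'esentez cela comme un changement de $\ZM$-base partitionn\'ee selon le nombre de segments, l\`a o\`u le texte proc\`ede par une double r\'ecurrence (sur la partition de Whittaker, puis sur $d$), mais les ingr\'edients et la structure de l'argument sont les m\^emes.
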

\begin{proof}
Pour une partition $\lambda$ de $n$, notons $\RC(G,C)_{\leq \lambda}$
le sous-groupe de $\RC(G,C)$ engendr\'e par les classes de repr\'esentations
irr\'eductibles $\pi$ telles que $\lambda_{\pi}\leq \lambda$. 
Soit $\pi$ une telle repr\'esentation et $a$ le multisegment associ\'ee
par le th\'eor\`eme de classification rappel\'e ci-dessus. On a donc
$\pi=\langle a\rangle$. Si $\pi$ n'est pas superSpeh,
alors la repr\'esentation $\pi(a)$ est induite parabolique propre, et
par construction on a
$$ [\pi] =[\langle a\rangle] \in [\pi(a)] + \sum_{\lambda'<\lambda}
\RC(G,C)_{\leq \lambda'}.$$
On en d\'eduit que
$$\RC(G,C)_{\leq \lambda} \subset \RC_{\Delta}(G,C) + \RC_{I}(G,C) +
\sum_{\lambda' < \lambda}
  \RC(G,C)_{\leq \lambda'}.$$
Puisque $\RC(G,\lambda)_{\leq 1^{(d)}}\subset \RC_{\Delta}(G,\lambda)$,
et puisque l'ensemble des partitions est fini, il vient par r\'ecurrence l'\'egalit\'e
\begin{equation}
\RC(G,C)= \RC_{I}(G,C) + \RC_{\Delta}(G,C).\label{eq:1}
\end{equation}
Reste \`a voir que
$\RC_{I}(G,C) \cap \RC_{\Delta}(G,C)=\{0\}$. Pour cela, il suffit de
voir que $\RC_{I}(G,C)$ est engendr\'e par les repr\'esentations $\pi(a)$
associ\'ees aux multisegments $a$ contenant au moins deux segments.
Or, par r\'ecurrence sur $d$, l'\'egalit\'e (\ref{eq:1}) implique la
suivante 
$$\RC_{I}(G,C)= \sum_{M<G}i_{M}^{G}(\RC_{\Delta}(M,C)).$$
Ici, $M$ d\'ecrit les sous-groupes de Levi diagonaux par blocs et
$i_{M}^{G}$ d\'esigne l'induction parabolique normalis\'ee le long du
parabolique sup\'erieur.
Mais si $\delta=\delta_{1}\otimes\cdots\otimes\delta_{r}$ est une
repr\'esentation superSpeh de $M =G_{d_{1}}\times\cdots\times
G_{d_{r}}$, avec $\delta_{i}=\delta_{r_{i}}(\pi_{i})$, alors
 $[i_{M}^{G}(\delta)]=[\pi(a)]$ pour $a =
 \{(\pi_{i},r_{i}), i=1,\cdots, r\}$.


\end{proof}

\begin{coro} \label{rlsurjectif}
  Pour le groupe $G=\GL_{d}(K)$, le morphisme de r\'eduction de
  Brauer-Nesbitt $r_{\ell}:\, \RC^{\rm
  ent}_{\ell'}(G,\o\QM_{\ell})\To{}\RC(G,\o\FM_{\ell})$ est surjectif.
\end{coro}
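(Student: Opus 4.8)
The plan is to reduce, via the classification of \S\ref{classification} and the triangularity already used in the proof of Proposition~\ref{propdecomp}, to hitting each representation $\pi(a)$, and then to lift $\pi(a)$ after correcting its central character by an unramified twist. First I would record the standard facts I need: normalized parabolic induction $i_M^G$ preserves $\ell$-integrality and commutes with $r_\ell$; a representation with $\ell$-integral cuspidal support is $\ell$-integral (\cite[II.4.14]{Vig}); $\nu$ is $\oZl^\times$-valued (as $\ell\ne p$, $q\in\oZl^\times$); and twisting by an $\oZl^\times$-valued character of $\det$ preserves $\ell$-integrality and commutes with $r_\ell$. Using the filtration of $\RC(G,\oFl)$ by the $\RC(G,\oFl)_{\leq\lambda}$ together with the relation $[\langle a\rangle]\in[\pi(a)]+\sum_{\lambda'<\lambda_a}\RC(G,\oFl)_{\leq\lambda'}$ established in the proof of Proposition~\ref{propdecomp}, the classes $[\pi(a)]$, for $a$ a supercuspidal multi-$\oFl$-segment of length $d$, span $\RC(G,\oFl)$; since $r_\ell(\RC^{\rm ent}_{\ell'}(G,\oQl))$ is a subgroup, it suffices to put each $[\pi(a)]$ in it.

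The technical heart, which I would isolate first, is the claim that \emph{every cuspidal $\oFl$-representation $\bar\pi$ of $G_m=\GL_m(K)$ admits a lift $\tilde\pi$ to $\oQl$ that is $\ell$-integral, supercuspidal, reduces to $\bar\pi$, and has central character of finite order prime to $\ell$.} One starts from a supercuspidal lift $\tilde\pi_0$ of $\bar\pi$ (\cite[III.5.10.2]{Vig}), which is automatically $\ell$-integral, hence has $\oZl^\times$-valued central character. Since $\omega_{\bar\pi}$ has finite order prime to $\ell$, the Teichmüller splitting $\oZl^\times=\mu^{\ell'}(\oZl)\times(1+\mathfrak{m}_{\oZl})$ and the canonical lifting of $\oFl^\times$-valued characters of $\OC_K^\times$ let one fix a lift $\omega$ of $\omega_{\bar\pi}$ to $K^\times$ of finite order prime to $\ell$; it then remains to find a supercuspidal lift $\tilde\pi$ of $\bar\pi$ with $\omega_{\tilde\pi}=\omega$. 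I would obtain this from the theory of simple types (lift a maximal simple type of $\bar\pi$ so that its restriction to the center is the prescribed $\omega$), or — at least when $\ell\nmid q-1$ — simply by twisting $\tilde\pi_0$ by the character of $\det$ valued in $1+\mathfrak{m}_{\oZl}$ that extracts the needed $m$-th root. This is the step I expect to be the main obstacle; everything after it is bookkeeping with central characters and a reindexing of multisegments.

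Granting the claim: fix $a=\{(\bar\pi_j,r_j),\,j=1,\dots,t\}$ with $\bar\pi_j$ supercuspidal of $G_{m_j}$, so $d=\sum_j r_jm_j$, and set $N:=\sum_j m_j\,r_j(r_j-1)/2$. Lift each $\bar\pi_j$ to $\tilde\pi_j$ as in the claim and put $\Pi:=\delta_{r_1}(\tilde\pi_1)\times\cdots\times\delta_{r_t}(\tilde\pi_t)$. Then $\Pi$ is $\ell$-integral, and by the Proposition of \S\ref{defsuperSpeh} together with compatibility of parabolic induction with $r_\ell$ one has $r_\ell(\Pi)=\delta_{r_1}(\bar\pi_1)\times\cdots\times\delta_{r_t}(\bar\pi_t)=\pi(a)$. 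As $\delta_P$ is trivial on $Z(G)$ and $\nu$ is unramified, $\omega_\Pi$ equals $\prod_j\omega_{\tilde\pi_j}^{r_j}$ on $\OC_K^\times$ — of finite order prime to $\ell$ there — while $\omega_\Pi(\varpi I_d)=\zeta\,q^{-N}$ with $\zeta:=\prod_j\omega_{\tilde\pi_j}(\varpi)^{r_j}\in\mu^{\ell'}(\oZl)$, the power of $q$ coming from the $\nu^i$-shifts inside the Speh segments. Choosing an unramified $\tilde\chi$ with $\tilde\chi(\varpi)$ a $d$-th root of $q^N$ in $\oZl^\times$ and setting $\Pi':=\Pi\otimes(\tilde\chi\circ\det)$, one gets an $\ell$-integral $\Pi'$ with $\omega_{\Pi'}(\varpi I_d)=\zeta\,q^{-N}\tilde\chi(\varpi)^d=\zeta$, so $\omega_{\Pi'}$ has finite order prime to $\ell$. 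Every irreducible subquotient of $\Pi'$ carries this same central character and has the ($\ell$-integral) cuspidal support of $\Pi'$, hence is $\ell$-integral by \cite[II.4.14]{Vig}; therefore $[\Pi']\in\RC^{\rm ent}_{\ell'}(G,\oQl)$ and $r_\ell(\Pi')=\pi(a)\otimes(\bar\chi\circ\det)=\pi(a')$, where $\bar\chi:=r_\ell(\tilde\chi)$ is unramified of finite order prime to $\ell$ and $a'=\{(\bar\pi_j\otimes(\bar\chi\circ\det),r_j)\}$.

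Finally I would observe that every supercuspidal multisegment $b=\{(\bar\sigma_j,r_j)\}$ arises as such an $a'$: its shape $(m_j,r_j)_j$ determines $d$ and $N$, so fixing a $d$-th root $t$ of $q^N$ in $\oZl^\times$, putting $\tilde\chi(\varpi)=t$ and $\bar\chi:=r_\ell(\tilde\chi)$, and running the previous paragraph on $a:=\{(\bar\sigma_j\otimes(\bar\chi^{-1}\circ\det),r_j)\}$ — legitimate, since each $\bar\sigma_j\otimes(\bar\chi^{-1}\circ\det)$ is supercuspidal and lifts by the claim — with this very $\tilde\chi$, one obtains $[\Pi']\in\RC^{\rm ent}_{\ell'}(G,\oQl)$ with $r_\ell(\Pi')=\pi(a')$ and $a'=\{(\bar\sigma_j\otimes(\bar\chi^{-1}\bar\chi\circ\det),r_j)\}=b$. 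Hence $[\pi(b)]\in r_\ell(\RC^{\rm ent}_{\ell'}(G,\oQl))$ for every $b$, and since these classes span $\RC(G,\oFl)$, the map $r_\ell$ is surjective.
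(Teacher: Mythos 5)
Your route is at bottom the same as the paper's: use the triangularity of the classification of \ref{classification} (as in the proof of Proposition \ref{propdecomp}) to reduce to hitting the classes $[\pi(a)]$, equivalently the superSpeh classes after induction on $d$, and realize these as reductions of $\delta_{r_1}(\tilde\pi_1)\times\cdots\times\delta_{r_t}(\tilde\pi_t)$ for cuspidal lifts $\tilde\pi_j$. The paper disposes of this in one line. What you add --- and what is genuinely needed for the statement as written, since the source is $\RC^{\rm ent}_{\ell'}$ and not merely $\RC^{\rm ent}$ --- is the control of the central character: by the formula $\delta_{r}(\tilde\pi)(\varpi I_{d})=\tilde\pi(\varpi I_{d'})^{r}q^{-d'r(r-1)/2}$ used in Lemma \ref{lemrelev}, a Speh representation with $r\geq 2$ never has central character of finite order, so the unramified twist by $\tilde\chi$ and the reindexing $a\mapsto a'$ of multisegments that you carry out are unavoidable, and your bookkeeping there (including the fact that every irreducible constituent of $\Pi'$ is integral with the same central character, hence lies in the generating set of $\RC^{\rm ent}_{\ell'}$) is correct.

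The one step you have not actually proved is your Claim: the existence of a supercuspidal lift of $\bar\pi$ whose central character has finite order prime to $\ell$. The essential content is its restriction to $\OC_K^{\times}$, since the value at $\varpi$ can always be repaired by a twist with values in the divisible group $1+\mathfrak{m}_{\oZl}$, or absorbed into your final unramified twist $\tilde\chi$. When $\ell\nmid q-1$ the condition on $\OC_K^{\times}$ is automatic for \emph{any} lift, because $\OC_K^{\times}\simeq \mu_{q-1}\times(1+\PC_K)$ then admits no nontrivial smooth character with values in $\mu_{\ell^{\infty}}$. When $\ell\mid q-1$ the claim is still true, but it requires the finer form of the lifting result \cite[III.5.10.2)]{Vig} via types (lift the level-zero part of a maximal simple type using a Teichm\"uller lift of its parameter), which you only gesture at. So, as written, the proof has a fillable gap exactly at the point you flagged; everything else is sound.
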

\begin{proof}
Comme l'induction parabolique commute \`a $r_{\ell}$, 
 une r\'ecurrence sur $d$ nous ram\`ene \`a prouver que $\RC_{\Delta}(G,\o\FM_\ell)$ est
 dans l'image de $r_{\ell}$, ce qui d\'ecoule de la d\'efinition
 \ref{defsuperSpeh} des repr\'esentations superSpeh..
\end{proof}

\subsection{Repr\'esentations de $D^{\times}$}
Ici $D$ d\'esigne une alg\`ebre \`a division de centre $K$ et de dimension
$d^{2}$. Le groupe $D^{\times}$ \'etant compact modulo son centre,
certaines propri\'et\'es des groupes finis s'y \'etendent facilement.

\begin{prop}\label{propD1}
  \begin{enumerate}
  \item L'homomorphisme
    $\tilde\theta:\,\RC(D^{\times},\o\FM_{\ell})\To{}
    \CC^{\infty}(D^{\rm ell},\o\ZM_{\ell})^{D^{\times}}$ est injectif.
  \item Toute $\o\FM_{\ell}$-repr\'esentation irr\'eductible se rel\`eve \`a
    $\o\QM_{\ell}$. En particulier, l'homorphisme de r\'eduction
    $r_{\ell}:\; \RC(D^{\times},\o\QM_{\ell})\To{}
    \RC(D^{\times},\o\FM_{\ell})$ est surjectif.
  \end{enumerate}
\end{prop}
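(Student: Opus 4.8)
The plan is to reduce both assertions to classical facts about finite groups, exploiting that $D^{\times}$ is compact modulo its centre $Z:=Z(D^{\times})=K^{\times}$.

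I first record a reduction common to both parts. Any smooth irreducible $\o\FM_{\ell}$-representation $\rho$ of $D^{\times}$ is finite-dimensional: by Schur's lemma it has a central character $\o\omega$, and since $\varpi_{D}^{d}\in K^{\times}$ is central, $\rho$ is the span of the $\varpi_{D}^{j}$-translates ($0\leq j<d$) of the finite-dimensional space generated by an $\OC_{D}^{\times}$-fixed vector. Hence $\ker\rho$ is open and contains $1+\mathfrak p_{D}^{m}$ for some $m$; and as $\o\omega(\varpi_{K})$ is a root of unity of some order $n_{0}$ prime to $\ell$ (the group $\o\FM_{\ell}^{\times}$ has no $\ell$-torsion), $\rho$ factors through the finite quotient $\Gamma:=D^{\times}/\langle\varpi_{K}^{n_{0}}\rangle(1+\mathfrak p_{D}^{m})$. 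This $\Gamma$ is solvable, being an extension of the solvable group $\OC_{D}^{\times}/(1+\mathfrak p_{D}^{m})$ by a cyclic group. By the Fong--Swan theorem for finite solvable groups, $\rho$ viewed on $\Gamma$ is the reduction modulo $\ell$ of an irreducible $\o\QM_{\ell}$-representation of $\Gamma$, whose inflation to $D^{\times}$ is a smooth, $\o\ZM_{\ell}$-integral lift $\tilde\rho$ of $\rho$. Since $\RC(D^{\times},\o\FM_{\ell})$ is generated by the classes of irreducibles, this proves (ii), including the surjectivity of $r_{\ell}$.

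For (i): since $D$ is a division algebra one has $D^{\rm rs}=D^{\rm ell}$, so $\tilde\theta$ is defined on $D^{\rm ell}_{\ell'}$ and extends to $D^{\rm ell}$ just as in the discussion preceding Proposition \ref{ellelements}; this extension restricts to $\tilde\theta$ on $D^{\rm ell}_{\ell'}$, so it suffices to show $\tilde\theta$ is injective on $D^{\rm ell}_{\ell'}$. Let $x\in\RC(D^{\times},\o\FM_{\ell})$ and choose, as above, integers $m$ and $N$ such that every irreducible constituent of $x$ is trivial on $\langle\varpi_{K}^{N}\rangle(1+\mathfrak p_{D}^{m})$; then $x$ is the inflation of some $\bar x\in\RC(\Gamma,\o\FM_{\ell})$, $\Gamma:=D^{\times}/\langle\varpi_{K}^{N}\rangle(1+\mathfrak p_{D}^{m})$, and inflation is injective on Grothendieck groups. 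For $\gamma\in D^{\rm ell}_{\ell'}$, taking the auxiliary pro-$p$ subgroup $H$ in (\ref{caracthetatilde}) inside $1+\mathfrak p_{D}^{m}$ (so that $V^{H}=V$ on the space of $x$) gives $\tilde\theta_{x}(\gamma)=\tilde\tr(\bar x(\bar\gamma))$, where $\bar\gamma$ is the image of $\gamma$ in $\Gamma$. Writing $\bar\gamma=\bar\gamma_{\ell}\bar\gamma_{\ell'}$ for the $\ell$-decomposition in the finite group $\Gamma$ and using that $\tilde\tr$ only feels the $\ell$-regular part of a finite-order operator (the elementary mechanism underlying Lemma \ref{lemmeperm}), we get $\tilde\tr(\bar x(\bar\gamma))=\tilde\tr(\bar x(\bar\gamma_{\ell'}))=\tilde\theta^{\Gamma}_{\bar x}(\bar\gamma_{\ell'})$, the classical Brauer character of $\bar x$ evaluated at the $\ell$-regular element $\bar\gamma_{\ell'}$.

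Thus injectivity comes down to showing that $\bar\gamma_{\ell'}$ runs through all $\ell$-regular conjugacy classes of $\Gamma$ as $\gamma$ runs through $D^{\rm ell}_{\ell'}$; this is where density of regular elements enters. Given an $\ell$-regular class $c$ of $\Gamma$, lift a representative to $\gamma_{0}\in D^{\times}$; the coset $\gamma_{0}(1+\mathfrak p_{D}^{m})$ is open and $D^{\rm rs}$ is open and dense, so the coset contains some $\gamma\in D^{\rm rs}$, and then $\bar\gamma=c$. Such a $\gamma$ is automatically of pro-order prime to $\ell$ modulo $Z$: the $\ell$-part of $\gamma K^{\times}$ in $D^{\times}/K^{\times}$ would map to a nontrivial $\ell$-element of $\bar\Gamma:=D^{\times}/K^{\times}(1+\mathfrak p_{D}^{m})$ unless it is trivial, since that image must divide the ($\ell$-regular, hence trivial) $\ell$-part of the image of $c$ in $\bar\Gamma$, while the kernel of $D^{\times}/K^{\times}\to\bar\Gamma$ is the pro-$p$ group $(1+\mathfrak p_{D}^{m})K^{\times}/K^{\times}$. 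Hence $\gamma\in D^{\rm rs}_{\ell'}=D^{\rm ell}_{\ell'}$ and $\bar\gamma_{\ell'}=\bar\gamma=c$ (as $c$ is $\ell$-regular). Therefore $\tilde\theta_{x}\equiv 0$ on $D^{\rm ell}_{\ell'}$ forces the Brauer character of $\bar x$ to vanish on all $\ell$-regular classes of $\Gamma$, so $\bar x=0$ by the classical linear independence of Brauer characters, and $x=0$. The only substantial point is this matching of $\ell$-regular classes, which as indicated rests on density of $D^{\rm rs}$ together with the observation that a regular preimage of an $\ell$-regular class is automatically $\ell'$.
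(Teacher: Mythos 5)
Your proof is correct and follows essentially the same route as the paper's: both parts reduce to a finite (solvable) quotient of $D^{\times}$ through which the representations factor, with (ii) being Fong--Swan and (i) coming from the density of regular elements (matching $\ell$-regular classes of the quotient) together with the classical linear independence of Brauer characters. You merely supply the details the paper leaves implicit, notably the precise finite quotient $\Gamma$ and the check that a regular preimage of an $\ell$-regular class lies in $D^{\rm ell}_{\ell'}$.
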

\begin{proof}
i) Soit $x\in\ker(\tilde\theta)$. Comme les
$\o\FM_{\ell}$-repr\'esentations lisses irr\'eductibles de $D^{\times}$
sont d'image finie,  l'\'el\'ement $x$ est
l'inflation d'une repr\'esentation virtuelle $y$ d'un quotient fini $D^{\times}/H$ de
$D^{\times}$ par un pro-$p$-groupe. Les \'el\'ements elliptiques \'etant denses dans $D^{\times}$,
la projection induit une surjection de $D^{\rm ell}_{\ell'}$ sur
l'ensemble des \'el\'ements d'ordre premier \`a $\ell$ de $D^{\times}/H$. Ainsi le
caract\`ere de Brauer (usuel) de $y$ est nul, et par cons\'equent $y=0$ (voir
par exemple \cite[Cor. (17.10)]{CR}). Donc $x=0$ aussi.

ii) C'est le th\'eor\`eme de Fong-Swan appliqu\'e aux quotients finis de
$D^{\times}$, lesquels sont tous r\'esolubles.
\end{proof}

Dans la proposition ci-dessous, on utilise la m\^eme terminologie que
celle du paragraphe \ref{terminologie}, sauf les notions de
$\ell$-supercuspidales et $\ell$-superSpeh, qui n'ont gu\`ere
d'int\'er\^et ici. 
En particulier, on sait que  $\rho\in\Irr{\o\QM_{\ell}}{D^{\times}}$
est $\ell$-enti\`ere \ssi\ son caract\`ere central l'est, et dans ce cas, on note
\begin{itemize}
\item $t(\rho)$ le nombre de caract\`eres $\psi:\,
  K^{\times}\To{}\o\QM_{\ell}^{\times}$ tels que $\rho\simeq
  \rho\otimes(\psi\circ{\rm Nrd})$.
\item $m(\rho)$ le nombre de repr\'esentations irr\'eductibles qui sont
  strictement congrues \`a $\rho$.
\item $a(\rho):=\frac{d}{t(\rho)}(q^{t(\rho)}-1)$.
\end{itemize}

\begin{prop}\label{propD2}
  Soit $\rho\in\Irr{\o\QM_{\ell}}{D^{\times}}$ $\ell$-enti\`ere. Alors $m(\rho)$ est
  inf\'erieur \`a la plus grande puissance de $\ell$ qui divise $a(\rho)$,
  et lui est \'egal \ssi\ $\rho$ est $\ell$-irr\'eductible.
\end{prop}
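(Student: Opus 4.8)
Le plan est de d\'eduire l'\'enonc\'e du lemme~\ref{lemrelev} via la correspondance de Jacquet--Langlands classique. On commencerait par \'ecrire $\JL_{\oQl}(\rho)=Z_{\oQl}(\delta_r(\sigma))$ pour une unique paire $(\sigma,r)$, avec $r\geq 1$ et $\sigma\in\Cu{\oQl}{\GL_{d/r}(K)}$ : l'image de $\JL_{\oQl}$ est l'ensemble des s\'eries discr\`etes de $\GL_d(K)$, \'echang\'ees avec les repr\'esentations de Speh par l'involution de Zelevinsky $Z_{\oQl}$, de sorte que $Z_{\oQl}(\JL_{\oQl}(\rho))=\delta_r(\sigma)$. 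Comme $\JL_{\oQl}$ et $Z_{\oQl}$ pr\'eservent les caract\`eres centraux et commutent aux torsions par les caract\`eres de $K^{\times}$ (tir\'es en arri\`ere par $\mathrm{Nrd}$ sur $D^{\times}$, par $\mathrm{det}$ sur $\GL_d(K)$), et comme chacune de $\rho$, $\delta_r(\sigma)$, $\sigma$ est $\ell$-enti\`ere \ssi\ son caract\`ere central l'est, on obtient que $\rho$ est $\ell$-enti\`ere \ssi\ $\sigma$ l'est. De m\^eme $t(\rho)=t\big(Z_{\oQl}(\JL_{\oQl}(\rho))\big)=t(\delta_r(\sigma))=t(\sigma)$, la derni\`ere \'egalit\'e faisant partie du lemme~\ref{lemrelev} ; d'o\`u $a(\rho)=a(\delta_r(\sigma))=r\,a(\sigma)$ par le lemme~\ref{lemrelev}, et en particulier $[a(\rho)]_{\ell}=[a(\delta_r(\sigma))]_{\ell}$.

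L'essentiel de l'argument serait ensuite de montrer que $m(\rho)=m(\delta_r(\sigma))$. L'application $Z_{\oQl}\circ\JL_{\oQl}$ est une bijection de $\Irr{\oQl}{D^{\times}}$ sur l'ensemble des repr\'esentations de Speh de $\GL_d(K)$, et elle pr\'eserve les caract\`eres centraux, donc en particulier la valeur en $\varpi I_d$. J'affirmerais qu'elle pr\'eserve aussi la congruence stricte dans les deux sens : $\rho$ et $\rho'$ sont strictement congrues exactement lorsque $Z_{\oQl}(\JL_{\oQl}(\rho))$ et $Z_{\oQl}(\JL_{\oQl}(\rho'))$ le sont. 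C'est l'\'etape que j'anticipe comme principal obstacle : elle doit se d\'eduire de la compatibilit\'e entre la classification de $\Irr{\oQl}{D^{\times}}$ par types simples (de mani\`ere \'equivalente, par paires admissibles) et la classification de Bushnell--Kutzko des s\'eries discr\`etes de $\GL_d(K)$ --- les deux \'etant construites \`a partir des m\^emes caract\`eres simples attach\'es aux m\^emes extensions de corps $E/K$, correspondance sous laquelle les $\oZl$-r\'eseaux stables et leurs r\'eductions se correspondent. En particulier, $\rho'$ strictement congrue \`a $\rho$ force $\rho'$ \`a avoir le m\^eme entier $r$ et une partie cuspidale $\sigma'$ strictement congrue \`a $\sigma$ au sens de la preuve du lemme~\ref{lemrelev} (c'est-\`a-dire $\sigma'$ congrue \`a $\sigma$ avec $\sigma'(\varpi I_{d/r})^{r}=\sigma(\varpi I_{d/r})^{r}$). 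Ceci acquis, toute repr\'esentation de $\GL_d(K)$ strictement congrue \`a $\delta_r(\sigma)$ est elle-m\^eme une repr\'esentation de Speh : \'etant $\oZl$-enti\`ere de r\'eduction $r_{\ell}(\delta_r(\sigma))=\delta_r(\o\sigma)$ (proposition~\ref{defsuperSpeh}), c'est un rel\`evement de la repr\'esentation de Speh $\delta_r(\o\sigma)$, donc une repr\'esentation de Speh (les rel\`evements d'une repr\'esentation de Speh \'etant de ce type, cf. le lemme~\ref{lemrelev}). Donc $Z_{\oQl}\circ\JL_{\oQl}$ induit une bijection entre les $\rho'$ strictement congrues \`a $\rho$ et les irr\'eductibles strictement congrues \`a $\delta_r(\sigma)$, d'o\`u $m(\rho)=m(\delta_r(\sigma))$.

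Pour conclure, on invoquerait le lemme~\ref{lemrelev} : $m(\rho)=m(\delta_r(\sigma))\leq[a(\delta_r(\sigma))]_{\ell}=[a(\rho)]_{\ell}$, avec \'egalit\'e \ssi\ $\delta_r(\sigma)$ est $\ell$-superSpeh ; et comme $\delta_r(\sigma)$ a toujours une r\'eduction irr\'eductible $\delta_r(\o\sigma)$, cette derni\`ere condition signifie pr\'ecis\'ement que $\o\sigma$ est supercuspidale, de mani\`ere \'equivalente que $\sigma$ est $\ell$-supercuspidale. Il resterait \`a relier cela \`a la $\ell$-irr\'eductibilit\'e de $\rho$, c'est-\`a-dire \`a l'irr\'eductibilit\'e de $\o\rho$ : par la m\^eme comparaison de param\`etres, $\o\rho$ est r\'eductible exactement lorsque le caract\`ere qui param\`etre $\rho$ cesse d'\^etre r\'egulier modulo $\ell$, ce qui est exactement la condition sous laquelle $\o\sigma$ cesse d'\^etre supercuspidale. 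On obtiendrait ainsi $m(\rho)=[a(\rho)]_{\ell}$ \ssi\ $\rho$ est $\ell$-irr\'eductible, comme voulu. Une autre voie --- et c'est celle que j'emprunterais en fait pour \'etablir les compatibilit\'es invoqu\'ees ci-dessus --- consiste \`a contourner Jacquet--Langlands et \`a reproduire l'argument de \cite[2.3]{VigAENS} directement sur $D^{\times}$, ce qui est m\^eme plus l\'eger ici puisque les quotients r\'eductifs finis attach\'es aux types simples de $D^{\times}$ sont des tores ; cela ram\`ene le calcul de $m(\rho)$ et de $t(\rho)$ \`a de la th\'eorie de Clifford sur ces quotients.
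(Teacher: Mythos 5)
Votre voie principale repose sur l'\'egalit\'e $m(\rho)=m(\delta_r(\sigma))$ pour $\delta_r(\sigma)=Z_{\oQl}(\JL_{\oQl}(\rho))$, et c'est l\`a que se trouve un vrai trou. L'in\'egalit\'e $m(\delta_r(\sigma))\leq m(\rho)$ est acquise (la congruence sur $G$ entra\^{\i}ne la congruence sur $D^{\times}$ via le second diagramme du th\'eor\`eme \ref{theoLJ}, et votre argument montrant que toute repr\'esentation strictement congrue \`a $\delta_r(\sigma)$ est encore de Speh est correct). Mais l'in\'egalit\'e inverse exige que $\rho'$ congrue \`a $\rho$ force $\delta'=Z_{\oQl}(\JL_{\oQl}(\rho'))$ \`a \^etre congrue \`a $\delta_r(\sigma)$ ; or de $r_{\ell}^{D^{\times}}[\rho']=r_{\ell}^{D^{\times}}[\rho]$ on ne tire que l'\'egalit\'e des images par $\LJ_{\oFl}$ de $r_{\ell}[\delta']$ et $r_{\ell}[\delta]$, et $\LJ_{\oFl}$ a un noyau \'enorme ($\RC_{I}$) ; si $\o\sigma'$ n'est pas supercuspidale, $\delta_r(\o\sigma')$ n'est pas superSpeh et rien n'emp\^eche a priori $r_{\ell}[\delta']\neq r_{\ell}[\delta]$. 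Dans l'\'economie du papier, cette implication inverse (deux $\ell$-superSpeh dont les correspondantes sont congrues sont congrues) est pr\'ecis\'ement ce qui est \emph{d\'eduit} de la proposition \ref{propD2} et du lemme \ref{lemrelev} \`a la fin de la preuve du th\'eor\`eme \ref{ptesLJ} : l'utiliser ici rend l'argument circulaire. La compatibilit\'e ``type par type'' de $\JL_{\oQl}$ avec les r\'eseaux entiers que vous invoquez pour combler ce trou n'est pas disponible \`a ce niveau de g\'en\'eralit\'e (c'est essentiellement une description explicite de la correspondance de Jacquet--Langlands, bien plus forte que ce dont on dispose), et la m\^eme objection vaut pour votre derni\`ere assertion identifiant la r\'eductibilit\'e de $\o\rho$ \`a la non-supercuspidalit\'e de $\o\sigma$.

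La ``autre voie'' que vous mentionnez en une phrase finale est en fait celle du papier, et elle est enti\`erement locale \`a $D^{\times}$ : on \'ecrit $\rho=\cind{J}{D^{\times}/\varpi^{\ZM}}{\tilde\tau_{|J}\otimes\chi}$ \`a partir d'un facteur irr\'eductible $\tau$ de $\rho_{|1+\PC_{D}}$, de son normalisateur $N_{\tau}$ et d'un prolongement $\tilde\tau$, puis on calcule par th\'eorie de Clifford $t(\rho)=[D^{\times}/\varpi^{\ZM}:\OC_{D}^{\times}J]$, la longueur de $r_{\ell}(\rho)$ (\'egale \`a $[N_{\tau}\cap N_{r_{\ell}(\chi)}:J]$), et $m(\rho)=m(\chi)$ exactement quand $N_{\tau}\cap N_{r_{\ell}(\chi)}=J$ ; enfin $m(\chi)$ est la $\ell$-partie de l'ordre de l'ab\'elianis\'e de $J/(1+\PC_{D})$, que la description explicite de Broussous identifie \`a $\frac{d}{t(\rho)}(q^{t(\rho)}-1)$. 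Pour que votre texte devienne une preuve, il faudrait d\'evelopper cette route directe (ce qui rend alors le d\'etour par Jacquet--Langlands superflu) ; en l'\'etat, l'\'etape centrale manque.
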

\begin{proof}
 Nous adaptons \`a $D^{\times}$ les arguments de Vign\'eras pour les repr\'esentations du
 groupe de Weil de $K$. 
On peut supposer, quitte \`a tordre par un caract\`ere non ramifi\'e, que le
caract\`ere central de $\rho$ est trivial sur l'uniformisante $\varpi$, \emph{i.e.} que
$\rho$ est une repr\'esentation de $D^{\times}/\varpi^{\ZM}$. Soit
$\PC_{D}$ le radical de $\OC_{D}$. On a
une filtration
$$ 1+\PC_{D}\subset \OC_{D}^{\times} \subset
D^{\times}/\varpi^{\ZM} $$
de quotients successifs isomorphes \`a $\FM_{q^{d}}^{\times}$ et
$\ZM/d\ZM$, l'action de ce dernier sur le pr\'ec\'edent \'etant donn\'ee par
le Frobenius.

Soit $\tau$ un facteur irr\'eductible de $\rho_{|1+\PC_{D}}$ et
$N_{\tau}$ le normalisateur de la classe d'isomorphisme de $\tau$ dans
$D^{\times}/\varpi^{\ZM}$. Comme $1+\PC_{D}$
 est un pro-$p$-groupe, la dimension de $\tau$ est une puissance de
$p$. Comme un $p$-Sylow de $N_{\tau}/(1+\PC_{D})$ est cyclique
l'argument de \cite[Lemme 1.19]{VigLuminy} montre que $\tau$ admet un
prolongement $\tilde\tau$ \`a $N_{\tau}$. 
Soit alors $\xi$ une sous-représentation irréductible de $N_{\tau}$
agissant sur $\Hom_{1+\PC_{D}}(\tilde\tau,\rho)$. Comme l'ensemble
d'entrelacement $I_{\tilde\tau\otimes\xi}$ de $\tilde\tau\otimes\xi$
est égal à $N_{\tau}$, l'homorphisme
 $\tilde\tau\otimes \xi\To{}\rho$ induit par adjonction 
un isomorphisme
$\cind{N_{\tau}}{D^{\times}/\varpi^{\ZM}}{\tilde\tau\otimes\xi}\simto \rho$.
La représentation $\xi$ est de la forme $\xi\simeq
\cind{J}{N_{\tau}}{\chi}$ pour un groupe $J$ contenant
$N_{\tau}\cap \OC_{D}^{\times}$  et
un caractère $\chi$ de $J$ trivial sur $1+\PC_{D}$. 
On peut alors écrire
$$\rho=\cind{J}{D^{\times}/\varpi^{\ZM}}{\tilde\tau_{|J}\otimes \chi}.$$
L'ensemble d'entrelacement
de $\tilde\tau_{|J}\otimes\chi$ est
$I_{\tilde\tau\otimes\tilde\chi}=N_{\tau}\cap N_{\chi}$, où
$N_{\chi}$ désigne le normalisateur de $\chi$.
 On a donc $N_{\tau}\cap N_{\chi}=J$ par irréductibilité de $\rho$.

Calculons maintenant l'entier $t(\rho)$. Soit $\psi$ un caractère de
$D^{\times}/\varpi^{\ZM}\OC_{D}^{\times}$. La représentation
$\rho\psi$ est isomorphe à $\rho$ \ssi\ il existe $x\in N_{\tau}$
tel que $\psi_{|J}\chi={\chi}^{x}$. Pour un tel $x$, il faut alors que
$\chi^{-1}\chi^{x}$ soit trivial sur $J\cap\OC_{D}^{\times}=N_{\tau}\cap\OC_{D}^{\times}$.
En identifiant $J/(1+\PC_{D})$ au sous-groupe
$(N_{\tau}\cap\OC_{D}^{\times})/(1+\PC_{D})\rtimes J/(N_{\tau}\cap
\OC_{D}^{\times})$ du groupe
$N_{\tau}/(1+\PC_{D})\simeq (N_{\tau}\cap\OC_{D}^{\times})/(1+\PC_{D})\rtimes N_{\tau}/(N_{\tau}\cap
\OC_{D}^{\times})$, et en notant que $(N_{\tau}\cap
\OC_{D}^{\times})$ est abélien, on
constate que $\chi^{x}\chi^{-1}$ est alors trivial sur $J$ tout
entier, et donc que $x\in J$. Il s'ensuit que 

$$t(\rho) = [D^{\times}/\varpi^{\ZM}: \OC_{D}^{\times}J].$$

Analysons maintenant la réduction $r_{\ell}(\rho)$. Elle est de la
forme $\cind{J}{D^{\times}/\varpi^{\ZM}}{r_{\ell}(\tilde\tau_{|J})\otimes
  r_{\ell}(\chi)}$. 
Commet $\tau$
est une
représentation irréductible  du \emph{pro-$p$-groupe}
$1+\PC_{D}$, la représentation $r_{\ell}(\tilde\tau_{|J})$ est
irréductible, et son normalisateur est encore $N_{\tau}$. 
On en
déduit que la longueur de $r_{\ell}(\rho)$ est égale à $[N_{\tau}\cap N_{r_{\ell}(\chi)} : J]$.
En particulier :
\begin{center}
  $\rho$ est $\ell$-irréductible \ssi\ $N_{\tau}\cap
  N_{r_{\ell}(\chi)}=J$.
\end{center}

Soit maintenant $\rho'$ une représention congrue à $\rho$. Comme elle
contient $\tau$, on peut la mettre sous la forme
$\rho'=\cind{J'}{D^{\times}/\varpi^{\ZM}}{\tilde\tau_{|J'}\otimes 
  \tilde\chi'}$.
Comme $\rho'$ a la
même dimension que $\rho$, $J'$ a le même indice que $J$ dans
$N_{\tau}$, et puisqu'il contient $N_{\tau}\cap \OC_{D}^{\times}$ et
que le quotient $N_{\tau}/(N_{\tau} \cap \OC_{D}^{\times})$ est
cyclique, on a $J=J'$.
Il s'ensuit que $r_{\ell}(\chi')$ et
$r_{\ell}(\chi)$ sont conjugués sous $N_{\tau}$. On en déduit
une bijection entre l'ensemble des $\rho'$ congrues à $\rho$ et
l'ensemble des $\chi'$ congrus à $\chi$ modulo conjugaison par
$N_{\tau}\cap N_{r_{\ell}(\chi)}$. En particulier on a 
$$ m(\rho)\leq m(\chi) \hbox{ avec \'egalit\'e \ssi\ }
N_{\tau}\cap N_{r_{\ell}(\chi)}=J.$$

Reste à calculer le nombre  $m(\chi)$ de caractères de
$J$ congrus à $\chi$. Il est clair que c'est la plus
grande puissance de $\ell$ divisant l'ordre de l'abélianisé de
$J/(1+\PC_{D})$. Pour calculer cet ordre, nous pouvons utiliser la
construction explicite de Broussous : nous pouvons supposer que
$(J,\tilde\tau_{|J}\otimes \chi)$ est de la forme
\cite[(10.1.4)]{Broussous}. 
Par construction, il existe des entiers $f'$, $d'$ et
$e'$ de produit égal à $d$
tels que 
\begin{equation}
J/(1+\PC_{D})=\FM_{q^{f'd'}}^{\times}\rtimes m\ZM/e'd'\ZM,\label{brou}
\end{equation}
où le
générateur de $\ZM/e'd'\ZM$ agit par le Frobenius relatif à
$\FM_{q^{f'}}$. Avec les notations de \emph{loc. cit}, $f'$ est le
degré résiduel de l'extension $F[\beta]$, $e'$ est son indice de
ramification, et $(d')^{2}$ est la dimension sur $F[\beta]$ du commutant $B$ de
$F[\beta]$ dans $D$ (une algèbre à division de centre
$F[\beta]$). L'entier $m$ est celui de la définition
\cite[(10.1.4)]{Broussous}. C'est un diviseur de $d'$ qui satisfait la relation
$$f'm=[D^{\times}/\varpi^{\ZM}\OC_{D}^{\times}J]=t(\rho).$$
Maintenant l'abélianisé de $J/(1+\PC_{D})$
s'identifie \`a $\FM_{q^{f'm}}^{\times}\times
m\ZM/e'd'\ZM$ (le morphisme quotient \'etant induit par la norme).
Par les égalités précédentes, son ordre est bien  $(q^{t(\rho)}-1)d/t(\rho)$.
\end{proof}

\`A partir de \ref{brou}, la discussion de
\cite{VigLuminy} pages 423 et 424 fournit l'analogue suivant de
\cite[1.20]{VigLuminy}.

\begin{prop}
 Soit $\rho$ une $\o\QM_{\ell}$-repr\'esentation irr\'eductible
 $\ell$-enti\`ere de $D^{\times}$. Il existe une
 $\o\FM_{\ell}$-repr\'esentation irr\'eductible $\bar\rho$ de $D^{\times}$
 et un entier $r$ tels que 
$$ r_{\ell}([\rho])= [\bar\rho] + [\bar\rho \nu] + \cdots +
[\bar\rho\nu^{r-1}] $$
o\`u $\nu$ d\'esigne le caract\`ere $g\mapsto q^{{\rm val\circ Nrd}(g)}$.
\end{prop}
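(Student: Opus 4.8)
The plan is to adapt the discussion of \cite{VigLuminy}, pp.~423--424, starting from the explicit model \ref{brou} established in the proof of Proposition \ref{propD2}. First, replacing $\rho$ by a twist $\rho\otimes(\psi\circ\mathrm{Nrd})$ by a suitable unramified character of $K^{\times}$, one may assume that the central character of $\rho$ is trivial on $\varpi$, so that $\rho$ is a representation of $D^{\times}/\varpi^{\ZM}$; this is harmless because $\nu$ is itself unramified, so that a decomposition $r_{\ell}(\rho\otimes(\psi\circ\mathrm{Nrd}))=\sum_{j}[\o{\rho}'\nu^{j}]$ yields $r_{\ell}(\rho)=\sum_{j}[(\o{\rho}'\otimes r_{\ell}(\psi\circ\mathrm{Nrd})^{-1})\nu^{j}]$. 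With the notations of the proof of Proposition \ref{propD2} one then writes $\rho=\cind{J}{D^{\times}/\varpi^{\ZM}}{\tilde\tau_{|J}\otimes\chi}$, with $\chi$ a character of $J$ trivial on $1+\PC_{D}$, with $J\supset N_{\tau}\cap\OC_{D}^{\times}$, and with $J/(1+\PC_{D})=\FM_{q^{f'd'}}^{\times}\rtimes m\ZM/e'd'\ZM$ as in \ref{brou}; recall from there that $r_{\ell}(\tilde\tau_{|J})$ is irreducible with normalizer still $N_{\tau}$, whence $r_{\ell}(\rho)=\cind{J}{D^{\times}/\varpi^{\ZM}}{r_{\ell}(\tilde\tau_{|J})\otimes r_{\ell}(\chi)}$ has length $r:=[N:J]$, where $N:=N_{\tau}\cap N_{r_{\ell}(\chi)}$.

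The next step is to describe the composition factors of $r_{\ell}(\rho)$. Since $J$ contains $N_{\tau}\cap\OC_{D}^{\times}$ and $N_{\tau}/(N_{\tau}\cap\OC_{D}^{\times})$ is cyclic, being a subgroup of $D^{\times}/\OC_{D}^{\times}\varpi^{\ZM}\simeq\ZM/d\ZM$, the quotient $N/J$ is finite \emph{cyclic}. As $r_{\ell}(\tilde\tau_{|J})$ extends to $N_{\tau}$ and $r_{\ell}(\chi)$ is $N$-invariant, $r_{\ell}(\tilde\tau_{|J})\otimes r_{\ell}(\chi)$ extends to a representation $\eta_{0}$ of $N$, and Clifford theory over the abelian quotient $N/J$ shows that each composition factor of $r_{\ell}(\rho)$ is of the form $\cind{N}{D^{\times}/\varpi^{\ZM}}{\eta_{0}\otimes\xi}$ for a character $\xi$ of $N/J$, there being $r$ of them counted with multiplicity (over $\o\FM_{\ell}$ the distinct ones are indexed by the $\o\FM_{\ell}$-characters of $N/J$, each occurring with multiplicity the $\ell$-part of $r$). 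Moreover, inflating characters of $N/J$ along the cyclic group $N/(N_{\tau}\cap\OC_{D}^{\times})\hookrightarrow\ZM/d\ZM$ and extending, every character of $N/J$ is the restriction of an \emph{unramified} character of $D^{\times}/\varpi^{\ZM}$; hence, fixing one composition factor $\o{\rho}$, all of them are of the form $\o{\rho}\otimes\mu$ for $\mu$ ranging over a cyclic group of unramified characters.

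It then remains to identify this cyclic group, together with the labelling of the composition factors, with powers of $\nu$. This is exactly the computation carried out in \cite{VigLuminy}, pp.~423--424, once transported to $D^{\times}$ by means of the model \ref{brou}: a generator of the cyclic quotient $N/J$ is represented by an element of $D^{\times}$ whose conjugation action is the Frobenius occurring in \ref{brou}, and comparing the extensions of $r_{\ell}(\tilde\tau_{|J})\otimes r_{\ell}(\chi)$ to $N$ related by this element shows that the relevant unramified twist is $\nu$ itself; after reordering the composition factors this gives $r_{\ell}([\rho])=[\o{\rho}]+[\o{\rho}\nu]+\cdots+[\o{\rho}\nu^{r-1}]$, the repetitions among the $\o{\rho}\nu^{j}$ when $\ell\mid r$ matching the multiplicities found above. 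The main obstacle, and the only non-formal step, is this last identification of the twisting direction with $\nu$ --- everything else being Clifford theory together with \ref{brou}; it is also what dissolves the apparent conflict with central characters, since it forces $\nu^{j}(\varpi)=1$ for all $j<r$ as soon as $r>1$.
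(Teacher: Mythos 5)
Your argument follows exactly the route the paper indicates for this proposition --- reduce by an unramified twist to the explicit model \ref{brou} obtained in the proof of Proposition \ref{propD2}, apply Clifford theory for the cyclic quotient $N/J$ with $N=N_{\tau}\cap N_{r_{\ell}(\chi)}$, and identify the twisting character with $\nu$ via the discussion of \cite{VigLuminy}, pp.~423--424 --- and in fact supplies more of the intermediate bookkeeping (extension of $r_{\ell}(\tilde\tau_{|J})\otimes r_{\ell}(\chi)$ to $N$, multiplicities $\ell^{{\rm val}_{\ell}(r)}$ from $\o\FM_{\ell}[N/J]$, extension of characters of $N/J$ to unramified characters) than the paper, which contents itself with the one-line reference. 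The single step you defer, namely that $\nu_{|N}$ is trivial on $J$ and of order the prime-to-$\ell$ part of $[N:J]$, is precisely the step the paper also leaves to that reference, so this is essentially the paper's own proof.
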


\section{Correspondances $\ell$-modulaires}

Nous reprenons les notations de l'introduction. En particulier $G$
d\'esigne le groupe $\GL_{d}(K)$ et $D$ une
alg\`ebre \`a division de centre $K$ et de dimension $d^{2}$.

\subsection{Preuve des \'enonc\'es principaux}

\alin{Langlands-Jacquet classique et int\'egralit\'e} \label{lentier}
On rappelle que $\LJ_{\oQl}$ est l'unique fl\`eche rendant
commutatif le diagramme suivant
\begin{equation}
\xymatrix{\CC^{\infty}(G^{\rm ell},\o\QM_{\ell})^{G} \ar[r]^{\sim} &
  \CC^{\infty}(D^{\rm ell},\o\QM_{\ell})^{D^{\times}} \\
\RC(G,\o\QM_{\ell}) \ar[u]^{\theta^{G}} \ar@{..>}[r]_{\LJ_{\oQl}} &  \RC(D^{\times},\o\QM_{\ell}) \ar[u]_{\theta^{D^{\times}}}
}\label{diagLJ}
\end{equation}
et que cette fl\`eche envoie une irr\'eductible $\pi$ sur $0$ ou sur $\pm[\rho]$
pour une irr\'eductible $\rho$ de $D^{\times}$. Identifions les deux centres $Z(G)$ et
$Z(D^{\times})$ \`a $K^{\times}$. Comme leur action \`a droite (ou \`a
gauche) pr\'eserve les lieux elliptiques respectifs, on voit que $\pi$
et $\rho$ ont des caract\`eres centraux \'egaux.
Or, si $\pi$ est $\ell$-enti\`ere, en particulier son caract\`ere central
est \`a valeurs dans $\oZl$.  Comme une repr\'esentation irr\'eductible
de $D^{\times}$ est enti\`ere \ssi\ son caract\`ere central l'est, il
s'ensuit que $\LJ_{\oQl}$ envoie 
$\RC^{\rm ent}(G,\o\QM_{\ell})$ dans $\RC^{\rm ent}(D^{\times},\o\QM_{\ell})$.



\alin{Preuve du th\'eor\`eme \ref{theoLJ}} \label{preuvetheoLJ}
Montrons l'existence d'un unique morphisme $\LJ_{\o\FM_{\ell}}$
rendant commutatif le second diagramme de l'\'enonc\'e du th\'eor\`eme.
 L'unicit\'e r\'esulte de la surjectivit\'e de $r_{\ell}^{G}$ du corollaire
 \ref{rlsurjectif}. Pour l'existence, toujours gr\^ace \`a cette surjectivit\'e, il suffit de prouver
que $\ker(r_{\ell}^{G})\subset  \ker(r_{\ell}^{D}\circ \LJ_{\oQl})$.
Soit donc $x\in \ker(r_{\ell}^{G})$. D'apr\`es le second diagramme de la proposition
\ref{propBrauer}, on a ${\theta_{x}}_{|G^{\rm ell}_{\ell'}}\equiv
0$. On en d\'eduit que
$\tilde\theta_{r_{\ell}^{D^{\times}}(\LJ_{\oQl}(x))}= {\theta_{\LJ_{\oQl}(x)}}_{|D^{\rm ell}_{\ell'}} \equiv
0$. D'apr\`es le i) de la proposition \ref{propD1} on a bien
 $r_{\ell}^{D^{\times}}(\LJ_{\oQl}(x))=0$.

La commutativit\'e du premier diagramme de l'\'enonc\'e du th\'eor\`eme d\'ecoule maintenant de celle du
diagramme (\ref{diagLJ}) et de celle du second diagramme de la proposition
\ref{propBrauer}.
Enfin, le fait que ce premier diagramme suffise \`a caract\'eriser
$\LJ_{\o\FM_{\ell}}$ provient \`a nouveau de l'injectivit\'e de $\tilde\theta^{D^{\times}}$.
\findem

\alin{Jacquet-Langlands classique et r\'eduction mod $\ell$} 
Les propri\'et\'es caract\'eristiques de l'application $\JL_{\oQl}:\,
\Irr{\oQl}{D^{\times}}\To{}\Irr{\oQl}{G}$  rappel\'ees
au paragraphe \ref{corclassic} font de son prolongement
par lin\'earit\'e, que nous noterons encore $\JL_{\oQl}$,  une
\emph{section remarquable} de $\LJ_{\oQl}$.
Comme les
$\oQl$-repr\'esentations irr\'eductibles de la ``s\'erie discr\`ete'' de $G$  
-- \emph{i.e.} les repr\'esentations de Steinberg g\'en\'eralis\'ees --
 sont $\ell$-enti\`eres \ssi\ leur caract\`ere
central l'est, on voit que l'application
$\JL_{\oQl}$  envoie une repr\'esentation enti\`ere sur une repr\'esentation
enti\`ere. Par contre, l'exemple suivant montre que la section
$\JL_{\oQl}$ de $\LJ_{\oQl}$ n'est pas
compatible \`a la r\'eduction modulo $\ell$.

\begin{exe}\label{exempleLJ}
  Soit $K=\QM_{5}$, $\ell=3$ et $d=2$, et soit
  $\chi:\FM_{5^{2}}^{\times}\To{} \o\QM_{3}^{\times}$ un caract\`ere d'ordre
  $3$. Il se prolonge \`a un caract\`ere de $\OC_{D}^{\times}Z(D^{\times})$ qui par
  induction fournit une $\o\QM_{3}$-repr\'esentation irr\'eductible $\rho$
  de dimension $2$ de $D^{\times}$. Cette repr\'esentation est \'evidemment congrue modulo
  $3$ \`a l'induite du caract\`ere trivial, laquelle se d\'ecompose en la
  somme directe du caract\`ere trivial $1$ et du caract\`ere ``signe''
  $\varepsilon$ de  $D^{\times}$. Or on a
$$ \JL_{\oQl}([\rho] - [1] -[\varepsilon])= [\pi] - [{\rm St}] - [\varepsilon\otimes
{\rm St}]$$
o\`u ${\rm St}$ d\'esigne la repr\'esentation de Steinberg et $\pi$ d\'esigne
une repr\'esentation supercuspidale de r\'eduction unipotente, \emph{cf}
\cite[Ex. II.11.1]{Vig}. La r\'eduction
modulo $\ell$ du terme de droite est \'egale \`a $-[{\rm Ind}_{B}^{G}(1)]$
et n'est donc pas nulle.
\end{exe}



Dans ce m\^eme exemple, on voit qu'il n'existe pas de section de $\LJ_{\oFl}$
envoyant les irr\'eductibles sur des irr\'eductibles. En effet, la
seule repr\'esentation $\pi\in\Irr{\oFl}{G}$ telle que
$\LJ_{\oFl}[\pi]=\pm [1_{D^{\times}}]$ est la repr\'esentation
triviale $\pi=1_{G}$. Or on a $\LJ_{\oFl}[1_{G}]=-[1_{D^{\times}}]$.
Le th\'eor\`eme suivant permet n\'eanmoins d'exhiber en toute g\'en\'eralit\'e 
une section de $\LJ_{\oFl}$ envoyant les irr\'eductibles sur des
irr\'eductibles au signe pr\`es.

\begin{theo}\label{ptesLJ}
L'homomorphisme $\LJ_{\o\FM_{\ell}}$ est surjectif, et son noyau est
$\RC_{I}(G,\o\FM_{\ell})$. Il envoie la classe d'une repr\'esentation
superSpeh de $G$ sur celle d'une irr\'eductible de $D^{\times}$
au signe pr\`es, 
induisant une bijection
$$ \{\o\FM_\ell\hbox{-repr\'esentations superSpeh de  }G\}\simto
\{\o\FM_\ell\hbox{-repr\'esentations irr\'eductibles de } D^{\times}\}.$$ 
\end{theo}

Notons que la surjectivit\'e de $\LJ_{\o\FM_{\ell}}$ d\'ecoule de celle de $\LJ_{\oQl}$
et du point ii) de la proposition \ref{propD1}. On a aussi d\'ej\`a
remarqu\'e que $\RC_{I}(G,\o\FM_{\ell})$ est inclus dans le noyau de $\LJ_{\oQl}$.
La proposition suivante est une \'etape importante de la preuve du th\'eor\`eme.

\begin{prop} \label{propptesLJ}
  Soit $\pi$ une $\o\QM_{\ell}$-repr\'esentation superSpeh
  $\ell$-enti\`ere de $G$. Notons $\rho:=|LJ_{\oQl}(\pi)|$ sa
  correspondante dans $\Irr{\o\QM_{\ell}}{D^{\times}}$. Alors $\pi$
  est $\ell$-superSpeh \ssi\ $\rho$ est $\ell$-irr\'eductible.
\end{prop}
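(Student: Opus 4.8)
The plan is to reduce to the single–segment case and then match numerical invariants on the two sides. First I would write $\pi=\delta_{r}(\sigma)$ for a supercuspidal $\o\QM_{\ell}$-representation $\sigma$ of $\GL_{d'}(K)$ with $d'r=d$. Since $\delta_{r}(\sigma)$ is the Zelevinsky dual of the generalized Steinberg $\mathrm{St}_{r}(\sigma)$, the difference $[\delta_{r}(\sigma)]-(-1)^{d-1}[\mathrm{St}_{r}(\sigma)]$ lies in the subgroup generated by proper parabolic inductions, whose elements have character vanishing on $G^{\rm ell}$; hence $\theta_{\delta_{r}(\sigma)}$ and $\theta_{\mathrm{St}_{r}(\sigma)}$ agree up to the constant sign $(-1)^{d-1}$ on $G^{\rm ell}$, and the defining diagram $(\ref{diagLJ})$ of $\LJ_{\oQl}$ yields $\LJ_{\oQl}(\pi)=(-1)^{d-1}[\rho]$, where $\rho\in\Irr{\oQl}{D^{\times}}$ is the unique representation with $\JL_{\oQl}(\rho)=\mathrm{St}_{r}(\sigma)$. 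Thus $\rho=|\LJ_{\oQl}(\pi)|$ is indeed the representation of the statement (one can also simply quote \cite[2.1.5]{lt}).

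Next I would reformulate both conditions numerically. By Lemme \ref{lemrelev} (property ii(b) transported to superSpeh representations), $\pi$ is $\ell$-superSpeh iff $m(\pi)=[a(\pi)]_{\ell}$; by Proposition \ref{propD2}, $\rho$ is $\ell$-irreducible iff $m(\rho)=[a(\rho)]_{\ell}$. So it is enough to prove $a(\pi)=a(\rho)$ and $m(\pi)=m(\rho)$. For the torsion numbers I would use that $\JL_{\oQl}$ commutes with twists by unramified characters $\psi$ of $K^{\times}$, that $\mathrm{St}_{r}(\sigma\otimes(\psi\circ\det))=\mathrm{St}_{r}(\sigma)\otimes(\psi\circ\det)$, and that both $\JL_{\oQl}$ and $\mathrm{St}_{r}(-)$ are injective on isomorphism classes: this gives $\rho\simeq\rho\otimes(\psi\circ\mathrm{Nrd})\iff\sigma\simeq\sigma\otimes(\psi\circ\det)$, whence $t(\rho)=t(\sigma)=t(\pi)$ (the last equality being established inside the proof of Lemme \ref{lemrelev}), and therefore $a(\pi)=a(\rho)=r\cdot a(\sigma)$ and $[a(\pi)]_{\ell}=[a(\rho)]_{\ell}=[r]_{\ell}[a(\sigma)]_{\ell}$.

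The hard part is the equality $m(\pi)=m(\rho)$; since $m(\pi)=[r]_{\ell}m(\sigma)$ by the proof of Lemme \ref{lemrelev}, this amounts to $m(\rho)=[r]_{\ell}m(\sigma)$, i.e.\ to a "numerical compatibility of $\JL$ with reduction modulo $\ell$ at the level of strict-congruence classes" — even though $\JL$ itself is not reduction-compatible. I would attack it by comparing the two explicit type-theoretic parametrizations already used above: on the $\GL$ side the simple type $(J^{\circ},\lambda)$ for $\delta_{r}(\sigma)$ together with the isomorphism $\HC_{\oZl}(\wt\SG_{r},q')\simto\HC(G,\lambda_{\oZl})$ from the proof of Proposition \ref{defsuperSpeh}, and on the $D^{\times}$ side Broussous's description $(\ref{brou})$ from the proof of Proposition \ref{propD2}. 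The Jacquet–Langlands correspondence is compatible with simple types and matches the ramification invariants $(e',f',d')$ on both sides; congruence of $\rho$ with another $\rho'$ then translates, via $r_{\ell}$ of the corresponding characters of $J/(1+\PC_{D})$, into congruence of the Hecke-algebra characters of $\delta_{r}(\sigma)$ with those of $\delta_{r}(\sigma')$ modulo the relevant conjugacy, the extra factor $[r]_{\ell}$ being exactly the $\ell$-part contributed by the affine part $\AC=\oZl[X_{1}^{\pm1},\dots,X_{r}^{\pm1}]$ of $\HC_{\oZl}(\wt\SG_{r},q')$ (equivalently, by $\ZM/e'd'\ZM$ relative to $\ZM/e'\ZM$). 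A more economical alternative avoiding types: since $\tilde\theta^{D^{\times}}$ is injective (Proposition \ref{propD1}(i)), $m(\rho)$ counts the $\ell$-integral irreducibles $\rho'$ of $D^{\times}$ whose Brauer character on $D^{\rm ell}_{\ell'}$ and whose value at $\varpi I_{d}$ agree with those of $\rho$; transferring through the reduction-compatible isomorphism $\CC^{\infty}(G^{\rm ell},-)^{G}\simto\CC^{\infty}(D^{\rm ell},-)^{D^{\times}}$ and invoking Proposition \ref{propBrauer} reduces the count to discrete series of $G$ congruent to $\mathrm{St}_{r}(\sigma)$ with prescribed central value, which one reads off from the Bushnell–Kutzko model of $\mathrm{St}_{r}(\sigma)$ and its reductions. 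Once $a(\pi)=a(\rho)$ and $m(\pi)=m(\rho)$ are in hand, the chain $\pi$ is $\ell$-superSpeh $\iff m(\pi)=[a(\pi)]_{\ell}\iff m(\rho)=[a(\rho)]_{\ell}\iff\rho$ is $\ell$-irreducible closes the proof.
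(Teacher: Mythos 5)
Your setup (reduction to the numerical criteria of Lemme \ref{lemrelev} and Proposition \ref{propD2}, and the equalities $t(\pi)=t(\rho)$, $a(\pi)=a(\rho)$ via compatibility of $\JL_{\oQl}$ with unramified twists) agrees with the paper. But the core of your argument is the claim $m(\pi)=m(\rho)$, and there you have a genuine gap: neither of your two proposed routes is actually carried out, and both rest on inputs that are not available. The type-theoretic route requires that the Jacquet--Langlands correspondence be compatible with simple types and match the invariants $(e',f',d')$ on the two sides; this is a deep statement (explicit JL via types) that the paper neither proves nor cites, and it cannot be extracted from the two separate computations in the proofs of Propositions \ref{defsuperSpeh} and \ref{propD2}, which are never related to each other by $\JL$. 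The Brauer-character route is worse: to convert ``same Brauer character on $G^{\rm ell}_{\ell'}$'' into ``congruent modulo $\RC_{I}$'' on the $G$-side you would need to know $\ker(\tilde\theta_{|G^{\rm ell}_{\ell'}})=\RC_{I}(G,\oFl)$, which in the paper is a \emph{corollary} of Th\'eor\`eme \ref{ptesLJ}, i.e.\ of the very statement you are proving.

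The point you are missing is that the full equality $m(\pi)=m(\rho)$ is not needed and is in fact essentially equivalent to the proposition. What comes for free is only the \emph{inequality} $m(\pi)\leq m(\rho)$: by the second diagram of Th\'eor\`eme \ref{theoLJ} (congruence-compatibility of $\LJ$, already established), if $\pi'$ is strictly congruent to $\pi$ then its correspondent $\rho'$ is strictly congruent to $\rho$, and $\pi'\mapsto\rho'$ is injective since $|\LJ_{\oQl}|$ is injective on Speh representations. Combined with the two upper bounds $m(\pi)\leq[a(\pi)]_{\ell}$ and $m(\rho)\leq[a(\rho)]_{\ell}$ and with $a(\pi)=a(\rho)$, this sandwich already gives the direction ``$\pi$ $\ell$-superSpeh $\Rightarrow$ $\rho$ $\ell$-irr\'eductible''. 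The converse cannot be obtained from the inequality alone; the paper gets it from a global argument, namely the surjectivity of the induced map from $\oFl$-superSpeh representations of $G$ onto $\Irr{\oFl}{D^{\times}}$ (which follows from surjectivity of $\LJ_{\oFl}$ and its vanishing on $\RC_{I}$): if $\rho$ is $\ell$-irr\'eductible, then $r_{\ell}(\rho)$ is hit by some $\ell$-superSpeh representation, so $\pi$ is congruent to an $\ell$-superSpeh representation, and the numerical characterization of Lemme \ref{lemrelev} then forces $\pi$ itself to be $\ell$-superSpeh. Without either a complete proof of $m(\pi)=m(\rho)$ or this surjectivity argument, your proof of the ``if'' direction does not close.
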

\begin{proof}
La preuve est tr\`es clairement inspir\'ee de celle du lemme 2.3.a) de
\cite{VigLanglands}. En effet, si $\pi'$ est congrue \`a
$\pi$, et si $\rho'$ d\'esigne sa correspondante, alors le deuxi\`eme
diagramme du th\'eor\`eme \ref{theoLJ} montre que $\rho'$ est
congrue \`a $\rho$. Utilisant les notations du paragraphe
\ref{terminologie}, on en d\'eduit l'in\'egalit\'e $ m(\pi) \leq m(\rho)$.
Par ailleurs, la correspondance de Jacquet-Langlands \'etant compatible
\`a la torsion par les caract\`eres, on a $t(\pi)=t(\rho)$ et donc
$a(\pi)=a(\rho)$.

Supposons alors que $\pi$ est $\ell$-supercuspidale. D'apr\`es le lemme
\ref{lemrelev}, on a $m(\pi)= \ell^{{\rm
    val}_{\ell}(a(\pi))}$. Gr\^ace \`a la proposition \ref{propD2}, on en d\'eduit
que $m(\rho)=\ell^{{\rm val}_{\ell}(a(\rho))}$, puis que $\rho$ est
$\ell$-irr\'eductible.

Il s'ensuit que $\LJ_{\o\FM_{\ell}}$ envoie les superSpeh sur
des irr\'eductibles de $D^{\times}$.
Par ailleurs on sait que la restriction de $\LJ_{\o\FM_{\ell}}$ \`a
$R_{\Delta}(G,\o\FM_{\ell})$ est surjective, puisque
$\LJ_{\o\FM_{\ell}}$ est  surjective et
nulle sur $\RC_{I}(G,\Delta)$.
On en d\'eduit que l'application 
\begin{equation}
 \{\o\FM_\ell\hbox{-repr. superSpeh de  }G\}\To{|\LJ_{\o\FM_{\ell}}|}
\{\o\FM_\ell\hbox{-repr. irr\'eductibles de } D^{\times}\}\label{app}
\end{equation}
est surjective.

Revenons maintenant \`a la repr\'esentation $\pi$ de l'\'enonc\'e et supposons
que $\rho$ est $\ell$-irr\'eductible. De ce qui
pr\'ec\`ede, on d\'eduit que $\pi$ est congrue \`a une repr\'esentation
$\ell$-superSpeh. La caract\'erisation du lemme \ref{lemrelev}
montre alors que $\pi$ elle-m\^eme est $\ell$-superSpeh.
\end{proof}

\begin{proof}[\indent Fin de la preuve du th\'eor\`eme \ref{ptesLJ}]
Vue la preuve de la proposition pr\'ec\'edente, il reste \`a \'etablir
l'injectivit\'e de (\ref{app}). Cela revient \`a montrer que deux
$\o\QM_{\ell}$-repr\'esentations $\ell$-superSpeh dont les
correspondantes $\rho$ et $\rho'$ sont congrues, sont elle m\^eme
congrues. Mais cela d\'ecoule \`a nouveau des crit\`eres du lemme
\ref{lemrelev} et de la proposition \ref{propD2}.
\end{proof}

Avant de passer \`a la preuve du th\'eor\`eme \ref{theoJL},
 signalons les deux corollaires suivants du th\'eor\`eme
\ref{ptesLJ},
qui ne concernent que le groupe $G$, mais que
l'auteur ne sait pas prouver directement.

\begin{coro}
  Le noyau de la restriction 
$\tilde\theta_{|G^{\rm
  ell}_{\ell'}}:\, \RC(G,\o\FM_{\ell})\To{}\CC^{\infty}(G^{\rm
ell}_{\ell'},\o\ZM_{\ell})^{G}$ du caract\`ere de Brauer $\tilde\theta$ aux \'el\'ements
elliptiques (d'ordre premier \`a $\ell$) est exactement
$\RC_{I}(G,\o\FM_{\ell})$.
\end{coro}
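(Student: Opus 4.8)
The plan is to prove the two inclusions separately. The inclusion $\RC_I(G,\o\FM_{\ell})\subseteq\ker(\tilde\theta_{|G^{\rm ell}_{\ell'}})$ is the elementary one: as recorded in the remark following Question~\ref{questBrauer}, the Brauer character of a proper parabolic induction $i_M^G(\sigma)$ vanishes identically on $G^{\rm ell}_{\ell'}$. This is the modular counterpart of the classical vanishing of the Harish-Chandra character of an induced representation on elements that are not conjugate into the Levi factor, and since a regular elliptic element of $G=\GL_d(K)$ is never conjugate into a proper Levi subgroup, the same van~Dijk-type argument carries over to $\o\FM_{\ell}$-coefficients. As $\RC_I(G,\o\FM_{\ell})$ is by definition generated by proper parabolic inductions, this inclusion follows.

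For the reverse inclusion I would take $x\in\RC(G,\o\FM_{\ell})$ with $\tilde\theta^G_x$ identically zero on $G^{\rm ell}_{\ell'}$ and run it through the first commutative square of Theorem~\ref{theoLJ}. Since the top horizontal arrow $\CC^{\infty}(G^{\rm ell}_{\ell'},\o\ZM_{\ell})^{G}\simto\CC^{\infty}(D^{\rm ell}_{\ell'},\o\ZM_{\ell})^{D^{\times}}$ is an isomorphism, commutativity of the square forces $\tilde\theta^{D^{\times}}_{\LJ_{\o\FM_{\ell}}(x)}=0$. Then I would invoke part~i) of Proposition~\ref{propD1}, namely the injectivity of the Brauer character $\tilde\theta^{D^{\times}}$, to conclude $\LJ_{\o\FM_{\ell}}(x)=0$. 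Finally Theorem~\ref{ptesLJ} identifies $\ker(\LJ_{\o\FM_{\ell}})$ with $\RC_I(G,\o\FM_{\ell})$, whence $x\in\RC_I(G,\o\FM_{\ell})$, which completes the proof.

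The argument is entirely formal once Theorems~\ref{theoLJ} and~\ref{ptesLJ} are available, so I do not expect a serious obstacle; the one point I would check carefully is the bookkeeping of domains of definition — that the injectivity statement of Proposition~\ref{propD1}(i) that is actually used is the one with values in $\CC^{\infty}(D^{\rm ell}_{\ell'},\o\ZM_{\ell})^{D^{\times}}$, where $\tilde\theta^{D^{\times}}$ is genuinely defined, which is indeed what its proof delivers (reduction to finite quotients of $D^{\times}$ via density of the prime-to-$\ell$ elliptic elements, then injectivity of ordinary Brauer characters of finite groups). It is precisely this detour through $D^{\times}$ and the $\LJ$-transfer that makes the statement accessible, a direct proof on $G$ alone being out of reach.
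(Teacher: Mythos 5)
Your argument is exactly the paper's intended derivation: the corollary is stated there as an immediate consequence of Theorem~\ref{ptesLJ}, obtained by chasing the first diagram of Theorem~\ref{theoLJ} and using the injectivity of $\tilde\theta^{D^{\times}}$ from Proposition~\ref{propD1}(i), the author noting explicitly that no direct proof on $G$ alone is known. Your separate treatment of the easy inclusion via the van~Dijk argument and your check on the domain $D^{\rm ell}_{\ell'}$ are both consistent with the paper's remarks.
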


\begin{coro}
  La question \ref{questBrauer} ii) a une r\'eponse affirmative pour $G=\GL_{d}(K)$.
\end{coro}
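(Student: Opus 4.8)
The plan is to deduce the statement from the corresponding, already available, surjectivity of Brauer characters for the group $D^{\times}$ --- which is compact modulo its center --- by transporting it to $G=\GL_{d}(K)$ through the classical Langlands--Jacquet character identity. The point is that Question~\ref{questBrauer}~ii) only involves \emph{elliptic} elements, and the elliptic locus is exactly what the Langlands--Jacquet transfer sees, even though Brauer-surjectivity for $\GL_{d}(K)$ itself is not known in general. Concretely, given $\pi\in\RC(G,\oFl)$, set $\rho:=\LJ_{\oFl}(\pi)\in\RC(D^{\times},\oFl)$. Since $D^{\times}$ is compact modulo its center, the surjectivity property of Brauer characters recalled just before Question~\ref{questBrauer} applies to it: there is $\tilde\rho\in\RC(D^{\times},\oQl)$ whose ordinary character agrees on $D^{\rm ell}$ with the $\ell'$-extension $\bar\theta^{D^{\times}}_{\rho}$ of $\tilde\theta^{D^{\times}}_{\rho}$; in particular $\theta^{D^{\times}}_{\tilde\rho}$ restricts to $\tilde\theta^{D^{\times}}_{\rho}$ on $D^{\rm ell}_{\ell'}$ and is $\ell'$-invariant on $D^{\rm ell}$ (its value at $\gamma'$ depends only on $\gamma'_{\ell'}$). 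I then put $\tilde x:=\JL_{\oQl}(\tilde\rho)\in\RC(G,\oQl)$, and the claim to be proved is that the ordinary character of $\tilde x$, restricted to $G^{\rm ell}$, equals $\bar\theta_{\pi}$.

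First I would compare the two functions on the $\ell'$-elliptic locus $G^{\rm ell}_{\ell'}$. Feeding $\tilde x$ into the classical Langlands--Jacquet diagram (\ref{diagLJ}) and using that $\JL_{\oQl}$ is a section of $\LJ_{\oQl}$, so that $\LJ_{\oQl}(\tilde x)=\tilde\rho$, the transfer isomorphism carries $\theta^{G}_{\tilde x}|_{G^{\rm ell}}$ onto $\theta^{D^{\times}}_{\tilde\rho}$, whose restriction to $D^{\rm ell}_{\ell'}$ is $\tilde\theta^{D^{\times}}_{\rho}|_{D^{\rm ell}_{\ell'}}=\tilde\theta^{D^{\times}}_{\LJ_{\oFl}(\pi)}|_{D^{\rm ell}_{\ell'}}$. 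By the first diagram of Theorem~\ref{theoLJ}, this is the image of $\tilde\theta^{G}_{\pi}|_{G^{\rm ell}_{\ell'}}$ under the $\oZl$-valued transfer on $\ell'$-elliptic loci, which is simply the restriction of the classical transfer of (\ref{diagLJ}) (both are induced by the same bijection of elliptic regular conjugacy classes with irreducible monic polynomials of degree $d$, and carry the same sign normalization $(-1)^{d+1}$). Since the transfer is injective, $\theta^{G}_{\tilde x}$ and $\tilde\theta^{G}_{\pi}$ agree on $G^{\rm ell}_{\ell'}$; and $\tilde\theta^{G}_{\pi}|_{G^{\rm ell}_{\ell'}}=\bar\theta_{\pi}|_{G^{\rm ell}_{\ell'}}$ by the very definition of $\bar\theta_{\pi}$.

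It remains to upgrade this equality from $G^{\rm ell}_{\ell'}$ to all of $G^{\rm ell}$. The function $\bar\theta_{\pi}$ is $\ell'$-invariant by construction. On the other hand $\theta^{G}_{\tilde x}|_{G^{\rm ell}}$ is $\ell'$-invariant as well: it transfers to $\theta^{D^{\times}}_{\tilde\rho}|_{D^{\rm ell}}$, which is $\ell'$-invariant by the choice of $\tilde\rho$, and the transfer preserves $\ell'$-invariance --- it is induced on both sides by the parametrization of elliptic regular classes by their common characteristic polynomial, and the topological Jordan $\ell'$-part of an elliptic $\gamma$ is computed inside the degree-$d$ field $K[\gamma]$, so $\gamma$ and its transfer $\gamma'$ have $\ell'$-parts with the same characteristic polynomial. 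Two $\ell'$-invariant functions on $G^{\rm ell}$ that agree on $G^{\rm ell}_{\ell'}$ coincide, so $\theta^{G}_{\tilde x}|_{G^{\rm ell}}=\bar\theta_{\pi}$, which is the assertion. The step genuinely requiring care is this last one, namely that the Langlands--Jacquet transfer intertwines the two $\ell'$-decompositions; everything else is bookkeeping combining Theorem~\ref{theoLJ}, the classical Langlands--Jacquet identity, and Brauer-character surjectivity for $D^{\times}$. (Alternatively, one could first replace $\pi$ by its superSpeh component via Proposition~\ref{propdecomp} and discard the proper parabolic inductions, whose Brauer characters vanish on elliptic elements, but this is unnecessary for the argument above.)
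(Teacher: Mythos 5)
Votre démonstration est correcte et suit exactement la voie que l'article laisse implicite : le texte énonce ce résultat comme corollaire sans preuve écrite, en signalant seulement qu'il « ne sait pas le prouver directement », c'est-à-dire qu'il faut passer par $D^{\times}$ via la surjectivité du caractère de Brauer pour un groupe compact modulo le centre, le diagramme (\ref{diagLJ}) et le théorème \ref{theoLJ} — ce que vous faites. Vous identifiez de surcroît correctement le seul point demandant un soin réel, à savoir la compatibilité du transfert elliptique (paramétrage par polynômes caractéristiques) avec les décompositions de Jordan topologiques, que vous justifiez en remarquant que la partie $\ell$-régulière de $\gamma$ se calcule dans le corps $K[\gamma]$.
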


\alin{Preuve du th\'eor\`eme \ref{theoJL}} \label{preuvetheoJL}
Notons $^{z}\JL_{\oFl}$
l'inverse de la bijection du th\'eor\`eme \ref{ptesLJ}. D'apr\`es ce
th\'eor\`eme, $^{z}\JL_{\oFl}$ v\'erifie bien les
deux propri\'et\'es annonc\'ees dans l'\'enonc\'e du th\'eor\`eme \ref{theoJL}, 
et est uniquement d\'etermin\'ee par ces propri\'et\'es.

Le reste de l'\'enonc\'e du th\'eor\`eme \ref{theoJL} d\'ecoule du diagramme
suivant.
$$
\xymatrix{
\{\o\QM_\ell\hbox{-repr. irr\'eductibles de } D^{\times}\}
\ar@{<-}[r]_-{Z_{\oQl}\circ\JL_{\oQl}}^-{\sim} &
\{\o\QM_\ell\hbox{-repr.  de Speh de }G\} \\
\{\o\QM_\ell\hbox{-repr. $\ell$-irr\'eductibles de } D^{\times}\}
\ar@{^(->}[u] \ar@{->>}[d]_{r_{\ell}}
\ar@{<-}[r]_-{Z_{\oQl}\circ\JL_{\oQl}}^-{\sim} &
\{\o\QM_\ell\hbox{-repr. $\ell$-superSpeh de  }G\}
\ar@{^(->}[u] \ar@{->>}[d]_{r_{\ell}} \\
\{\o\FM_\ell\hbox{-repr. irr\'eductibles de } D^{\times}\}
\ar@{<-}[r]_-{^{z}\JL_{\oQl}}^-{\sim} &
\{\o\FM_\ell\hbox{-repr. superSpeh de  }G\}
}
$$
Dans ce diagramme, la bijection du haut vient simplement de ce que
 l'involution $Z_{\oQl}$ induit une bijection
entre s\'eries discr\`etes et repr\'esentations de Speh. L'inverse de la
bijection du haut est donn\'ee par $|\LJ_{\oQl}|$.
Le fait que cette
bijection induise la bijection du milieu d\'ecoule alors de la proposition 
\ref{propptesLJ}. La commutativit\'e du carr\'e du bas provient enfin de
la d\'efinition de $^{z}\JL_{\oFl}$ comme inverse de $|\JL_{\oFl}|$,
laquelle est induite par $|\LJ_{\oQl}|$ via la r\'eduction modulo $\ell$ d'apr\`es le
th\'eor\`eme \ref{theoLJ}.
\findem

\alin{Preuve du corollaire \ref{coroLD}}
Nous rappelons d'abord l'\'enonc\'e de la correspondance de Langlands
modulo $\ell$ pour $\GL_{d}(K)$, d\^u \`a Vign\'eras \cite[1.8]{VigLanglands}.
Comme dans le cas
$\ell$-adique ou complexe, la correspondance est \'etablie en deux
temps. D'abord pour les (super)cuspidales, puis pour les autres
repr\'esentations \`a partir de la classification. 
Il s'av\`ere que  pour rendre \`a la correspondance une certaine 
compatibilit\'e \`a la r\'eduction modulo $\ell$, il faut la 
composer avec l'involution de Zelevinsky.
Notons donc  $\tau_{\oQl}^{G}:= \sigma_{\oQl}^{G}\circ Z_{\oQl}$ cette
compos\'ee et appelons-la ``correspondance de Zelevinski sur $\oQl$''.
Rappelons que son image est l'ensemble
${\rm Rep}^{d}_{\o\QM_{\ell}}(WD_{K})$ des classes d'isomorphisme de
 $\oQl$-repr\'esentation de Weil-Deligne
$\tau=(\tau^{\rm ss},N)$ de
dimension $d$. Une telle repr\'esentation est dite enti\`ere si sa partie
semi-simple l'est. On sait alors que $\tau^{G}_{\oQl}$ et
$\sigma^{G}_{\oQl}$ respectent les repr\'esentations enti\`eres.
On dispose d'une application de r\'eduction modulo $\ell$ 
$$R_{\ell}:\, {\rm Rep}^{d}_{\o\QM_{\ell}}(WD_{K})^{\rm ent}\To{}{\rm
  Rep}^{d}_{\o\FM_{\ell}}(WD_{K}) $$ 
qui sur la partie semi-simple est donn\'ee par $r_{\ell}$. Cette
application est jug\'ee \'evidente dans \cite[1.8]{VigLanglands}, mais on
pourra en trouver une construction dans \cite[4.1.8]{lt}.
Parall\`element on a une application
$$ J_{\ell}:\, \Irr{\o\QM_{\ell}}{G_{d}} \To{}
\Irr{\o\FM_{\ell}}{G_{d}}$$
qui envoie une repr\'esentation $\ell$-enti\`ere $\pi$  sur l'unique
constituent $\o\pi$ de $r_{\ell}\pi$ qui v\'erifie
$\lambda_{\pi}=\lambda_{\o\pi}$. D'apr\`es \cite[Thm V.12]{VigInduced},
cette application est surjective.
Le th\'eor\`eme principal de \cite{VigLanglands} dit  alors :
\begin{thm} {Il existe une bijection
$$ \tau_{\o\FM_{\ell}}^{G}:\, \Irr{\o\FM_{\ell}}{\GL_{d}(K)} \simto {\rm Rep}^{d}_{\o\FM_{\ell}}(WD_{K})   $$
uniquement d\'etermin\'ee par la propri\'et\'e $\tau_{\o\FM_{\ell}}^{G} \circ
J_{\ell} = R_{\ell} \circ \tau_{\oQl}^{G}$.  }
\end{thm}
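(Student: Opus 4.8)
Le plan est d'\'etablir la correspondance d'abord au niveau \emph{cuspidal}, o\`u elle r\'esulte d'un comptage de congruences, puis de la propager \`a toutes les repr\'esentations gr\^ace aux classifications \`a la Zelevinsky des deux c\^ot\'es, qui ram\`enent tout au niveau cuspidal de fa\c{c}on compatible \`a $r_{\ell}$, $R_{\ell}$ et $J_{\ell}$. L'outil de comptage est la notion de congruence stricte de \ref{terminologie} et les invariants num\'eriques $t(\pi)$, $a(\pi)$, $m(\pi)$ : la correspondance classique $\sigma^{G}_{\oQl}$ commute \`a la torsion par les caract\`eres, donc pr\'eserve $t$ et $a$, et elle transforme les congruences en congruences.

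Au niveau cuspidal, je montrerais d'abord que $R_{\ell}\circ\sigma^{G}_{\oQl}$ induit une bijection entre l'ensemble des $\oFl$-repr\'esentations supercuspidales de $\GL_{d'}(K)$ et celui des $\oFl$-repr\'esentations irr\'eductibles de $W_{K}$ de dimension $d'$. Toute $\oFl$-repr\'esentation cuspidale de $\GL_{d'}(K)$ se rel\`eve en une $\oQl$-supercuspidale $\ell$-enti\`ere par \cite[III.5.10.2]{Vig}, et toute $\oFl$-repr\'esentation irr\'eductible de $W_{K}$ se rel\`eve de m\^eme par un argument de Fong--Swan (\cite{VigLuminy}) ; il suffit donc de voir que pour une $\oQl$-supercuspidale $\ell$-enti\`ere $\pi$ d'image $\sigma:=\sigma^{G}_{\oQl}(\pi)$, les r\'eductions $r_{\ell}(\pi)$ et $R_{\ell}(\sigma)$ ne d\'ependent que de la classe de congruence stricte. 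C'est un \'enonc\'e de cardinalit\'e : l'ensemble des $\oQl$-supercuspidales strictement congrues \`a $\pi$ a pour cardinal $m(\pi)$, major\'e par la plus grande puissance de $\ell$ divisant $a(\pi)$ et lui \'egal \ssi\ $\pi$ est $\ell$-irr\'eductible (\cite[III.1.1.d]{Vig}, \cite[2.3]{VigAENS}), et l'\'enonc\'e parall\`ele vaut pour $\sigma$ avec les \emph{m\^emes} $t$ et $a$ (\cite[1.20]{VigLuminy}). Puisque $\sigma^{G}_{\oQl}$ appareille des classes de congruence de m\^eme cardinal, elle s'y restreint en une bijection, ce qui force $r_{\ell}(\pi)\leftrightarrow R_{\ell}(\sigma)$ \`a \^etre constant sur ces classes ; la surjectivit\'e des deux c\^ot\'es provient du rel\`evement. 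Le cas d'une cuspidale non supercuspidale $\o\pi$ de $\GL_{d'}(K)$ se ram\`ene \`a ceci : $\o\pi$ est de la forme $\delta_{e}(\o\rho)$ pour une supercuspidale $\o\rho$ de $\GL_{d'/e}(K)$, $e$ reli\'e \`a l'ordre de $q$ modulo $\ell$, et lui correspond la $\oFl$-repr\'esentation semi-simple $\o\tau\oplus\o\tau\nu\oplus\cdots\oplus\o\tau\nu^{e-1}$ de $W_{K}$, o\`u $\o\tau$ correspond \`a $\o\rho$ : c'est pr\'ecis\'ement $R_{\ell}$ appliqu\'e \`a $\sigma^{G}_{\oQl}$ d'un rel\`evement de $\o\pi$.

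Pour la propagation, le th\'eor\`eme de classification \ref{classification} param\`etre, tant sur $\oFl$ que sur $\oQl$, les irr\'eductibles de $\GL_{d}(K)$ par les multi-segments supercuspidaux $a=\{(\pi_{i},r_{i})\}$ via $a\mapsto\langle a\rangle$ ; parall\`element, ${\rm Rep}^{d}_{C}(WD_{K})$ est param\'etr\'e par les multi-ensembles de paires $(\o\sigma,r)$ via $(\o\sigma,r)\mapsto\o\sigma\boxtimes{\rm Sp}(r)$ et somme directe, sur la m\^eme combinatoire de (sur)segments. La correspondance classique $\tau^{G}_{\oQl}$ est compatible \`a ces param\'etrisations : additive sur les multisegments, elle envoie une repr\'esentation de Speh $\delta_{r}(\pi)$ sur $\sigma^{G}_{\oQl}(\pi)\boxtimes{\rm Sp}(r)$ (propri\'et\'e standard de la correspondance $\ell$-adique, \emph{cf.} \cite{lt}). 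On d\'efinit alors $\tau^{G}_{\oFl}$ sur $\langle a\rangle$, $a=\{(\o\pi_{i},r_{i})\}$, en relevant chaque $\o\pi_{i}$ en $\tilde\pi_{i}$ et en posant $\tau^{G}_{\oFl}(\langle a\rangle):=\bigoplus_{i}R_{\ell}\bigl(\sigma^{G}_{\oQl}(\tilde\pi_{i})\boxtimes{\rm Sp}(r_{i})\bigr)$ ; le niveau cuspidal montre que ceci ne d\'epend pas des $\tilde\pi_{i}$, et la proposition \ref{defsuperSpeh} (la r\'eduction de $\delta_{r}(\tilde\pi)$ est l'irr\'eductible superSpeh $\delta_{r}(\o\pi)$) l'identifie \`a la donn\'ee attendue via la bijection ci-dessus. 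La surjectivit\'e de $\tau^{G}_{\oFl}$ d\'ecoule de la surjectivit\'e au niveau cuspidal et de celle des deux param\'etrisations ; son injectivit\'e et la relation $\tau^{G}_{\oFl}\circ J_{\ell}=R_{\ell}\circ\tau^{G}_{\oQl}$ se ram\`enent, via l'additivit\'e de $\tau^{G}_{\oQl}$ et la surjectivit\'e de $J_{\ell}$ (\cite[V.12]{VigInduced}), \`a ce que $J_{\ell}$ et $R_{\ell}$ agissent de fa\c{c}on miroir sur la combinatoire des segments -- contenu du lemme \ref{lemrelev} et de son analogue galoisien, qui comparent les puissances de $\ell$ divisant $a(\pi)$ et $r$ au comportement de $R_{\ell}({\rm Sp}(r))$.

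L'obstacle principal n'est pas le niveau cuspidal, mais cette propagation : il faut contr\^oler pr\'ecis\'ement comment $J_{\ell}$ (resp. $R_{\ell}$) interagit avec la combinatoire des multisegments (resp. des repr\'esentations de Weil--Deligne), car $J_{\ell}$ ne se borne pas \`a r\'eduire bloc \`a bloc -- il peut scinder ou fusionner des segments suivant l'ordre de $q$ modulo $\ell$, tout comme ${\rm Sp}(r)$ cesse d'\^etre ``maximale'' apr\`es r\'eduction lorsque $\ell$ divise certains $q^{i}-1$. Le point crucial est donc l'identit\'e, pour tout $\pi$ $\ell$-entier, entre le multisegment supercuspidal de $J_{\ell}(\pi)$ et celui attach\'e \`a $R_{\ell}(\tau^{G}_{\oQl}(\pi))$ via la bijection combinatoire commune aux deux c\^ot\'es. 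L'unicit\'e de $\tau^{G}_{\oFl}$ est ensuite formelle, puisque $J_{\ell}$ est surjective.
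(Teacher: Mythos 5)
Ce th\'eor\`eme n'est pas d\'emontr\'e dans l'article : il y est cit\'e tel quel comme le th\'eor\`eme principal de \cite{VigLanglands}, et ne sert qu'\`a d\'efinir $\sigma^{G}_{\oFl}$. Il n'y a donc pas de preuve interne \`a laquelle comparer votre texte ; celui-ci doit \^etre jug\'e comme une reconstruction de l'argument de Vign\'eras. Votre plan g\'en\'eral --- traiter d'abord le niveau supercuspidal par un comptage de congruences \`a l'aide des invariants $t$, $a$, $m$, puis propager via les classifications par multisegments des deux c\^ot\'es --- est bien celui de \cite{VigAENS} et \cite{VigLanglands}.

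Il y a cependant une lacune r\'eelle, et elle porte pr\'ecis\'ement sur le point dur. Au niveau supercuspidal, vous posez en pr\'eambule que $\sigma^{G}_{\oQl}$ \emph{transforme les congruences en congruences}, puis vous concluez que, les classes de congruence stricte ayant le m\^eme cardinal des deux c\^ot\'es, la correspondance s'y restreint en une bijection. C'est circulaire : l'\'egalit\'e $t(\pi)=t(\sigma)$, donc $[a(\pi)]_{\ell}=[a(\sigma)]_{\ell}$, et les deux crit\`eres num\'eriques parall\`eles ne disent rien sur la position relative des deux partitions en classes de congruence ; pour que le comptage morde, il faut d'abord savoir que $\sigma^{G}_{\oQl}$ envoie la classe de congruence de $\pi$ \emph{dans} celle de $\sigma^{G}_{\oQl}(\pi)$, ce qui est exactement le c{\oe}ur de l'\'enonc\'e \`a d\'emontrer, et que votre \og donc \fg\ initial ne couvre pas. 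Comparez avec la proposition \ref{propptesLJ} du texte : l'in\'egalit\'e $m(\pi)\leq m(\rho)$ n'y est disponible que parce que la compatibilit\'e de $\LJ_{\oQl}$ \`a $r_{\ell}$ a \'et\'e \'etablie \emph{au pr\'ealable} (second diagramme du th\'eor\`eme \ref{theoLJ}, via les caract\`eres de Brauer). C\^ot\'e galoisien, aucun transfert a priori de ce type n'est fourni par vos ingr\'edients ; l'\'etablir est le contenu essentiel de \cite{VigAENS} et \cite{VigLanglands} et requiert un argument suppl\'ementaire qui ne se d\'eduit pas des seuls invariants num\'eriques. Deux points secondaires : une cuspidale non supercuspidale est de la forme Steinberg g\'en\'eralis\'ee ${\rm St}(\o\rho,e)$ (\cite[III.5.14]{Vig}) et non de la forme Speh $\delta_{e}(\o\rho)$, laquelle n'est jamais cuspidale pour $e>1$ puisque non g\'en\'erique ; et l'\'etape de propagation (compatibilit\'e de $J_{\ell}$, qui s\'electionne le constituant via la partition de Whittaker, avec la combinatoire de $R_{\ell}$) reste \`a l'\'etat d'annonce, comme vous le reconnaissez d'ailleurs vous-m\^eme.
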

La correspondance de Langlands-Vign\'eras est alors d\'efinie par
$\sigma^{G}_{\oFl}:= \tau^{G}_{\oFl}\circ Z_{\oFl}$.

Dans l'\'enonc\'e du corollaire \ref{coroLD}, nous avons d\'efini
$\sigma^{D}_{\oFl}:=\sigma^{G}_{\oFl}\circ
\JL_{\oFl}=\tau^{G}_{\oFl}\circ {^{z}\JL}_{\oFl}$. Il s'agit bien
d'une injection $\Irr{\oFl}{D^{\times}}\injo 
{\rm Rep}_{\o\FM_{\ell}}^{d}(WD_{K})$. Remarquons que sa d\'efinition ne
fait intervenir que la restriction de $\tau^{G}_{\oFl}$ aux
repr\'esentations superSpeh. Pour d\'eterminer l'image de
$\sigma^{D}_{\oFl}$, nous aurons besoin du lemme suivant.

\begin{lem}
  Soit $\sigma=(\sigma^{\rm ss},N)$ une $\oQl$-repr\'esentation de
  Weil-Deligne enti\`ere. On a \'equivalence entre
  \begin{enumerate}
  \item $R_{\ell}(\sigma)$ est ind\'ecomposable ($\sigma$ est alors dite
    $\ell$-ind\'ecomposable).
  \item $\sigma$ est ind\'ecomposable et ${\rm
      long}(r_{\ell}(\sigma^{\rm ss}))= {\rm long}(\sigma^{\rm 
  ss})$.
\item Il existe une repr\'esentation $\ell$-irr\'eductible 
$\lambda$ de $W_{K}$ telle que
$\sigma^{\rm ss}=\bigoplus_{i=0}^{r-1} \lambda(-i)$ et $N$ est donn\'e
par la compos\'ee de la projection \'evidente $\sigma^{\rm ss} \twoheadrightarrow 
\bigoplus_{i=1}^{r-1} \lambda(-i)$ suivie de l'injection \'evidente
$\bigoplus_{i=1}^{r-1} \lambda(-i) \injo \sigma^{\rm ss}(-1)$.
\item $(\tau^{G}_{\oQl})^{-1}(\sigma)$ est une repr\'esentation
  $\ell$-superSpeh de $G$.
  \end{enumerate}
\end{lem}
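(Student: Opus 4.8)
The plan is to prove the four conditions equivalent by running the cycle (i)~$\Rightarrow$~(ii)~$\Rightarrow$~(iii)~$\Rightarrow$~(iv)~$\Rightarrow$~(i). I would begin by recalling two structural facts. First, over a coefficient field of characteristic~$0$, every indecomposable Weil--Deligne representation is isomorphic to the ``special'' representation ${\rm Sp}(r,\mu)$, with semisimple part $\bigoplus_{i=0}^{r-1}\mu(-i)$ for a unique irreducible $\mu$ of $W_K$ and a unique $r\geq 1$, and with $N$ the evident nilpotent sending the summand $\mu(-i)$ of $\sigma^{\rm ss}$ identically onto the summand $\mu(-i)$ of $\sigma^{\rm ss}(-1)$ (killing $\mu=\mu(0)$): this is exactly the shape described in~(iii), and $\sigma^{\rm ss}$ then has length~$r$. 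Second, the reduction $R_\ell$ leaves the Deligne operator unchanged and acts on semisimple parts through $r_\ell$, so $R_\ell({\rm Sp}(r,\mu))$ is the special construction ${\rm Sp}(r,-)$ applied to the possibly reducible $\oFl$-representation $r_\ell(\mu)^{\rm ss}$; as ${\rm Sp}(r,-)$ is additive, $R_\ell({\rm Sp}(r,\mu))\simeq\bigoplus_j{\rm Sp}(r,\bar\mu_j)$ whenever $r_\ell(\mu)^{\rm ss}=\bigoplus_j\bar\mu_j$ with the $\bar\mu_j$ irreducible.

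For (i)~$\Rightarrow$~(ii): a direct summand of a Weil--Deligne representation whose semisimple part is integral is again integral, so any nontrivial decomposition of an integral $\sigma$ induces one of $R_\ell(\sigma)$; hence $R_\ell(\sigma)$ indecomposable forces $\sigma\simeq{\rm Sp}(r,\mu)$. Since $r_\ell$ never shortens a semisimple representation we always have ${\rm long}(r_\ell(\sigma^{\rm ss}))\geq{\rm long}(\sigma^{\rm ss})$, and if this were strict then $r_\ell(\mu)^{\rm ss}$ would be a nontrivial direct sum, making $R_\ell(\sigma)$ decomposable --- a contradiction. For (ii)~$\Rightarrow$~(iii): writing $\sigma\simeq{\rm Sp}(r,\lambda)$ with $\lambda$ irreducible, integrality of $\sigma$ forces $\lambda$ integral, and ${\rm long}(r_\ell(\sigma^{\rm ss}))={\rm long}(\sigma^{\rm ss})=r$ then forces ${\rm long}(r_\ell(\lambda)^{\rm ss})=1$, i.e.\ $\lambda$ is $\ell$-irreducible; and ${\rm Sp}(r,\lambda)$ is precisely the representation of~(iii).

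For the remaining two implications I would pass through the classical correspondences. Given~(iii), set $d'=d/r$ and $\pi:=(\sigma^{G}_{\oQl})^{-1}(\lambda)\in\Irr{\oQl}{G_{d'}}$, which is supercuspidal; since $(\sigma^{G}_{\oQl})^{-1}({\rm Sp}(r,\lambda))$ is the generalized Steinberg ${\rm St}_r(\pi)$ and $\tau^{G}_{\oQl}=\sigma^{G}_{\oQl}\circ Z_{\oQl}$, we get $(\tau^{G}_{\oQl})^{-1}(\sigma)=Z_{\oQl}({\rm St}_r(\pi))=\delta_r(\pi)$, a Speh representation; by Lemme~\ref{lemrelev} it is $\ell$-superSpeh exactly when $\pi$ is $\ell$-supercuspidal. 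The equivalence ``$\lambda$ $\ell$-irreducible $\Longleftrightarrow$ $\pi$ $\ell$-supercuspidal'' is the compatibility of local Langlands for supercuspidals with reduction mod~$\ell$, matching $\ell$-supercuspidal representations of $G_{d'}$ with $\ell$-irreducible representations of $W_K$; I would invoke it from~\cite{VigLanglands} together with the numerical criteria of~\cite{VigAENS} and~\cite{VigLuminy} on the two sides (alternatively it can be bypassed, staying inside the present paper, via Propositions~\ref{propptesLJ} and~\ref{propD2} applied to $\delta_r(\pi)$ and to its correspondent in $\Irr{\oQl}{D^{\times}}$). This yields (iii)~$\Rightarrow$~(iv), and for (iv)~$\Rightarrow$~(i) the same analysis writes $\sigma={\rm Sp}(r,\lambda)$ with $\lambda=\sigma^{G}_{\oQl}(\pi)$ now $\ell$-irreducible, so $\bar\lambda:=r_\ell(\lambda)$ is irreducible over $\oFl$ and $R_\ell(\sigma)\simeq{\rm Sp}(r,\bar\lambda)$.

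The one genuinely nontrivial point --- and the expected main obstacle --- is then to see that ${\rm Sp}(r,\bar\lambda)$ is indecomposable over $\oFl$, which is not automatic since its semisimple part $\bigoplus_{i=0}^{r-1}\bar\lambda(-i)$ need not be multiplicity-free (it already fails to be when the order of $q$ in $\oFl^{\times}$ is $<r$). I would handle this by an endomorphism-ring computation: the Deligne operator of ${\rm Sp}(r,\bar\lambda)$ is a regular nilpotent on the ``segment'' direction, so any endomorphism commuting with it is a polynomial $\sum_k a_k N^k$ in $N$; imposing $W_K$-equivariance, the twist relation between $N$ and the Weil group together with Schur's lemma for $\bar\lambda$ forces $a_k=0$ whenever $\bar\lambda(-k)\not\simeq\bar\lambda$, i.e.\ unless $e_0\mid k$, where $e_0\geq 1$ is the period of $\bar\lambda$ under unramified twist (a divisor of the order of $q$ in $\oFl^{\times}$). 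Hence ${\rm End}({\rm Sp}(r,\bar\lambda))\cong\oFl[T]/(T^{\lceil r/e_0\rceil})$ is a local ring, so ${\rm Sp}(r,\bar\lambda)=R_\ell(\sigma)$ is indecomposable, closing the cycle. Everything else is bookkeeping with the Weil--Deligne classification and with the additivity of ${\rm Sp}(r,-)$ under $R_\ell$.
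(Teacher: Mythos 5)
Your proof is correct and follows essentially the same route as the paper: the equivalence of (iii) and (iv) is obtained, exactly as in the text, from the numerical congruence-counting criteria of the lemme \ref{lemrelev} and of \cite[2.3]{VigAENS} combined with the compatibility of the correspondence with unramified twists and congruences. The genuine added value is your endomorphism-ring computation ${\rm End}({\rm Sp}(r,\bar\lambda))\cong\oFl[T]/(T^{\lceil r/e_{0}\rceil})$, which correctly establishes the indecomposability of $R_{\ell}(\sigma)$ even when $\bigoplus_{i}\bar\lambda(-i)$ fails to be multiplicity-free --- precisely the step the paper declares ``laiss\'ee au lecteur''; note only that your parenthetical alternative via les propositions \ref{propptesLJ} et \ref{propD2} relates $G$ to $D^{\times}$ rather than to $W_{K}$, so it cannot by itself replace the Galois-side criterion.
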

\begin{proof}
  L'\'equivalence entre les trois premiers points est laiss\'ee au
  lecteur. Pour y incorporer le point iv), rappelons que
  $\tau_{\oQl}^{G}$ induit une bijection
$$ \tau^{G}_{\oQl}:\;\{\o\QM_\ell\hbox{-repr.  de Speh de }G\} 
\To{\sim} 
\{\o\QM_\ell\hbox{-repr.  de dim. $d$ ind\'ecomposables de }WD_{K}\}. $$
 Compte tenu du lemme \ref{lemrelev} et de la compatibilit\'e de
 $\tau^{G}_{\oQl}$ \`a la torsion et aux congruences, il nous suffira de
 prouver le crit\`ere num\'erique de $\ell$-ind\'ecomposabilit\'e d'une
 repr\'esentation ind\'ecomposable $\sigma$ suivant.

\emph{Soit $t(\sigma)$ le nombre de caract\`eres non ramifi\'es $\psi$ de $W_{K}$
tels que $\sigma\psi\simeq \sigma$. Soit $m(\sigma)$ le nombre de
repr\'esentations de Weil-Deligne enti\`eres $\sigma'$  telles que
$R_{\ell}(\sigma)=R_{\ell}(\sigma')$ et
${\rm det}(\sigma'(\varphi))={\rm det}(\sigma(\varphi))$ o\`u $\varphi$ est un rel\`evement de
Frobenius fix\'e. Alors $m(\sigma)$ est inf\'erieur \`a la plus grande
puissance de
$\ell$ divisant le nombre
$a(\sigma):=\frac{d}{t(\sigma)}(q^{t(\sigma)}-1)$, et lui est \'egal 
\ssi\ $\sigma$ est
$\ell$-ind\'ecomposable.}

En effet, \'ecrivons $\sigma^{\rm ss}$ sous la forme
$\bigoplus_{i=0}^{r-1} \lambda(-i)$ avec $\lambda$ irr\'eductible et $N$
comme dans le point iii). On voit que $\sigma$ est
$\ell$-ind\'ecomposable \ssi\ $\lambda$ est $\ell$-irr\'eductible. D'apr\`es
\cite[2.3]{VigAENS}, cela \'equivaut \`a $m(\lambda)=\ell^{{\rm val}_{\ell}(a(\lambda))}$,
avec des notations similaires \`a ci-dessus. Or, par un argument
similaire \`a celui du lemme \ref{lemrelev}, on v\'erifie
que $m(\sigma)= \ell^{{\rm val}_\ell(r)}m(\lambda)$ et
$t(\sigma)=t(\lambda)$, d'o\`u l'on conclut \`a nouveau comme dans le
lemme \ref{lemrelev}.
\end{proof}

Le lemme justifie le carr\'e commutatif sup\'erieur du
diagrammes suivant. 
$$
\xymatrix{
\{\o\QM_\ell\hbox{-repr.  de Speh de }G\} 
\ar[r]_-{\tau^{G}_{\oQl}}^-{\sim} &
\{\o\QM_\ell\hbox{-repr.  de dim. $d$ ind\'ecomposables de }WD_{K}\} \\
\{\o\QM_\ell\hbox{-repr. $\ell$-superSpeh de  }G\}
\ar@{^(->}[u] \ar@{->>}[d]_{r_{\ell}}
\ar[r]_-{\tau^{G}_{\oQl}}^-{\sim} &
\{\o\QM_\ell\hbox{-repr.  de dim. $d$, $\ell$-ind\'ecomposables de }WD_{K}\} 
\ar@{^(->}[u] \ar@{->>}[d]_{R_{\ell}} \\
\{\o\FM_\ell\hbox{-repr. superSpeh de  }G\}
\ar[r]_-{\tau^{G}_{\oFl}}^-{\sim} &
\{\o\FM_\ell\hbox{-repr.  de dim. $d$ ind\'ecomposables de }WD_{K}\} 
}
$$
Le carr\'e inf\'erieur est la traduction du th\'eor\`eme
rappel\'e ci-dessus, puisque pour toute $\oQl$-repr\'esentation de Speh
enti\`ere $\delta$, on a simplement $J_{\ell}(\delta)=r_{\ell}(\delta)$.
Finalement on a identifi\'e l'image de $\sigma^{D}_{\oFl}$ \`a ${\rm
  Rep}_{\oFl}^{d}(WD_{K})^{\rm indec}$. La caract\'erisation en termes
de rel\`evements d\'ecoule de la caract\'erisation analogue dans le th\'eor\`eme \ref{theoJL}.

\findem

\subsection{Langlands-Jacquet mod $\ell$ et effectivit\'e au signe pr\`es}

\label{seceff}

Dans cette section, nous \'etudions la question \ref{questionLJ}. 
Remarquons qu'elle rel\`eve purement de la th\'eorie des
repr\'esentations de $G$. En effet, soit $\pi\in \Irr{\oFl}{G}$,
il s'agit de savoir si la projection de la classe
$[\pi]$ sur $\RC_{\Delta}(G,\o\FM_{\ell})$ modulo
$\RC_{I}(G,\o\FM_{\ell})$ est une combinaison lin\'eaire positive ou
n\'egative de repr\'esentations superSpeh.
En cons\'equence, il y a ``beaucoup'' de cas o\`u la question admet
presque trivialement une r\'eponse positive.
Avant d'exposer ces cas, nous avons besoin d'un raffinement de la
proposition \ref{propdecomp}.

\alin{Support supercuspidal}
Soit $\pi\in\Irr{\oFl}{G}$. On peut trouver un \levi $M=G_{d_{1}}\times
\cdots\times G_{d_{r}}$ de $G$ et une repr\'esentation
\emph{supercuspidale}
$\tau=\tau_{1}\otimes\tau_{2}\otimes\cdots\otimes\tau_{r}$ de $M$
telle que  $\pi$ apparaisse comme sous-quotient de l'induite 
$\tau_{1}\times\tau_{2}\times\cdots\times\tau_{r}$. 
D'apr\`es \cite[V.4]{VigInduced}, cette paire $(M,\tau)$ est
\emph{unique \`a conjugaison pr\`es}. La classe de conjugaison de
$(M,\tau)$ est appel\'ee \emph{support supercuspidal} de $\pi$.

D\'efinissons alors $\RC(G,\oFl)_{\tau}$ comme le
sous-groupe de $\RC(G,\oFl)$ engendr\'e par les irr\'eductibles dont le
support supercuspidal contient $(M,\tau)$, et posons 
$\RC_{\Delta}(G,\oFl)_{\tau}:=\RC(G,\oFl)_{\tau}\cap\RC_{\Delta}(G,\oFl)$
et de m\^eme
$\RC_{I}(G,\oFl)_{\tau}:=\RC(G,\oFl)_{\tau}\cap\RC_{I}(G,\oFl)$.

\begin{lem}\label{decompssc} 
On a  une d\'ecomposition
$ \RC(G,\oFl)_{\tau} = \RC_{\Delta}(G,\oFl)_{\tau} \oplus
\RC_{I}(G,\oFl)_{\tau},$ o\`u le rang de
$\RC_{\Delta}(G,\oFl)_{\tau}$ est le nombre de repr\'esentations
supercuspidales $\tau_{0}$ de $G_{d/r}$ telles que la paire $(M,\tau)$
soit 
conjugu\'ee \`a la paire $(G_{d/r}^{r},
\tau_{0}\otimes\tau_{0}\nu\otimes\cdots\otimes\tau_{0}\nu^{r-1})$.
Plus pr\'ecis\'ement, ce nombre est donn\'e comme suit.
\begin{enumerate}
\item Si $\tau$ est conjugu\'ee \`a
  $\tau_{0}\otimes\tau_{0}\nu\otimes\cdots\otimes\tau_{0}\nu^{r-1}$
  pour $\tau_{0}$ supercuspidale de $G_{d/r}$, alors posons $r_{0}$ le
  cardinal de l'orbite $\{\tau_{0}\nu^{i}, i\in\ZM\}$.
  \begin{enumerate}
  \item Si $r_0$ divise $r$, alors
    $\rk(\RC_{\Delta}(G,\oFl)_{\tau})=r_0$.
  \item Si $r_0$ ne divise pas $r$, alors     $\rk(\RC_{\Delta}(G,\oFl)_{\tau})=1$.
  \end{enumerate}
\item Sinon,     $\rk(\RC_{\Delta}(G,\oFl)_{\tau})=0$.
\end{enumerate}
\end{lem}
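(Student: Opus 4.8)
The plan is to reduce the whole statement to the classification theorem of \ref{classification} and to a short combinatorial computation with $\nu$-orbits of supercuspidals. First I would record that, for a multi-segment supercuspidal $a=\{(\pi_{i},r_{i})\}$, the supercuspidal support of $\langle a\rangle$ — and of $\pi(a)$ — is the multi-ensemble $\bigsqcup_{i}\{\pi_{i}\nu^{j},\,0\leq j<r_{i}\}$. Since $a\mapsto\langle a\rangle$ is a bijection onto $\Irr{\oFl}{G}$, the irreducibles are partitioned by their supercuspidal support, which gives the block decomposition $\RC(G,\oFl)=\bigoplus_{(M,\tau)}\RC(G,\oFl)_{\tau}$, the $\tau$-summand being free with basis the $\langle a\rangle$ of supercuspidal support $(M,\tau)$. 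I would also note that $\RC_{\Delta}(G,\oFl)$ is free on the superSpeh representations — these are the $\langle a\rangle$ with $a=\{(\pi_{0},r)\}$ a single \emph{supercuspidal} segment, since then $\langle a\rangle=\delta_{r}(\pi_{0})$ by property m2) — while $\RC_{I}(G,\oFl)$ is spanned by proper parabolic inductions, each of which lies in a single block because all its irreducible subquotients share the same supercuspidal support.

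For the direct-sum decomposition I would start from $\RC(G,\oFl)=\RC_{\Delta}(G,\oFl)\oplus\RC_{I}(G,\oFl)$ (Proposition \ref{propdecomp}). By the previous paragraph each of the two summands is generated by elements each living in a single block, hence each summand is the direct sum of its intersections with the $\RC(G,\oFl)_{\tau}$. A routine lemma on direct sums then yields, for every $(M,\tau)$, the asserted equality $\RC(G,\oFl)_{\tau}=\RC_{\Delta}(G,\oFl)_{\tau}\oplus\RC_{I}(G,\oFl)_{\tau}$, and in particular shows $\RC_{\Delta}(G,\oFl)_{\tau}$ is free on the superSpeh representations of supercuspidal support $(M,\tau)$.

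It then remains to compute $\rk\bigl(\RC_{\Delta}(G,\oFl)_{\tau}\bigr)$, i.e. the number of superSpeh representations of supercuspidal support $(M,\tau)$, hence by the classification the number of single supercuspidal segments $(\tau_{0}',r')$ with $\{\tau_{0}'\nu^{j},\,0\leq j<r'\}=\{\tau_{1},\dots,\tau_{r}\}$ as multi-ensembles. Matching the two multi-ensembles forces $r'=r$, with $\tau_{0}'$ then supercuspidal of $G_{d/r}$, so this count is exactly the number of supercuspidals $\tau_{0}$ of $G_{d/r}$ with $\tau$ conjugate to $\tau_{0}\otimes\tau_{0}\nu\otimes\cdots\otimes\tau_{0}\nu^{r-1}$; if there is none, the rank is $0$, which is case ii). Otherwise I would fix one such $\tau_{0}$, let $r_{0}$ be the cardinality of its $\nu$-orbit, note that any admissible $\tau_{0}'$ lies in that orbit — say $\tau_{0}'=\tau_{0}\nu^{a}$ with $0\leq a<r_{0}$ — and that $\tau_{0}\nu^{a}$ is admissible iff the translate by $a$ of the multiset $\{\tau_{0}\nu^{j},\,0\leq j<r\}$ coincides with itself.

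To finish, writing $r=kr_{0}+s$ with $0\leq s<r_{0}$ and identifying the orbit with $\ZM/r_{0}\ZM$, the multiplicity function of the multiset $\{\tau_{0}\nu^{j},\,0\leq j<r\}$ takes the value $k+1$ on the arc $\{0,\dots,s-1\}$ and $k$ elsewhere. When $s=0$, i.e. $r_{0}\mid r$, this function is constant, every $a$ is admissible, and the rank is $r_{0}$ — the first sub-case of i). When $s\neq0$, i.e. $r_{0}\nmid r$, invariance under translation by $a$ forces the proper arc $\{0,\dots,s-1\}$ of $\ZM/r_{0}\ZM$ to be stable under $+a$, which happens only for $a=0$; so only $\tau_{0}$ is admissible and the rank is $1$ — the second sub-case of i). All the steps are short; the only point requiring a little care is this last combinatorial fact, that a proper arc of a finite cyclic group has trivial translation stabiliser, together with the bookkeeping showing that matching the multi-ensembles forces $r'=r$ and $\tau_{0}'$ of the right size. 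I do not foresee a genuine obstacle.
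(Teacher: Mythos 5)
Votre preuve est correcte et suit essentiellement la même démarche que celle de l'article : l'observation clef que les induites paraboliques d'irréductibles restent dans un seul « bloc » de support supercuspidal (unicité du support supercuspidal) donne la compatibilité de la décomposition $\RC_{\Delta}\oplus\RC_{I}$ avec les $\RC(G,\oFl)_{\tau}$, puis le rang se calcule en comptant les superSpeh de support donné via la combinatoire des $\nu$-orbites. Vous ne faites qu'expliciter la « discussion détaillée \dots\ élémentaire » que l'article laisse au lecteur (y compris le fait qu'un arc propre de $\ZM/r_{0}\ZM$ a un stabilisateur de translation trivial), ce qui est exact.
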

\begin{proof}
La d\'ecomposition se prouve exactement comme celle de la proposition 
\ref{propdecomp}, une fois qu'on a remarqu\'e la cons\'equence suivante de
la  propri\'et\'e d'``unicit\'e'' du support supercuspidal : si $\pi$
appara\^{\i}t comme sous-quotient d'une repr\'esentation $i$ paraboliquement
induite d'une repr\'esentation irr\'eductible, alors tous les
sous-quotients de $i$ ont le m\^eme support supercuspidal que $\pi$.

Le rang de $\RC_{\Delta}(G,\oFl)_{\tau}$ est le nombre de
repr\'esentations superSpeh de support supercuspidal contenant $(M,\tau)$.
Vue la condition  m1)  dans la caract\'erisation des repr\'esentations
superSpeh au paragraphe 
\ref{defsuperSpeh}, ce nombre est bien celui annonc\'e dans le lemme. La
discussion d\'etaill\'ee est alors \'el\'ementaire.
Pr\'ecisons seulement que sous l'hypoth\`ese i)(a),  les superSpeh dans
$\RC(G,\oFl)_{\tau}$ sont exactement
\begin{equation}
\delta_{r}(\tau_{0}),
\delta_{r}(\tau_{0}\nu)=\delta_{r}(\tau_{0})\nu ,\cdots,
\delta_{r}(\tau_{0}\nu^{r_{0}-1}) =
\delta_{r}(\tau_{0})\nu^{r_{0}-1}.\label{enusuperspeh} 
\end{equation}
\end{proof}

Par commodit\'e, nous dirons qu'une repr\'esentation $\pi\in\Irr{\oFl}{G}$
est \emph{elliptique de type $(\tau_{0},r)$}
  si $\tau_{0}$ est une repr\'esentation supercuspidale de $G_{d/r}$ et le
 support supercuspidal de $\pi$ contient la paire  
 $(
 (G_{d/r})^{r},\tau_{0}\otimes\tau_{0}\nu\otimes\cdots\otimes\tau_{0}\nu^{r-1})$.

\begin{coro} \label{coroeff}
  Soit $\pi\in\Irr{\oFl}{G}$. 
  \begin{enumerate}
  \item Si $\pi$ n'est pas elliptique, on a $\LJ_{\oFl}(\pi)=0$.
  \item Si $\pi$ est elliptique de type $(\tau_{0},r)$, alors dans
   les  cas ci-dessous, 
  $\LJ_{\oFl}(\pi)$ est effective au signe pr\`es.
  \begin{enumerate}
  \item Si $\pi$ se rel\`eve \`a $\oQl$ (exemple : $\pi$ g\'en\'erique ou de Speh).
  \item Si $\tau_{0}$ n'est pas isomorphe \`a $\tau_{0}\nu^{r}$.
  \item Si $\pi\nu\simeq \pi$.
\end{enumerate}
  \end{enumerate}
\end{coro}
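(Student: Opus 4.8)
Le point i) est immédiat à partir du théorème \ref{ptesLJ} combiné au lemme \ref{decompssc} : si $\pi$ n'est pas elliptique, alors $\RC(G,\oFl)_{\tau}$ ne contient aucune représentation superSpeh (cas ii) du lemme), donc $[\pi]\in\RC_{I}(G,\oFl)$, qui est le noyau de $\LJ_{\oFl}$. Pour le point ii), on raisonne cas par cas. Dans le cas (a), si $\pi$ se relève en $\tilde\pi$ à $\oQl$, alors $r_{\ell}(\tilde\pi)=\pi$ et, par la commutativité du second diagramme du théorème \ref{theoLJ}, $\LJ_{\oFl}(\pi)=r_{\ell}^{D^{\times}}(\LJ_{\oQl}(\tilde\pi))$ ; or $\LJ_{\oQl}(\tilde\pi)=0$ ou $\pm[\rho]$ pour une $\oQl$-irréductible $\rho$ de $D^{\times}$ (propriété de $\LJ_{\oQl}$ rappelée au paragraphe \ref{corclassic}), et la réduction mod $\ell$ d'une $\oQl$-représentation irréductible $\ell$-entière de $D^{\times}$ est effective (elle est un vrai module, cf. la proposition de la fin du paragraphe sur les représentations de $D^{\times}$). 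Donc $\LJ_{\oFl}(\pi)$ est effective au signe près.

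Pour les cas (b) et (c), il faut travailler dans le bloc $\RC(G,\oFl)_{\tau}$ et utiliser explicitement sa structure donnée par le lemme \ref{decompssc}. Le principe est le suivant : $\LJ_{\oFl}$ restreint à $\RC(G,\oFl)_{\tau}$ tue $\RC_{I}(G,\oFl)_{\tau}$ et envoie isomorphiquement $\RC_{\Delta}(G,\oFl)_{\tau}$ (au signe près sur chaque générateur superSpeh) sur le sous-groupe de $\RC(D^{\times},\oFl)$ engendré par les irréductibles correspondantes $^{z}\JL_{\oFl}^{-1}$ de $D^{\times}$ de ``support'' adéquat. Donc pour montrer que $\LJ_{\oFl}(\pi)$ est effective au signe près, il suffit de montrer que la projection de $[\pi]$ sur $\RC_{\Delta}(G,\oFl)_{\tau}$, parallèlement à $\RC_{I}(G,\oFl)_{\tau}$, s'écrit comme combinaison linéaire à coefficients tous de même signe des classes superSpeh, \emph{une fois ces classes normalisées par les signes $\varepsilon_{\delta}$ tels que $\LJ_{\oFl}(\delta)=\varepsilon_{\delta}[{^{z}\JL_{\oFl}^{-1}}(\delta)]$}. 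Dans le cas (b), où $\tau_{0}\not\simeq\tau_{0}\nu^{r}$, l'hypothèse i)(b) du lemme \ref{decompssc} montre que $\RC_{\Delta}(G,\oFl)_{\tau}$ est de rang $1$, engendré par une unique représentation superSpeh $\delta=\delta_{r}(\tau_{0})$ ; la projection de $[\pi]$ est alors un multiple entier $m[\delta]$, et $\LJ_{\oFl}(\pi)=\pm m[{^{z}\JL_{\oFl}^{-1}}(\delta)]$ est trivialement effective au signe près (il n'y a qu'un seul irréductible de $D^{\times}$ impliqué, donc la positivité est automatique). Dans le cas (c), où $\pi\nu\simeq\pi$, l'action de la torsion par $\nu$ sur $\RC(G,\oFl)_{\tau}$ et sur $\RC(D^{\times},\oFl)$ est compatible via $\LJ_{\oFl}$ ; comme $[\pi]$ est $\nu$-invariante, sa projection sur $\RC_{\Delta}(G,\oFl)_{\tau}$ l'est aussi, donc est un multiple de $\sum_{i=0}^{r_{0}-1}\pm\delta_{r}(\tau_{0})\nu^{i}$ (la somme sur l'orbite, en utilisant l'énumération (\ref{enusuperspeh}) du lemme \ref{decompssc}), et son image par $\LJ_{\oFl}$ est un multiple de la somme des irréductibles de $D^{\times}$ correspondantes — à nouveau effective au signe près.

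Le point délicat est la gestion des signes de normalisation $\varepsilon_{\delta}$. Dans les cas (b) et (c) on se ramène soit à un seul générateur superSpeh, soit à une $\nu$-orbite complète de générateurs ; il faut alors savoir que les signes $\varepsilon_{\delta_{r}(\tau_{0})\nu^{i}}$ sont \emph{tous égaux} le long d'une telle orbite. Cela résulte de la $\nu$-équivariance de $\LJ_{\oFl}$ (qui provient elle-même de celle de $\LJ_{\oQl}$ et de la compatibilité à la réduction mod $\ell$), combinée au fait que $^{z}\JL_{\oFl}$ est aussi $\nu$-équivariante : la torsion par $\nu$ permutant sans changer de signe les deux membres de l'égalité $\LJ_{\oFl}(\delta)=\varepsilon_{\delta}[{^{z}\JL_{\oFl}^{-1}}(\delta)]$, on obtient $\varepsilon_{\delta\nu}=\varepsilon_{\delta}$. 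C'est vraiment le seul endroit où un peu de soin est requis ; le reste est de l'algèbre linéaire élémentaire dans des groupes abéliens libres de petit rang.
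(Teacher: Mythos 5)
Votre preuve est correcte et suit pour l'essentiel la m\^eme d\'emarche que celle du texte : le point i) et les cas ii)(b) et ii)(c) se d\'eduisent du lemme \ref{decompssc} exactement de la m\^eme fa\c{c}on (rang $0$, rang $1$, puis \'egalit\'e des coefficients $a_{i}$ le long de la $\nu$-orbite), et le cas ii)(a) repose comme ici sur le rel\`evement de $\pi$ et la compatibilit\'e de $\LJ$ \`a la r\'eduction modulo $\ell$. Vous explicitez en outre la constance des signes $\varepsilon_{\delta\nu^{i}}$ le long d'une $\nu$-orbite de repr\'esentations superSpeh, point laiss\'e implicite dans le texte ; c'est un compl\'ement utile mais non une diff\'erence de m\'ethode.
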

\begin{proof}
Le cas i) d\'ecoule du point ii) du lemme. Dans le cas ii)(a), choisissons
un rel\`evement $\wt\pi$ de $\pi$. On a alors
$\LJ_{\oFl}(\pi)=r_{\ell}(\LJ_{\oQl}(\wt\pi))$. Or on sait que
$\LJ_{\oQl}(\wt\pi)$ est effective au signe pr\`es.

Le cas ii) (b) est justiciable du cas i)(b) du lemme. Enfin le dernier
cas rel\`eve du cas i)(a) du lemme. On a vu dans ce cas en
(\ref{enusuperspeh}) que les superSpeh de m\^eme support supercuspidal
que $\pi$ forment une orbite $(\delta,\delta\nu,\cdots
\delta\nu^{r_{0}-1})$ sous l'action de $\nu$ par torsion. \'Ecrivons
alors
$$ [\pi]= \sum_{i=0}^{r_{0}-1} a_{i}[\delta\nu^{i}] \hbox{ mod }
\RC_{I}(G,\oFl).$$
Comme $\RC_{I}(G,\oFl)$ est stable par torsion par $\nu$, l'hypoth\`ese
$[\pi]=[\pi\nu]$ implique que tous les $a_{i}$ sont \'egaux. Ils ont
donc \emph{a fortiori} le m\^eme signe.
\end{proof}

On peut maintenant \'ecr\'emer encore un peu en utilisant le support
cuspidal.

\begin{prop}
  Soit $\pi\in \Irr{\oFl}{G}$ une repr\'esentation elliptique. Si le
  support cuspidal de $\pi$ est diff\'erent de son support
  supercuspidal, alors $\LJ_{\oFl}(\pi)$ est effective au signe pr\`es.
\end{prop}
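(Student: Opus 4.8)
Le plan est d'\'etablir le r\'esultat plus pr\'ecis que, sous ces hypoth\`eses, $\LJ_{\oFl}([\pi])=0$. Comme $\ker(\LJ_{\oFl})=\RC_{I}(G,\oFl)$ d'apr\`es le th\'eor\`eme \ref{ptesLJ}, il suffira de montrer que $[\pi]\in\RC_{I}(G,\oFl)$, et pour cela je raffinerais la d\'ecomposition de la proposition \ref{propdecomp} selon le support cuspidal. L'ingr\'edient cl\'e est l'observation suivante : \emph{toute repr\'esentation superSpeh a pour support cuspidal son support supercuspidal}. En effet, si $\delta=\delta_{r}(\tau_{0})$ est superSpeh, alors $\tau_{0}$ est supercuspidale par la d\'efinition \ref{defsuperSpeh}, donc $\tau_{0}\otimes\tau_{0}\nu\otimes\cdots\otimes\tau_{0}\nu^{r-1}$ est une repr\'esentation \emph{supercuspidale} de $M:=G_{d/r}^{r}$ ; la propri\'et\'e m1) et la r\'eciprocit\'e de Frobenius montrent que $\delta$ est un sous-quotient de $i_{M}^{G}(\tau_{0}\otimes\cdots\otimes\tau_{0}\nu^{r-1})$, de sorte que par unicit\'e du support cuspidal \cite[V.4]{VigInduced} celui de $\delta$ est $(M,\tau_{0}\otimes\cdots\otimes\tau_{0}\nu^{r-1})$, qui est aussi son support supercuspidal.

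Ensuite, pour une classe de conjugaison $(M',\tau')$ form\'ee d'un Levi de $G$ et d'une repr\'esentation \emph{cuspidale} de $M'$, je noterais $\RC(G,\oFl)_{(M',\tau')}$ le sous-groupe de $\RC(G,\oFl)$ engendr\'e par les irr\'eductibles de support cuspidal $(M',\tau')$, de sorte que $\RC(G,\oFl)$ est la somme directe des $\RC(G,\oFl)_{(M',\tau')}$. Par transitivit\'e de l'induction parabolique et unicit\'e du support cuspidal, toute induite parabolique propre d'un irr\'eductible de support cuspidal $(M',\tau')$ a tous ses sous-quotients de support cuspidal $(M',\tau')$ ; ainsi $\RC_{I}(G,\oFl)$ est la somme directe de ses intersections avec les $\RC(G,\oFl)_{(M',\tau')}$. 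De m\^eme, $\RC_{\Delta}(G,\oFl)$, \'etant engendr\'e par des classes superSpeh appartenant chacune \`a un unique $\RC(G,\oFl)_{(M',\tau')}$, est la somme directe de ses intersections avec ceux-ci. La d\'ecomposition de la proposition \ref{propdecomp} se raffine donc, pour chaque $(M',\tau')$, en
$$\RC(G,\oFl)_{(M',\tau')}=\bigl(\RC_{I}(G,\oFl)\cap\RC(G,\oFl)_{(M',\tau')}\bigr)\oplus\bigl(\RC_{\Delta}(G,\oFl)\cap\RC(G,\oFl)_{(M',\tau')}\bigr),$$
exactement comme dans le lemme \ref{decompssc}.

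Il ne restera qu'\`a conclure : soit $(M',\tau')$ le support cuspidal de $\pi$. L'hypoth\`ese dit que $\tau'$ n'est pas supercuspidale ; par la premi\`ere \'etape, aucune repr\'esentation superSpeh n'a ce support cuspidal, c'est-\`a-dire $\RC_{\Delta}(G,\oFl)\cap\RC(G,\oFl)_{(M',\tau')}=0$. La deuxi\`eme \'etape donne alors $\RC(G,\oFl)_{(M',\tau')}\subset\RC_{I}(G,\oFl)$, d'o\`u $[\pi]\in\RC_{I}(G,\oFl)$ et $\LJ_{\oFl}(\pi)=0$, qui est en particulier effective au signe pr\`es (l'hypoth\`ese d'ellipticit\'e n'intervient d'ailleurs pas). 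Le seul point un peu technique est le raffinement de la proposition \ref{propdecomp} selon le support cuspidal, \'el\'ementaire \`a partir de \cite[V.4]{VigInduced} ; ce qu'il faut surtout retenir est qu'aucune information de positivit\'e de coefficients --- l'obstacle r\'eel derri\`ere la question \ref{questionLJ} --- n'est ici n\'ecessaire, pr\'ecis\'ement parce que l'hypoth\`ese place $[\pi]$ enti\`erement dans $\RC_{I}(G,\oFl)$.
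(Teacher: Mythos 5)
Il y a une lacune s\'erieuse : votre argument \'etablirait que $\LJ_{\oFl}(\pi)=0$ d\`es que le support cuspidal diff\`ere du support supercuspidal, ce qui est faux et contredit un calcul explicite du texte. Prenez en effet $\varepsilon=d$ (o\`u $\varepsilon$ est l'ordre de $q$ dans $\Fl^{\times}$) et $\pi={\rm St}(1,d)$ la Steinberg g\'en\'eralis\'ee, cuspidale non supercuspidale d'apr\`es \cite[III.5.14]{Vig} : son support cuspidal est $(G,\pi)$ tandis que son support supercuspidal est port\'e par le tore, donc $\pi$ satisfait l'hypoth\`ese de la proposition ; or c'est la repr\'esentation ``non-d\'eg\'en\'er\'ee'' de la preuve de la proposition \ref{effnontrivial}, pour laquelle la formule obtenue donne $\langle a\rangle\equiv(-1)^{d-1}\sum_{k}[\nu^{i_{k}}] \hbox{ mod } \RC_{I}(G,\oFl)$, quantit\'e non nulle. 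Donc $\LJ_{\oFl}({\rm St}(1,d))\neq 0$, et votre conclusion $[\pi]\in\RC_{I}(G,\oFl)$ est fausse en g\'en\'eral.

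Le pas qui \'echoue est le raffinement de la proposition \ref{propdecomp} selon le support cuspidal, plus pr\'ecis\'ement l'affirmation que toute induite parabolique propre $i_{M'}^{G}(\tau')$ d'une cuspidale $\tau'$ a tous ses sous-quotients de support cuspidal $(M',\tau')$. L'unicit\'e de \cite[V.4]{VigInduced} que vous invoquez porte sur le support \emph{super}cuspidal ; le support cuspidal n'est pas caract\'eris\'e par ``appara\^{\i}tre comme sous-quotient d'une induite d'une cuspidale'', pr\'ecis\'ement parce qu'en modulaire une telle induite peut avoir des sous-quotients cuspidaux non supercuspidaux. Ainsi ${\rm Ind}_{B}^{G}(\nu^{\frac{1-d}{2}}\otimes\cdots\otimes\nu^{\frac{d-1}{2}})$ appartient \`a $\RC_{I}(G,\oFl)$ mais ses constituants irr\'eductibles se r\'epartissent sur deux supports cuspidaux distincts : $\RC_{I}(G,\oFl)$ n'est donc pas la somme directe de ses intersections avec les blocs de support cuspidal, et votre conclusion ne suit pas. (Le raffinement selon le support \emph{super}cuspidal est, lui, correct --- c'est le lemme \ref{decompssc} --- mais il ne donne rien ici.) C'est pourquoi la preuve du texte ne conclut $\LJ_{\oFl}(\pi)=0$ que lorsque le support cuspidal est \emph{non connexe}, auquel cas $\pi$ est elle-m\^eme une induite propre irr\'eductible ; dans le cas connexe, elle observe que $\lambda\simeq\lambda\nu$ pour une Steinberg g\'en\'eralis\'ee, d'o\`u $\pi\simeq\pi\nu$, et applique le point ii)(c) du corollaire \ref{coroeff}, qui ne donne que l'effectivit\'e au signe pr\`es, pas l'annulation.
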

\begin{proof}
  Soit $\lambda=\lambda_{1}\otimes\cdots\otimes\lambda_{s}$ un \'el\'ement
  du support cuspidal de $\pi$. D'apr\`es \cite[Thm V.10]{VigInduced}, il existe un
  multisegment cuspidal $b$ de support $\{\lambda_{1},\cdots, \lambda_{s}\}$
 tel que $\pi\simeq \langle b\rangle$. 

Supposons dans un premier temps que l'ensemble $\{\lambda_{1},\cdots,
\lambda_{s}\}$ n'est pas connexe, au sens de \cite[V.3]{VigInduced}. \'Ecrivons
$b$ comme somme $b=b_{1}\sqcup\cdots \sqcup b_{k}$ de composantes
connexes. Par construction, la repr\'esentation $\langle b\rangle$
appara\^{\i}t avec multiplicit\'e $1$ dans l'induite $\langle
b_{1}\rangle\times \langle b_{2}\rangle \times\cdots\times \langle
b_{k}\rangle$. Or, d'apr\`es \cite[Prop V.3]{VigInduced}, cette induite est irr\'eductible. La
repr\'esentation $\pi$ est donc induite parabolique propre et on a
$\LJ_{\oFl}(\pi)=0$.

Supposons maintenant que l'ensemble $\{\lambda_{1},\cdots,
\lambda_{s}\}$ est connexe. Cela signifie en particulier que tous les $\lambda_{i}$ sont
de la forme $\lambda\nu^{j_{i}}$ pour un certain entier $j_{i}$. Par
hypoth\`ese, $\lambda$ n'est pas supercuspidale. D'apr\`es
\cite[III.5.14]{Vig}, elle est donc de la forme ``Steinberg
g\'en\'eralis\'ee''. Par construction, une telle repr\'esentation v\'erifie
$\lambda\simeq \lambda\nu$. Par cons\'equent $\pi\simeq\pi\nu$ et on
peut appliquer le point ii)(c) du corollaire pr\'ec\'edent.
\end{proof}

En utilisant les alg\`ebres de Hecke de types de Bushnell-Kutzko comme
dans \cite[IV]{VigInduced}, la proposition pr\'ec\'edente permet en principe\footnote{Les
  d\'etails restent toutefois \`a \'ecrire.} de ramener la
question d'effectivit\'e de $\LJ_{\oFl}(\pi)$ au cas o\`u $\pi$ est
elliptique \emph{superunipotente}, c'est-\`a-dire au cas o\`u le support
cuspidal de $\pi$ contient la paire $(\GM^{d},
1\otimes\nu\otimes\cdots\otimes\nu^{d-1})$, \`a torsion par un caract\`ere
$\chi\otimes\chi\otimes\cdots\otimes\chi$ pr\`es.
Notons alors $\varepsilon$ l'ordre de $q$ dans $\Fl^{\times}$. Le
corollaire \ref{coroeff} dit que $\LJ_{\oFl}(\pi)$ est effectif si
$\varepsilon=1$ ou si $\varepsilon$ ne divise pas $d$. Le premier cas ``non
trivial'' d''effectivit\'e est donc donn\'e par la proposition suivante.

\begin{prop} \label{effnontrivial}
  Soit $\pi$ elliptique superunipotente. Si l'ordre $\varepsilon$ de
  $q$ dans $\Fl^{\times}$ est $d$, alors
  $\LJ_{\oFl}(\pi) $ est effective au signe pr\`es.
\end{prop}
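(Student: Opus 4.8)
The plan is to work inside the block $\RC(G,\oFl)_{\tau}$ cut out by the supercuspidal support and to reduce the whole statement to a combinatorial assertion about that block. A twist $\pi\mapsto\pi\otimes(\chi\circ\det)$ on $G$ matches the twist $\rho\mapsto\rho\otimes(\chi\circ{\rm Nrd})$ on $D^{\times}$; these are compatible with the transfer of Brauer characters of Theorem \ref{theoLJ} (on matching elliptic conjugacy classes $\det$ and ${\rm Nrd}$ agree, being the constant coefficient of the common characteristic polynomial) and commute with parabolic induction, hence with $\LJ_{\oFl}$, and preserve $\RC_{I}$, $\RC_{\Delta}$ and the property ``effective up to a sign''. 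So we may assume $\tau=1\otimes\nu\otimes\cdots\otimes\nu^{d-1}$. Since $\varepsilon=d$ we have $\nu^{d}=1$ over $\oFl$, so by the classification $\pi=\langle a\rangle$ where the segments of the multisegment $a$ partition the cyclic support $\{1,\nu,\cdots,\nu^{d-1}\}$ into arcs; let $k$ be their number. By Lemma \ref{decompssc}.i)(a), $\RC_{\Delta}(G,\oFl)_{\tau}$ is free of rank $d$ on the superSpeh $\delta,\delta\nu,\cdots,\delta\nu^{d-1}$ with $\delta=\delta_{d}(1)$; let ${\rm pr}_{\Delta}$ be the projection onto it along $\RC_{I}(G,\oFl)_{\tau}$, so ${\rm pr}_{\Delta}([\pi])=\sum_{j=0}^{d-1}a_{j}[\delta\nu^{j}]$. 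Then by Theorem \ref{ptesLJ}, $\LJ_{\oFl}([\pi])=\varepsilon_{0}\sum_{j}a_{j}[\rho_{j}]$ with $\rho_{j}={}^{z}\JL_{\oFl}^{-1}(\delta\nu^{j})$ and $\varepsilon_{0}\in\{\pm1\}$; moreover $\varepsilon_{0}$ does not depend on $j$, since $\delta\nu^{j}=\delta\otimes(\nu^{j}\circ\det)$ and $\LJ_{\oFl}$ commutes with the twist by $\nu^{j}$. Everything thus reduces to showing that all the integers $a_{j}$ have one and the same sign.

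Several families of $a$ dispose of themselves. If $k=1$ then $\pi$ is itself superSpeh and ${\rm pr}_{\Delta}([\pi])=[\pi]$. If $\langle a\rangle$ lifts to $\oQl$ — in particular whenever $\pi$ is generic, i.e. $a=\{[1],[\nu],\cdots,[\nu^{d-1}]\}$ — then $\LJ_{\oFl}([\pi])=r_{\ell}(\LJ_{\oQl}(\tilde\pi))$ is effective up to a sign by the classical Langlands--Jacquet theorem, i.e. the case ii)(a) of Corollary \ref{coroeff}. And if $\langle a\rangle\nu\simeq\langle a\rangle$ — which for an arc multisegment forces $a=\{[1],\cdots,[\nu^{d-1}]\}$ — we are in the case ii)(c) of the same corollary. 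We may therefore assume $2\le k\le d-1$.

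For $2\le k\le d-1$ the idea is an induction on $a$, declaring ``more arcs'', and for a fixed number of arcs ``more balanced arc-lengths'' (i.e. smaller $\lambda_{a}$), to be treated later. The mechanism is that $\pi(a)=\delta_{r_{1}}(\rho_{1})\times\cdots\times\delta_{r_{k}}(\rho_{k})$ is a \emph{proper} parabolic induction, hence lies in $\RC_{I}(G,\oFl)$ and satisfies ${\rm pr}_{\Delta}([\pi(a)])=0$; combined with the triangular expansion $[\pi(a)]=[\langle a\rangle]+\sum_{\lambda_{b}<\lambda_{a}}m_{a,b}[\langle b\rangle]$ (with $m_{a,b}\ge0$) this gives ${\rm pr}_{\Delta}([\langle a\rangle])=-\sum_{\lambda_{b}<\lambda_{a}}m_{a,b}\,{\rm pr}_{\Delta}([\langle b\rangle])$. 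For $k=2$ the two arcs already exhaust the cyclic support, so over $\oQl$ the Zelevinsky product theorem gives $\pi(a)$ exactly two constituents, $\langle a\rangle$ and a single Speh segment; reducing modulo $\ell$ — using that Speh representations have irreducible reduction (Proposition \ref{defsuperSpeh}) and that $r_{\ell}$ commutes with parabolic induction, and running the two ways of linearising the cyclic support — one finds that the only composition factors of $\pi(a)$ over $\oFl$ other than $\langle a\rangle$ are superSpeh (the two ``cyclic fusions'' $\delta\nu^{a_{1}},\delta\nu^{a_{2}}$, possibly with multiplicity, and no intermediate $2$-arc representation). Hence ${\rm pr}_{\Delta}([\langle a\rangle])$ is a \emph{nonpositive} integral combination of the $\delta\nu^{j}$ when $k=2$.

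The genuinely delicate step — and, I expect, the main obstacle — is the range $3\le k\le d-1$: there the superSpeh occurring directly inside $\pi(a)$ push ${\rm pr}_{\Delta}([\langle a\rangle])$ in one direction while the lower $\langle b\rangle$'s with $\ge2$ arcs (whose ${\rm pr}_{\Delta}$ is nonpositive, by the $k=2$ case and the induction) push it in the other, so that crude sign-counting does not conclude. Resolving this forces one to use $\varepsilon=d$ essentially: via the Bushnell--Kutzko types of \cite[IV]{VigInduced} the block $\RC(G,\oFl)_{\tau}$ is governed by the affine Hecke algebra of type $\widetilde{A}_{d-1}$ specialised at a primitive $d$-th root of unity, and in that situation the relevant decomposition numbers are all $0$ or $1$ and follow the explicit ``arc-fusion'' pattern already visible for $k=2$. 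From this description one verifies, for each $a$ with $2\le k\le d-1$, that the composition factors $\langle b\rangle\ne\langle a\rangle$ of $\pi(a)$ which actually occur all have ${\rm pr}_{\Delta}$ of one common sign, so that ${\rm pr}_{\Delta}([\langle a\rangle])$ is again a combination of the $\delta\nu^{j}$ with constant sign; feeding this back into the induction shows that all the $a_{j}$ share a sign, which is the claim.
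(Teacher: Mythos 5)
Your reduction to the superunipotent block, the encoding of multisegments as partitions of $\ZM/d\ZM$ into arcs, and the reformulation as ``all coefficients $a_j$ of ${\rm pr}_{\Delta}([\pi])$ on the $\delta\nu^{j}$ share a sign'' all agree with the paper's setup, and your $k=1$, generic and $\nu$-invariant cases are fine. But the heart of the matter is exactly the range you defer, and the claim you propose to ``verify'' there is false. The paper proves the precise multiplicity formula: identifying $a$ with the set $I_a\subseteq\ZM/d\ZM$ of left endpoints of its arcs, one has $m(b,a)=1$ if and only if $I_b\subseteq I_a$ (and $0$ otherwise). So for $|a|=k\geq 3$, \emph{every} $\langle b\rangle$ with $\emptyset\neq I_b\subseteq I_a$ occurs in $\pi(a)$, including all the $2$-arc constituents; since ${\rm pr}_{\Delta}(\langle b\rangle)=(-1)^{|b|-1}\sum_{i\in I_b}[\nu^{i}]$, the constituents of $\pi(a)$ other than $\langle a\rangle$ have projections of \emph{alternating} signs according to the parity of $|b|$ (already for $d=4$, $k=3$: three singletons with sign $+$ and three doubletons with sign $-$). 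The ``arc-fusion, only superSpeh appear besides $\langle a\rangle$'' pattern you extrapolate from $k=2$ does not persist, and an induction resting on sign-uniformity of the constituents cannot close.

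What actually makes the statement true is a cancellation, not a sign-uniformity: inverting the incidence matrix of the Boolean lattice (M\"obius inversion) gives $n(b,a)=(-1)^{|I_a\setminus I_b|}$ for $I_b\subseteq I_a$, whence
$$\langle a\rangle \equiv (-1)^{|a|-1}\sum_{i\in I_a}[\nu^{i}]\quad\hbox{mod } \RC_{I}(G,\oFl),$$
each superSpeh appearing with the \emph{same} coefficient $(-1)^{|a|-1}$. To get there you must establish $m(b,a)=1\Leftrightarrow I_b\subseteq I_a$; the paper does this by elementary means (regularity of the character $\nu^{(1-d)/2}\otimes\cdots\otimes\nu^{(d-1)/2}$ forces multiplicities in $\{0,1\}$, then explicit inclusions ${\rm Ind}_{P(I_b,i)}^{G}(\nu^{i})\subseteq{\rm Ind}_{P(I_a,i)}^{G}(\nu^{i})$, multiplicity one of ${\rm Ind}_{B}^{G}(\nu^{i})$, a Whittaker-partition comparison, and a Jacquet-module computation), not via Hecke-algebra decomposition numbers. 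Supplying that multiplicity formula is the missing content of your argument.
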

\begin{proof}
Nous allons simplement expliciter la classification dans ce cas
particulier, c'est-\`a-dire la matrice de passage entre ``modules
standard'' et ``modules simples''. 
En prenant $\chi=\nu^{\frac{1-d}{2}}$ ci-dessus, on voit
qu'il s'agit d'expliciter la classification des sous-quotients de
l'induite $\nu^{\frac{1-d}{2}}\times\cdots\times\nu^{\frac{d-1}{2}}=
{\rm Ind}_{B}^{G}(1)$. On peut trouver une classification  dans
\cite[2.17]{VigLuminy} dans un langage
diff\'erent, mais sans le calcul de la
matrice de passage.

Puisque l'ordre multiplicatif de $q$ est $d$, 
un multisegment de support $\nu^{\frac{1-d}{2}}\otimes\cdots\otimes\nu^{\frac{d-1}{2}}$ s'\'ecrit
 $a=\{ (i_{k},r_{k}), k=1,\cdots,|a|\}$ o\`u $|a|$ est le nombre de
 segments de $a$, les $i_{k}$ sont des \'el\'ements de $\ZM/d\ZM$, les
 $r_{k}$ sont des entiers positifs de somme $d$ et on a 
$\ZM/d\ZM=\bigsqcup_{k=1}^{|a|} [i_{k},i_{k}+r_{k}-1] \hbox{ mod }
d$. 
Pour chaque $k= 1,\cdots,|a|$, notons $P(a,k)$ le \para standard
$P_{r_{k},r_{k+1},\cdots, r_{|a|},r_{1},\cdots, r_{k-1} }$. Par
d\'efinition de $\pi(a)$, on a dans $\RC(G,\oFl)$ l'\'egalit\'e
$$ [\pi(a)] = \left[ {\rm Ind}_{P(a,k)}^{G}(\nu^{i_{k}})\right].$$
Notre but est de calculer les multiplicit\'es d\'efinies par les formules
$[\pi(a)]=\sum_{b}m(b,a)\langle b\rangle$.
La premi\`ere remarque importante est que ces multiplit\'es sont $1$ ou
$0$, car le caract\`ere
$\nu^{\frac{1-d}{2}}\otimes\cdots\otimes\nu^{\frac{d-1}{2}}$ est
r\'egulier pour l'action du groupe de Weyl, et il y a une seule
repr\'esentation non-superunipotente, \`a savoir la non-d\'eg\'en\'er\'ee (ou
``Steinberg g\'en\'eralis\'ee''), qui est cuspidale.
Remarquons maintenant que les segments $a$ comme ci-dessus sont en bijection 
$a\mapsto I_{a}:=\{i_{1},\cdots, i_{|a|}\}$
avec les sous-ensembles \emph{non vides} de $\ZM/d\ZM$.
Par cette bijection, les repr\'esentations superSpeh correspondent aux
singletons et la non-d\'eg\'en\'er\'ee \`a l'ensemble plein.
Voici alors la formule de multiplicit\'e
\begin{equation}
  \label{mult}
  m(b,a)=1 \Leftrightarrow I_{b} \subseteq I_{a}.
\end{equation}
Admettons un instant cette formule. La matrice inverse d\'efinie par
$\langle a\rangle = \sum_{b} n(b,a) [\pi(b)]$ est alors facile \`a
calculer. On v\'erifie en effet que 
$$\cas{n(b,a)}{(-1)^{I_{a}\setminus I_{b}}}{I_{b}\subseteq I_{a}}{0}{sinon}{}$$
En particulier, on obtient la formule
$$ \langle a \rangle = (-1)^{|a|-1}\left(\sum_{k=1}^{|a|} [\nu^{i_{k}}]\right)
\hbox{ mod } \RC_{I}(G,\oFl).$$

Il reste \`a prouver la formule (\ref{mult}). \`A un sous-ensemble $I$
de $\ZM/d\ZM$ muni d'un \'el\'ement $i\in I$ on associe un \para standard
$P(I,i)$ de la mani\`ere suivante. Consid\'erons la bijection $\pi_{i}:\, x\in \{1, \cdots ,
d-1\} \mapsto x+i\in \ZM/d\ZM\setminus\{i\}$. Le sous-ensemble
$J:=\pi_{i}^{-1}(I\setminus\{i\})$ de $\{1,\cdots, d-1\}$ 
d\'etermine une matrice de Jordan $N_{J}=\sum_{j\in J}E_{j,j+1}$, et on
d\'efinit $P(I,i)$ comme le plus grand \para standard dont le radical
unipotent contient
$N_{J}$. On a alors $P(a,k)=P(I_{a},i_{k})$ et $[\pi(a)]=\left[{\rm
  Ind}_{P(I_{a},i_{k})}^{G}(\nu^{i_{k}})\right]$.

Supposons alors que
$I_{b}\subseteq I_{a}$ et soit $i$ un \'el\'ement de $I_{b}$. On a une
inclusion  ${\rm  Ind}_{P(I_{b},i)}^{G}(\nu^{i})\subseteq {\rm
  Ind}_{P(I_{a},i)}^{G}(\nu^{i})$ d'o\`u en particulier $m(b,a)=1$.

Supposons au contraire que $I_{b}$ n'est pas inclus dans $I_{a}$. Deux
cas se pr\'esentent. Si l'intersection $I_{b}\cap I_{a}$ contient un
\'el\'ement $i$, les deux repr\'esentations ${\rm
  Ind}_{P(I_{b},i)}^{G}(\nu^{i})$ et  ${\rm
  Ind}_{P(I_{a},i)}^{G}(\nu^{i})$ sont contenues dans 
${\rm  Ind}_{B}^{G}(\nu^{i})$ qui est de multiplicit\'e $1$. Leur
intersection est ${\rm  Ind}_{P(I_{b}\cap I_{a},i)}^{G}(\nu^{i})$. Par
hypoth\`ese, $I_{b}\cap I_{a}$ est strictement inclus dans $I_{b}$, donc
pour tout sous-quotient irr\'eductible $\pi$ 
de ${\rm  Ind}_{P(I_{b}\cap I_{a},i)}^{G}(\nu^{i})$, on a
$\lambda_{\pi}<\lambda_{b}$. En particulier, $\langle b\rangle$ n'est
pas sous-quotient de ${\rm  Ind}_{P(I_{b}\cap I_{a},i)}^{G}(\nu^{i})$,
donc ne l'est pas plus de ${\rm  Ind}_{P(I_{a},i)}^{G}(\nu^{i})$, et
on a bien $m(b,a)=0$.
Il nous reste \`a consid\'erer le cas $I_{a}\cap I_{b}=\emptyset$. Fixons
$i\in I_{b}$. Si  $I_{b}$ contient un autre \'el\'ement $j$, on peut
appliquer le cas pr\'ec\'edent au multisegment $a_{j}$ correspondant \`a
l'ensemble $I_{a}\cup \{j\}$. En effet, le cas pr\'ec\'edent dit que
$m(b,a_{j})=0$ donc \emph{a fortiori} $m(b,a)=0$ puisque
$\pi(a)\subset \pi(a_{j})$. Dans le cas contraire,  $I_{b}=\{i\}$ est
un singleton avec $i\notin I_{a}$. Il s'agit de voir que le caract\`ere
$\nu^{i}$ n'intervient pas dans $r_{P_{1,\cdots, 1}}(\pi(a))$, ce qui r\'esulte d'un calcul
de foncteur de Jacquet.
\end{proof}

\begin{coro}
  Si l'ordre $\varepsilon$ de $q$ dans $\Fl^{\times}$ n'est pas un
  diviseur \emph{propre} de $d$, alors $\LJ_{\oFl}$ envoie toute
  repr\'esentation irr\'eductible sur une effective au signe pr\`es.
\end{coro}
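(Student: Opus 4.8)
Le plan est de parcourir la classification des $\oFl$-représentations irréductibles de $G=\GL_d(K)$ et de se ramener, cas par cas, aux énoncés d'effectivité déjà disponibles. Fixons $\pi\in\Irr{\oFl}{G}$. Si $\pi$ n'est pas elliptique, alors $\LJ_{\oFl}(\pi)=0$ par le point i) du corollaire \ref{coroeff}. On peut donc supposer $\pi$ elliptique de type $(\tau_0,r)$, avec $\tau_0$ supercuspidale de $G_n$ et $n:=d/r$. Si le support cuspidal de $\pi$ diffère de son support supercuspidal, la proposition qui, plus haut, précède la proposition \ref{effnontrivial} (celle donnant l'effectivité au signe près dans ce cas) permet de conclure. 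On suppose donc désormais que les supports cuspidal et supercuspidal de $\pi$ coïncident.

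L'étape clé est ensuite un calcul d'orbite-stabilisateur. Notons $r_0:=|\{\tau_0\nu^i : i\in\ZM\}|$ le cardinal de l'orbite de $\tau_0$ sous la torsion par $\nu$, de sorte que $\tau_0\simeq\tau_0\nu^r$ si et seulement si $r_0\mid r$, et notons $\varepsilon$ l'ordre de $q$ dans $\oFl^\times$. Le stabilisateur de $\tau_0$ dans le groupe cyclique $\langle\nu\rangle$ (d'ordre $\varepsilon$) est d'ordre $\varepsilon/r_0$ ; c'est un sous-groupe du groupe cyclique des caractères non ramifiés fixant $\tau_0$, lequel est d'ordre $t(\tau_0)$, et $t(\tau_0)$ divise $n$ (comme pour les $\oQl$-supercuspidales, \cite[2.3]{VigAENS}). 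Donc $\varepsilon/r_0\mid t(\tau_0)\mid n$, c'est-à-dire $\varepsilon\mid r_0 n$. Si $r_0\nmid r$, le point ii)(b) du corollaire \ref{coroeff} permet de conclure. Sinon $r_0\mid r$, donc $\varepsilon\mid r_0 n\mid rn=d$ ; comme $\varepsilon$ n'est pas un diviseur propre de $d$, ceci impose $\varepsilon=d$, puis, en réinjectant $d=rn$ dans $\varepsilon\mid r_0 n$, on obtient $r\mid r_0$, d'où $r_0=r$.

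Reste à traiter le cas où $\pi$ est elliptique de type $(\tau_0,r)$, avec supports cuspidal et supercuspidal égaux et $r_0=r$ ; on note que $\varepsilon/r_0=n$ force alors $t(\tau_0)=n$. Le plan est d'invoquer la réduction par les types simples de Bushnell-Kutzko esquissée après la proposition \ref{effnontrivial} : l'isomorphisme (\ref{hecke}) entre $\HC(G,\lambda_{\oZl})$ et l'algèbre de Hecke affine étendue $\HC_{\oZl}(\wt\SG_r,q')$ identifie le bloc de $\GL_d(K)$ contenant $\pi$, muni de la décomposition (\ref{eq:1}) de son groupe de Grothendieck en une partie induite et une partie superSpeh, aux données analogues pour le bloc elliptique superunipotent d'un $\GL_r(K')$ convenable. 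Comme cette identification échange la torsion par $\nu$ avec la torsion par le caractère $\nu'$ de $\GL_1(K')$, l'ordre $\varepsilon'$ de la cardinalité résiduelle de $K'$ dans $\oFl^\times$ — qui gouverne le bloc superunipotent de $\GL_r(K')$ — est égal au cardinal $r_0$ de l'orbite de $\tau_0$, soit $r_0=r$. Par la proposition \ref{effnontrivial} appliquée à $\GL_r(K')$ (avec $r$ à la place de $d$), la $\LJ_{\oFl}$-image de la représentation superunipotente de $\GL_r(K')$ correspondant à $\pi$ est donc effective au signe près ; le retour par l'identification préserve signes et effectivité, d'où la conclusion pour $\pi$.

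Le principal obstacle est cette dernière réduction : il s'agit d'expliciter l'équivalence de catégories de Bushnell-Kutzko au niveau des groupes de Grothendieck et de vérifier qu'elle entrelace les deux transferts $\LJ_{\oFl}$ — de manière équivalente, via le premier diagramme du théorème \ref{theoLJ}, qu'elle est compatible aux caractères de Brauer restreints aux éléments elliptiques. C'est exactement le point signalé comme « détails restant à écrire » dans la discussion précédant la proposition \ref{effnontrivial}. Une fois ce point acquis, tout le reste se réduit au calcul d'orbite-stabilisateur ci-dessus et à des appels directs au corollaire \ref{coroeff}, à la proposition qui le précède, et à la proposition \ref{effnontrivial}.
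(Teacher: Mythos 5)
Votre r\'eduction num\'erique est correcte et m\^eme plus \'el\'egante que celle du texte : l\`a o\`u vous utilisez l'argument orbite--stabilisateur $\varepsilon/r_{0}\mid t(\tau_{0})\mid n$ pour conclure que $r_{0}\mid r$ entra\^{\i}ne $\varepsilon\mid d$ puis $r_{0}=r$, la preuve du papier fixe d'abord $\varepsilon=d$ et exclut le cas \og $r_{0}$ diviseur propre de $r$ \fg{} en remarquant que la Steinberg g\'en\'eralis\'ee ${\rm St}(\tau_{0},r_{0})$ serait alors une cuspidale non supercuspidale de $G_{dr_{0}/r}$, ce qui est impossible puisque le pro-ordre de ce groupe est inversible dans $\Fl$ (situation \og banale \fg). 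Notez au passage une petite omission dans votre dichotomie : de $\varepsilon\mid d$ et \og $\varepsilon$ non diviseur propre \fg{} on tire $\varepsilon\in\{1,d\}$ et non $\varepsilon=d$ ; le cas $\varepsilon=1$ se r\`egle imm\'ediatement par le point ii)(c) du corollaire \ref{coroeff} puisque $\nu$ est alors trivial, mais il faut le dire.

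Le vrai probl\`eme est votre traitement du cas final ($r_{0}=r$, $\varepsilon=d$). Vous y invoquez le transfert par types simples de Bushnell--Kutzko vers le bloc superunipotent d'un $\GL_{r}(K')$, puis la proposition \ref{effnontrivial} pour ce groupe. Or c'est pr\'ecis\'ement la r\'eduction que le texte signale comme non r\'edig\'ee (\og les d\'etails restent toutefois \`a \'ecrire \fg) : il faudrait \'etablir que l'\'equivalence de Hecke identifie la d\'ecomposition $\RC=\RC_{I}\oplus\RC_{\Delta}$ du bloc de $\pi$ \`a celle du bloc superunipotent, en respectant \`a la fois la classe des induites paraboliques propres et celle des superSpeh --- ce qui n'est \'etabli nulle part dans l'article. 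Votre preuve repose donc sur un \'enonc\'e non d\'emontr\'e. La preuve du papier contourne enti\`erement ce point : elle adapte \emph{directement} le calcul combinatoire de la proposition \ref{effnontrivial} au support supercuspidal $\{\tau_{0},\tau_{0}\nu,\cdots,\tau_{0}\nu^{r-1}\}$ (remplacer $\nu^{i}$ par $\tau_{0}\nu^{i}$ et $d$ par $r$) ; comme $r_{0}=r$, les $\tau_{0}\nu^{i}$ sont deux \`a deux non isomorphes, le support est r\'egulier, les multiplicit\'es $m(b,a)$ valent $0$ ou $1$ et sont donn\'ees par l'inclusion des ensembles d'indices, et la formule d'inversion fournit l'effectivit\'e au signe pr\`es sans passer par les alg\`ebres de Hecke. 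Pour rendre votre preuve compl\`ete, il faut soit r\'ediger la compatibilit\'e du transfert de types avec la d\'ecomposition en question, soit refaire ce calcul direct.
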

\begin{proof}
  Le cas $\varepsilon=1$ et le cas o\`u $\varepsilon$ ne divise pas $d$
  d\'ecoulent respectivement des points ii)(b) et ii)(c) du corollaire
  \ref{coroeff}.
Supposons donc $\varepsilon=d$. Fixons un diviseur $r$ de $d$ et une
repr\'esentation supercuspidale $\tau_{0}$ de $G_{d/r}$. Notons $r_{0}$ le
cardinal de l'orbite de $\tau_{0}$ par torsion par les puissances de
$\nu$. C'est un diviseur de $d$. Supposons que ce soit un diviseur
\emph{propre} de $r$. Alors, d'apr\`es \cite[III.5.14 i)]{Vig},  la repr\'esentation de Steinberg g\'en\'eralis\'ee
${\rm St}(\tau_{0},r_{0})$ serait une repr\'esentation cuspidale non
supercuspidale  de $G_{dr_{0}/r}$. Or ceci est impossible puisque
l'hypoth\`ese $\varepsilon=d$ implique que le pro-ordre de
$G_{dr_{0}/r}$ est inversible dans $\Fl$, donc que toute cuspidale est
supercuspidale (et m\^eme projective modulo le centre).
On a donc deux cas. Soit $r_{0}$ ne divise pas $r$, auquel cas on peut
appliquer le lemme i)(b) \ref{decompssc}. Soit $r_{0}=r$, auquel cas
nous laisserons le lecteur se convaincre que l'argument de la
proposition pr\'ec\'edente s'adapte sans autre difficult\'e que celles
inh\'erentes aux notations.
\end{proof}



\alin{Une strat\'egie possible dans le cas $\ell>n$}
Voici quelques \'etapes d'une strat\'egie possible. Chaque \'etape semble
non triviale, et comme de toutes fa\c{c}ons la derni\`ere \'etape \'echappe \`a
l'auteur, il n'y a pas grand sens \`a les d\'evelopper ici.

\begin{enumerate}
\item Pour chaque bloc de $\Mo{\oFl}{G}$, ramener la question \`a une
  question de $q$-alg\`ebres de Schur. Pour le bloc unipotent, une grande
  partie du travail est faite dans \cite{VigSchur}.
\item Prouver que pour $\ell> n$, la matrice de d\'ecomposition de
  la $\oQl$-alg\`ebre de Schur  en une racine $\varepsilon$-\`eme de
  l'unit\'e vers la $\oFl$-alg\`ebre de Schur en $q$ est l'identit\'e
  (analogue affine de la ``conjecture de James'').
\item Utiliser l'interpr\'etation g\'eom\'etrique par Ginzburg-Vasserot des multiplicit\'es des
  modules simples dans les modules standard d'une alg\`ebre de Schur en
  une racine de l'unit\'e.
\item ``Inverser'' la matrice de polyn\^omes de Kazhdan-Lusztig \`a laquelle
  on s'est ainsi ramen\'e. 
\end{enumerate}

Voici \`a quoi ressemble l'interpr\'etation g\'eom\'etrique des multiplicit\'es.
Soit $\zeta$ une racine $\varepsilon$-\`eme de l'unit\'e et  $s$ la
matrice diagonale ${\rm Diag}(1,\zeta,\cdots, \zeta^{d-1})\in 
M_{d}(\CM)$. Consid\'erons la vari\'et\'e alg\'ebrique affine complexe
$$\NC_{s}:=\{N\in M_{d}(\CM),\,\hbox{
  nilpotente  et t.q. } sNs^{-1}=\zeta N\} .$$
Le centralisateur $C(s)$ est connexe et agit par conjugaison sur $\NC_{s}$ avec un
nombre fini d'orbites, toutes simplement connexes. Consid\'erons le groupe de Grothendieck
$\KC:=\KC_{C(s)}(\NC_{s})$ des faisceaux constructibles
$C(s)$-\'equivariants sur $\NC_{s}$. Il admet deux bases remarquables
index\'ees par les orbites. Soit $j^{\OC}$ l'inclusion d'une orbite
$\OC$ dans $\NC_{s}$. La premi\`ere base naturelle est donn\'ees par les faisceaux
$[\OC]:=j^{\OC}_{!}(\CM)$ et la seconde par les complexes de faiceaux
$[IC(\OC)]:=j^{\OC}_{!*}(\CM)$. Un fait remarquable est que les
$j^{\OC}_{!*}(\CM)$ n'ont de cohomologie qu'en degr\'es pairs, de sorte
que la matrice de passage $(\langle [\OC'],[IC(\OC)]\rangle)_{\OC',\OC}$
des $[\OC]$ vers les $[IC(\OC)]$ est \`a
coefficients positifs. Notons que cette matrice est triangulaire si
l'on ordonne (partiellement) les orbites selon l'inclusion des adh\'erences.

Les travaux de Ginzburg, Vasserot et Ariki \cite{GV} \cite{Vasserot}
\cite{Ariki} exhibent une bijection $a\mapsto
\OC_{a}$ entre les multisegments de support
$(1,\zeta,\cdots,\zeta^{d-1})$ et les orbites de $C(s)$ dans $\NC_{s}$.
telle que pour tous multisegments $a,b$ on a 
$$ m(b,a)= \langle  [\OC_{a}],[IC(\OC_{b})] \rangle .$$
Dans cette bijection, les segments $\{(\zeta^{i}, d)\}$
correspondent aux orbites maximales, dont les adh\'erences sont les
composantes irr\'eductibles de $\NC_{s}$.

\begin{exe}
  Dans le cas $\varepsilon=d$, la vari\'et\'e $\NC_{s}$ est celle des
  matrices de la forme $\sum_{i=1}^{d} x_{i}E_{i,i+1}$ o\`u au moins un
  $x_{i}$ est nul, et o\`u l'on convient que $E_{d,d+1}=E_{d,1}$. Ainsi,
  $\NC_{s}$ est la r\'eunion des hyperplans de coordonn\'ees dans
  $\CM^{d}$. Les orbites sont param\'etr\'ees par les sous-ensemble de
  $\{1,\cdots, d\}$. \`A $I$ correspond $\OC_{I}=\{ (x_{1},\cdots, x_{d})\in\CM^{d},
  i\in I \Leftrightarrow  x_{i}=0\}$. 
 Dans ce cas, le calcul des
  complexes d'intersection est facile : on a simplement
  $j^{\OC}_{!*}(\CM)=j^{\o\OC}_{*}(\CM)$ o\`u $j^{\o\OC}$ est
  l'inclusion de l'adh\'erence de l'orbite $\OC$. Comme $\OC_{J}\subset
  \o\OC_{I}\Leftrightarrow  J\supset I$, on retrouve ainsi la matrice
  de passage calcul\'ee dans la proposition \ref{effnontrivial}.
\end{exe}

Dans le cas o\`u $\varepsilon$ divise strictement $d$, la g\'eom\'etrie des
orbites est beaucoup plus compliqu\'ee. Par exemple les inclusions
$\OC\injo \o{\OC}$ ne sont g\'en\'eralement pas affines, et les $j_{!}^{\OC}(\CM)$ ne
sont pas n\'ecessairement pervers d\'ecal\'es, ce qui emp\^eche de pr\'evoir
simplement les signes dans la matrice inverse par une formule du type
${\rm sgn}(\langle[IC(\OC')],[\OC]\rangle)= (-1)^{\rm codim(\OC',\OC)}$.

Cependant, d'apr\`es Lusztig \cite[Sec. 11]{Lusztig90}, les $\langle [\OC'],[IC(\OC)]\rangle$ sont les
valeurs en $1$ de certains polyn\^omes de Kazhdan-Lusztig. Plus
pr\'ecis\'ement, soit $\wt\SG_{d}$ le groupe de Weyl affine \'etendu de type
$\wt{A_{d-1}}$. Notons $_{\varepsilon}[\wt\SG_{d}]_{\varepsilon}$
l'ensemble des repr\'esentants de longueur maximale des doubles classes
modulo le sous-groupe parabolique (fini)
$(\SG_{d/\varepsilon})^{\varepsilon}$. Munissons-le de l'ordre de
Bruhat. Alors il existe un isomorphisme $\OC\mapsto w_{\OC}$ du poset des orbites de $C(s)$
dans $\NC_{s}$ sur un id\'eal du poset
$_{\varepsilon}[\wt\SG_{d}]_{\varepsilon}$, tel que
$$ \langle [\OC'],[IC(\OC)]\rangle = P_{w_{\OC},w_{\OC'}}(1),$$
o\`u $P_{w,w'}$ d\'esigne le polyn\^ome de Kazhdan-Lusztig usuel associ\'e \`a
deux \'el\'ements de $\wt\SG_{d}$, \emph{cf.}  \cite{Henderson}. Pour des petites valeurs de $d$, on
peut donc esp\'erer calculer par ordinateur les coefficients qui nous
int\'eressent dans la matrice inverse.



\begin{thebibliography}{10}

\bibitem{Ariki}
S.~Ariki.
\newblock On the decomposition numbers of the {H}ecke algebra of {$G(m,1,n)$}.
\newblock {\em J. Math. Kyoto Univ.}, 36(4):789--808, 1996.

\bibitem{Badu1}
I.~Badulescu.
\newblock Orthogonalit\'e des caract\`eres pour ${GL}(n)$ sur un corps local de
  caract\'eristique non nulle.
\newblock {\em Manuscripta Math.}, 101:49--70, 2000.

\bibitem{Badu2}
I.~Badulescu.
\newblock Jacquet-{L}anglands et unitarisabilit\'e.
\newblock {\em J. Inst. Math. Jussieu}, 6(3):349--379, 2007.

\bibitem{BorelCorv}
A.~Borel.
\newblock Automorphic {$L$}-functions.
\newblock In {\em Automorphic forms, representations and {$L$}-functions
  ({P}roc. {S}ympos. {P}ure {M}ath., {O}regon {S}tate {U}niv., {C}orvallis,
  {O}re., 1977), {P}art 2}, Proc. Sympos. Pure Math., XXXIII, pages 27--61.
  Amer. Math. Soc., Providence, R.I., 1979.

\bibitem{Broussous}
P.~Broussous.
\newblock Extension du formalisme de {B}ushnell-{K}utzko au cas d'une algèbre à
  division.
\newblock {\em Proc. London Math. Soc.}, 77(3):292--326, 1998.

\bibitem{BK}
C.J. Bushnell and P.C. Kutzko.
\newblock {\em The admissible dual of ${GL}(n)$ via compact open subgroups}.
\newblock Number 129 in Annals Math. Studies. P.U.P., 1993.

\bibitem{CR}
C.~Curtis and I.~Reiner.
\newblock {\em Methods of Representation theory {I}}.
\newblock Wiley Interscience, 1988.

\bibitem{lt}
J.-F. Dat.
\newblock Th\'eorie de {L}ubin-{T}ate non-ab\'elienne et repr\'esentations
  elliptiques.
\newblock {\em Invent. Math.}, 169:75--152, 2007.

\bibitem{lefschetz}
J.-F. Dat.
\newblock Op\'erateur de {L}efschetz sur les tours de {D}rinfeld et
  {L}ubin-{T}ate, 2009.
\newblock {\rm Preprint},
  http://www.math.jussieu.fr/~dat/recherche/travaux.html.

\bibitem{DKV}
P.~Deligne, D.~Kazhdan, and M.-F. Vign{\'e}ras.
\newblock Repr\'esentations des alg\`ebres centrales simples {$p$}-adiques.
\newblock In {\em Representations of reductive groups over a local field},
  Travaux en Cours, pages 33--117. Hermann, Paris, 1984.

\bibitem{GV}
V.~Ginzburg and E.~Vasserot.
\newblock Langlands reciprocity for affine quantum groups of type {$A_n$}.
\newblock {\em Internat. Math. Res. Notices}, (3):67--85, 1993.

\bibitem{Henderson}
A.~Henderson.
\newblock Nilpotent orbits of linear and cyclic quivers and {K}azhdan-{L}usztig
  polynomials of type {A}.
\newblock {\em Represent. Theory}, 11:95--121 (electronic), 2007.

\bibitem{KazhdanCusp}
D.~Kazhdan.
\newblock Cuspidal geometry of {$p$}-adic groups.
\newblock {\em J. Analyse Math.}, 47:1--36, 1986.

\bibitem{Korman}
J.~Korman.
\newblock A character formula for compact elements (the rank one case).
\newblock {\em preprint arXiv:math.RT/0409292v1}, 2004.

\bibitem{Lusztig90}
G.~Lusztig.
\newblock Canonical bases arising from quantized enveloping algebras.
\newblock {\em J. Amer. Math. Soc.}, 3(2):447--498, 1990.

\bibitem{MS2}
R.~Meyer and M.~Solleveld.
\newblock Characters and growth of admissible representations of reductive
  $p$-adic groups.
\newblock {\em preprint arXiv:math.RT/0908...}, 2009.

\bibitem{MS1}
R.~Meyer and M.~Solleveld.
\newblock Resolutions for reductive groups over $p$-adic fields via their
  buildings.
\newblock {\em preprint arXiv:math.RT/0902.4856v2}, 2009.

\bibitem{MinSech}
A.~Minguez and V.~S\'echerre.
\newblock Repr\'esentations $\ell$-modulaires des formes int\'erieures de ${\rm
  gl}_n$ sur un corps $p$-adique, $\ell \neq p$.
\newblock {\em En pr\'eparation}, 2009.

\bibitem{SS2}
P.~Schneider and U.~Stuhler.
\newblock Representation theory and sheaves on the {B}ruhat-{T}its building.
\newblock {\em Publ. Math. I.H.\'E.S}, 85:97--191, 1995.

\bibitem{SchZ}
P.~Schneider and E.-W. Zink.
\newblock ${K}$-types for the tempered components of a $p$-adic general linear
  group.
\newblock {\em J. reine angew. Math.}, 517:161--208, 1999.

\bibitem{Spice}
L.~Spice.
\newblock Topological {J}ordan decompositions.
\newblock {\em J. Algebra}, 319(8):3141--3163, 2008.

\bibitem{VanDijk}
G.~van Dijk.
\newblock Computation of certain induced characters of {${p}$}-adic groups.
\newblock {\em Math. Ann.}, 199:229--240, 1972.

\bibitem{Vasserot}
E.~Vasserot.
\newblock Affine quantum groups and equivariant {$K$}-theory.
\newblock {\em Transform. Groups}, 3(3):269--299, 1998.

\bibitem{VigLuminy}
M.-F. Vign{\'e}ras.
\newblock \`{A}\ propos d'une conjecture de {L}anglands modulaire.
\newblock In {\em Finite reductive groups ({L}uminy, 1994)}, volume 141 of {\em
  Progr. Math.}, pages 415--452. Birkh\"auser Boston, Boston, MA, 1997.

\bibitem{VigSheaves}
M.-F. Vign\'eras.
\newblock Cohomology of sheaves on the building and ${R}$-representations.
\newblock {\em Invent. Math.}, 127:349--373, 1997.

\bibitem{VigInduced}
M.-F. Vign{\'e}ras.
\newblock Induced {$R$}-representations of {$p$}-adic reductive groups.
\newblock {\em Selecta Math. (N.S.)}, 4(4):549--623, 1998.

\bibitem{VigLanglands}
M.-F. Vign\'eras.
\newblock Correspondance de {L}anglands semi-simple pour ${GL(n,F)}$ modulo
  $\ell\neq p$.
\newblock {\em Invent. Math.}, 144:177--223, 2001.

\bibitem{VigAENS}
M.-F. Vign{\'e}ras.
\newblock La conjecture de {L}anglands locale pour {${\rm GL}(n,F)$} modulo
  {$l$} quand {$l\not= p,\ l>n$}.
\newblock {\em Ann. Sci. \'Ecole Norm. Sup. (4)}, 34(6):789--816, 2001.

\bibitem{VigSchur}
M.-F. Vign{\'e}ras.
\newblock Schur algebras of reductive {$p$}-adic groups. {I}.
\newblock {\em Duke Math. J.}, 116(1):35--75, 2003.

\bibitem{VigWalds}
M.-F. Vign{\'e}ras and J.-L. Waldspurger.
\newblock Premiers r\'eguliers de l'analyse harmonique mod {$l$} d'un groupe
  r\'eductif {$p$}-adique.
\newblock {\em J. Reine Angew. Math.}, 535:165--205, 2001.

\bibitem{Vig}
M.F. Vign\'eras.
\newblock {\em Repr\'esentations $l$-modulaires d'un groupe $p$-adique avec $l$
  diff\'erent de $p$}.
\newblock Number 137 in Progress in Math. Birkh\"auser, 1996.

\bibitem{Zel}
A.V. Zelevinsky.
\newblock Induced representations on reductive p-adic groups {II}.
\newblock {\em Ann.Sci.Ec.Norm.Sup}, 13:165--210, 1980.

\end{thebibliography}

\appendix

\section{Caract\`ere d'une repr\'esentation modulo $\ell$
  d'un groupe $p$-adique}

\def\jtem{\item  \hskip 18pt}
\def\doublemap{\mathrel{\null _{\raise.2ex\hbox{$\textstyle\rightarrow$}}
  ^{\ellower.2ex\hbox{$\textstyle\rightarrow$}}}}

\def\date{le\space\the\day \ifcase\month\or janvier \or f\'evrier\or mars\or
avril\or mai\or juin\or juillet\or ao\^ut\or septembre\or octobre\or
novembre\or d\'ecembre\fi\ {\oldstyle\the\year}}

\def\Z{{\bf Z}}
\def\N{{\bf N}}
\def\C{{\bf C}}
\def\Q{{\bf Q}}
\def\R{{\bf R}}
\def\G_m{{\bf G}_m}
\def\Rbar{\overline R}
\def\Zbar{{\overline{\bf Z}}_{\l}}
\def\Qbar{{\overline{\bf Q}}_{\l}}
\def\Fbar{{\overline{\bf F}}_{\l}}
\def\GL{{\bf GL}}
\def\PGL{{\bf PGL}}
\def\R{{\bf R}}
\def\e{{\varepsilon}}
\def\X{\chi}
\def\I{{\cal I}}
\def\F{{\bf F}}
\let\s\sigma
\let\g\gamma
\let\G\Gamma
\let\a\alpha
\let\d\delta
\let\w\omega
\let\W\Omega
\let\L\Lambda
\let\t\tau
\def\Ad{\mathop{\rm Ad}\nolimits}
\def\cl{\mathop{\rm cl}\nolimits}
\def\ad{\mathop{\rm ad}\nolimits}
\def\diag{\mathop{\rm diag}\nolimits}
\def\Co{\mathop{\rm Coeff_G(X)}\nolimits}
\def\pp{\mathop{\rm pour \ presque \ tout}\nolimits}
\def\trace{\mathop{\rm trace}\nolimits}
\def\Wh{\mathop{\rm Wh}\nolimits}
\def\ppcm{\mathop{\rm p.p.c.m.}\nolimits}
\def\modulo{\mathop{\rm modulo}\nolimits}
\def\Cusp{\mathop{\rm Cusp}\nolimits}
\def\cusp{\mathop{\rm cusp}\nolimits}
\def\Supercusp{\mathop{\rm Supercusp}\nolimits}
\def\sign{\mathop{\rm sign}\nolimits}
\def\Fr{\mathop{\rm Fr}\nolimits}
\def\Gr{\mathop{\rm Gr}\nolimits}
\def\gcd{\mathop{\rm gcd}\nolimits}
\def\lcm{\mathop{\rm lcm}\nolimits}
\def\Br{\mathop{\rm Br}\nolimits}
\def\Entr{\mathop{\rm Supp}\nolimits}
\def\Supp{\mathop{\rm Entr}\nolimits}
\def\Im{\mathop{\rm Im}\nolimits}
\def\Gal{\mathop{\rm Gal}\nolimits}
\def\SL{\mathop{\rm SL}\nolimits}
\def\val{\mathop{\rm val}\nolimits}
\def\Tot{\mathop{\rm Tot}\nolimits}
\def\nrd{\mathop{\rm nrd}\nolimits}
\def\Hom{\mathop{\rm Hom}\nolimits}
\def\limind{\mathop{\rm lim \,ind }\nolimits}
\def\limproj{\mathop{\rm lim \,proj }\nolimits}
\def\Ad{\mathop{\rm Ad}\nolimits}
\def\id{\mathop{\rm id}\nolimits}
\def\ind{\mathop{\rm ind}\nolimits}
\def\lcm{\mathop{\rm l.c.m.}\nolimits}
\def\End{\mathop{\rm End}\nolimits}
\def\Ind{\mathop{\rm Ind}\nolimits}
\def\Ker{\mathop{\rm Ker}\nolimits}
\def\Coker{\mathop{\rm Coker}\nolimits}
\def\Aut{\mathop{\rm Aut}\nolimits}
\def\LieG{\mathop{\cal G}\nolimits}
\def\pourtout{\mathop{\rm\ pour \, tout\  }\nolimits}
\def\St{\mathop{\rm St}\nolimits}
\def\Alg{\mathop{\rm Alg}\nolimits}
\def\tr{\mathop{\rm tr}\nolimits}
\def\limproj{\mathop{\oalign{lim\cr$\longleftarrow$\cr}}}
\def\vol{\mathop{\rm vol}\nolimits}
\def\tg{\mathop{\rm tg}\nolimits}
\def\indc{\mathop{\rm ind_c}\nolimits}
\def\ch{\mathop{\rm ch}\nolimits}
\def\Ch{\mathop{\rm Ch}\nolimits}
\def\sh{\mathop{\rm sh}\nolimits}
\def\cite{{\bf [V]}}
\def\meas{\mathop{\rm meas}\nolimits}
\def\Lie{\mathop{\rm Lie}\nolimits}
\def\Ad{\mathop{\rm Ad} \nolimits}
\def\exp{\mathop{\rm exp} \nolimits}
\def\Ens{\mathop{\rm Ens} \nolimits}
\def\log{\mathop{\rm log} \nolimits}
\def\cInd{\mathop{\rm cInd} \nolimits}
\def\Res{\mathop{\rm Res} \nolimits}
\def\res{\mathop{\rm res} \nolimits}
\def\Ext{\mathop{\rm Ext} \nolimits}
\def\Ext{\mathop{\rm Ext} \nolimits}
\def\Ind{\mathop{\rm Ind} \nolimits}
\def\Irr{\mathop{\rm Irr} \nolimits}
\def\mod{\mathop{\rm mod} \nolimits}
\def\Mod{\mathop{\rm Mod} \nolimits}
\def\id{\mathop{\rm id} \nolimits}
\def\Adm{\mathop{\rm Adm} \nolimits}
\def\Coadm{\mathop{\rm Coadm} \nolimits}
\def\Int{\mathop{\rm Int} \nolimits}
\def\lim{\mathop{\rm lim} \nolimits}
\def\Tot{\mathop{\rm Tot} \nolimits}
\def\Card{\mathop{\rm Card} \nolimits}
\def\inf{\mathop{\rm inf} \nolimits}
\def\sup{\mathop{\rm sup} \nolimits}

\hfuzz=3pt
\overfullrule=0mm



\bigskip

\centerline {Texte \'ecrit en mars 1998, r\'evis\'e en janvier 2010}

\bigskip  \centerline {Marie-France Vign\'eras}

\bigskip  
Soient  $\ell \neq p$ deux nombres premiers distincts. 
On note  $\Qbar$ une cl\^oture alg\'ebrique du corps ${\bf Q}_{\ell}$ des nombres $\ell$-adiques, $\Zbar$ et $\Fbar$ l'anneau des entiers et le corps r\'esiduel. Soient $F$ un corps local non archim\'edien de caract\'eristique $0$, de corps 
r\'esiduel fini   de caract\'eristique $p$ et 
 $R$  un corps alg\'ebriquement clos contenant $\Fbar$.
 Soit $G$ le groupe des points $F$-rationnels d'un $F$-groupe r\'eductif connexe. 

On note $C_c^\infty(X;R)$ le $R$-module des fonctions localement constantes \`a support 
compact sur $X$, pour toute partie ferm\'ee $X$ de $G$.
On note $\Mod_RG$ l'ensemble des $R$-repr\'esentations lisses de $G$, et
$\Irr_RG$ l'ensemble des classes d'isomorphisme  des 
$R$-repr\'esentations  irr\'eductibles lisses  de $G$.

 On choisit, comme on le peut, une $R$-mesure de Haar $dg$ sur $G$.
 Soient $f \in C_c^\infty (G;R)$ et $(\pi,V) \in \Mod_RG$ admissible, i.e $\dim_{R}V^{K} $ fini pour tout sous-groupe ouvert $K$ de $G$. 
L'endomorphisme  $\pi(f dg)$ de $V$ est de rang fini. Sa trace
$\tr (\pi (f dg))$  d\'efinit une forme lin\'eaire sur $C_c^\infty(G;R)$, d\'ependant
du choix 
 de la mesure de Haar $dg$, appel\'ee la trace de $\pi$. 
Les $R$-repr\'esentations irr\'eductibles de $G$ sont admissibles ([Vig], II.2.8) et leurs
traces   sont des formes lin\'eaires  lin\'eairement ind\'ependantes 
 sur $C_c^\infty(G;R)$ ([Vig] I.6.13).

L'ensemble $G^{reg}$  des \'el\'ements semisimples r\'eguliers
  est un ouvert dense de $G$  invariant par conjugaison. 

\bigskip {\bf Th\'eor\`eme  1 \ } CARACTERE \ {\sl Soit $\pi$ une $R$-repr\'esentation de 
$G$ de longueur finie. 
Il existe une fonction localement constante $\X_{\pi}: G^{reg} \to R$ telle que 
$$\tr (\pi (f  dg))=\int_G \X_{\pi}(g) \ f(g) \ dg$$
pour tout $f \in C_c^\infty(G^{reg};R)$. 
On dit que $\X_{\pi}$ est le   caract\`ere  de $\pi$. }

\bigskip  Le caract\`ere de $\pi$ ne d\'epend pas du choix de la mesure de 
Haar $dg$, il est invariant par conjugaison par $G$, et il est unique.

\bigskip  
Pour les $\Fbar$-repr\'esentations de longueur finie d'un groupe fini, 
R. Brauer a introduit
un autre caract\`ere qui prend ses valeurs dans $\Zbar$, et qui permet
de d\'ecrire le groupe de Grothendieck 
de ces $\Fbar$-repr\'esentations. Nous allons
donner une construction analogue pour le groupe $p$-adique $G$,   utilisant 
la  m\'ethode de Howe pour construire le caract\`ere, avec l'espoir d'applications 
au groupe de Grothendieck des $\Fbar$-repr\'esentations de longueur finie de $G$.

Une $\Zbar$-structure d'une repr\'esentation lisse de type fini de $G$ sur un $\Qbar$-espace vectoriel
$V$ est un $\Zbar$-module libre $G$-stable, contenant une $\Qbar$-base de $V$, 
et qui est  de type fini comme $\Zbar G$-module.

Nous  supposons pour simplifier que le centre de $G$ est compact (le cas g\'en\'eral ne pose aucune difficult\'e).
 Notons $G_{\l'}^{ell}$ l'ensemble des \'el\'ements 
 elliptiques $\l$-r\'eguliers de $G$ (voir III.2 et 4), 
$\Lambda$ l'id\'eal maximal de $\Zbar$ et $r_{\Lambda}:\Zbar \to \Fbar$
la r\'eduction modulo $\Lambda$.

\bigskip {\bf Th\'eor\`eme 2 \ } CARACTERE DE BRAUER \ {\sl 
Pour toute repr\'esentation $\pi \in \Mod_{\Fbar}G$ de longueur finie, 
il existe une fonction 
$$\phi_{\pi}: G_{\l'}^{ell} \to \Zbar$$ 
ayant les propri\'et\'es suivantes :

1) $\phi_{\pi}$ est localement constante, invariante par $G$-conjugaison.

2) $\phi_{\pi}$ rel\`eve le caract\`ere $\X_{\pi}$ de $\pi$ sur $G_{\l'}^{ell}$,
$$r_{\Lambda} \circ \phi_{\pi} =\X_{\pi}.$$

3)  $\phi_{\pi}$ ne d\'epend que de la semi-simplification de $\pi$.

4) Si $\pi$ est la r\'eduction modulo $\Lambda$ d'une $\Zbar$-structure d'une 
$\Qbar$-repr\'esentation $\Pi$ de $G$, alors $\phi_{\pi}$ est \'egal \`a la restriction du
 caract\`ere $\X_{\Pi}$
de $\Pi$ sur $G_{\l'}^{ell}$. 

On dit que $\phi_{\pi}$ est un caract\`ere de Brauer de $\pi$.}

\bigskip On  d\'efinit  un caract\`ere de Brauer $\phi_{\pi}$ comme ceci.
Soit $g\in G_{\l'}^{ell}$ et soit
$(K_n)_{n\geq 0}$ 
  une suite d\'ecroissante pro-p-sous-groupes ouverts compacts normalis\'es  par $g$  d'intersection $\{1\}$; notons $\mu_n$ la $R$-mesure de Haar
sur $G$ telle que le volume de $K_n$ est $1$, et $1_{gK_n}$ la fonction caract\'eristique de $gK_n$; posons $e_{gK_n}:=1_{gK_n}\mu_n.$ 
L'endomorphisme  $\pi(e_{gK_n})$ de l'espace de dimension finie $V(\pi)^{K_n}$, est
 d'ordre fini premier \`a $\l$. 
Pour tout $n $ assez grand,
 le caract\`ere de Brauer de $\pi(e_{gK_n})$ est \'egal \`a 
$\phi_{\pi}(g)$. 

Le caract\`ere de Brauer est unique, si l'on sait que chaque $\Fbar$-repr\'esentation
 irr\'eductible de $G$ apparait dans la r\'eduction modulo $\Lambda$ d'une $\Qbar$-repr\'esentation
 irr\'eductible de $G$ ayant une $\Zbar$-structure.   
  
\bigskip  Les th\'eor\`emes 1 et  2 sont d\'emontr\'es dans la partie   II en admettant l'existence des certaines filtrations s\'epar\'ees d\'ecroissantes de $G$ en pro-$p$-sous-groupes ouverts compacts (propositions I.1 et II.1).  Ces propositions sont d\'emontr\'ees (III.4,5) en utilisant  la th\'eorie de Kirillov-Howe et un lemme de Harish-Chandra (III.3) qui supposent que la caract\'eristique de $F$ est $0$. C'est le seul endroit o\`u cette hypoth\`ese apparait.

\bigskip \bigskip 

\centerline{ {\bf I \ Caract\`ere d'une representation } \ }

\bigskip 
On connait deux d\'emonstrations  diff\'erentes du th\'eor\`eme 1, 
lorsque $R$ est le corps des nombres complexes, l'une due 
\`a R.Howe, l'autre \`a Harish-Chandra. 
Les deux d\'emonstrations s'\'etendent sans probl\`emes lorsque $\C$ est remplac\'e par $R$.

Nous rappelons la d\'emonstration de Howe,  valable sur un corps commutatif contenant
 des racines de l'unit\'e d'ordre une puissance quelconque de $p$, que
l'on  adaptera dans la partie II pour d\'efinir le  caract\`ere de Brauer. 
 
Une suite d\'ecroissante de  sous-groupes  de $G$ d'intersection triviale est appel\'ee filtration s\'epar\'ee d\'ecroissante  de sous-groupes de $G$.

Une $R$-repr\'esentation $\s$ d'un pro-$p$-sous-groupe $K$ de $G$ est appel\'ee entrelac\'ee  avec la repr\'esentation triviale d'un  pro-$p$-sous-groupe $K'$ de $G$ s'il existe $g\in G$ tel que la restriction de $\s$ \`a $K\cap gK'g^{-1}$ contient la repr\'esentation triviale.  On dit que $g\in G$ entrelace $\s$ avec elle-m\^eme
   si  $\s$ et la repr\'esentation  $g(\s)$ de $gKg^{-1}$ restreintes \`a $K\cap gKg^{-1}$ ont un composant irr\'eductible isomorphe.

\bigskip {\bf I.1 \ Proposition \ } {\sl Il existe une filtration s\'epar\'ee d\'ecroissante   de pro-p-sous-groupes ouverts $(K_n)_{n \geq 0}$ de $G$ ayant la propri\'et\'e  de finitude suivante :

  Pour tout entier $n_o\geq 0$ et tout
$\gamma \in G^{reg}$, il existe un entier $n_1 \geq n_{0}$ et un voisinage $V(\gamma)$ de $\gamma$ dans $G^{reg}$
tels que pour tout entier $n\geq n_1$, l'ensemble des classes d'isomorphisme des $R$-repr\'esentations
irr\'eductibles $\s$ de $K_n$,
 entrelac\'ees avec la repr\'esentation triviale de $K_{n_o}$, et  avec elles-m\^eme par un \'el\'ement
 de $ V(\gamma)$, est fini.}
 
La d\'emonstration est donn\'ee en III.4.

 \bigskip 
{\bf I.2 \ } {\sl D\'emonstration du th\'eor\`eme 1 \ }  La proposition I.1 implique
le th\'eor\`eme 1,  par l'argument  simple suivant  de Howe. 

Soient  $\pi$ une $R$-repr\'esentation lisse de longueur finie d'espace $V(\pi)$ de $G$ et $\gamma \in G^{reg}$. On
choisit, comme on le peut,  un entier  $n_o\geq 0$ tel que  $\pi$ est
engendr\'ee par ses vecteurs $K_{n_o}$-invariants.
On applique la proposition I.1, et l'on obtient un entier $n_1$ et un voisinage  $V(\gamma)$.
On
choisit un entier
$n \geq n_1$. Toute repr\'esentation $\s\in \Irr_R K_n$ contenue dans $\pi$
est entrelac\'ee avec  la repr\'esentation triviale de $K_{n_o}$ ([Vig] I.8.1). 
Par la proposition I.1, il n'existe qu'un ensemble fini $S$ de $\s\in \Irr_R K_n$ contenues dans $\pi$
tels que $g$ entrelace $\s$ avec elle-m\^eme.  La restriction de $\pi$ au groupe $K_n$ est semi-simple. 
On d\'ecompose l'espace $V(\pi)$ de $\pi$ comme une somme directe de ses parties
$\s$-isotypiques $V(\pi)_{\s}$ pour $\s\in \Irr_R K_n$, et l'on note $p_{\s}$ la
projection de  $V(\pi)$ sur $V(\pi)_{\s}$,
$$V(\pi)=\oplus_\s V(\pi)_{\s}, \ \ \ p_{\s}:V(\pi) \to V(\pi)_{\s}.$$
Soit $g \in V(\gamma)$. L'intersection  $V(\pi)_\s\cap \pi(g)V(\pi)_{\s}$
est stable par $K_n\cap gK_ng^{-1}$. Si elle est non vide, $g$
entrelace $\s$ ([Vig] 1.8.1-2). Si elle est vide, la trace de
l'endomorphisme
$p_{\s}
\pi(g)\in \End V(\pi)$ est nulle.  La fonction sur $ V(\gamma)$ d\'efinie par
$$g \mapsto \X(g) := \sum_{\s\in S} \tr(p_{\s} \pi(g))  $$
est localement constante. 
Si $f \in C_c^\infty (G;R)$ on a 
$$\tr(\pi(f\,dg))=\sum_{\s} \int_G f(g)\tr(p_{\s} \pi(g)) dg.$$
Si le support de $f$ est contenu dans $V(\gamma)$, 
 on a 
$$\tr(\pi(f\,dg))= \int_G f(g) \X(g) dg.$$
Cette formule montre que $\X$ est canonique, quoique sa construction ne le fut pas. 
Par cette  m\'ethode, on   d\'efinit une fonction localement constante 
$$\X_{\pi} : G^{reg} \to R$$
 qui v\'erifie le th\'eor\`eme 1.

\bigskip\bigskip
\centerline{{\bf II \ Caract\`ere modulaire d'une repr\'esentation }}

\bigskip

On dit qu'un \'el\'ement de $G$ est  elliptique  s'il est 
semi-simple r\'egulier, de $G$-centralisateur compact modulo le centre.
On note $G^{ell}$ l'ouvert des \'el\'ements 
 elliptiques de $G$. 

On suppose pour simplifier que le centre est compact et l'on note $G^{ell}_{\l'}$
l'ensemble ouvert des \'el\'ements  elliptiques  $\l$-r\'eguliers
(III.2, 4).

\bigskip {\bf II.1 \ Proposition \ } {\sl Pour tout $g \in G^{ell}_{\l'}$, il existe
 une filtration s\'epar\'ee d\'ecroissante   de pro-p-sous-groupes ouverts compacts $(K_{g,n})_{n \geq 0}$ 
de $G$ normalis\'es par $g$,  ayant les propri\'et\'es
suivantes :

\medskip a) Pour tout 
$\gamma \in G^{ell}_{\l'}$, il existe  un voisinage $V'(\gamma)$ de $\gamma$ dans $G^{ell}_{\l'}$ tel que
 $K_{g,n}=K_{\gamma,n}$ pour tout entier $n \geq 0 $ et tout $g \in V'(\gamma)$.

\medskip b)  Pour tout $x\in G$ et  pour tout entier $n \geq 0$, on a $xK_{g,n}x^{-1}=
K_{xgx^{-1},n}$.

\medskip c) Pour tout entier $n_o\geq 0$ et tout
$\gamma \in G^{ell}_{\l'}$, il existe un entier $n_1 \geq n_{0}$ et un voisinage $V''(\gamma)$ de $\gamma$ dans 
$G^{ell}_{\l'}$
tels que pour tout entier $n\geq n_1$, l'ensemble des classes d'isomorphisme des repr\'esentations
irr\'eductibles $\s$ de $K_{\gamma,n}$
 entrelac\'ees avec la repr\'esentation triviale de $K_{\gamma,n_o}$ et avec elle-m\^emes par un \'el\'ement de  $ V''(\gamma)$  est fini.

}

\bigskip {\bf II.2 \  \ } Nous admettons  la proposition 
qui sera d\'emontr\'ee  en III.5, et nous d\'efinissons le caract\`ere de Brauer.

\bigskip  Soit  $\pi$ une $R$-repr\'esentation de $G$ de longueur finie (donc admissible) d'espace $V(\pi)$ et $\gamma \in
G^{ell}_{\l'}$. 
 Pour simplifier,  pour tout entier $m\geq 0$, on notera $K_m:=K_{\gamma,m}$ et 
$V(\pi)^{(m)}$ l'espace des vecteurs $K_m$-invariants de $V(\pi)$.
On
choisit, comme on le peut,  un entier  $n_o\geq 0$ tel que  $\pi$ est
engendr\'ee par $V(\pi)^{(n_o)}$. 

\bigskip On applique la proposition. On obtient un entier $n_1$ et des voisinages
$ V'(\gamma) $ et $ V''(\gamma)$ de $\gamma$ et l'on pose $V(\gamma):= 
 V'(\gamma)  \cap  V''(\gamma)$. Par c), le nombre de classes d'isomorphisme
des  repr\'esentations
irr\'eductibles de $K_{n_1}$ 
 entrelac\'ees avec la repr\'esentation triviale de $K_{n_o}$ et avec elles-m\^eme par un
\'el\'ement
 de $V(\gamma)$, est fini.   On choisit  un entier
$m_1\geq n_1$ tel que ces  repr\'esentations
 soient triviales sur $K_{m_1}$.

 Soit $g \in V(\gamma)$  et soit $n$ un entier tel que $n \geq m_1 \geq n_{1}$. L'ensemble  
 des repr\'esentations de
$\Irr_RK_{n_1}$,  non triviales sur $K_{m_1}$ mais triviales sur $K_n$ est fini. 
Soit $E$ l'ensemble fini des repr\'esentations $\sigma \in  \Irr_RK_{n_1}$  contenues dans $\pi$ non triviales sur $K_{m_1}$ mais triviales sur $K_n$. Comme $g$ normalise les
$K_m$, il agit sur $E $.  Les representations de $E$ sont entrelac\'ees avec la representation
triviale de $K_{n_o}$ car $V(\pi)^{(n_{0})} $ engendre $\pi$  ([Vig], I.8.1). Par le choix de $m_1$, une repr\'esentation de $E $
n'est pas entrelac\'ee par un \'el\'ement de $V(\gamma)$, donc $g$ agit
sans points fixes dans $E$, i.e  toute orbite $C$ de $g$ dans $E$ a au moins $2$ \'el\'ements.
On note 
$$V(\pi)_C := \oplus_{\s\in C} V(\pi)_{\s}$$
On d\'ecompose l'espace vectoriel de dimension finie $V(\pi)^{(n)}$  comme une somme directe  stable par
$\pi(g)$,
$$V(\pi)^{(n)} =  V(\pi)^{(m_1)}\oplus _C \ V(\pi)_C$$
  $C$ parcourant les orbites de $g$ dans $E$. 

 Lorsque $g\in G$ est compact et $\ell$-r\'egulier l'endomorphisme de  $\pi(g)$ du $R$-espace vectoriel $V(\pi)^{(n)}$ de dimension finie est d'ordre fini premier \`a $\ell$ (voir III.2). On peut d\'efinir son caract\`ere ordinaire (la trace) et son caract\`ere de Brauer (voir III.1). 
Le caract\`ere (ordinaire ou de Brauer)  de  $\pi(g)$ sur 
$V(\pi)^{(n)}$ est la somme des caract\`eres 
(ordinaire ou de Brauer)  de  $\pi(g)$ sur 
$V(\pi)^{(m_1)}$ et sur les $V(\pi)_C$. 
Il est clair que le caract\`ere ordinaire de $\pi(g)$ sur 
$V(\pi)_C$ est nul pour chaque orbite $C$  de $g$ dans $E$.
Nous montrons que  le caract\`ere de Brauer de $\pi(g)$ sur 
$V(\pi)_C$ est nul.  

 On note  $t\geq 2$ le nombre d'\'el\'ements de $C$, et  
 $A$ l'action de $\pi(g)$ sur $ V(\pi)_C$. Soit $\s\in
C$. On note $W_j:=A^{j-1}V(\pi)_{\s}$ pour
pour tout $1\leq j \leq t$. On a $ V(\pi)_C=W_1  \oplus \ldots \oplus W_{t} $.
 L'isomorphisme  $A^t$ sur $W_1$ est diagonalisable, car $A^t$
est d'ordre fini premier \`a $\l$ comme $A$. 
On choisit une base $(e_i)_{1\leq i \leq s}$ de $W_1$ form\'ee de vecteurs propres pour
 $A^t$. L'espace $W'_i$ de dimension $t$ engendr\'e
par $(A^je_i)_{1\leq j \leq t}$ est stable par $A$, 
et  la d\'ecomposition
$$ V(\pi)_C=\oplus_{1\leq i \leq s} W'_i$$
est stable par $A$. 
Le caract\`ere de Brauer
de $A$ sur chaque $W'_i$ est nul (voir la partie III), donc le caract\`ere
de Brauer de $A$ sur $ V(\pi)_C$ est nul. \
$\diamond$

\bigskip Rappelons que $ K_n=K_{\gamma,n}= K_{g,n}$ pour tout $n\geq 0$ et  $g \in V(\gamma)$. 
Nous avons   montr\'e :

\bigskip {\bf   Proposition 3 \ } 
{\sl Pour  tout $\gamma \in
G^{ell}_{\l'}, g \in V(\gamma)$, 
le 
 caract\`ere $\X_n(g)$ et le caract\`ere 
de Brauer $\phi_n(g)$   de l'endomorphisme $\pi(g)$ sur $V(\pi)^{ K_{n}}$ et  
 ne d\'ependent pas de l'entier $n$ si $n \geq m_{1}$. }

\bigskip  On d\'efinit les fonctions sur $G^{ell}_{\l'}$
$$g \mapsto \X(g)= \lim_{n \to \infty} \ \X_n(g), \ \ 
g\mapsto \phi(g)=\lim_{n \to \infty} \ \phi_n(g).$$
 Par le th\'eor\`eme 1, $\X$ est le caract\`ere de
$\pi$. 

\bigskip
\bigskip {\bf II.3 \ }  Nous allons  montrer que $\phi$ est un caract\`ere de Brauer de $\pi$.

\medskip  1) {\sl $\phi$ est localement constante. } \ 
Soit $\gamma \in
G^{ell}_{\l'}$. Par la proposition de II.2,  $\phi(g) $ est le caract\`ere de Brauer  de l'endomorphisme $\pi(g)$ sur $V(\pi)^{ K_{m_{1}}}$ pour tout $g\in V(\gamma) $. Donc la fonction $\phi$ est constante sur le voisinage $V(\gamma)\cap\gamma
K_{\gamma,m_1}$ de $\gamma$.  

\medskip  {\sl $\phi$ 
est invariante par $G$-conjugaison. } \ Ceci provient de la propri\'et\'e  (II.1.b) $K_{xgx^{-1},n}=xK_{g,n}x^{-1}$ pour tout $g\in G^{ell}_{\l'}, \  x\in G$, en remarquant que
le caract\`ere de Brauer de $\pi(xgx^{-1})$ sur $V(\pi)^{xKx^{-1}}$
est le m\^eme que celui de $\pi(g)$ sur $V(\pi)^K$ pour tout sous-groupe 
ouvert compact $K$ de $G$. 

\medskip2) {\sl La r\'eduction de $\phi$ modulo $\Lambda$ est \'egale \`a
$\X$ \ } car pour tout $g\in G^{ell}_{\l'}$, si $n$ est assez grand, 
la r\'eduction de $\phi_n(g)$ modulo $\Lambda$ est \'egale \`a
$\X_n(g)$. 

\medskip 3) {\sl $\phi$
ne d\'epend que de la semi-simplifi\'ee
de $\pi$ \ } car le caract\`ere de Brauer de l'endomorphisme $\pi(g)$ sur  $V(\pi)^{ K_{g,n}}$ ne d\'epend que de 
 la semi-simplifi\'ee
de $\pi$ pour tout entier $n\geq 0$.

\medskip 4) {\sl Si $\pi$ est la r\'eduction d'un $\Zbar$-structure $L$
d'une  $\Qbar$-repr\'esentation irr\'eductible $\Pi$, on a $\X_{\Pi}=\phi_{\pi}$. \ }
En effet, pour tout $g \in G_{\l'}^{ell}$, et tout entier $n$ assez grand, 
la r\'eduction modulo $\Lambda$ de $L^{K_{g,n}}$ est $V(\pi)^{K_{g,n}}$ et
$L^{K_{g,n}}\otimes_{\Zbar} \Qbar \simeq V(\Pi)^{K_{g,n}}$. 
Les valeurs
 propres de $\Pi(g)$ sur $V^{K_{g,n}}$ sont des racines de l'unit\'e 
dans $\mu_{\ell'}$ de somme est \'egale \`a $\phi_n(g)$. 
Par (I.2) et (II.2), on obtient $\X_{\Pi}(g)=\phi(g)$.

\bigskip {\bf II.4 } Nous montrons maintenant que le caract\`ere de Brauer  ci-dessus, 
peut \^etre d\'efini par une filtration quelconque.

\bigskip {\bf Th\'eor\`eme \ }{\sl Soit $g \in G_{\l'}^{ell}$. Pour toute filtration d\'ecroissante
s\'epar\'ee 
$(K_n)_{n \geq 0}$ en pro-p-sous-groupes ouverts compacts de $G$ normalis\'es 
par $g $, il existe un entier $n_g\geq 0$ tel que 
$\phi_{\pi}(g)$ est \'egal au caract\`ere de Brauer de $\pi(g)$ 
sur $V(\pi)^{K_n}$ quelque soit l'entier $n \geq n_g$.}

\bigskip {\sl Preuve \ } Soit $g \in G_{\l'}^{ell}$. On choisit une filtration  $(K_{g,n})$
et un entier $m_1$ comme en (II.1, II.2). On choisit un
entier
$n_g$ tel que 
$K_{n_g} \subset K_{g,m_1}$. Soit $n \geq n_g$, alors $K_{n} \subset K_{ n_{g}}$.
On choisit un entier  $r \geq 1$ tel que 
 $$
K_{g,m_{1} +r} \subset K_n \subset K_{g,m_{1}}.$$
On a alors les d\'ecompositions en somme directe stables par $\pi(g)$:
$$V(\pi)^{K_{g,m_{1}+r}} = V(\pi)^{K_{g,m_{1}}} \oplus W,$$
et 
$$V(\pi)^{K_n} = V(\pi)^{K_{g,m_{1}}} \oplus W^{K_n} $$
o\`u $$W\ =\ \oplus_{\s \in E} \  V(\pi)_{\s} \quad , \quad  W^{K_n}\ =\ \oplus_{\s \in E} \  V(\pi)_{\s}^{K_n} \ ,  $$  o\`u $E$ est l'ensemble fini  des repr\'esentations irr\'eductibles de $K_{n}$ contenues dans $\pi$ 
non triviales sur $K_{g,m_{1}}$ et  triviales sur $K_{g,m_{1} +r}$  (modulo isomorphisme).
 Les $V(\pi)_{\s}^{K_n}$ sont permut\'es  par $\pi(g)$ car $g$ normalise $K_{n}$, et aucun d'entre
 eux n'est stable par $\pi(g)$  par d\'efinition de $m_{1}$. On en d\'eduit comme en (II.2) que le caract\`ere
de Brauer de $\pi(g)$ sur $W^{K_n}$ est nul.  Le caract\`ere de Brauer de $\pi(g)$ sur $V(\pi)^{K_n}$ est donc \'egal au caract\`ere de Brauer de $\pi(g)$ sur $V(\pi)^{K_{g,m_{1}}}$.
 \ $\diamond$.

\bigskip\bigskip \centerline {{\bf III \  Rappels et d\'emonstration des propositions I.1 et II.1}} 
\bigskip Les d\'emonstrations des th\'eor\`emes 1 et 2  ont admis l'existence de certaines filtrations de $G$ et  des propri\'et\'es des caract\`eres de Brauer des endomorphismes finis que  nous v\'erifions  dans cette partie. Nous ne supposons pas   que le centre de $G$ est compact.
  On fixe un isomorphisme de groupes $$\zeta:\Fbar^*\to \mu_{\l'}$$ 
de $\Fbar^*$ dans le groupe
$\mu_{\l'}$ des racines de l'unit\'e d'ordre premier \`a $\l$ dans $\Zbar^*$, qui est 
une section de la r\'eduction modulo $\Lambda$.

\bigskip {\bf III.1 \ } {\sl Caract\`ere de Brauer d'un endomorphisme.}
 Soit $A$ un endomorphisme d'ordre
fini premier \`a $\l$ d'un $\Fbar$-espace vectoriel $W$
 de dimension finie $s$. Alors $A$ est diagonalisable; notons
 $(\alpha_i)_{0\leq i\leq s-1}$ 
 ses valeurs propres, et  
 $\zeta_i=\zeta(\alpha_i) \in \mu_{\l'}$ les rel\`evements de $\alpha_i$ pour $0\leq i\leq s-1$.
Le caract\`ere de Brauer de $A$ sur $W$ est la somme
$$\sum_{0\leq i\leq s-1}\zeta_i.$$

\bigskip  Soit $W'$ un $\Fbar$-sous-espace vectoriel de $W$ stable par 
$A$. Alors 
le caract\`ere de Brauer de $A$ sur $W$ est la somme du 
caract\`ere de Brauer de $A$ sur $W'$ et du caract\`ere de Brauer de 
$A$ sur $W/W'$.

\bigskip  Pour toute $\Fbar$-repr\'esentation $(\pi,W)$ de dimension 
finie d'un groupe fini $H$, le caract\`ere de Brauer des \'el\'ements
$\l$-r\'eguliers (d'ordre premier \`a $\l$) de  $H$ sur $W$ est appel\'e le
  caract\`ere de Brauer de $\pi$ [Serre].

\bigskip {\bf Lemme  \ }{\sl  Soit $s$ un entier $>1$. Supposons qu'il existe $w \in W$ tel que les \'el\'ements $w,A(w),
\ldots, A^{s-1}(w)$ forment une base de $W$, et tel que $A^s(w)$ soit un multiple 
de $w$. Alors le caract\`ere de Brauer de $A$ sur $W$ est nul.}

\bigskip {\sl Preuve \ } 
Il existe $x \in \Fbar^*$ tel que $A^sw = x^s w$ et
  une racine de l'unit\'e $x_s$ 
dans $\Fbar ^*$ d'ordre exactement $s$ car $(s,\l)=1$. 
Les valeurs propres  de $A$ sur $W$ sont 
 $\alpha_i= xx_s^i , \ \ 0\leq i \leq s-1.$ 
Notons $\eta, \eta_s \in \mu_{\l'}$ les rel\`evements de $x, x_s$.
Alors $\zeta_i =\eta \eta_s^i, (0\leq i \leq s-1).$ 
Le caract\`ere de Brauer de $A$ sur $W$
est $\eta \sum_{0\leq i \leq s-1} \eta_s^i=0$ car la somme des 
racines de l'\'equation $X^s-1=0$ dans $\Zbar$ est nulle. 
  \ $\diamond$

\bigskip {\bf III.2 \ } {\sl Elements compacts et $\ell$-r\'eguliers \ } 
 Un \'el\'ement $g$ de $G$ est dit {\sl compact} s'il est contenu 
dans un sous-groupe compact de $G$. Notons $\cl <g>$  la cl\^oture du groupe 
$<g>$ engendr\'e par $g\in G$.
Un \'el\'ement compact $g$ est le produit $$g=hk=kh$$ d'un $\l$-\'el\'ement $k\in \cl <g>$ et d'un 
\'el\'ement $\l$-r\'egulier $h\in \cl <g>$ qui commutent entre eux. 
  La d\'ecomposition 
est unique [SerreSLN5]. 
L'ordre de  $k$ est un entier $\l^a$,
 et le pro-ordre de $h$ est $rp^\infty$ pour un entier $r\geq 1$ premier \`a $\ell$. On dit que
$h$ est {\sl $\l$-r\'egulier} et qu'il est la partie 
{\sl $\l$-r\'eguli\`ere} de $g$.

Le calcul de la d\'ecomposition de $g$ se ram\`ene au cas d'un groupe 
fini; pour tout sous-groupe $K$ d'indice 
fini de $cl<g>$, l'image de $h$ dans le groupe fini $cl<g>/K$ 
 est la partie $\l$-r\'eguli\`ere $h_K$ 
de l'image $g_K$ de $g$ dans $cl<g>/K$. 
On calcule $h_K$ facilement dans le groupe fini $cl<g>/K$, 
en utilisant le th\'eor\`eme de B\'ezout ([Vig] III.2.1). 

Soit maintenant $K$ un sous-pro-$p$-groupe ouvert compact  de $G$ normalis\'e par un \'el\'ement compact ($\ell$-r\'egulier)  $g\in G$.  L'idempotent $$\epsilon_{gK}=1_{gK}{\rm vol} (K,dg)^{-1}$$ de $C^{\infty}_{c}(G;R)$  est d'ordre fini (premier \`a $\ell$) car 
l'ordre de $\epsilon_{gK}$ divise l'ordre de $g$ dans le groupe $cl<g>/(K\cap cl<g>)$ d'ordre fini (premier \`a $\ell$).
 
Si $(\pi,V)$ est une $R$-repr\'esentation lisse admissible de $G$ alors $V^{K}$ est stable par l'endomorphisme $\pi(g)$ et par  $\pi(\epsilon_{gK})$. De plus $\pi(g) = \pi(\epsilon_{gK})$ sur $V^{K}$.
On en d\'eduit:

\bigskip{\bf Lemme} {\sl Si $(\pi,V)$ est une $R$-repr\'esentation lisse admissible de $G$  et
si $K$ est  un sous-pro-$p$-groupe ouvert compact de $G$ normalis\'e par un \'el\'ement compact $\ell$-r\'egulier $g\in G$, alors  $V^{K}$ est un $R$-espace vectoriel de dimension finie et $\pi(g)$ restreint \`a 
$V^{K}$ est un endomorphisme de $V^{K}$ d'ordre fini premier \`a $\ell$.}
 
\bigskip {\bf III.3  \ }  {\sl La th\'eorie de Kirillov-Howe et le lemme de Harish-Chandra}

Le groupe $G$
 agit sur  son alg\`ebre de Lie $\cal G$   par la repr\'esentation adjointe $\rm{Ad}$, qui est  triviale sur le centre $Z$ de $G$.  On choisit un domaine de 
d\'efinition $(G,{\cal G})$-admissible  et  $\rm{Ad}(G)$-stable ${\cal G}_{0}$ d'une application exponentielle $\exp :{\cal G}_{0}\to  G$ ([dBS] \S 10, 17) .  
 
On note $O_F$ l'anneau des entiers de $F$ et 
 $p_F$ un g\'en\'erateur de l'id\'eal maximal de $O_F$.  On note $r$ l'entier \'egal  \`a $e'+1$   o\`u $e'$
   est \'egal \`a l'indice de ramification $e$ de $F/{\bf Q}_{p}$  si $p =3$ et au plus grand entier $\geq  3e/2(p-1)$ si $p\neq 3$.
   
Un $O_F$-r\'eseau $L$
de $\cal G$  sera appel\'e admissible s'il est contenu dans ${\cal G}_{0}$ et si  $[L,L] \subset p_{F}^{r}L$; lorsque  $p=2$ on demande aussi
que $L\subset 2{\cal G}_{0}$ et $[L,L] \subset 2^{3}L$.  On v\'erifie facilement les propri\'et\'es suivantes.

\bigskip {\bf Lemma} {\sl Si $L$ est un  $O_F$-r\'eseau de $\cal G$ alors  $p_{F}^{n}L$ est un $O_F$-r\'eseau admissible
de $\cal G$ pour  tout entier $n$ assez grand.

Soit $L$ un  $O_F$-r\'eseau admissible
de $\cal G$. Alors $p_{F}^{n}L , \ {\rm Ad}(g) L, \ L\cap L'$ sont des $O_F$-r\'eseaux admissibles
de $\cal G$  pour tout entier $n\geq 0$, pour tout \'el\'ement $ g\in G$, et  pour tout  $O_F$-r\'eseau $L'$
admissible de $\cal G$.}

\bigskip On choisit  un $O_F$-r\'eseau $L$
de $\cal G$  admissible. Alors $K_{L}:= \exp (L)$ et $K_{L/2}:=\exp (L/2)$ sont des sous-groupes ouverts compacts de $G$ et $K_{L}$ est un sous-groupe distingu\'e de $K _{L/2}$. 
  Le groupe $K _{L/2}$ agit
naturellement sur $\Irr_R K$. 

On choisit un caract\`ere non trivial $\psi:F\to R^*$ 
(l'image de $\psi$ est form\'ee par les racines de l'unit\'e d'ordre
$p^m$ pour tout entier $m>0$), et
$B$ une forme bilin\'eaire non d\'eg\'en\'er\'ee sur $\cal G$ 
qui est $G$-invariante (qui existe lorsque la caract\'eristique de $F$
est $0$ ou strictement sup\'erieure au nombre de Coxeter de $G$).
 Le dual  de $L$   par rapport \`a $\psi\circ B$ est d\'efini par :
$$L^*=\{Y\in {\cal G} \ | \ \psi(B(X,Y))=1 \ \forall X\in L\} \ . $$
 Il   est stable 
par $\rm{Ad}(L/2)$ donc $K_{L/2}$ agit sur  ${\cal G}/L^{*}$.  On  identifiera un sous-ensemble de ${\cal G}/L^{*}$ avec son image inverse dans ${\cal G}$.
Nous  utiliserons la partie suivante de la th\'eorie de Kirillov-Howe   ([dBS] th. 17.1, cor. 17.2, [Kir] th. 1.1):

\bigskip {\bf Th\'eor\`eme }  (Howe)  {\sl  Pour tout $O_F$-r\'eseau $L$ admissible de $\cal G$ 
il existe une bijection ${\cal O} \mapsto  \s_{\cal O}$
des  $K _{L/2}$-orbites dans ${\cal G}/L^*$
sur les $K _{L/2}$-orbites dans $\Irr_RK_{L}$ telle que

a)  ${\cal O}=L^{*}$ correspond \`a la repr\'esentation triviale de $K_{L} $.

b) Si $g\in G$ normalise $K _{L/2}$ et $K_{L}$ alors $\rm{Ad}(g) {\cal O}$ correspond \`a $g(\s_{\cal O})$.

c)  Deux $O_F$-r\'eseaux  $L, L'$ admissibles arbitraires de $\cal G$ v\'erifient  la propri\'et\'e suivante:

Soient $g\in G$ ,  $ {\cal O}$  une  $K_{L/2}$-orbite dans ${\cal G}/L^*$ , \ $ {\cal O}'$ une  $K_{L'/2}$-orbite dans ${\cal G}/L'^*$. Alors
  $${\cal O} \ \cap \ \rm{Ad}(g) {\cal O}' \ = \ \emptyset$$ 
  iff $g$ n'entrelace pas $ \s_{{\cal O}'}$ avec  $\s_{\cal O}$.}

\bigskip Lorsque $p=2$, seul cas o\`u $K _{L/2} \neq K$, on a not\'e $g(\s_{\cal O})$ l'ensemble des $g(\s)$
pour tout $\s\in \Irr_{R }K_{L}$ dans la $K _{L/2}$-orbite $\cal O$; 
on a convenu que $g\in G$ entrelace $ \s_{{\cal O}'}$ avec  $\s_{\cal O}$ s'il existe
$\s' \in \Irr_{R }K_{L'}$ contenu dans $ \s_{{\cal O}'}$ et $\s \in \Irr_{R }K_{L}$ contenu dans $ \s_{{\cal O}}$
tel que $g$ entrelace $\s'$ avec $\s$, i.e. tel que  les restrictions de $g(\s') \in \Irr_{R }gK_{L'}g^{-1}$ et de $\s\in  \Irr_{R }K_{L}$ \`a $gK_{L'}g^{-1}\cap K_{L}$ ont un composant irr\'eductible isomorphe.
 Le caract\`ere $\X_{\s_{\cal O}}$ de $\s_{\cal O}$ (la somme des caract\`eres des $\s \in \s_{\cal O}$ si $p=2$) est ``la transform\'ee de Fourier'' de $\cal O$:  
  $$\X_{\s_{\cal O}}(\exp (\lambda) )= d_{\cal O}^{-1} \sum _{X\in {\cal O}/L^{*} } \psi (B (X,\lambda)) \quad {\rm pour \ tout } \ \lambda \in L \ , $$  
  o\`u  $d_{\cal O}$ est l'indice dans $K_{L}$ du stabilisateur $\{ k\in K_{L} \  | \ {\rm Ad}(k) X\in X +L^{*}\} $ d'un \'el\'ement arbitraire $X\in \cal O$ et $\psi:F\to R^{*}$ est un caract\`ere additif non trivial fix\'e (\`a valeurs dans le groupe  des racines de l'unit\'e dans $R^{*}$ d'ordre une puissance de $p$).

  \bigskip Les propri\'et\'es a) et b) ne figurent pas explicitement dans la r\'ef\'erence mais se d\'eduisent facilement de la formule donnant le caract\`ere  $\X_{\s_{\cal O}}$. Si ${\cal O}=L^{*}$ le membre de droite \'egal \`a $1$  est le caract\`ere  de la repr\'esentation triviale.  Si $g\in G$ normalise $K _{L/2}$ et $K_{L}$ alors   $g(\sigma_{0})$ est une $K _{L/2}$-orbite  dans $\Irr_RK_{L}$ et 
  $$\X_{g(\s_{\cal O})}(\exp (\lambda) )= \X_{\s_{\cal O}}(g^{-1}\exp (\lambda)g )= \X_{\s_{\cal O}}(\exp ({\rm Ad}(g)^{-1}\lambda) = 
 d_{\cal O}^{-1} \sum _{X\in {\cal O}/L^{*} } \psi (B (X,{\rm Ad}(g)^{-1}\lambda))$$
$$= d_{\cal O}^{-1} \sum _{X\in {\cal O}/L^{*} } \psi (B ( {\rm Ad}(g)X, \lambda)) =  d_{ {\rm Ad} (g){\cal O}}^{-1} \sum _{X\in {{\rm Ad}(g)\cal O}/L^{*} } \psi (B (X, \lambda)) \ . $$

\bigskip Soit  $\cal N$ l'ensemble des \'el\'ements  nilpotents  de $\cal G$.
Les ${\rm Ad}(G)$-orbites ne s'\'eloignent pas  de $\cal N$.  Pour tout  compact $C$
de  $\cal G$, il  existe un $O_{F}$-r\'eseau $L_{C}$ de   $\cal G$ tel que   ([dBS] lemma 12.2, [Fou] lemma 1)
$${\rm Ad}(G) C \subset {\cal N}+ L_{C}  \ .  $$On l'applique \`a $C=L^{*}$ en notant que
 $L_{C} \subset p_{F}^{-n}L^{*}$ si $n$ est assez grand. On  d\'eduit  que  toute $K_{p_{F}^{n}L/2} $-orbite $\cal O$ dans ${\cal G}/ p_{F}^{-n}L^{*}$ qui rencontre ${\rm Ad}(G) L^{*} $ contient un \'el\'ement nilpotent.
 Ces orbites correspondent dans la th\'eorie de Kirillov-Howe (propri\'et\'e a)) aux $K_{p_{F}^{n}L/2} $-orbites dans $\Irr_RK_{p_{F}^{n}L}$ entrelac\'ees avec la repr\'esentation triviale de $K_{L}$ dans $G$.

\bigskip  Soit $\gamma\in G$  semi-simple.
On note $M=Z_G(\gamma)^o$ la composante connexe du centralisateur de 
$\gamma$ dans $G$. C'est un groupe r\'eductif comme $G$. 
 On note  ${\cal M}$ l'alg\`ebre de Lie de $M$. On a une d\'ecomposition 
 $${\cal G}={\cal M} \oplus ({\rm Ad}(\gamma) -1)   {\cal G}$$
 $B$-orthogonale ([dBS] \S 18). On note $p_{\cal M}$ la projection sur $\cal M$.  Soit $M_{\gamma}$ l'ensemble des $m\in M$ tels que $\rm{det } ({\rm Ad}(\gamma m)-1)|_{{\cal G}/{\cal M}} \neq 0$. Alors $M_{\gamma}$ est ouvert  ${\rm Ad}(M)$-stable et contient $1$; l'application 
 $$G\times M_{\gamma} \to G$$
 $$(x,m)\mapsto x\gamma m x^{-1}$$
 est submersive [HC162]. L'image d'un ouvert est ouverte.
   On choisit un   domaine de 
d\'efinition $(M,{\cal M})$-admissible  et  $\rm{Ad}(M)$-stable ${\cal M}_{0} \subset {\cal M} \cap {\cal G}_{0}$ d'une application exponentielle $\exp :{\cal M}_{0}\to  M$,    restriction d'une application exponentielle   $\exp :{\cal G}_{0}\to  G$. Ceci d\'etermine la notion de $O_F$-r\'eseau admissible   dans $\cal M$.

\bigskip  On fixe un $O_F$-r\'eseau de r\'ef\'erence dans $\cal G$, ce qui d\'efinit une valuation   
 sur $\cal G$ de boule unit\'e ${\cal G}(0)$ \'egale \`a ce r\'eseau.
 Le choix du r\'eseau ne compte pas vraiment
puisque deux r\'eseaux sont commensurables.   Pour tout $n\in \Z$, on note ${\cal G}(n)= p_{F}^{n}{\cal G}(0) $ l'ensemble des \'el\'ements de
valuation $\geq n$.  On note ${\cal N}_0$ l'ensemble des \'el\'ements nilpotents de valuation nulle, et pour
 tout entier $a>0$, on consid\`ere le voisinage de ${\cal N}_0$ 
 $${\cal V}_a:={\cal N}_0+ {\cal G}(a)\  $$
contenu dans la boule unit\'e ${\cal G}(0)$.   On note ${\cal M}_a$ le $F$-espace vectoriel engendr\'e par ${\cal V}_a \cap {\cal M}  $.

\bigskip 
Enoncons maintenant le lemme d'Harish-Chandra ([HC] Lemme 38) ([dBS] 21.3).

\bigskip {\bf Lemme  \ } (Harish-Chandra) {\sl   Soit $\gamma \in G$ semi-simple et $M=Z_{G}(\gamma)^{0} $. Il existe des $O_{F}$-r\'eseaux admissibles $L$ de $\cal G$ et $L'$ de $\cal M$ et un entier $c>0$  v\'erifiant la propri\'et\'e  (HC) : 

\medskip Si $a$ est un entier $>0$, pour tout entier $n\geq n_a$ assez grand,
et pour toute  $K_ {p_{F}^{n}L/2}$-orbite 
 ${\cal O}$ dans ${\cal G}/p_{F}^{-n}L^*$ on a 
 $$p_{\cal M}({\cal O}) \ \subset \  {\cal M}_{a}$$
 si ${\cal O} $ v\'erifie les trois conditions suivantes : 

1)  ${\cal O} $ contient un \'el\'ement nilpotent,

2) ${\cal O} $ contient un \'el\'ement de valuation $ \ < \ - \ c \ n$,

3)  ${\cal O} $ contient un \'el\'ement de ${\rm Ad}(\gamma K_{L'}) ({\cal O} ) $. 

Cette propri\'et\'e reste vraie pour tous les $O_{F}$-r\'eseaux admissibles $L_{1}$ de $\cal G$ et $L'_{1}$ de $\cal M$  plus petits que $L$ et $L'$.
 }

\bigskip Dans le dictionnaire de Kirillov-Howe,  la troisi\`eme condition dit que la $K_ {p_{F}^{n}L/2}$-orbite 
de $\Irr_{R}K_ {p_{F}^{n}L}$ correspondant \`a ${\cal O}$ est entrelac\'ee avec elle-m\^eme par un \'el\'ement de ${\rm Ad}(\gamma K_{L'}) $.  Lorsque $\gamma$ est semi-simple r\'egulier le centralisateur connexe  $M$ de $\gamma$ est un tore,
  ${\cal M}$ ne contient aucun nilpotent.   Les  parties compactes   ${\cal N}_0 $ et $ {\cal M} \cap {\cal G}(0)$  de $\cal G$ ayant une intersection vide,   ${\cal M}_a = \emptyset $
 pour $a$ assez grand. On obtient:

\bigskip {\bf Lemme \ }   {\sl Si $\gamma \in G^{reg}$  
la propri\'et\'e (HC) du lemme d'Harish-Chandra est \'equivalente \`a :     
  
    Pour tout $n $ assez grand, aucune  $K_ {p_{F}^{n}L/2}$-orbite 
 ${\cal O}$ dans ${\cal G}/p_{F}^{-n}L^*$ ne peut  v\'erifier les trois conditions 1), 2), 3).
  }

\bigskip {\bf III.4  \ }  {\sl    D\'emonstration de la proposition I.1  }

 Soit $L$ un  $O_{F}$-r\'eseau admissible  de $\cal G$. Nous allons montrer
que la suite $(K_{n})_{n\geq 0}$ d\'efinie par $K_{n}:= K_ {p_{F}^{n}L}$ v\'erifie la proposition I.1.
Il est clair que c'est une filtration s\'epar\'ee d\'ecroissante de pro-$p$-sous-groupes ouverts de $G$. 
La condition de finitude  fait intervenir un entier $n_{0}\geq 0$.  On se ram\`ene \`a $n_{0}=0$ en remplacant $L$ par $p_{F}^{n_{0}}L$ ce qui all\`ege les notations.

 Soit $\gamma \in G^{reg}$    et $M$ son centralisateur connexe dans $G$. On choisit un entier $n_{1} \geq 0$ et   un  $O_{F}$-r\'eseau admissible $L'$  de $\cal M$   tel que $p_{F}^{n_{1}}L$ et  $L'$ v\'erifient le lemme de Harish-Chandra, et  $\gamma K_{L'} \subset G^{reg}$.  On se ram\`ene \`a $n_{1}=0$ en remplacant $L$ par $p_{F}^{n_{1}}L$. On pose
$$V(\gamma):= \cup_{k\in K_{ L }} k\ \gamma  K_{L'} \ k^{-1} \  .$$
C'est un voisinage ouvert de $\gamma$ dans $G^{reg}$. 
On va montrer que  pour $n$  tr\`es grand,   il n'existe qu'un nombre fini de  $K_{p_{F}^{n}L/2} $-orbites dans $\Irr_RK_{p_{F}^{n}L}$ entrelac\'ees avec la repr\'esentation triviale $I_{L}$ de $K_{L}$  et avec elle-m\^emes par un \'el\'ement  de $V(\gamma)$. Ceci est une assertion un peu plus forte que la condition de finitude de la proposition 1.

 Supposons $n$ tr\`es grand. Par le dictionnaire de Kirillov-Howe les $K_{p_{F}^{n}L/2} $-orbites dans $\Irr_RK_{p_{F}^{n}L}$ correspondent \`a des  $K_{p_{F}^{n}L/2} $-orbites $\cal O$  dans ${\cal G}/ p_{F}^{-n}L^{*}$. Comme nous l'avons d\'eja remarqu\'e, la propri\'et\'e  ``entrelac\'ee avec   $I_{L}$  '' implique $\cal O$ contient un \'el\'ement nilpotent,
et la propri\'et\'e ``entrelac\'ee avec elle-m\^eme par un \'el\'ement  de $\gamma  K_{L'}$ ''
 est \'equivalente \`a  $\cal O$ contient un \'el\'ement de $ {\rm Ad}(\gamma K_{L'}) ({\cal O} ) $.
  Le lemme de Harish-Chandra pour $\gamma \in G^{reg}$  implique que les orbites $\cal O$ ayant ces deux propri\'et\'e ne contiennent pas un \'el\'ement de valuation $<- cn$ pour un $c>0$ fix\'e.  Elles sont donc en nombre fini.

Pour terminer la d\'emonstration on   remplace   $\gamma  K_{L'}$  par l'ouvert $V(\gamma)$ de $G$. Le groupe
$K_{L}$ normalise les groupes $K_{p_{F}^{n}L/2} $ et $K_{p_{F}^{n}L} $. Les  images  par  $ K_{L}$ des   $K_{p_{F}^{n}L/2} $-orbites dans $\Irr_RK_{p_{F}^{n}L}$   entrelac\'ees avec $I_{L}$ et 
 avec elle-m\^emes par un \'el\'ement  de $\gamma  K_{L'}$  sont \'egales aux $K_{p_{F}^{n}L/2} $-orbites dans $\Irr_RK_{p_{F}^{n}L}$  
  entrelac\'ees avec $I_{L}$ et avec elle-m\^emes par un \'el\'ement  de $V(\gamma)$.
   Elles  correspondent par les propri\'et\'es a) et   b)   du  dictionnaire de Kirillov-Howe aux images par ${\rm Ad}(K_{L})$  de certaines  $K_{p_{F}^{n}L/2} $-orbites $\cal O$ dans ${\cal G}/p_{F}^{-n}L^{*} $ contenant un \'el\'ement nilpotent et un \'el\'ement de   $ {\rm Ad}(\gamma K_{L'}) ({\cal O} ) $.  Nous avons vu que le nombre de ces orbites $\cal O$ est  fini, et  il est \'evident que l'image de $\cal O$ par ${\rm Ad}(K_{L}) $ est un ensemble fini d'orbites. Donc le nombre $K_{p_{F}^{n}L/2} $-orbites dans $\Irr_RK_{p_{F}^{n}L}$ entrelac\'ees avec   $I_{L}$  et avec elle-m\^emes par un \'el\'ement  de $V(\gamma)$ est fini.

\bigskip {\bf III.5  \ }  {\sl    D\'emonstration de la proposition II.1  }

  Un tore maximal $T$ de $G$ est appel\'e elliptique,  s'il est compact modulo le centre $Z$ de $G$. Posons $T^{reg}:= T \cap G^{reg}$. Alors  l'application
$$(x,t)\to xtx^{-1}:
\ G \times T^{reg}\to G$$ est submersive. Son image est un ouvert que l'on notera $G.T^{reg}$.
 L'ouvert $G^{ell}$ des \'el\'ements elliptiques de $G$, est une union disjointe  
d'ouverts   de la forme $G.T^{reg}$ pour  $T$ appartenant \`a l'ensemble fini  $\{T_{1}, \ldots, T_{r}\}$ de  tores maximaux  elliptiques    de $G$ modulo $G$-conjugation.

Soit $g\in G^{ell}$ et soit $T\in \{T_{1}, \ldots, T_{r}\}$ tel que $g\in G.T^{reg}$.
Le normalisateur $N_{G}(T)$  de $T$ dans $G$  est aussi compact modulo $Z$.
Il  existe une classe unique $xN_G(T)$  telle que $g\in xT^{reg}x^{-1}$.

  Soit $L$ un  $O_{F}$-r\'eseau admissible  de $\cal G$ qui est stable par $\rm{Ad}(N_{G}(T))$. Il suffit de partir 
d'un $O_F$-r\'eseau admissible quelconque   $L'$  de $\cal G$ et de prendre
  $$L \ := \  \cap_{n\in N_{G}(F)}\ \rm{Ad}(n) L' \ .$$
C'est   un $O_F$-r\'eseau admissible de   $\cal G$  car   le nombre des  $O_F$-r\'eseaux $\rm{Ad}(n) L'  $ pour $n \in N_{G}(F)$ est fini.   Il est clairement $N_{G}(F)$-stable.  
  La suite $(K_{g,n})_{n\geq 0}$ 
 d\'efinie par $$K_{g,n}:= K_{p_{F}^{n}{\rm Ad}(x)L}=xK_{p_{F}^{n}L}x^{-1}$$ est une
 filtration s\'epar\'ee d\'ecroissante de pro-$p$-sous-groupes ouverts de $G$ normalis\'es par $g$ qui ne d\'epend pas du choix de $x$.
Nous   montrons maintenant que ces filtrations  $(K_{g,n})_{n\geq 0}$ pour  tout $g\in G^{ell}$ v\'erifient la proposition II.1.

a) L'application $g'\mapsto K_{g',n}$ est constante sur le voisinage   $V(g):=\cup_{y\in xK_{L}} yT^{reg}y^{-1}$  de $g$ dans $G$ car   $K_{L}$ normalise $K_{p_{F}^{n}L}$ donc
 $K_{',n}= yK_{p_{F}^{n}L}y^{-1}=K_{g,n}$ pour tout $g'\in yT^{reg}y^{-1}\ , \ y\in xK_{L}$.  
 
b) On a $y K_{g,n}  y^{-1} = yxK_{p_{F}^{n}L}(xy)^{-1} = K_{ygy^{-1},n}$ pour tout $y\in G$ par d\'efinition 
de $K_{g,n}$.

c) Le r\'eseau ${\rm Ad}(x)L$ est admissible. Par le preuve de la proposition I.1 la filtration $K_{p^{n}{\rm Ad}(x)L}=K_{g,n}$ v\'erfie la condition de finitude c).

\bigskip\bigskip {\bf Bibliographie }

[HC1]  Harish-Chandra. Admissible invariant distributions on reductive p-adic groups, dans : Collected
Papers 
II, Springer 1984, 371-438. 

[dBS]  Harish-Chandra. Admissible invariant distributions on reductive p-adic groups, dans Notes by DeBacker and Sally 1997. University Lecture series  16 (1999).

[HC162]  Harish-Chandra. Notes by G. van Dijk. Harmonic Analysis on Reductive p-adic Groups. Lecture
Notes in Mathematics 162 Springer-Verlag 1970.

[Fou] Howe R. The Fourier transform and germs of characters. Math. Ann. 208 (1974) 305-322.

[Kir] Howe R. Kirillov Theory for compact p-adic groups. Pacific J. of Math. Vo. 73, No. 2. 1997.

[Serre] Serre J.-P. Repr\'esentations lin\'eaires des groupes finis. Hermann, deuxi\`eme \'edition 1971.

[SerreSLN5] Serre J.-P.  Cohomologie galoisienne. Lecture
Notes in Mathematics 5. Springer-Verlag, cinqui\`eme \'edition 1994.

[Vig] Vign\'eras M.-F. Repr\'esentations $\l$-modulaires d'un groupe r\'eductif p-adique
avec $\l \neq p$. Progress in Math 137. Birkhauser 1996.

\end{document}